\def\section{\@startsection{section}{1}%
  \z@{1.2\linespacing\@plus\linespacing}{.5\linespacing}%
  {\normalfont\scshape\centering}}
\def\subsection{\@startsection{subsection}{2}%
  \z@{0.9\linespacing\@plus.7\linespacing}{-.5em}%
  {\normalfont\bfseries}}
\renewcommand\thesubsubsection{\@alph\c@subsubsection}
\renewcommand\paragraph{\@startsection{paragraph}{4}{\z@}%
                                    {0.33\linespacing \@plus1ex \@minus.2ex}%
                                    {-0.3em}%
                                    {\normalfont\normalsize\itshape $\bullet$\ \ }}
\definecolor{gr}{rgb}   {0.,   0.69,   0.23 }
\definecolor{yl}{rgb}   {0.5,   0.5,   0.1 }
\definecolor{bl}{rgb}   {0.,    0.,   1. }
\definecolor{mg}{rgb}   {0.85,  0.,    0.85}
\definecolor{gy}{rgb}   {0.8,  0.8,   0.8}
\definecolor{marin}{rgb}   {0.,   0.65,   0.25}
\definecolor{rouge}{rgb}   {0.8,   0.,   0.}
\def\Bullet{\raise-1.15ex\hbox{$\bullet$}}
\global\let\figforTeXisloaded=\relax\fi
\def\ctr@ln@m#1{\ifx#1\undefined\else%
    \immediate\write16{*** Fig4TeX WARNING : \string#1 already defined.}\fi}
\def\ctr@ld@f#1#2{\ctr@ln@m#2#1#2}
\def\ctr@ln@w#1#2{\ctr@ln@m#2\csname#1\endcsname#2}
{\catcode`\/=0 \catcode`/\=12 /ctr@ld@f/gdef/BS@{\}}
\ctr@ld@f\def\ctr@lcsn@m#1{\expandafter\ifx\csname#1\endcsname\relax\else%
    \immediate\write16{*** Fig4TeX WARNING : \BS@\expandafter\string#1\space already defined.}\fi}
\ctr@ld@f\edef\colonc@tcode{\the\catcode`\:}
\ctr@ld@f\edef\semicolonc@tcode{\the\catcode`\;}
\ctr@ld@f\def\t@stc@tcodech@nge{{\let\c@tcodech@nged=\z@%
    \ifnum\colonc@tcode=\the\catcode`\:\else\let\c@tcodech@nged=\@ne\fi%
    \ifnum\semicolonc@tcode=\the\catcode`\;\else\let\c@tcodech@nged=\@ne\fi%
    \ifx\c@tcodech@nged\@ne%
    \immediate\write16{}
    \immediate\write16{!!!=============================================================!!!}
    \immediate\write16{ Fig4TeX WARNING :}
    \immediate\write16{ The category code of some characters has been changed, which will}
    \immediate\write16{ result in an error (message "Runaway argument?").}
    \immediate\write16{ This probably comes from another package that changed the category}
    \immediate\write16{ code after Fig4TeX was loaded. If that proves to be exact, the}
    \immediate\write16{ solution is to exchange the loading commands on top of your file}
    \immediate\write16{ so that Fig4TeX is loaded last. For example, in LaTeX, we should}
    \immediate\write16{ say :}
    \immediate\write16{\BS@ usepackage[french]{babel}}
    \immediate\write16{\BS@ usepackage{fig4tex}}
    \immediate\write16{!!!=============================================================!!!}
    \immediate\write16{}
    \fi}}
\ctr@ld@f\def\FigforTeX{F\kern-.05em i\kern-.05em g\kern-.1em\raise-.14em\hbox{4}\kern-.19em\TeX}
\ctr@ln@w{newdimen}\epsil@n\epsil@n=0.00005pt
\ctr@ln@w{newdimen}\Cepsil@n\Cepsil@n=0.005pt
\ctr@ln@w{newdimen}\dcq@\dcq@=254pt
\ctr@ln@w{newdimen}\PI@\PI@=3.141592pt
\ctr@ln@w{newdimen}\DemiPI@deg\DemiPI@deg=90pt
\ctr@ln@w{newdimen}\PI@deg\PI@deg=180pt
\ctr@ln@w{newdimen}\DePI@deg\DePI@deg=360pt
\ctr@ld@f\chardef\t@n=10
\ctr@ld@f\chardef\c@nt=100
\ctr@ld@f\chardef\@lxxiv=74
\ctr@ld@f\chardef\@xci=91
\ctr@ld@f\mathchardef\@nMnCQn=9949
\ctr@ld@f\chardef\@vi=6
\ctr@ld@f\chardef\@xxx=30
\ctr@ld@f\chardef\@lvi=56
\ctr@ld@f\chardef\@@lxxi=71
\ctr@ld@f\chardef\@lxxxv=85
\ctr@ld@f\mathchardef\@@mmmmlxviii=4068
\ctr@ld@f\mathchardef\@ccclx=360
\ctr@ld@f\mathchardef\@dccxx=720
\ctr@ln@w{newcount}\p@rtent \ctr@ln@w{newcount}\f@ctech \ctr@ln@w{newcount}\result@tent
\ctr@ln@w{newdimen}\v@lmin \ctr@ln@w{newdimen}\v@lmax \ctr@ln@w{newdimen}\v@leur
\ctr@ln@w{newdimen}\result@t\ctr@ln@w{newdimen}\result@@t
\ctr@ln@w{newdimen}\mili@u \ctr@ln@w{newdimen}\c@rre \ctr@ln@w{newdimen}\delt@
\ctr@ld@f\def\degT@rd{0.017453 }  
\ctr@ld@f\def\rdT@deg{57.295779 } 
\ctr@ln@m\v@leurseule
{\catcode`p=12 \catcode`t=12 \gdef\v@leurseule#1pt{#1}}
\ctr@ld@f\def\repdecn@mb#1{\expandafter\v@leurseule\the#1\space}
\ctr@ld@f\def\arct@n#1(#2,#3){{\v@lmin=#2\v@lmax=#3%
    \maxim@m{\mili@u}{-\v@lmin}{\v@lmin}\maxim@m{\c@rre}{-\v@lmax}{\v@lmax}%
    \delt@=\mili@u\m@ech\mili@u%
    \ifdim\c@rre>\@nMnCQn\mili@u\divide\v@lmax\tw@\c@lATAN\v@leur(\z@,\v@lmax)
    \else%
    \maxim@m{\mili@u}{-\v@lmin}{\v@lmin}\maxim@m{\c@rre}{-\v@lmax}{\v@lmax}%
    \m@ech\c@rre%
    \ifdim\mili@u>\@nMnCQn\c@rre\divide\v@lmin\tw@
    \maxim@m{\mili@u}{-\v@lmin}{\v@lmin}\c@lATAN\v@leur(\mili@u,\z@)%
    \else\c@lATAN\v@leur(\delt@,\v@lmax)\fi\fi%
    \ifdim\v@lmin<\z@\v@leur=-\v@leur\ifdim\v@lmax<\z@\advance\v@leur-\PI@%
    \else\advance\v@leur\PI@\fi\fi%
    \global\result@t=\v@leur}#1=\result@t}
\ctr@ld@f\def\m@ech#1{\ifdim#1>1.646pt\divide\mili@u\t@n\divide\c@rre\t@n\m@ech#1\fi}
\ctr@ld@f\def\c@lATAN#1(#2,#3){{\v@lmin=#2\v@lmax=#3\v@leur=\z@\delt@=\tw@ pt%
    \un@iter{0.785398}{\v@lmax<}%
    \un@iter{0.463648}{\v@lmax<}%
    \un@iter{0.244979}{\v@lmax<}%
    \un@iter{0.124355}{\v@lmax<}%
    \un@iter{0.062419}{\v@lmax<}%
    \un@iter{0.031240}{\v@lmax<}%
    \un@iter{0.015624}{\v@lmax<}%
    \un@iter{0.007812}{\v@lmax<}%
    \un@iter{0.003906}{\v@lmax<}%
    \un@iter{0.001953}{\v@lmax<}%
    \un@iter{0.000976}{\v@lmax<}%
    \un@iter{0.000488}{\v@lmax<}%
    \un@iter{0.000244}{\v@lmax<}%
    \un@iter{0.000122}{\v@lmax<}%
    \un@iter{0.000061}{\v@lmax<}%
    \un@iter{0.000030}{\v@lmax<}%
    \un@iter{0.000015}{\v@lmax<}%
    \global\result@t=\v@leur}#1=\result@t}
\ctr@ld@f\def\un@iter#1#2{%
    \divide\delt@\tw@\edef\dpmn@{\repdecn@mb{\delt@}}%
    \mili@u=\v@lmin%
    \ifdim#2\z@%
      \advance\v@lmin-\dpmn@\v@lmax\advance\v@lmax\dpmn@\mili@u%
      \advance\v@leur-#1pt%
    \else%
      \advance\v@lmin\dpmn@\v@lmax\advance\v@lmax-\dpmn@\mili@u%
      \advance\v@leur#1pt%
    \fi}
\ctr@ld@f\def\c@ssin#1#2#3{\expandafter\ifx\csname COS@\number#3\endcsname\relax\c@lCS{#3pt}%
    \expandafter\xdef\csname COS@\number#3\endcsname{\repdecn@mb\result@t}%
    \expandafter\xdef\csname SIN@\number#3\endcsname{\repdecn@mb\result@@t}\fi%
    \edef#1{\csname COS@\number#3\endcsname}\edef#2{\csname SIN@\number#3\endcsname}}
\ctr@ld@f\def\c@lCS#1{{\mili@u=#1\p@rtent=\@ne%
    \relax\ifdim\mili@u<\z@\red@ng<-\else\red@ng>+\fi\f@ctech=\p@rtent%
    \relax\ifdim\mili@u<\z@\mili@u=-\mili@u\f@ctech=-\f@ctech\fi\c@@lCS}}
\ctr@ld@f\def\c@@lCS{\v@lmin=\mili@u\c@rre=-\mili@u\advance\c@rre\DemiPI@deg\v@lmax=\c@rre%
    \mili@u\@@lxxi\mili@u\divide\mili@u\@@mmmmlxviii%
    \edef\v@larg{\repdecn@mb{\mili@u}}\mili@u=-\v@larg\mili@u%
    \edef\v@lmxde{\repdecn@mb{\mili@u}}%
    \c@rre\@@lxxi\c@rre\divide\c@rre\@@mmmmlxviii%
    \edef\v@largC{\repdecn@mb{\c@rre}}\c@rre=-\v@largC\c@rre%
    \edef\v@lmxdeC{\repdecn@mb{\c@rre}}%
    \fctc@s\mili@u\v@lmin\global\result@t\p@rtent\v@leur%
    \let\t@mp=\v@larg\let\v@larg=\v@largC\let\v@largC=\t@mp%
    \let\t@mp=\v@lmxde\let\v@lmxde=\v@lmxdeC\let\v@lmxdeC=\t@mp%
    \fctc@s\c@rre\v@lmax\global\result@@t\f@ctech\v@leur}
\ctr@ld@f\def\fctc@s#1#2{\v@leur=#1\relax\ifdim#2<\@lxxxv\p@\cosser@h\else\sinser@t\fi}
\ctr@ld@f\def\cosser@h{\advance\v@leur\@lvi\p@\divide\v@leur\@lvi%
    \v@leur=\v@lmxde\v@leur\advance\v@leur\@xxx\p@%
    \v@leur=\v@lmxde\v@leur\advance\v@leur\@ccclx\p@%
    \v@leur=\v@lmxde\v@leur\advance\v@leur\@dccxx\p@\divide\v@leur\@dccxx}
\ctr@ld@f\def\sinser@t{\v@leur=\v@lmxdeC\p@\advance\v@leur\@vi\p@%
    \v@leur=\v@largC\v@leur\divide\v@leur\@vi}
\ctr@ld@f\def\red@ng#1#2{\relax\ifdim\mili@u#1#2\DemiPI@deg\advance\mili@u#2-\PI@deg%
    \p@rtent=-\p@rtent\red@ng#1#2\fi}
\ctr@ld@f\def\pr@c@lCS#1#2#3{\ctr@lcsn@m{COS@\number#3 }%
    \expandafter\xdef\csname COS@\number#3\endcsname{#1}%
    \expandafter\xdef\csname SIN@\number#3\endcsname{#2}}
\pr@c@lCS{1}{0}{0}
\pr@c@lCS{0.7071}{0.7071}{45}\pr@c@lCS{0.7071}{-0.7071}{-45}
\pr@c@lCS{0}{1}{90}          \pr@c@lCS{0}{-1}{-90}
\pr@c@lCS{-1}{0}{180}        \pr@c@lCS{-1}{0}{-180}
\pr@c@lCS{0}{-1}{270}        \pr@c@lCS{0}{1}{-270}
\ctr@ld@f\def\invers@#1#2{{\v@leur=#2\maxim@m{\v@lmax}{-\v@leur}{\v@leur}%
    \f@ctech=\@ne\m@inv@rs%
    \multiply\v@leur\f@ctech\edef\v@lv@leur{\repdecn@mb{\v@leur}}%
    \p@rtentiere{\p@rtent}{\v@leur}\v@lmin=\p@\divide\v@lmin\p@rtent%
    \inv@rs@\multiply\v@lmax\f@ctech\global\result@t=\v@lmax}#1=\result@t}
\ctr@ld@f\def\m@inv@rs{\ifdim\v@lmax<\p@\multiply\v@lmax\t@n\multiply\f@ctech\t@n\m@inv@rs\fi}
\ctr@ld@f\def\inv@rs@{\v@lmax=-\v@lmin\v@lmax=\v@lv@leur\v@lmax%
    \advance\v@lmax\tw@ pt\v@lmax=\repdecn@mb{\v@lmin}\v@lmax%
    \delt@=\v@lmax\advance\delt@-\v@lmin\ifdim\delt@<\z@\delt@=-\delt@\fi%
    \ifdim\delt@>\epsil@n\v@lmin=\v@lmax\inv@rs@\fi}
\ctr@ld@f\def\minim@m#1#2#3{\relax\ifdim#2<#3#1=#2\else#1=#3\fi}
\ctr@ld@f\def\maxim@m#1#2#3{\relax\ifdim#2>#3#1=#2\else#1=#3\fi}
\ctr@ld@f\def\p@rtentiere#1#2{#1=#2\divide#1by65536 }
\ctr@ld@f\def\r@undint#1#2{{\v@leur=#2\divide\v@leur\t@n\p@rtentiere{\p@rtent}{\v@leur}%
    \v@leur=\p@rtent pt\global\result@t=\t@n\v@leur}#1=\result@t}
\ctr@ld@f\def\sqrt@#1#2{{\v@leur=#2%
    \minim@m{\v@lmin}{\p@}{\v@leur}\maxim@m{\v@lmax}{\p@}{\v@leur}%
    \f@ctech=\@ne\m@sqrt@\sqrt@@%
    \mili@u=\v@lmin\advance\mili@u\v@lmax\divide\mili@u\tw@\multiply\mili@u\f@ctech%
    \global\result@t=\mili@u}#1=\result@t}
\ctr@ld@f\def\m@sqrt@{\ifdim\v@leur>\dcq@\divide\v@leur\c@nt\v@lmax=\v@leur%
    \multiply\f@ctech\t@n\m@sqrt@\fi}
\ctr@ld@f\def\sqrt@@{\mili@u=\v@lmin\advance\mili@u\v@lmax\divide\mili@u\tw@%
    \c@rre=\repdecn@mb{\mili@u}\mili@u%
    \ifdim\c@rre<\v@leur\v@lmin=\mili@u\else\v@lmax=\mili@u\fi%
    \delt@=\v@lmax\advance\delt@-\v@lmin\ifdim\delt@>\epsil@n\sqrt@@\fi}
\ctr@ld@f\def\extrairelepremi@r#1\de#2{\expandafter\lepremi@r#2@#1#2}
\ctr@ld@f\def\lepremi@r#1,#2@#3#4{\def#3{#1}\def#4{#2}\ignorespaces}
\ctr@ld@f\def\@cfor#1:=#2\do#3{%
  \edef\@fortemp{#2}%
  \ifx\@fortemp\empty\else\@cforloop#2,\@nil,\@nil\@@#1{#3}\fi}
\ctr@ln@m\@nextwhile
\ctr@ld@f\def\@cforloop#1,#2\@@#3#4{%
  \def#3{#1}%
  \ifx#3\Fig@nnil\let\@nextwhile=\Fig@fornoop\else#4\relax\let\@nextwhile=\@cforloop\fi%
  \@nextwhile#2\@@#3{#4}}

\ctr@ld@f\def\@ecfor#1:=#2\do#3{%
  \def\@@cfor{\@cfor#1:=}%
  \edef\@@@cfor{#2}%
  \expandafter\@@cfor\@@@cfor\do{#3}}
\ctr@ld@f\def\Fig@nnil{\@nil}
\ctr@ld@f\def\Fig@fornoop#1\@@#2#3{}
\ctr@ln@m\list@@rg
\ctr@ld@f\def\trtlis@rg#1#2{\def\list@@rg{#1}%
    \@ecfor\p@rv@l:=\list@@rg\do{\expandafter#2\p@rv@l|}}
\ctr@ln@w{newbox}\b@xvisu
\ctr@ln@w{newtoks}\let@xte
\ctr@ln@w{newif}\ifitis@K
\ctr@ln@w{newcount}\s@mme
\ctr@ln@w{newcount}\l@mbd@un \ctr@ln@w{newcount}\l@mbd@de
\ctr@ln@w{newcount}\superc@ntr@l\superc@ntr@l=\@ne        
\ctr@ln@w{newcount}\typec@ntr@l\typec@ntr@l=\superc@ntr@l 
\ctr@ln@w{newdimen}\v@lX  \ctr@ln@w{newdimen}\v@lY  \ctr@ln@w{newdimen}\v@lZ
\ctr@ln@w{newdimen}\v@lXa \ctr@ln@w{newdimen}\v@lYa \ctr@ln@w{newdimen}\v@lZa
\ctr@ln@w{newdimen}\unit@\unit@=\p@ 
\ctr@ld@f\def\unit@util{pt}
\ctr@ld@f\def\ptT@ptps{0.996264 }
\ctr@ld@f\def\ptpsT@pt{1.00375 }
\ctr@ld@f\def\ptT@unit@{1} 
\ctr@ld@f\def\setunit@#1{\def\unit@util{#1}\setunit@@#1:\invers@{\result@t}{\unit@}%
    \edef\ptT@unit@{\repdecn@mb\result@t}}
\ctr@ld@f\def\setunit@@#1#2:{\ifcat#1a\unit@=\@ne#1#2\else\unit@=#1#2\fi}
\ctr@ld@f\def\d@fm@cdim#1#2{{\v@leur=#2\v@leur=\ptT@unit@\v@leur\xdef#1{\repdecn@mb\v@leur}}}
\ctr@ln@w{newif}\ifBdingB@x\BdingB@xtrue
\ctr@ln@w{newdimen}\c@@rdXmin \ctr@ln@w{newdimen}\c@@rdYmin  
\ctr@ln@w{newdimen}\c@@rdXmax \ctr@ln@w{newdimen}\c@@rdYmax
\ctr@ld@f\def\b@undb@x#1#2{\ifBdingB@x%
    \relax\ifdim#1<\c@@rdXmin\global\c@@rdXmin=#1\fi%
    \relax\ifdim#2<\c@@rdYmin\global\c@@rdYmin=#2\fi%
    \relax\ifdim#1>\c@@rdXmax\global\c@@rdXmax=#1\fi%
    \relax\ifdim#2>\c@@rdYmax\global\c@@rdYmax=#2\fi\fi}
\ctr@ld@f\def\b@undb@xP#1{{\Figg@tXY{#1}\b@undb@x{\v@lX}{\v@lY}}}
\ctr@ld@f\def\ellBB@x#1;#2,#3(#4,#5,#6){{\s@uvc@ntr@l\et@tellBB@x%
    \setc@ntr@l{2}\figptell-2::#1;#2,#3(#4,#6)\b@undb@xP{-2}%
    \figptell-2::#1;#2,#3(#5,#6)\b@undb@xP{-2}%
    \c@ssin{\C@}{\S@}{#6}\v@lmin=\C@ pt\v@lmax=\S@ pt%
    \mili@u=#3\v@lmin\delt@=#2\v@lmax\arct@n\v@leur(\delt@,\mili@u)%
    \mili@u=-#3\v@lmax\delt@=#2\v@lmin\arct@n\c@rre(\delt@,\mili@u)%
    \v@leur=\rdT@deg\v@leur\advance\v@leur-\DePI@deg%
    \c@rre=\rdT@deg\c@rre\advance\c@rre-\DePI@deg%
    \v@lmin=#4pt\v@lmax=#5pt%
    \loop\ifdim\v@leur<\v@lmax\ifdim\v@leur>\v@lmin%
    \edef\@ngle{\repdecn@mb\v@leur}\figptell-2::#1;#2,#3(\@ngle,#6)%
    \b@undb@xP{-2}\fi\advance\v@leur\PI@deg\repeat%
    \loop\ifdim\c@rre<\v@lmax\ifdim\c@rre>\v@lmin%
    \edef\@ngle{\repdecn@mb\c@rre}\figptell-2::#1;#2,#3(\@ngle,#6)%
    \b@undb@xP{-2}\fi\advance\c@rre\PI@deg\repeat%
    \resetc@ntr@l\et@tellBB@x}\ignorespaces}
\ctr@ld@f\def\initb@undb@x{\c@@rdXmin=\maxdimen\c@@rdYmin=\maxdimen%
    \c@@rdXmax=-\maxdimen\c@@rdYmax=-\maxdimen}
\ctr@ld@f\def\c@ntr@lnum#1{%
    \relax\ifnum\typec@ntr@l=\@ne%
    \ifnum#1<\z@%
    \immediate\write16{*** Forbidden point number (#1). Abort.}\end\fi\fi%
    \set@bjc@de{#1}}
\ctr@ln@m\objc@de
\ctr@ld@f\def\set@bjc@de#1{\edef\objc@de{@BJ\ifnum#1<\z@ M\romannumeral-#1\else\romannumeral#1\fi}}
\s@mme=\m@ne\loop\ifnum\s@mme>-19
  \set@bjc@de{\s@mme}\ctr@lcsn@m\objc@de\ctr@lcsn@m{\objc@de T}
\advance\s@mme\m@ne\repeat
\s@mme=\@ne\loop\ifnum\s@mme<6
  \set@bjc@de{\s@mme}\ctr@lcsn@m\objc@de\ctr@lcsn@m{\objc@de T}
\advance\s@mme\@ne\repeat
\ctr@ld@f\def\setc@ntr@l#1{\ifnum\superc@ntr@l>#1\typec@ntr@l=\superc@ntr@l%
    \else\typec@ntr@l=#1\fi}
\ctr@ld@f\def\resetc@ntr@l#1{\global\superc@ntr@l=#1\setc@ntr@l{#1}}
\ctr@ld@f\def\s@uvc@ntr@l#1{\edef#1{\the\superc@ntr@l}}
\ctr@ln@m\c@lproscal
\ctr@ld@f\def\c@lproscalDD#1[#2,#3]{{\Figg@tXY{#2}%
    \edef\Xu@{\repdecn@mb{\v@lX}}\edef\Yu@{\repdecn@mb{\v@lY}}\Figg@tXY{#3}%
    \global\result@t=\Xu@\v@lX\global\advance\result@t\Yu@\v@lY}#1=\result@t}
\ctr@ld@f\def\c@lproscalTD#1[#2,#3]{{\Figg@tXY{#2}\edef\Xu@{\repdecn@mb{\v@lX}}%
    \edef\Yu@{\repdecn@mb{\v@lY}}\edef\Zu@{\repdecn@mb{\v@lZ}}%
    \Figg@tXY{#3}\global\result@t=\Xu@\v@lX\global\advance\result@t\Yu@\v@lY%
    \global\advance\result@t\Zu@\v@lZ}#1=\result@t}
\ctr@ld@f\def\c@lprovec#1{%
    \det@rmC\v@lZa(\v@lX,\v@lY,\v@lmin,\v@lmax)%
    \det@rmC\v@lXa(\v@lY,\v@lZ,\v@lmax,\v@leur)%
    \det@rmC\v@lYa(\v@lZ,\v@lX,\v@leur,\v@lmin)%
    \Figv@ctCreg#1(\v@lXa,\v@lYa,\v@lZa)}
\ctr@ld@f\def\det@rm#1[#2,#3]{{\Figg@tXY{#2}\Figg@tXYa{#3}%
    \delt@=\repdecn@mb{\v@lX}\v@lYa\advance\delt@-\repdecn@mb{\v@lY}\v@lXa%
    \global\result@t=\delt@}#1=\result@t}
\ctr@ld@f\def\det@rmC#1(#2,#3,#4,#5){{\global\result@t=\repdecn@mb{#2}#5%
    \global\advance\result@t-\repdecn@mb{#3}#4}#1=\result@t}
\ctr@ld@f\def\getredf@ctDD#1(#2,#3){{\maxim@m{\v@lXa}{-#2}{#2}\maxim@m{\v@lYa}{-#3}{#3}%
    \maxim@m{\v@lXa}{\v@lXa}{\v@lYa}
    \ifdim\v@lXa>\@xci pt\divide\v@lXa\@xci%
    \p@rtentiere{\p@rtent}{\v@lXa}\advance\p@rtent\@ne\else\p@rtent=\@ne\fi%
    \global\result@tent=\p@rtent}#1=\result@tent\ignorespaces}
\ctr@ld@f\def\getredf@ctTD#1(#2,#3,#4){{\maxim@m{\v@lXa}{-#2}{#2}\maxim@m{\v@lYa}{-#3}{#3}%
    \maxim@m{\v@lZa}{-#4}{#4}\maxim@m{\v@lXa}{\v@lXa}{\v@lYa}%
    \maxim@m{\v@lXa}{\v@lXa}{\v@lZa}
    \ifdim\v@lXa>\@lxxiv pt\divide\v@lXa\@lxxiv%
    \p@rtentiere{\p@rtent}{\v@lXa}\advance\p@rtent\@ne\else\p@rtent=\@ne\fi%
    \global\result@tent=\p@rtent}#1=\result@tent\ignorespaces}
\ctr@ld@f\def\FigptintercircB@zDD#1:#2:#3,#4[#5,#6,#7,#8]{{\s@uvc@ntr@l\et@tfigptintercircB@zDD%
    \setc@ntr@l{2}\figvectPDD-1[#5,#8]\Figg@tXY{-1}\getredf@ctDD\f@ctech(\v@lX,\v@lY)%
    \mili@u=#4\unit@\divide\mili@u\f@ctech\c@rre=\repdecn@mb{\mili@u}\mili@u%
    \figptBezierDD-5::#3[#5,#6,#7,#8]%
    \v@lmin=#3\p@\v@lmax=\v@lmin\advance\v@lmax0.1\p@%
    \loop\edef\T@{\repdecn@mb{\v@lmax}}\figptBezierDD-2::\T@[#5,#6,#7,#8]%
    \figvectPDD-1[-5,-2]\n@rmeucCDD{\delt@}{-1}\ifdim\delt@<\c@rre\v@lmin=\v@lmax%
    \advance\v@lmax0.1\p@\repeat%
    \loop\mili@u=\v@lmin\advance\mili@u\v@lmax%
    \divide\mili@u\tw@\edef\T@{\repdecn@mb{\mili@u}}\figptBezierDD-2::\T@[#5,#6,#7,#8]%
    \figvectPDD-1[-5,-2]\n@rmeucCDD{\delt@}{-1}\ifdim\delt@>\c@rre\v@lmax=\mili@u%
    \else\v@lmin=\mili@u\fi\v@leur=\v@lmax\advance\v@leur-\v@lmin%
    \ifdim\v@leur>\epsil@n\repeat\figptcopyDD#1:#2/-2/%
    \resetc@ntr@l\et@tfigptintercircB@zDD}\ignorespaces}
\ctr@ln@m\figptinterlines
\ctr@ld@f\def\inters@cDD#1:#2[#3,#4;#5,#6]{{\s@uvc@ntr@l\et@tinters@cDD%
    \setc@ntr@l{2}\vecunit@{-1}{#4}\vecunit@{-2}{#6}%
    \Figg@tXY{-1}\setc@ntr@l{1}\Figg@tXYa{#3}%
    \edef\A@{\repdecn@mb{\v@lX}}\edef\B@{\repdecn@mb{\v@lY}}%
    \v@lmin=\B@\v@lXa\advance\v@lmin-\A@\v@lYa%
    \Figg@tXYa{#5}\setc@ntr@l{2}\Figg@tXY{-2}%
    \edef\C@{\repdecn@mb{\v@lX}}\edef\D@{\repdecn@mb{\v@lY}}%
    \v@lmax=\D@\v@lXa\advance\v@lmax-\C@\v@lYa%
    \delt@=\A@\v@lY\advance\delt@-\B@\v@lX%
    \invers@{\v@leur}{\delt@}\edef\v@ldelta{\repdecn@mb{\v@leur}}%
    \v@lXa=\A@\v@lmax\advance\v@lXa-\C@\v@lmin%
    \v@lYa=\B@\v@lmax\advance\v@lYa-\D@\v@lmin%
    \v@lXa=\v@ldelta\v@lXa\v@lYa=\v@ldelta\v@lYa%
    \setc@ntr@l{1}\Figp@intregDD#1:{#2}(\v@lXa,\v@lYa)%
    \resetc@ntr@l\et@tinters@cDD}\ignorespaces}
\ctr@ld@f\def\inters@cTD#1:#2[#3,#4;#5,#6]{{\s@uvc@ntr@l\et@tinters@cTD%
    \setc@ntr@l{2}\figvectNVTD-1[#4,#6]\figvectNVTD-2[#6,-1]\figvectPTD-1[#3,#5]%
    \r@pPSTD\v@leur[-2,-1,#4]\edef\v@lcoef{\repdecn@mb{\v@leur}}%
    \figpttraTD#1:{#2}=#3/\v@lcoef,#4/\resetc@ntr@l\et@tinters@cTD}\ignorespaces}
\ctr@ld@f\def\r@pPSTD#1[#2,#3,#4]{{\Figg@tXY{#2}\edef\Xu@{\repdecn@mb{\v@lX}}%
    \edef\Yu@{\repdecn@mb{\v@lY}}\edef\Zu@{\repdecn@mb{\v@lZ}}%
    \Figg@tXY{#3}\v@lmin=\Xu@\v@lX\advance\v@lmin\Yu@\v@lY\advance\v@lmin\Zu@\v@lZ%
    \Figg@tXY{#4}\v@lmax=\Xu@\v@lX\advance\v@lmax\Yu@\v@lY\advance\v@lmax\Zu@\v@lZ%
    \invers@{\v@leur}{\v@lmax}\global\result@t=\repdecn@mb{\v@leur}\v@lmin}%
    #1=\result@t}
\ctr@ln@m\n@rminf
\ctr@ld@f\def\n@rminfDD#1#2{{\Figg@tXY{#2}\maxim@m{\v@lX}{\v@lX}{-\v@lX}%
    \maxim@m{\v@lY}{\v@lY}{-\v@lY}\maxim@m{\global\result@t}{\v@lX}{\v@lY}}%
    #1=\result@t}
\ctr@ld@f\def\n@rminfTD#1#2{{\Figg@tXY{#2}\maxim@m{\v@lX}{\v@lX}{-\v@lX}%
    \maxim@m{\v@lY}{\v@lY}{-\v@lY}\maxim@m{\v@lZ}{\v@lZ}{-\v@lZ}%
    \maxim@m{\v@lX}{\v@lX}{\v@lY}\maxim@m{\global\result@t}{\v@lX}{\v@lZ}}%
    #1=\result@t}
\ctr@ld@f\def\n@rmeucCDD#1#2{\Figg@tXY{#2}\divide\v@lX\f@ctech\divide\v@lY\f@ctech%
    #1=\repdecn@mb{\v@lX}\v@lX\v@lX=\repdecn@mb{\v@lY}\v@lY\advance#1\v@lX}
\ctr@ld@f\def\n@rmeucCTD#1#2{\Figg@tXY{#2}%
    \divide\v@lX\f@ctech\divide\v@lY\f@ctech\divide\v@lZ\f@ctech%
    #1=\repdecn@mb{\v@lX}\v@lX\v@lX=\repdecn@mb{\v@lY}\v@lY\advance#1\v@lX%
    \v@lX=\repdecn@mb{\v@lZ}\v@lZ\advance#1\v@lX}
\ctr@ln@m\n@rmeucSV
\ctr@ld@f\def\n@rmeucSVDD#1#2{{\Figg@tXY{#2}%
    \v@lXa=\repdecn@mb{\v@lX}\v@lX\v@lYa=\repdecn@mb{\v@lY}\v@lY%
    \advance\v@lXa\v@lYa\sqrt@{\global\result@t}{\v@lXa}}#1=\result@t}
\ctr@ld@f\def\n@rmeucSVTD#1#2{{\Figg@tXY{#2}\v@lXa=\repdecn@mb{\v@lX}\v@lX%
    \v@lYa=\repdecn@mb{\v@lY}\v@lY\v@lZa=\repdecn@mb{\v@lZ}\v@lZ%
    \advance\v@lXa\v@lYa\advance\v@lXa\v@lZa\sqrt@{\global\result@t}{\v@lXa}}#1=\result@t}
\ctr@ln@m\n@rmeuc
\ctr@ld@f\def\n@rmeucDD#1#2{{\Figg@tXY{#2}\getredf@ctDD\f@ctech(\v@lX,\v@lY)%
    \divide\v@lX\f@ctech\divide\v@lY\f@ctech%
    \v@lXa=\repdecn@mb{\v@lX}\v@lX\v@lYa=\repdecn@mb{\v@lY}\v@lY%
    \advance\v@lXa\v@lYa\sqrt@{\global\result@t}{\v@lXa}%
    \global\multiply\result@t\f@ctech}#1=\result@t}
\ctr@ld@f\def\n@rmeucTD#1#2{{\Figg@tXY{#2}\getredf@ctTD\f@ctech(\v@lX,\v@lY,\v@lZ)%
    \divide\v@lX\f@ctech\divide\v@lY\f@ctech\divide\v@lZ\f@ctech%
    \v@lXa=\repdecn@mb{\v@lX}\v@lX%
    \v@lYa=\repdecn@mb{\v@lY}\v@lY\v@lZa=\repdecn@mb{\v@lZ}\v@lZ%
    \advance\v@lXa\v@lYa\advance\v@lXa\v@lZa\sqrt@{\global\result@t}{\v@lXa}%
    \global\multiply\result@t\f@ctech}#1=\result@t}
\ctr@ln@m\vecunit@
\ctr@ld@f\def\vecunit@DD#1#2{{\Figg@tXY{#2}\getredf@ctDD\f@ctech(\v@lX,\v@lY)%
    \divide\v@lX\f@ctech\divide\v@lY\f@ctech%
    \Figv@ctCreg#1(\v@lX,\v@lY)\n@rmeucSV{\v@lYa}{#1}%
    \invers@{\v@lXa}{\v@lYa}\edef\v@lv@lXa{\repdecn@mb{\v@lXa}}%
    \v@lX=\v@lv@lXa\v@lX\v@lY=\v@lv@lXa\v@lY%
    \Figv@ctCreg#1(\v@lX,\v@lY)\multiply\v@lYa\f@ctech\global\result@t=\v@lYa}}
\ctr@ld@f\def\vecunit@TD#1#2{{\Figg@tXY{#2}\getredf@ctTD\f@ctech(\v@lX,\v@lY,\v@lZ)%
    \divide\v@lX\f@ctech\divide\v@lY\f@ctech\divide\v@lZ\f@ctech%
    \Figv@ctCreg#1(\v@lX,\v@lY,\v@lZ)\n@rmeucSV{\v@lYa}{#1}%
    \invers@{\v@lXa}{\v@lYa}\edef\v@lv@lXa{\repdecn@mb{\v@lXa}}%
    \v@lX=\v@lv@lXa\v@lX\v@lY=\v@lv@lXa\v@lY\v@lZ=\v@lv@lXa\v@lZ%
    \Figv@ctCreg#1(\v@lX,\v@lY,\v@lZ)\multiply\v@lYa\f@ctech\global\result@t=\v@lYa}}
\ctr@ld@f\def\vecunitC@TD[#1,#2]{\Figg@tXYa{#1}\Figg@tXY{#2}%
    \advance\v@lX-\v@lXa\advance\v@lY-\v@lYa\advance\v@lZ-\v@lZa\c@lvecunitTD}
\ctr@ld@f\def\vecunitCV@TD#1{\Figg@tXY{#1}\c@lvecunitTD}
\ctr@ld@f\def\c@lvecunitTD{\getredf@ctTD\f@ctech(\v@lX,\v@lY,\v@lZ)%
    \divide\v@lX\f@ctech\divide\v@lY\f@ctech\divide\v@lZ\f@ctech%
    \v@lXa=\repdecn@mb{\v@lX}\v@lX%
    \v@lYa=\repdecn@mb{\v@lY}\v@lY\v@lZa=\repdecn@mb{\v@lZ}\v@lZ%
    \advance\v@lXa\v@lYa\advance\v@lXa\v@lZa\sqrt@{\v@lYa}{\v@lXa}%
    \invers@{\v@lXa}{\v@lYa}\edef\v@lv@lXa{\repdecn@mb{\v@lXa}}%
    \v@lX=\v@lv@lXa\v@lX\v@lY=\v@lv@lXa\v@lY\v@lZ=\v@lv@lXa\v@lZ}
\ctr@ln@m\figgetangle
\ctr@ld@f\def\figgetangleDD#1[#2,#3,#4]{\ifps@cri{\s@uvc@ntr@l\et@tfiggetangleDD\setc@ntr@l{2}%
    \figvectPDD-1[#2,#3]\figvectPDD-2[#2,#4]\vecunit@{-1}{-1}%
    \c@lproscalDD\delt@[-2,-1]\figvectNVDD-1[-1]\c@lproscalDD\v@leur[-2,-1]%
    \arct@n\v@lmax(\delt@,\v@leur)\v@lmax=\rdT@deg\v@lmax%
    \ifdim\v@lmax<\z@\advance\v@lmax\DePI@deg\fi\xdef#1{\repdecn@mb{\v@lmax}}%
    \resetc@ntr@l\et@tfiggetangleDD}\ignorespaces\fi}
\ctr@ld@f\def\figgetangleTD#1[#2,#3,#4,#5]{\ifps@cri{\s@uvc@ntr@l\et@tfiggetangleTD\setc@ntr@l{2}%
    \figvectPTD-1[#2,#3]\figvectPTD-2[#2,#5]\figvectNVTD-3[-1,-2]%
    \figvectPTD-2[#2,#4]\figvectNVTD-4[-3,-1]%
    \vecunit@{-1}{-1}\c@lproscalTD\delt@[-2,-1]\c@lproscalTD\v@leur[-2,-4]%
    \arct@n\v@lmax(\delt@,\v@leur)\v@lmax=\rdT@deg\v@lmax%
    \ifdim\v@lmax<\z@\advance\v@lmax\DePI@deg\fi\xdef#1{\repdecn@mb{\v@lmax}}%
    \resetc@ntr@l\et@tfiggetangleTD}\ignorespaces\fi}    
\ctr@ld@f\def\figgetdist#1[#2,#3]{\ifps@cri{\s@uvc@ntr@l\et@tfiggetdist\setc@ntr@l{2}%
    \figvectP-1[#2,#3]\n@rmeuc{\v@lX}{-1}\v@lX=\ptT@unit@\v@lX\xdef#1{\repdecn@mb{\v@lX}}%
    \resetc@ntr@l\et@tfiggetdist}\ignorespaces\fi}
\ctr@ld@f\def\Figg@tT#1{\c@ntr@lnum{#1}%
    {\expandafter\expandafter\expandafter\extr@ctT\csname\objc@de\endcsname:%
     \ifnum\B@@ltxt=\z@\ptn@me{#1}\else\csname\objc@de T\endcsname\fi}}
\ctr@ld@f\def\extr@ctT#1,#2,#3/#4:{\def\B@@ltxt{#3}}
\ctr@ld@f\def\Figg@tXY#1{\c@ntr@lnum{#1}%
    \expandafter\expandafter\expandafter\extr@ctC\csname\objc@de\endcsname:}
\ctr@ln@m\extr@ctC
\ctr@ld@f\def\extr@ctCDD#1/#2,#3,#4:{\v@lX=#2\v@lY=#3}
\ctr@ld@f\def\extr@ctCTD#1/#2,#3,#4:{\v@lX=#2\v@lY=#3\v@lZ=#4}
\ctr@ld@f\def\Figg@tXYa#1{\c@ntr@lnum{#1}%
    \expandafter\expandafter\expandafter\extr@ctCa\csname\objc@de\endcsname:}
\ctr@ln@m\extr@ctCa
\ctr@ld@f\def\extr@ctCaDD#1/#2,#3,#4:{\v@lXa=#2\v@lYa=#3}
\ctr@ld@f\def\extr@ctCaTD#1/#2,#3,#4:{\v@lXa=#2\v@lYa=#3\v@lZa=#4}
\ctr@ln@m\t@xt@
\ctr@ld@f\def\figinit#1{\t@stc@tcodech@nge\initpr@lim\Figinit@#1,:\initpss@ttings\ignorespaces}
\ctr@ld@f\def\Figinit@#1,#2:{\setunit@{#1}\def\t@xt@{#2}\ifx\t@xt@\empty\else\Figinit@@#2:\fi}
\ctr@ld@f\def\Figinit@@#1#2:{\if#12 \else\Figs@tproj{#1}\initTD@\fi}
\ctr@ln@w{newif}\ifTr@isDim
\ctr@ld@f\def\UnD@fined{UNDEFINED}
\ctr@ld@f\def\ifundefined#1{\expandafter\ifx\csname#1\endcsname\relax}
\ctr@ln@m\@utoFN
\ctr@ln@m\@utoFInDone
\ctr@ln@m\disob@unit
\ctr@ld@f\def\initpr@lim{\initb@undb@x\figsetmark{}\figsetptname{$A_{##1}$}\def\Sc@leFact{1}%
    \initDD@\figsetroundcoord{yes}\ps@critrue\expandafter\setupd@te\defaultupdate:%
    \edef\disob@unit{\UnD@fined}\edef\t@rgetpt{\UnD@fined}\gdef\@utoFInDone{1}\gdef\@utoFN{0}}
\ctr@ld@f\def\initDD@{\Tr@isDimfalse%
    \ifPDFm@ke%
     \let\Ps@rcerc=\Ps@rcercBz%
     \let\Ps@rell=\Ps@rellBz%
    \fi
    \let\c@lDCUn=\c@lDCUnDD%
    \let\c@lDCDeux=\c@lDCDeuxDD%
    \let\c@ldefproj=\relax%
    \let\c@lproscal=\c@lproscalDD%
    \let\c@lprojSP=\relax%
    \let\extr@ctC=\extr@ctCDD%
    \let\extr@ctCa=\extr@ctCaDD%
    \let\extr@ctCF=\extr@ctCFDD%
    \let\Figp@intreg=\Figp@intregDD%
    \let\Figpts@xes=\Figpts@xesDD%
    \let\n@rmeucSV=\n@rmeucSVDD\let\n@rmeuc=\n@rmeucDD\let\n@rminf=\n@rminfDD%
    \let\pr@dMatV=\pr@dMatVDD%
    \let\ps@xes=\ps@xesDD%
    \let\vecunit@=\vecunit@DD%
    \let\figcoord=\figcoordDD%
    \let\figgetangle=\figgetangleDD%
    \let\figpt=\figptDD%
    \let\figptBezier=\figptBezierDD%
    \let\figptbary=\figptbaryDD%
    \let\figptcirc=\figptcircDD%
    \let\figptcircumcenter=\figptcircumcenterDD%
    \let\figptcopy=\figptcopyDD%
    \let\figptcurvcenter=\figptcurvcenterDD%
    \let\figptell=\figptellDD%
    \let\figptendnormal=\figptendnormalDD%
    \let\figptinterlineplane=\figptinterlineplaneDD%
    \let\figptinterlines=\inters@cDD%
    \let\figptorthocenter=\figptorthocenterDD%
    \let\figptorthoprojline=\figptorthoprojlineDD%
    \let\figptorthoprojplane=\figptorthoprojplaneDD%
    \let\figptrot=\figptrotDD%
    \let\figptscontrol=\figptscontrolDD%
    \let\figptsintercirc=\figptsintercircDD%
    \let\figptsinterlinell=\figptsinterlinellDD%
    \let\figptsorthoprojline=\figptsorthoprojlineDD%
    \let\figptorthoprojplane=\figptorthoprojplaneDD%
    \let\figptsrot=\figptsrotDD%
    \let\figptssym=\figptssymDD%
    \let\figptstra=\figptstraDD%
    \let\figptsym=\figptsymDD%
    \let\figpttraC=\figpttraCDD%
    \let\figpttra=\figpttraDD%
    \let\figptvisilimSL=\figptvisilimSLDD%
    \let\figsetobdist=\figsetobdistDD%
    \let\figsettarget=\figsettargetDD%
    \let\figsetview=\figsetviewDD%
    \let\figvectDBezier=\figvectDBezierDD%
    \let\figvectN=\figvectNDD%
    \let\figvectNV=\figvectNVDD%
    \let\figvectP=\figvectPDD%
    \let\figvectU=\figvectUDD%
    \let\psarccircP=\psarccircPDD%
    \let\psarccirc=\psarccircDD%
    \let\psarcell=\psarcellDD%
    \let\psarcellPA=\psarcellPADD%
    \let\psarrowBezier=\psarrowBezierDD%
    \let\psarrowcircP=\psarrowcircPDD%
    \let\psarrowcirc=\psarrowcircDD%
    \let\psarrowhead=\psarrowheadDD%
    \let\psarrow=\psarrowDD%
    \let\psBezier=\psBezierDD%
    \let\pscirc=\pscircDD%
    \let\pscurve=\pscurveDD%
    \let\psnormal=\psnormalDD%
    }
\ctr@ld@f\def\initTD@{\Tr@isDimtrue\initb@undb@xTD\newt@rgetptfalse\newdis@bfalse%
    \let\c@lDCUn=\c@lDCUnTD%
    \let\c@lDCDeux=\c@lDCDeuxTD%
    \let\c@ldefproj=\c@ldefprojTD%
    \let\c@lproscal=\c@lproscalTD%
    \let\extr@ctC=\extr@ctCTD%
    \let\extr@ctCa=\extr@ctCaTD%
    \let\extr@ctCF=\extr@ctCFTD%
    \let\Figp@intreg=\Figp@intregTD%
    \let\Figpts@xes=\Figpts@xesTD%
    \let\n@rmeucSV=\n@rmeucSVTD\let\n@rmeuc=\n@rmeucTD\let\n@rminf=\n@rminfTD%
    \let\pr@dMatV=\pr@dMatVTD%
    \let\ps@xes=\ps@xesTD%
    \let\vecunit@=\vecunit@TD%
    \let\figcoord=\figcoordTD%
    \let\figgetangle=\figgetangleTD%
    \let\figpt=\figptTD%
    \let\figptBezier=\figptBezierTD%
    \let\figptbary=\figptbaryTD%
    \let\figptcirc=\figptcircTD%
    \let\figptcircumcenter=\figptcircumcenterTD%
    \let\figptcopy=\figptcopyTD%
    \let\figptcurvcenter=\figptcurvcenterTD%
    \let\figptinterlineplane=\figptinterlineplaneTD%
    \let\figptinterlines=\inters@cTD%
    \let\figptorthocenter=\figptorthocenterTD%
    \let\figptorthoprojline=\figptorthoprojlineTD%
    \let\figptorthoprojplane=\figptorthoprojplaneTD%
    \let\figptrot=\figptrotTD%
    \let\figptscontrol=\figptscontrolTD%
    \let\figptsintercirc=\figptsintercircTD%
    \let\figptsorthoprojline=\figptsorthoprojlineTD%
    \let\figptsorthoprojplane=\figptsorthoprojplaneTD%
    \let\figptsrot=\figptsrotTD%
    \let\figptssym=\figptssymTD%
    \let\figptstra=\figptstraTD%
    \let\figptsym=\figptsymTD%
    \let\figpttraC=\figpttraCTD%
    \let\figpttra=\figpttraTD%
    \let\figptvisilimSL=\figptvisilimSLTD%
    \let\figsetobdist=\figsetobdistTD%
    \let\figsettarget=\figsettargetTD%
    \let\figsetview=\figsetviewTD%
    \let\figvectDBezier=\figvectDBezierTD%
    \let\figvectN=\figvectNTD%
    \let\figvectNV=\figvectNVTD%
    \let\figvectP=\figvectPTD%
    \let\figvectU=\figvectUTD%
    \let\psarccircP=\psarccircPTD%
    \let\psarccirc=\psarccircTD%
    \let\psarcell=\psarcellTD%
    \let\psarcellPA=\psarcellPATD%
    \let\psarrowBezier=\psarrowBezierTD%
    \let\psarrowcircP=\psarrowcircPTD%
    \let\psarrowcirc=\psarrowcircTD%
    \let\psarrowhead=\psarrowheadTD%
    \let\psarrow=\psarrowTD%
    \let\psBezier=\psBezierTD%
    \let\pscirc=\pscircTD%
    \let\pscurve=\pscurveTD%
    }
\ctr@ld@f\def\un@v@ilable#1{\immediate\write16{*** The macro #1 is not available in the current context.}}
\ctr@ld@f\def\figinsert#1{{\def\t@xt@{#1}\relax%
    \ifx\t@xt@\empty\ifnum\@utoFInDone>\z@\Figinsert@\DefGIfilen@me,:\fi%
    \else\expandafter\FiginsertNu@#1 :\fi}\ignorespaces}
\ctr@ld@f\def\FiginsertNu@#1 #2:{\def\t@xt@{#1}\relax\ifx\t@xt@\empty\def\t@xt@{#2}%
    \ifx\t@xt@\empty\ifnum\@utoFInDone>\z@\Figinsert@\DefGIfilen@me,:\fi%
    \else\FiginsertNu@#2:\fi\else\expandafter\FiginsertNd@#1 #2:\fi}
\ctr@ld@f\def\FiginsertNd@#1#2:{\ifcat#1a\Figinsert@#1#2,:\else%
    \ifnum\@utoFInDone>\z@\Figinsert@\DefGIfilen@me,#1#2,:\fi\fi}
\ctr@ln@m\Sc@leFact
\ctr@ld@f\def\Figinsert@#1,#2:{\def\t@xt@{#2}\ifx\t@xt@\empty\xdef\Sc@leFact{1}\else%
    \X@rgdeux@#2\xdef\Sc@leFact{\@rgdeux}\fi%
    \Figdisc@rdLTS{#1}{\t@xt@}\@psfgetbb{\t@xt@}%
    \v@lX=\@psfllx\p@\v@lX=\ptpsT@pt\v@lX\v@lX=\Sc@leFact\v@lX%
    \v@lY=\@psflly\p@\v@lY=\ptpsT@pt\v@lY\v@lY=\Sc@leFact\v@lY%
    \b@undb@x{\v@lX}{\v@lY}%
    \v@lX=\@psfurx\p@\v@lX=\ptpsT@pt\v@lX\v@lX=\Sc@leFact\v@lX%
    \v@lY=\@psfury\p@\v@lY=\ptpsT@pt\v@lY\v@lY=\Sc@leFact\v@lY%
    \b@undb@x{\v@lX}{\v@lY}%
    \ifPDFm@ke\Figinclud@PDF{\t@xt@}{\Sc@leFact}\else%
    \v@lX=\c@nt pt\v@lX=\Sc@leFact\v@lX\edef\F@ct{\repdecn@mb{\v@lX}}%
    \ifx\TeXturesonMacOSltX\special{postscriptfile #1 vscale=\F@ct\space hscale=\F@ct}%
    \else\includegraphics{#1}\fi\fi%
    \message{[\t@xt@]}\ignorespaces}
\ctr@ld@f\def\Figdisc@rdLTS#1#2{\expandafter\Figdisc@rdLTS@#1 :#2}
\ctr@ld@f\def\Figdisc@rdLTS@#1 #2:#3{\def#3{#1}\relax\ifx#3\empty\expandafter\Figdisc@rdLTS@#2:#3\fi}
\ctr@ld@f\def\figinsertE#1{\FiginsertE@#1,:\ignorespaces}
\ctr@ld@f\def\FiginsertE@#1,#2:{{\def\t@xt@{#2}\ifx\t@xt@\empty\xdef\Sc@leFact{1}\else%
    \X@rgdeux@#2\xdef\Sc@leFact{\@rgdeux}\fi%
    \Figdisc@rdLTS{#1}{\t@xt@}\pdfximage{\t@xt@}%
    \setbox\Gb@x=\hbox{\pdfrefximage\pdflastximage}%
    \v@lX=\z@\v@lY=-\Sc@leFact\dp\Gb@x\b@undb@x{\v@lX}{\v@lY}%
    \advance\v@lX\Sc@leFact\wd\Gb@x\advance\v@lY\Sc@leFact\dp\Gb@x%
    \advance\v@lY\Sc@leFact\ht\Gb@x\b@undb@x{\v@lX}{\v@lY}%
    \v@lX=\Sc@leFact\wd\Gb@x\pdfximage width \v@lX {\t@xt@}%
    \rlap{\pdfrefximage\pdflastximage}\message{[\t@xt@]}}\ignorespaces}
\ctr@ld@f\def\X@rgdeux@#1,{\edef\@rgdeux{#1}}
\ctr@ln@m\figpt
\ctr@ld@f\def\figptDD#1:#2(#3,#4){\ifps@cri\c@ntr@lnum{#1}%
    {\v@lX=#3\unit@\v@lY=#4\unit@\Fig@dmpt{#2}{\z@}}\ignorespaces\fi}
\ctr@ld@f\def\Fig@dmpt#1#2{\def\t@xt@{#1}\ifx\t@xt@\empty\def\B@@ltxt{\z@}%
    \else\expandafter\gdef\csname\objc@de T\endcsname{#1}\def\B@@ltxt{\@ne}\fi%
    \expandafter\xdef\csname\objc@de\endcsname{\ifitis@vect@r\C@dCl@svect%
    \else\C@dCl@spt\fi,\z@,\B@@ltxt/\the\v@lX,\the\v@lY,#2}}
\ctr@ld@f\def\C@dCl@spt{P}
\ctr@ld@f\def\C@dCl@svect{V}
\ctr@ln@m\c@@rdYZ
\ctr@ln@m\c@@rdY
\ctr@ld@f\def\figptTD#1:#2(#3,#4){\ifps@cri\c@ntr@lnum{#1}%
    \def\c@@rdYZ{#4,0,0}\extrairelepremi@r\c@@rdY\de\c@@rdYZ%
    \extrairelepremi@r\c@@rdZ\de\c@@rdYZ%
    {\v@lX=#3\unit@\v@lY=\c@@rdY\unit@\v@lZ=\c@@rdZ\unit@\Fig@dmpt{#2}{\the\v@lZ}%
    \b@undb@xTD{\v@lX}{\v@lY}{\v@lZ}}\ignorespaces\fi}
\ctr@ln@m\Figp@intreg
\ctr@ld@f\def\Figp@intregDD#1:#2(#3,#4){\c@ntr@lnum{#1}%
    {\result@t=#4\v@lX=#3\v@lY=\result@t\Fig@dmpt{#2}{\z@}}\ignorespaces}
\ctr@ld@f\def\Figp@intregTD#1:#2(#3,#4){\c@ntr@lnum{#1}%
    \def\c@@rdYZ{#4,\z@,\z@}\extrairelepremi@r\c@@rdY\de\c@@rdYZ%
    \extrairelepremi@r\c@@rdZ\de\c@@rdYZ%
    {\v@lX=#3\v@lY=\c@@rdY\v@lZ=\c@@rdZ\Fig@dmpt{#2}{\the\v@lZ}%
    \b@undb@xTD{\v@lX}{\v@lY}{\v@lZ}}\ignorespaces}
\ctr@ln@m\figptBezier
\ctr@ld@f\def\figptBezierDD#1:#2:#3[#4,#5,#6,#7]{\ifps@cri{\s@uvc@ntr@l\et@tfigptBezierDD%
    \FigptBezier@#3[#4,#5,#6,#7]\Figp@intregDD#1:{#2}(\v@lX,\v@lY)%
    \resetc@ntr@l\et@tfigptBezierDD}\ignorespaces\fi}
\ctr@ld@f\def\figptBezierTD#1:#2:#3[#4,#5,#6,#7]{\ifps@cri{\s@uvc@ntr@l\et@tfigptBezierTD%
    \FigptBezier@#3[#4,#5,#6,#7]\Figp@intregTD#1:{#2}(\v@lX,\v@lY,\v@lZ)%
    \resetc@ntr@l\et@tfigptBezierTD}\ignorespaces\fi}
\ctr@ld@f\def\FigptBezier@#1[#2,#3,#4,#5]{\setc@ntr@l{2}%
    \edef\T@{#1}\v@leur=\p@\advance\v@leur-#1pt\edef\UNmT@{\repdecn@mb{\v@leur}}%
    \figptcopy-4:/#2/\figptcopy-3:/#3/\figptcopy-2:/#4/\figptcopy-1:/#5/%
    \l@mbd@un=-4 \l@mbd@de=-\thr@@\p@rtent=\m@ne\c@lDecast%
    \l@mbd@un=-4 \l@mbd@de=-\thr@@\p@rtent=-\tw@\c@lDecast%
    \l@mbd@un=-4 \l@mbd@de=-\thr@@\p@rtent=-\thr@@\c@lDecast\Figg@tXY{-4}}
\ctr@ln@m\c@lDCUn
\ctr@ld@f\def\c@lDCUnDD#1#2{\Figg@tXY{#1}\v@lX=\UNmT@\v@lX\v@lY=\UNmT@\v@lY%
    \Figg@tXYa{#2}\advance\v@lX\T@\v@lXa\advance\v@lY\T@\v@lYa%
    \Figp@intregDD#1:(\v@lX,\v@lY)}
\ctr@ld@f\def\c@lDCUnTD#1#2{\Figg@tXY{#1}\v@lX=\UNmT@\v@lX\v@lY=\UNmT@\v@lY\v@lZ=\UNmT@\v@lZ%
    \Figg@tXYa{#2}\advance\v@lX\T@\v@lXa\advance\v@lY\T@\v@lYa\advance\v@lZ\T@\v@lZa%
    \Figp@intregTD#1:(\v@lX,\v@lY,\v@lZ)}
\ctr@ld@f\def\c@lDecast{\relax\ifnum\l@mbd@un<\p@rtent\c@lDCUn{\l@mbd@un}{\l@mbd@de}%
    \advance\l@mbd@un\@ne\advance\l@mbd@de\@ne\c@lDecast\fi}
\ctr@ld@f\def\figptmap#1:#2=#3/#4/#5/{\ifps@cri{\s@uvc@ntr@l\et@tfigptmap%
    \setc@ntr@l{2}\figvectP-1[#4,#3]\Figg@tXY{-1}%
    \pr@dMatV/#5/\figpttra#1:{#2}=#4/1,-1/%
    \resetc@ntr@l\et@tfigptmap}\ignorespaces\fi}
\ctr@ln@m\pr@dMatV
\ctr@ld@f\def\pr@dMatVDD/#1,#2;#3,#4/{\v@lXa=#1\v@lX\advance\v@lXa#2\v@lY%
    \v@lYa=#3\v@lX\advance\v@lYa#4\v@lY\Figv@ctCreg-1(\v@lXa,\v@lYa)}
\ctr@ld@f\def\pr@dMatVTD/#1,#2,#3;#4,#5,#6;#7,#8,#9/{%
    \v@lXa=#1\v@lX\advance\v@lXa#2\v@lY\advance\v@lXa#3\v@lZ%
    \v@lYa=#4\v@lX\advance\v@lYa#5\v@lY\advance\v@lYa#6\v@lZ%
    \v@lZa=#7\v@lX\advance\v@lZa#8\v@lY\advance\v@lZa#9\v@lZ%
    \Figv@ctCreg-1(\v@lXa,\v@lYa,\v@lZa)}
\ctr@ln@m\figptbary
\ctr@ld@f\def\figptbaryDD#1:#2[#3;#4]{\ifps@cri{\edef\list@num{#3}\extrairelepremi@r\p@int\de\list@num%
    \s@mme=\z@\@ecfor\c@ef:=#4\do{\advance\s@mme\c@ef}%
    \edef\listec@ef{#4,0}\extrairelepremi@r\c@ef\de\listec@ef%
    \Figg@tXY{\p@int}\divide\v@lX\s@mme\divide\v@lY\s@mme%
    \multiply\v@lX\c@ef\multiply\v@lY\c@ef%
    \@ecfor\p@int:=\list@num\do{\extrairelepremi@r\c@ef\de\listec@ef%
           \Figg@tXYa{\p@int}\divide\v@lXa\s@mme\divide\v@lYa\s@mme%
           \multiply\v@lXa\c@ef\multiply\v@lYa\c@ef%
           \advance\v@lX\v@lXa\advance\v@lY\v@lYa}%
    \Figp@intregDD#1:{#2}(\v@lX,\v@lY)}\ignorespaces\fi}
\ctr@ld@f\def\figptbaryTD#1:#2[#3;#4]{\ifps@cri{\edef\list@num{#3}\extrairelepremi@r\p@int\de\list@num%
    \s@mme=\z@\@ecfor\c@ef:=#4\do{\advance\s@mme\c@ef}%
    \edef\listec@ef{#4,0}\extrairelepremi@r\c@ef\de\listec@ef%
    \Figg@tXY{\p@int}\divide\v@lX\s@mme\divide\v@lY\s@mme\divide\v@lZ\s@mme%
    \multiply\v@lX\c@ef\multiply\v@lY\c@ef\multiply\v@lZ\c@ef%
    \@ecfor\p@int:=\list@num\do{\extrairelepremi@r\c@ef\de\listec@ef%
           \Figg@tXYa{\p@int}\divide\v@lXa\s@mme\divide\v@lYa\s@mme\divide\v@lZa\s@mme%
           \multiply\v@lXa\c@ef\multiply\v@lYa\c@ef\multiply\v@lZa\c@ef%
           \advance\v@lX\v@lXa\advance\v@lY\v@lYa\advance\v@lZ\v@lZa}%
    \Figp@intregTD#1:{#2}(\v@lX,\v@lY,\v@lZ)}\ignorespaces\fi}
\ctr@ld@f\def\figptbaryR#1:#2[#3;#4]{\ifps@cri{%
    \v@leur=\z@\@ecfor\c@ef:=#4\do{\maxim@m{\v@lmax}{\c@ef pt}{-\c@ef pt}%
    \ifdim\v@lmax>\v@leur\v@leur=\v@lmax\fi}%
    \ifdim\v@leur<\p@\f@ctech=\@M\else\ifdim\v@leur<\t@n\p@\f@ctech=\@m\else%
    \ifdim\v@leur<\c@nt\p@\f@ctech=\c@nt\else\ifdim\v@leur<\@m\p@\f@ctech=\t@n\else%
    \f@ctech=\@ne\fi\fi\fi\fi%
    \def\listec@ef{0}%
    \@ecfor\c@ef:=#4\do{\sc@lec@nvRI{\c@ef pt}\edef\listec@ef{\listec@ef,\the\s@mme}}%
    \extrairelepremi@r\c@ef\de\listec@ef\figptbary#1:#2[#3;\listec@ef]}\ignorespaces\fi}
\ctr@ld@f\def\sc@lec@nvRI#1{\v@leur=#1\p@rtentiere{\s@mme}{\v@leur}\advance\v@leur-\s@mme\p@%
    \multiply\v@leur\f@ctech\p@rtentiere{\p@rtent}{\v@leur}%
    \multiply\s@mme\f@ctech\advance\s@mme\p@rtent}
\ctr@ln@m\figptcirc
\ctr@ld@f\def\figptcircDD#1:#2:#3;#4(#5){\ifps@cri{\s@uvc@ntr@l\et@tfigptcircDD%
    \c@lptellDD#1:{#2}:#3;#4,#4(#5)\resetc@ntr@l\et@tfigptcircDD}\ignorespaces\fi}
\ctr@ld@f\def\figptcircTD#1:#2:#3,#4,#5;#6(#7){\ifps@cri{\s@uvc@ntr@l\et@tfigptcircTD%
    \setc@ntr@l{2}\c@lExtAxes#3,#4,#5(#6)\figptellP#1:{#2}:#3,-4,-5(#7)%
    \resetc@ntr@l\et@tfigptcircTD}\ignorespaces\fi}
\ctr@ln@m\figptcircumcenter
\ctr@ld@f\def\figptcircumcenterDD#1:#2[#3,#4,#5]{\ifps@cri{\s@uvc@ntr@l\et@tfigptcircumcenterDD%
    \setc@ntr@l{2}\figvectNDD-5[#3,#4]\figptbaryDD-3:[#3,#4;1,1]%
                  \figvectNDD-6[#4,#5]\figptbaryDD-4:[#4,#5;1,1]%
    \resetc@ntr@l{2}\inters@cDD#1:{#2}[-3,-5;-4,-6]%
    \resetc@ntr@l\et@tfigptcircumcenterDD}\ignorespaces\fi}
\ctr@ld@f\def\figptcircumcenterTD#1:#2[#3,#4,#5]{\ifps@cri{\s@uvc@ntr@l\et@tfigptcircumcenterTD%
    \setc@ntr@l{2}\figvectNTD-1[#3,#4,#5]%
    \figvectPTD-3[#3,#4]\figvectNVTD-5[-1,-3]\figptbaryTD-3:[#3,#4;1,1]%
    \figvectPTD-4[#4,#5]\figvectNVTD-6[-1,-4]\figptbaryTD-4:[#4,#5;1,1]%
    \resetc@ntr@l{2}\inters@cTD#1:{#2}[-3,-5;-4,-6]%
    \resetc@ntr@l\et@tfigptcircumcenterTD}\ignorespaces\fi}
\ctr@ln@m\figptcopy
\ctr@ld@f\def\figptcopyDD#1:#2/#3/{\ifps@cri{\Figg@tXY{#3}%
    \Figp@intregDD#1:{#2}(\v@lX,\v@lY)}\ignorespaces\fi}
\ctr@ld@f\def\figptcopyTD#1:#2/#3/{\ifps@cri{\Figg@tXY{#3}%
    \Figp@intregTD#1:{#2}(\v@lX,\v@lY,\v@lZ)}\ignorespaces\fi}
\ctr@ln@m\figptcurvcenter
\ctr@ld@f\def\figptcurvcenterDD#1:#2:#3[#4,#5,#6,#7]{\ifps@cri{\s@uvc@ntr@l\et@tfigptcurvcenterDD%
    \setc@ntr@l{2}\c@lcurvradDD#3[#4,#5,#6,#7]\edef\Sprim@{\repdecn@mb{\result@t}}%
    \figptBezierDD-1::#3[#4,#5,#6,#7]\figpttraDD#1:{#2}=-1/\Sprim@,-5/%
    \resetc@ntr@l\et@tfigptcurvcenterDD}\ignorespaces\fi}
\ctr@ld@f\def\figptcurvcenterTD#1:#2:#3[#4,#5,#6,#7]{\ifps@cri{\s@uvc@ntr@l\et@tfigptcurvcenterTD%
    \setc@ntr@l{2}\figvectDBezierTD -5:1,#3[#4,#5,#6,#7]%
    \figvectDBezierTD -6:2,#3[#4,#5,#6,#7]\vecunit@TD{-5}{-5}%
    \edef\Sprim@{\repdecn@mb{\result@t}}\figvectNVTD-1[-6,-5]%
    \figvectNVTD-5[-5,-1]\c@lproscalTD\v@leur[-6,-5]%
    \invers@{\v@leur}{\v@leur}\v@leur=\Sprim@\v@leur\v@leur=\Sprim@\v@leur%
    \figptBezierTD-1::#3[#4,#5,#6,#7]\edef\Sprim@{\repdecn@mb{\v@leur}}%
    \figpttraTD#1:{#2}=-1/\Sprim@,-5/\resetc@ntr@l\et@tfigptcurvcenterTD}\ignorespaces\fi}
\ctr@ld@f\def\c@lcurvradDD#1[#2,#3,#4,#5]{{\figvectDBezierDD -5:1,#1[#2,#3,#4,#5]%
    \figvectDBezierDD -6:2,#1[#2,#3,#4,#5]\vecunit@DD{-5}{-5}%
    \edef\Sprim@{\repdecn@mb{\result@t}}\figvectNVDD-5[-5]\c@lproscalDD\v@leur[-6,-5]%
    \invers@{\v@leur}{\v@leur}\v@leur=\Sprim@\v@leur\v@leur=\Sprim@\v@leur%
    \global\result@t=\v@leur}}
\ctr@ln@m\figptell
\ctr@ld@f\def\figptellDD#1:#2:#3;#4,#5(#6,#7){\ifps@cri{\s@uvc@ntr@l\et@tfigptell%
    \c@lptellDD#1::#3;#4,#5(#6)\figptrotDD#1:{#2}=#1/#3,#7/%
    \resetc@ntr@l\et@tfigptell}\ignorespaces\fi}
\ctr@ld@f\def\c@lptellDD#1:#2:#3;#4,#5(#6){\c@ssin{\C@}{\S@}{#6}\v@lmin=\C@ pt\v@lmax=\S@ pt%
    \v@lmin=#4\v@lmin\v@lmax=#5\v@lmax%
    \edef\Xc@mp{\repdecn@mb{\v@lmin}}\edef\Yc@mp{\repdecn@mb{\v@lmax}}%
    \setc@ntr@l{2}\figvectC-1(\Xc@mp,\Yc@mp)\figpttraDD#1:{#2}=#3/1,-1/}
\ctr@ld@f\def\figptellP#1:#2:#3,#4,#5(#6){\ifps@cri{\s@uvc@ntr@l\et@tfigptellP%
    \setc@ntr@l{2}\figvectP-1[#3,#4]\figvectP-2[#3,#5]%
    \v@leur=#6pt\c@lptellP{#3}{-1}{-2}\figptcopy#1:{#2}/-3/%
    \resetc@ntr@l\et@tfigptellP}\ignorespaces\fi}
\ctr@ln@m\@ngle
\ctr@ld@f\def\c@lptellP#1#2#3{\edef\@ngle{\repdecn@mb\v@leur}\c@ssin{\C@}{\S@}{\@ngle}%
    \figpttra-3:=#1/\C@,#2/\figpttra-3:=-3/\S@,#3/}
\ctr@ln@m\figptendnormal
\ctr@ld@f\def\figptendnormalDD#1:#2:#3,#4[#5,#6]{\ifps@cri{\s@uvc@ntr@l\et@tfigptendnormal%
    \Figg@tXYa{#5}\Figg@tXY{#6}%
    \advance\v@lX-\v@lXa\advance\v@lY-\v@lYa%
    \setc@ntr@l{2}\Figv@ctCreg-1(\v@lX,\v@lY)\vecunit@{-1}{-1}\Figg@tXY{-1}%
    \delt@=#3\unit@\maxim@m{\delt@}{\delt@}{-\delt@}\edef\l@ngueur{\repdecn@mb{\delt@}}%
    \v@lX=\l@ngueur\v@lX\v@lY=\l@ngueur\v@lY%
    \delt@=\p@\advance\delt@-#4pt\edef\l@ngueur{\repdecn@mb{\delt@}}%
    \figptbaryR-1:[#5,#6;#4,\l@ngueur]\Figg@tXYa{-1}%
    \advance\v@lXa\v@lY\advance\v@lYa-\v@lX%
    \setc@ntr@l{1}\Figp@intregDD#1:{#2}(\v@lXa,\v@lYa)\resetc@ntr@l\et@tfigptendnormal}%
    \ignorespaces\fi}
\ctr@ld@f\def\figptexcenter#1:#2[#3,#4,#5]{\ifps@cri{\let@xte={-}%
    \Figptexinsc@nter#1:#2[#3,#4,#5]}\ignorespaces\fi}
\ctr@ld@f\def\figptincenter#1:#2[#3,#4,#5]{\ifps@cri{\let@xte={}%
    \Figptexinsc@nter#1:#2[#3,#4,#5]}\ignorespaces\fi}
\ctr@ld@f
\ctr@ld@f\def\Figptexinsc@nter#1:#2[#3,#4,#5]{%
    \figgetdist\LA@[#4,#5]\figgetdist\LB@[#3,#5]\figgetdist\LC@[#3,#4]%
    \figptbaryR#1:{#2}[#3,#4,#5;\the\let@xte\LA@,\LB@,\LC@]}
\ctr@ln@m\figptinterlineplane
\ctr@ld@f\def\figptinterlineplaneDD{\un@v@ilable{figptinterlineplane}}
\ctr@ld@f\def\figptinterlineplaneTD#1:#2[#3,#4;#5,#6]{\ifps@cri{\s@uvc@ntr@l\et@tfigptinterlineplane%
    \setc@ntr@l{2}\figvectPTD-1[#3,#5]\vecunit@TD{-2}{#6}%
    \r@pPSTD\v@leur[-2,-1,#4]\edef\v@lcoef{\repdecn@mb{\v@leur}}%
    \figpttraTD#1:{#2}=#3/\v@lcoef,#4/\resetc@ntr@l\et@tfigptinterlineplane}\ignorespaces\fi}
\ctr@ln@m\figptorthocenter
\ctr@ld@f\def\figptorthocenterDD#1:#2[#3,#4,#5]{\ifps@cri{\s@uvc@ntr@l\et@tfigptorthocenterDD%
    \setc@ntr@l{2}\figvectNDD-3[#3,#4]\figvectNDD-4[#4,#5]%
    \resetc@ntr@l{2}\inters@cDD#1:{#2}[#5,-3;#3,-4]%
    \resetc@ntr@l\et@tfigptorthocenterDD}\ignorespaces\fi}
\ctr@ld@f\def\figptorthocenterTD#1:#2[#3,#4,#5]{\ifps@cri{\s@uvc@ntr@l\et@tfigptorthocenterTD%
    \setc@ntr@l{2}\figvectNTD-1[#3,#4,#5]%
    \figvectPTD-2[#3,#4]\figvectNVTD-3[-1,-2]%
    \figvectPTD-2[#4,#5]\figvectNVTD-4[-1,-2]%
    \resetc@ntr@l{2}\inters@cTD#1:{#2}[#5,-3;#3,-4]%
    \resetc@ntr@l\et@tfigptorthocenterTD}\ignorespaces\fi}
\ctr@ln@m\figptorthoprojline
\ctr@ld@f\def\figptorthoprojlineDD#1:#2=#3/#4,#5/{\ifps@cri{\s@uvc@ntr@l\et@tfigptorthoprojlineDD%
    \setc@ntr@l{2}\figvectPDD-3[#4,#5]\figvectNVDD-4[-3]\resetc@ntr@l{2}%
    \inters@cDD#1:{#2}[#3,-4;#4,-3]\resetc@ntr@l\et@tfigptorthoprojlineDD}\ignorespaces\fi}
\ctr@ld@f\def\figptorthoprojlineTD#1:#2=#3/#4,#5/{\ifps@cri{\s@uvc@ntr@l\et@tfigptorthoprojlineTD%
    \setc@ntr@l{2}\figvectPTD-1[#4,#3]\figvectPTD-2[#4,#5]\vecunit@TD{-2}{-2}%
    \c@lproscalTD\v@leur[-1,-2]\edef\v@lcoef{\repdecn@mb{\v@leur}}%
    \figpttraTD#1:{#2}=#4/\v@lcoef,-2/\resetc@ntr@l\et@tfigptorthoprojlineTD}\ignorespaces\fi}
\ctr@ln@m\figptorthoprojplane
\ctr@ld@f\def\figptorthoprojplaneDD{\un@v@ilable{figptorthoprojplane}}
\ctr@ld@f\def\figptorthoprojplaneTD#1:#2=#3/#4,#5/{\ifps@cri{\s@uvc@ntr@l\et@tfigptorthoprojplane%
    \setc@ntr@l{2}\figvectPTD-1[#3,#4]\vecunit@TD{-2}{#5}%
    \c@lproscalTD\v@leur[-1,-2]\edef\v@lcoef{\repdecn@mb{\v@leur}}%
    \figpttraTD#1:{#2}=#3/\v@lcoef,-2/\resetc@ntr@l\et@tfigptorthoprojplane}\ignorespaces\fi}
\ctr@ld@f\def\figpthom#1:#2=#3/#4,#5/{\ifps@cri{\s@uvc@ntr@l\et@tfigpthom%
    \setc@ntr@l{2}\figvectP-1[#4,#3]\figpttra#1:{#2}=#4/#5,-1/%
    \resetc@ntr@l\et@tfigpthom}\ignorespaces\fi}
\ctr@ln@m\figptrot
\ctr@ld@f\def\figptrotDD#1:#2=#3/#4,#5/{\ifps@cri{\s@uvc@ntr@l\et@tfigptrotDD%
    \c@ssin{\C@}{\S@}{#5}\setc@ntr@l{2}\figvectPDD-1[#4,#3]\Figg@tXY{-1}%
    \v@lXa=\C@\v@lX\advance\v@lXa-\S@\v@lY%
    \v@lYa=\S@\v@lX\advance\v@lYa\C@\v@lY%
    \Figv@ctCreg-1(\v@lXa,\v@lYa)\figpttraDD#1:{#2}=#4/1,-1/%
    \resetc@ntr@l\et@tfigptrotDD}\ignorespaces\fi}
\ctr@ld@f\def\figptrotTD#1:#2=#3/#4,#5,#6/{\ifps@cri{\s@uvc@ntr@l\et@tfigptrotTD%
    \c@ssin{\C@}{\S@}{#5}%
    \setc@ntr@l{2}\figptorthoprojplaneTD-3:=#4/#3,#6/\figvectPTD-2[-3,#3]%
    \n@rmeucTD\v@leur{-2}\ifdim\v@leur<\Cepsil@n\Figg@tXYa{#3}\else%
    \edef\v@lcoef{\repdecn@mb{\v@leur}}\figvectNVTD-1[#6,-2]%
    \Figg@tXYa{-1}\v@lXa=\v@lcoef\v@lXa\v@lYa=\v@lcoef\v@lYa\v@lZa=\v@lcoef\v@lZa%
    \v@lXa=\S@\v@lXa\v@lYa=\S@\v@lYa\v@lZa=\S@\v@lZa\Figg@tXY{-2}%
    \advance\v@lXa\C@\v@lX\advance\v@lYa\C@\v@lY\advance\v@lZa\C@\v@lZ%
    \Figg@tXY{-3}\advance\v@lXa\v@lX\advance\v@lYa\v@lY\advance\v@lZa\v@lZ\fi%
    \Figp@intregTD#1:{#2}(\v@lXa,\v@lYa,\v@lZa)\resetc@ntr@l\et@tfigptrotTD}\ignorespaces\fi}
\ctr@ln@m\figptsym
\ctr@ld@f\def\figptsymDD#1:#2=#3/#4,#5/{\ifps@cri{\s@uvc@ntr@l\et@tfigptsymDD%
    \resetc@ntr@l{2}\figptorthoprojlineDD-5:=#3/#4,#5/\figvectPDD-2[#3,-5]%
    \figpttraDD#1:{#2}=#3/2,-2/\resetc@ntr@l\et@tfigptsymDD}\ignorespaces\fi}
\ctr@ld@f\def\figptsymTD#1:#2=#3/#4,#5/{\ifps@cri{\s@uvc@ntr@l\et@tfigptsymTD%
    \resetc@ntr@l{2}\figptorthoprojplaneTD-3:=#3/#4,#5/\figvectPTD-2[#3,-3]%
    \figpttraTD#1:{#2}=#3/2,-2/\resetc@ntr@l\et@tfigptsymTD}\ignorespaces\fi}
\ctr@ln@m\figpttra
\ctr@ld@f\def\figpttraDD#1:#2=#3/#4,#5/{\ifps@cri{\Figg@tXYa{#5}\v@lXa=#4\v@lXa\v@lYa=#4\v@lYa%
    \Figg@tXY{#3}\advance\v@lX\v@lXa\advance\v@lY\v@lYa%
    \Figp@intregDD#1:{#2}(\v@lX,\v@lY)}\ignorespaces\fi}
\ctr@ld@f\def\figpttraTD#1:#2=#3/#4,#5/{\ifps@cri{\Figg@tXYa{#5}\v@lXa=#4\v@lXa\v@lYa=#4\v@lYa%
    \v@lZa=#4\v@lZa\Figg@tXY{#3}\advance\v@lX\v@lXa\advance\v@lY\v@lYa%
    \advance\v@lZ\v@lZa\Figp@intregTD#1:{#2}(\v@lX,\v@lY,\v@lZ)}\ignorespaces\fi}
\ctr@ln@m\figpttraC
\ctr@ld@f\def\figpttraCDD#1:#2=#3/#4,#5/{\ifps@cri{\v@lXa=#4\unit@\v@lYa=#5\unit@%
    \Figg@tXY{#3}\advance\v@lX\v@lXa\advance\v@lY\v@lYa%
    \Figp@intregDD#1:{#2}(\v@lX,\v@lY)}\ignorespaces\fi}
\ctr@ld@f\def\figpttraCTD#1:#2=#3/#4,#5,#6/{\ifps@cri{\v@lXa=#4\unit@\v@lYa=#5\unit@\v@lZa=#6\unit@%
    \Figg@tXY{#3}\advance\v@lX\v@lXa\advance\v@lY\v@lYa\advance\v@lZ\v@lZa%
    \Figp@intregTD#1:{#2}(\v@lX,\v@lY,\v@lZ)}\ignorespaces\fi}
\ctr@ld@f\def\figptsaxes#1:#2(#3){\ifps@cri{\an@lys@xes#3,:\ifx\t@xt@\empty%
    \ifTr@isDim\Figpts@xes#1:#2(0,#3,0,#3,0,#3)\else\Figpts@xes#1:#2(0,#3,0,#3)\fi%
    \else\Figpts@xes#1:#2(#3)\fi}\ignorespaces\fi}
\ctr@ln@m\Figpts@xes
\ctr@ld@f\def\Figpts@xesDD#1:#2(#3,#4,#5,#6){%
    \s@mme=#1\figpttraC\the\s@mme:$x$=#2/#4,0/%
    \advance\s@mme\@ne\figpttraC\the\s@mme:$y$=#2/0,#6/}
\ctr@ld@f\def\Figpts@xesTD#1:#2(#3,#4,#5,#6,#7,#8){%
    \s@mme=#1\figpttraC\the\s@mme:$x$=#2/#4,0,0/%
    \advance\s@mme\@ne\figpttraC\the\s@mme:$y$=#2/0,#6,0/%
    \advance\s@mme\@ne\figpttraC\the\s@mme:$z$=#2/0,0,#8/}
\ctr@ld@f\def\figptsmap#1=#2/#3/#4/{\ifps@cri{\s@uvc@ntr@l\et@tfigptsmap%
    \setc@ntr@l{2}\def\list@num{#2}\s@mme=#1%
    \@ecfor\p@int:=\list@num\do{\figvectP-1[#3,\p@int]\Figg@tXY{-1}%
    \pr@dMatV/#4/\figpttra\the\s@mme:=#3/1,-1/\advance\s@mme\@ne}%
    \resetc@ntr@l\et@tfigptsmap}\ignorespaces\fi}
\ctr@ln@m\figptscontrol
\ctr@ld@f\def\figptscontrolDD#1[#2,#3,#4,#5]{\ifps@cri{\s@uvc@ntr@l\et@tfigptscontrolDD\setc@ntr@l{2}%
    \v@lX=\z@\v@lY=\z@\Figtr@nptDD{-5}{#2}\Figtr@nptDD{2}{#5}%
    \divide\v@lX\@vi\divide\v@lY\@vi%
    \Figtr@nptDD{3}{#3}\Figtr@nptDD{-1.5}{#4}\Figp@intregDD-1:(\v@lX,\v@lY)%
    \v@lX=\z@\v@lY=\z@\Figtr@nptDD{2}{#2}\Figtr@nptDD{-5}{#5}%
    \divide\v@lX\@vi\divide\v@lY\@vi\Figtr@nptDD{-1.5}{#3}\Figtr@nptDD{3}{#4}%
    \s@mme=#1\advance\s@mme\@ne\Figp@intregDD\the\s@mme:(\v@lX,\v@lY)%
    \figptcopyDD#1:/-1/\resetc@ntr@l\et@tfigptscontrolDD}\ignorespaces\fi}
\ctr@ld@f\def\figptscontrolTD#1[#2,#3,#4,#5]{\ifps@cri{\s@uvc@ntr@l\et@tfigptscontrolTD\setc@ntr@l{2}%
    \v@lX=\z@\v@lY=\z@\v@lZ=\z@\Figtr@nptTD{-5}{#2}\Figtr@nptTD{2}{#5}%
    \divide\v@lX\@vi\divide\v@lY\@vi\divide\v@lZ\@vi%
    \Figtr@nptTD{3}{#3}\Figtr@nptTD{-1.5}{#4}\Figp@intregTD-1:(\v@lX,\v@lY,\v@lZ)%
    \v@lX=\z@\v@lY=\z@\v@lZ=\z@\Figtr@nptTD{2}{#2}\Figtr@nptTD{-5}{#5}%
    \divide\v@lX\@vi\divide\v@lY\@vi\divide\v@lZ\@vi\Figtr@nptTD{-1.5}{#3}\Figtr@nptTD{3}{#4}%
    \s@mme=#1\advance\s@mme\@ne\Figp@intregTD\the\s@mme:(\v@lX,\v@lY,\v@lZ)%
    \figptcopyTD#1:/-1/\resetc@ntr@l\et@tfigptscontrolTD}\ignorespaces\fi}
\ctr@ld@f\def\Figtr@nptDD#1#2{\Figg@tXYa{#2}\v@lXa=#1\v@lXa\v@lYa=#1\v@lYa%
    \advance\v@lX\v@lXa\advance\v@lY\v@lYa}
\ctr@ld@f\def\Figtr@nptTD#1#2{\Figg@tXYa{#2}\v@lXa=#1\v@lXa\v@lYa=#1\v@lYa\v@lZa=#1\v@lZa%
    \advance\v@lX\v@lXa\advance\v@lY\v@lYa\advance\v@lZ\v@lZa}
\ctr@ld@f\def\figptscontrolcurve#1,#2[#3]{\ifps@cri{\s@uvc@ntr@l\et@tfigptscontrolcurve%
    \def\list@num{#3}\extrairelepremi@r\Ak@\de\list@num%
    \extrairelepremi@r\Ai@\de\list@num\extrairelepremi@r\Aj@\de\list@num%
    \s@mme=#1\figptcopy\the\s@mme:/\Ai@/%
    \setc@ntr@l{2}\figvectP -1[\Ak@,\Aj@]%
    \@ecfor\Ak@:=\list@num\do{\advance\s@mme\@ne\figpttra\the\s@mme:=\Ai@/\curv@roundness,-1/%
       \figvectP -1[\Ai@,\Ak@]\advance\s@mme\@ne\figpttra\the\s@mme:=\Aj@/-\curv@roundness,-1/%
       \advance\s@mme\@ne\figptcopy\the\s@mme:/\Aj@/%
       \edef\Ai@{\Aj@}\edef\Aj@{\Ak@}}\advance\s@mme-#1\divide\s@mme\thr@@%
       \xdef#2{\the\s@mme}%
    \resetc@ntr@l\et@tfigptscontrolcurve}\ignorespaces\fi}
\ctr@ln@m\figptsintercirc
\ctr@ld@f\def\figptsintercircDD#1[#2,#3;#4,#5]{\ifps@cri{\s@uvc@ntr@l\et@tfigptsintercircDD%
    \setc@ntr@l{2}\let\c@lNVintc=\c@lNVintcDD\Figptsintercirc@#1[#2,#3;#4,#5]%
    \resetc@ntr@l\et@tfigptsintercircDD}\ignorespaces\fi}
\ctr@ld@f\def\figptsintercircTD#1[#2,#3;#4,#5;#6]{\ifps@cri{\s@uvc@ntr@l\et@tfigptsintercircTD%
    \setc@ntr@l{2}\let\c@lNVintc=\c@lNVintcTD\vecunitC@TD[#2,#6]%
    \Figv@ctCreg-3(\v@lX,\v@lY,\v@lZ)\Figptsintercirc@#1[#2,#3;#4,#5]%
    \resetc@ntr@l\et@tfigptsintercircTD}\ignorespaces\fi}
\ctr@ld@f\def\Figptsintercirc@#1[#2,#3;#4,#5]{\figvectP-1[#2,#4]%
    \vecunit@{-1}{-1}\delt@=\result@t\f@ctech=\result@tent%
    \s@mme=#1\advance\s@mme\@ne\figptcopy#1:/#2/\figptcopy\the\s@mme:/#4/%
    \ifdim\delt@=\z@\else%
    \v@lmin=#3\unit@\v@lmax=#5\unit@\v@leur=\v@lmin\advance\v@leur\v@lmax%
    \ifdim\v@leur>\delt@%
    \v@leur=\v@lmin\advance\v@leur-\v@lmax\maxim@m{\v@leur}{\v@leur}{-\v@leur}%
    \ifdim\v@leur<\delt@%
    \divide\v@lmin\f@ctech\divide\v@lmax\f@ctech\divide\delt@\f@ctech%
    \v@lmin=\repdecn@mb{\v@lmin}\v@lmin\v@lmax=\repdecn@mb{\v@lmax}\v@lmax%
    \invers@{\v@leur}{\delt@}\advance\v@lmax-\v@lmin%
    \v@lmax=-\repdecn@mb{\v@leur}\v@lmax\advance\delt@\v@lmax\delt@=.5\delt@%
    \v@lmax=\delt@\multiply\v@lmax\f@ctech%
    \edef\t@ille{\repdecn@mb{\v@lmax}}\figpttra-2:=#2/\t@ille,-1/%
    \delt@=\repdecn@mb{\delt@}\delt@\advance\v@lmin-\delt@%
    \sqrt@{\v@leur}{\v@lmin}\multiply\v@leur\f@ctech\edef\t@ille{\repdecn@mb{\v@leur}}%
    \c@lNVintc\figpttra#1:=-2/-\t@ille,-1/\figpttra\the\s@mme:=-2/\t@ille,-1/\fi\fi\fi}
\ctr@ld@f\def\c@lNVintcDD{\Figg@tXY{-1}\Figv@ctCreg-1(-\v@lY,\v@lX)} 
\ctr@ld@f\def\c@lNVintcTD{{\Figg@tXY{-3}\v@lmin=\v@lX\v@lmax=\v@lY\v@leur=\v@lZ%
    \Figg@tXY{-1}\c@lprovec{-3}\vecunit@{-3}{-3}
    \Figg@tXY{-1}\v@lmin=\v@lX\v@lmax=\v@lY%
    \v@leur=\v@lZ\Figg@tXY{-3}\c@lprovec{-1}}} 
\ctr@ln@m\figptsinterlinell
\ctr@ld@f\def\figptsinterlinellDD#1[#2,#3,#4,#5;#6,#7]{\ifps@cri{\s@uvc@ntr@l\et@tfigptsinterlinellDD%
    \figptcopy#1:/#6/\s@mme=#1\advance\s@mme\@ne\figptcopy\the\s@mme:/#7/%
    \v@lmin=#3\unit@\v@lmax=#4\unit@
    \setc@ntr@l{2}\figptbaryDD-4:[#6,#7;1,1]\figptsrotDD-3=-4,#7/#2,-#5/
    \Figg@tXY{-3}\Figg@tXYa{#2}\advance\v@lX-\v@lXa\advance\v@lY-\v@lYa
    \figvectP-1[-3,-2]\Figg@tXYa{-1}\figvectP-3[-4,#7]\Figptsint@rLE{#1}
    \resetc@ntr@l\et@tfigptsinterlinellDD}\ignorespaces\fi}
\ctr@ld@f\def\figptsinterlinellP#1[#2,#3,#4;#5,#6]{\ifps@cri{\s@uvc@ntr@l\et@tfigptsinterlinellP%
    \figptcopy#1:/#5/\s@mme=#1\advance\s@mme\@ne\figptcopy\the\s@mme:/#6/\setc@ntr@l{2}%
    \figvectP-1[#2,#3]\vecunit@{-1}{-1}\v@lmin=\result@t
    \figvectP-2[#2,#4]\vecunit@{-2}{-2}\v@lmax=\result@t
    \figptbary-4:[#5,#6;1,1]
    \figvectP-3[#2,-4]\c@lproscal\v@lX[-3,-1]\c@lproscal\v@lY[-3,-2]
    \figvectP-3[-4,#6]\c@lproscal\v@lXa[-3,-1]\c@lproscal\v@lYa[-3,-2]
    \Figptsint@rLE{#1}\resetc@ntr@l\et@tfigptsinterlinellP}\ignorespaces\fi}
\ctr@ld@f\def\Figptsint@rLE#1{%
    \getredf@ctDD\f@ctech(\v@lmin,\v@lmax)%
    \getredf@ctDD\p@rtent(\v@lX,\v@lY)\ifnum\p@rtent>\f@ctech\f@ctech=\p@rtent\fi%
    \getredf@ctDD\p@rtent(\v@lXa,\v@lYa)\ifnum\p@rtent>\f@ctech\f@ctech=\p@rtent\fi%
    \divide\v@lmin\f@ctech\divide\v@lmax\f@ctech\divide\v@lX\f@ctech\divide\v@lY\f@ctech%
    \divide\v@lXa\f@ctech\divide\v@lYa\f@ctech%
    \c@rre=\repdecn@mb\v@lXa\v@lmax\mili@u=\repdecn@mb\v@lYa\v@lmin%
    \getredf@ctDD\f@ctech(\c@rre,\mili@u)%
    \c@rre=\repdecn@mb\v@lX\v@lmax\mili@u=\repdecn@mb\v@lY\v@lmin%
    \getredf@ctDD\p@rtent(\c@rre,\mili@u)\ifnum\p@rtent>\f@ctech\f@ctech=\p@rtent\fi%
    \divide\v@lmin\f@ctech\divide\v@lmax\f@ctech\divide\v@lX\f@ctech\divide\v@lY\f@ctech%
    \divide\v@lXa\f@ctech\divide\v@lYa\f@ctech%
    \v@lmin=\repdecn@mb{\v@lmin}\v@lmin\v@lmax=\repdecn@mb{\v@lmax}\v@lmax%
    \edef\G@xde{\repdecn@mb\v@lmin}\edef\P@xde{\repdecn@mb\v@lmax}%
    \c@rre=-\v@lmax\v@leur=\repdecn@mb\v@lY\v@lY\advance\c@rre\v@leur\c@rre=\G@xde\c@rre%
    \v@leur=\repdecn@mb\v@lX\v@lX\v@leur=\P@xde\v@leur\advance\c@rre\v@leur
    \v@lmin=\repdecn@mb\v@lYa\v@lmin\v@lmax=\repdecn@mb\v@lXa\v@lmax%
    \mili@u=\repdecn@mb\v@lX\v@lmax\advance\mili@u\repdecn@mb\v@lY\v@lmin
    \v@lmax=\repdecn@mb\v@lXa\v@lmax\advance\v@lmax\repdecn@mb\v@lYa\v@lmin
    \ifdim\v@lmax>\epsil@n%
    \maxim@m{\v@leur}{\c@rre}{-\c@rre}\maxim@m{\v@lmin}{\mili@u}{-\mili@u}%
    \maxim@m{\v@leur}{\v@leur}{\v@lmin}\maxim@m{\v@lmin}{\v@lmax}{-\v@lmax}%
    \maxim@m{\v@leur}{\v@leur}{\v@lmin}\p@rtentiere{\p@rtent}{\v@leur}\advance\p@rtent\@ne%
    \divide\c@rre\p@rtent\divide\mili@u\p@rtent\divide\v@lmax\p@rtent%
    \delt@=\repdecn@mb{\mili@u}\mili@u\v@leur=\repdecn@mb{\v@lmax}\c@rre%
    \advance\delt@-\v@leur\ifdim\delt@<\z@\else\sqrt@\delt@\delt@%
    \invers@\v@lmax\v@lmax\edef\Uns@rAp{\repdecn@mb\v@lmax}%
    \v@leur=-\mili@u\advance\v@leur-\delt@\v@leur=\Uns@rAp\v@leur%
    \edef\t@ille{\repdecn@mb\v@leur}\figpttra#1:=-4/\t@ille,-3/\s@mme=#1\advance\s@mme\@ne%
    \v@leur=-\mili@u\advance\v@leur\delt@\v@leur=\Uns@rAp\v@leur%
    \edef\t@ille{\repdecn@mb\v@leur}\figpttra\the\s@mme:=-4/\t@ille,-3/\fi\fi}
\ctr@ln@m\figptsorthoprojline
\ctr@ld@f\def\figptsorthoprojlineDD#1=#2/#3,#4/{\ifps@cri{\s@uvc@ntr@l\et@tfigptsorthoprojlineDD%
    \setc@ntr@l{2}\figvectPDD-3[#3,#4]\figvectNVDD-4[-3]\resetc@ntr@l{2}%
    \def\list@num{#2}\s@mme=#1\@ecfor\p@int:=\list@num\do{%
    \inters@cDD\the\s@mme:[\p@int,-4;#3,-3]\advance\s@mme\@ne}%
    \resetc@ntr@l\et@tfigptsorthoprojlineDD}\ignorespaces\fi}
\ctr@ld@f\def\figptsorthoprojlineTD#1=#2/#3,#4/{\ifps@cri{\s@uvc@ntr@l\et@tfigptsorthoprojlineTD%
    \setc@ntr@l{2}\figvectPTD-2[#3,#4]\vecunit@TD{-2}{-2}%
    \def\list@num{#2}\s@mme=#1\@ecfor\p@int:=\list@num\do{%
    \figvectPTD-1[#3,\p@int]\c@lproscalTD\v@leur[-1,-2]%
    \edef\v@lcoef{\repdecn@mb{\v@leur}}\figpttraTD\the\s@mme:=#3/\v@lcoef,-2/%
    \advance\s@mme\@ne}\resetc@ntr@l\et@tfigptsorthoprojlineTD}\ignorespaces\fi}
\ctr@ln@m\figptsorthoprojplane
\ctr@ld@f\def\figptsorthoprojplaneDD{\un@v@ilable{figptsorthoprojplane}}
\ctr@ld@f\def\figptsorthoprojplaneTD#1=#2/#3,#4/{\ifps@cri{\s@uvc@ntr@l\et@tfigptsorthoprojplane%
    \setc@ntr@l{2}\vecunit@TD{-2}{#4}%
    \def\list@num{#2}\s@mme=#1\@ecfor\p@int:=\list@num\do{\figvectPTD-1[\p@int,#3]%
    \c@lproscalTD\v@leur[-1,-2]\edef\v@lcoef{\repdecn@mb{\v@leur}}%
    \figpttraTD\the\s@mme:=\p@int/\v@lcoef,-2/\advance\s@mme\@ne}%
    \resetc@ntr@l\et@tfigptsorthoprojplane}\ignorespaces\fi}
\ctr@ld@f\def\figptshom#1=#2/#3,#4/{\ifps@cri{\s@uvc@ntr@l\et@tfigptshom%
    \setc@ntr@l{2}\def\list@num{#2}\s@mme=#1%
    \@ecfor\p@int:=\list@num\do{\figvectP-1[#3,\p@int]%
    \figpttra\the\s@mme:=#3/#4,-1/\advance\s@mme\@ne}%
    \resetc@ntr@l\et@tfigptshom}\ignorespaces\fi}
\ctr@ln@m\figptsrot
\ctr@ld@f\def\figptsrotDD#1=#2/#3,#4/{\ifps@cri{\s@uvc@ntr@l\et@tfigptsrotDD%
    \c@ssin{\C@}{\S@}{#4}\setc@ntr@l{2}\def\list@num{#2}\s@mme=#1%
    \@ecfor\p@int:=\list@num\do{\figvectPDD-1[#3,\p@int]\Figg@tXY{-1}%
    \v@lXa=\C@\v@lX\advance\v@lXa-\S@\v@lY%
    \v@lYa=\S@\v@lX\advance\v@lYa\C@\v@lY%
    \Figv@ctCreg-1(\v@lXa,\v@lYa)\figpttraDD\the\s@mme:=#3/1,-1/\advance\s@mme\@ne}%
    \resetc@ntr@l\et@tfigptsrotDD}\ignorespaces\fi}
\ctr@ld@f\def\figptsrotTD#1=#2/#3,#4,#5/{\ifps@cri{\s@uvc@ntr@l\et@tfigptsrotTD%
    \c@ssin{\C@}{\S@}{#4}%
    \setc@ntr@l{2}\def\list@num{#2}\s@mme=#1%
    \@ecfor\p@int:=\list@num\do{\figptorthoprojplaneTD-3:=#3/\p@int,#5/%
    \figvectPTD-2[-3,\p@int]%
    \figvectNVTD-1[#5,-2]\n@rmeucTD\v@leur{-2}\edef\v@lcoef{\repdecn@mb{\v@leur}}%
    \Figg@tXYa{-1}\v@lXa=\v@lcoef\v@lXa\v@lYa=\v@lcoef\v@lYa\v@lZa=\v@lcoef\v@lZa%
    \v@lXa=\S@\v@lXa\v@lYa=\S@\v@lYa\v@lZa=\S@\v@lZa\Figg@tXY{-2}%
    \advance\v@lXa\C@\v@lX\advance\v@lYa\C@\v@lY\advance\v@lZa\C@\v@lZ%
    \Figg@tXY{-3}\advance\v@lXa\v@lX\advance\v@lYa\v@lY\advance\v@lZa\v@lZ%
    \Figp@intregTD\the\s@mme:(\v@lXa,\v@lYa,\v@lZa)\advance\s@mme\@ne}%
    \resetc@ntr@l\et@tfigptsrotTD}\ignorespaces\fi}
\ctr@ln@m\figptssym
\ctr@ld@f\def\figptssymDD#1=#2/#3,#4/{\ifps@cri{\s@uvc@ntr@l\et@tfigptssymDD%
    \setc@ntr@l{2}\figvectPDD-3[#3,#4]\Figg@tXY{-3}\Figv@ctCreg-4(-\v@lY,\v@lX)%
    \resetc@ntr@l{2}\def\list@num{#2}\s@mme=#1%
    \@ecfor\p@int:=\list@num\do{\inters@cDD-5:[#3,-3;\p@int,-4]\figvectPDD-2[\p@int,-5]%
    \figpttraDD\the\s@mme:=\p@int/2,-2/\advance\s@mme\@ne}%
    \resetc@ntr@l\et@tfigptssymDD}\ignorespaces\fi}
\ctr@ld@f\def\figptssymTD#1=#2/#3,#4/{\ifps@cri{\s@uvc@ntr@l\et@tfigptssymTD%
    \setc@ntr@l{2}\vecunit@TD{-2}{#4}\def\list@num{#2}\s@mme=#1%
    \@ecfor\p@int:=\list@num\do{\figvectPTD-1[\p@int,#3]%
    \c@lproscalTD\v@leur[-1,-2]\v@leur=2\v@leur\edef\v@lcoef{\repdecn@mb{\v@leur}}%
    \figpttraTD\the\s@mme:=\p@int/\v@lcoef,-2/\advance\s@mme\@ne}%
    \resetc@ntr@l\et@tfigptssymTD}\ignorespaces\fi}
\ctr@ln@m\figptstra
\ctr@ld@f\def\figptstraDD#1=#2/#3,#4/{\ifps@cri{\Figg@tXYa{#4}\v@lXa=#3\v@lXa\v@lYa=#3\v@lYa%
    \def\list@num{#2}\s@mme=#1\@ecfor\p@int:=\list@num\do{\Figg@tXY{\p@int}%
    \advance\v@lX\v@lXa\advance\v@lY\v@lYa%
    \Figp@intregDD\the\s@mme:(\v@lX,\v@lY)\advance\s@mme\@ne}}\ignorespaces\fi}
\ctr@ld@f\def\figptstraTD#1=#2/#3,#4/{\ifps@cri{\Figg@tXYa{#4}\v@lXa=#3\v@lXa\v@lYa=#3\v@lYa%
    \v@lZa=#3\v@lZa\def\list@num{#2}\s@mme=#1\@ecfor\p@int:=\list@num\do{\Figg@tXY{\p@int}%
    \advance\v@lX\v@lXa\advance\v@lY\v@lYa\advance\v@lZ\v@lZa%
    \Figp@intregTD\the\s@mme:(\v@lX,\v@lY,\v@lZ)\advance\s@mme\@ne}}\ignorespaces\fi}
\ctr@ln@m\figptvisilimSL
\ctr@ld@f\def\figptvisilimSLDD{\un@v@ilable{figptvisilimSL}}
\ctr@ld@f\def\figptvisilimSLTD#1:#2[#3,#4;#5,#6]{\ifps@cri{\s@uvc@ntr@l\et@tfigptvisilimSLTD%
    \setc@ntr@l{2}\figvectP-1[#3,#4]\n@rminf{\delt@}{-1}%
    \ifcase\curr@ntproj\v@lX=\cxa@\p@\v@lY=-\p@\v@lZ=\cxb@\p@
    \Figv@ctCreg-2(\v@lX,\v@lY,\v@lZ)\figvectP-3[#5,#6]\figvectNV-1[-2,-3]%
    \or\figvectP-1[#5,#6]\vecunitCV@TD{-1}\v@lmin=\v@lX\v@lmax=\v@lY
    \v@leur=\v@lZ\v@lX=\cza@\p@\v@lY=\czb@\p@\v@lZ=\czc@\p@\c@lprovec{-1}%
    \or\c@ley@pt{-2}\figvectN-1[#5,#6,-2]\fi
    \edef\Ai@{#3}\edef\Aj@{#4}\figvectP-2[#5,\Ai@]\c@lproscal\v@leur[-1,-2]%
    \ifdim\v@leur>\z@\p@rtent=\@ne\else\p@rtent=\m@ne\fi%
    \figvectP-2[#5,\Aj@]\c@lproscal\v@leur[-1,-2]%
    \ifdim\p@rtent\v@leur>\z@\figptcopy#1:#2/#3/%
    \message{*** \BS@ figptvisilimSL: points are on the same side.}\else%
    \figptcopy-3:/#3/\figptcopy-4:/#4/%
    \loop\figptbary-5:[-3,-4;1,1]\figvectP-2[#5,-5]\c@lproscal\v@leur[-1,-2]%
    \ifdim\p@rtent\v@leur>\z@\figptcopy-3:/-5/\else\figptcopy-4:/-5/\fi%
    \divide\delt@\tw@\ifdim\delt@>\epsil@n\repeat%
    \figptbary#1:#2[-3,-4;1,1]\fi\resetc@ntr@l\et@tfigptvisilimSLTD}\ignorespaces\fi}
\ctr@ld@f\def\c@ley@pt#1{\t@stp@r\ifitis@K\v@lX=\cza@\p@\v@lY=\czb@\p@\v@lZ=\czc@\p@%
    \Figv@ctCreg-1(\v@lX,\v@lY,\v@lZ)\Figp@intreg-2:(\wd\Bt@rget,\ht\Bt@rget,\dp\Bt@rget)%
    \figpttra#1:=-2/-\disob@intern,-1/\else\end\fi}
\ctr@ld@f\def\t@stp@r{\itis@Ktrue\ifnewt@rgetpt\else\itis@Kfalse%
    \message{*** \BS@ figptvisilimXX: target point undefined.}\fi\ifnewdis@b\else%
    \itis@Kfalse\message{*** \BS@ figptvisilimXX: observation distance undefined.}\fi%
    \ifitis@K\else\message{*** This macro must be called after \BS@ psbeginfig or after
    having set the missing parameter(s) with \BS@ figset proj()}\fi}
\ctr@ld@f\def\figscan#1(#2,#3){{\s@uvc@ntr@l\et@tfigscan\@psfgetbb{#1}\if@psfbbfound\else%
    \def\@psfllx{0}\def\@psflly{20}\def\@psfurx{540}\def\@psfury{640}\fi\figscan@{#2}{#3}%
    \resetc@ntr@l\et@tfigscan}\ignorespaces}
\ctr@ld@f\def\figscan@#1#2{%
    \unit@=\@ne bp\setc@ntr@l{2}\figsetmark{}%
    \def\minst@p{20pt}%
    \v@lX=\@psfllx\p@\v@lX=\Sc@leFact\v@lX\r@undint\v@lX\v@lX%
    \v@lY=\@psflly\p@\v@lY=\Sc@leFact\v@lY\ifdim\v@lY>\z@\r@undint\v@lY\v@lY\fi%
    \delt@=\@psfury\p@\delt@=\Sc@leFact\delt@%
    \advance\delt@-\v@lY\v@lXa=\@psfurx\p@\v@lXa=\Sc@leFact\v@lXa\v@leur=\minst@p%
    \edef\valv@lY{\repdecn@mb{\v@lY}}\edef\LgTr@it{\the\delt@}%
    \loop\ifdim\v@lX<\v@lXa\edef\valv@lX{\repdecn@mb{\v@lX}}%
    \figptDD -1:(\valv@lX,\valv@lY)\figwriten -1:\hbox{\vrule height\LgTr@it}(0)%
    \ifdim\v@leur<\minst@p\else\figsetmark{\raise-8bp\hbox{$\scriptscriptstyle\triangle$}}%
    \figwrites -1:\@ffichnb{0}{\valv@lX}(6)\v@leur=\z@\figsetmark{}\fi%
    \advance\v@leur#1pt\advance\v@lX#1pt\repeat%
    \def\minst@p{10pt}%
    \v@lX=\@psfllx\p@\v@lX=\Sc@leFact\v@lX\ifdim\v@lX>\z@\r@undint\v@lX\v@lX\fi%
    \v@lY=\@psflly\p@\v@lY=\Sc@leFact\v@lY\r@undint\v@lY\v@lY%
    \delt@=\@psfurx\p@\delt@=\Sc@leFact\delt@%
    \advance\delt@-\v@lX\v@lYa=\@psfury\p@\v@lYa=\Sc@leFact\v@lYa\v@leur=\minst@p%
    \edef\valv@lX{\repdecn@mb{\v@lX}}\edef\LgTr@it{\the\delt@}%
    \loop\ifdim\v@lY<\v@lYa\edef\valv@lY{\repdecn@mb{\v@lY}}%
    \figptDD -1:(\valv@lX,\valv@lY)\figwritee -1:\vbox{\hrule width\LgTr@it}(0)%
    \ifdim\v@leur<\minst@p\else\figsetmark{$\triangleright$\kern4bp}%
    \figwritew -1:\@ffichnb{0}{\valv@lY}(6)\v@leur=\z@\figsetmark{}\fi%
    \advance\v@leur#2pt\advance\v@lY#2pt\repeat}
\ctr@ld@f
\ctr@ld@f\def\figscan@E#1(#2,#3){{\s@uvc@ntr@l\et@tfigscan@E%
    \Figdisc@rdLTS{#1}{\t@xt@}\pdfximage{\t@xt@}%
    \setbox\Gb@x=\hbox{\pdfrefximage\pdflastximage}%
    \edef\@psfllx{0}\v@lY=-\dp\Gb@x\edef\@psflly{\repdecn@mb{\v@lY}}%
    \edef\@psfurx{\repdecn@mb{\wd\Gb@x}}%
    \v@lY=\dp\Gb@x\advance\v@lY\ht\Gb@x\edef\@psfury{\repdecn@mb{\v@lY}}%
    \figscan@{#2}{#3}\resetc@ntr@l\et@tfigscan@E}\ignorespaces}
\ctr@ld@f\def\figshowpts[#1,#2]{{\figsetmark{$\bullet$}\figsetptname{\bf ##1}%
    \p@rtent=#2\relax\ifnum\p@rtent<\z@\p@rtent=\z@\fi%
    \s@mme=#1\relax\ifnum\s@mme<\z@\s@mme=\z@\fi%
    \loop\ifnum\s@mme<\p@rtent\pt@rvect{\s@mme}%
    \ifitis@K\figwriten{\the\s@mme}:(4pt)\fi\advance\s@mme\@ne\repeat%
    \pt@rvect{\s@mme}\ifitis@K\figwriten{\the\s@mme}:(4pt)\fi}\ignorespaces}
\ctr@ld@f\def\pt@rvect#1{\set@bjc@de{#1}%
    \expandafter\expandafter\expandafter\inqpt@rvec\csname\objc@de\endcsname:}
\ctr@ld@f\def\inqpt@rvec#1#2:{\if#1\C@dCl@spt\itis@Ktrue\else\itis@Kfalse\fi}
\ctr@ld@f\def\figshowsettings{{%
    \immediate\write16{====================================================================}%
    \immediate\write16{ Current settings about:}%
    \immediate\write16{ --- GENERAL ---}%
    \immediate\write16{Scale factor and Unit = \unit@util\space (\the\unit@)
     \space -> \BS@ figinit{ScaleFactorUnit}}%
    \immediate\write16{Update mode = \ifpsupdatem@de yes\else no\fi
     \space-> \BS@ psset(update=yes/no) or \BS@ pssetdefault(update=yes/no)}%
    \immediate\write16{ --- PRINTING ---}%
    \immediate\write16{Implicit point name = \ptn@me{i} \space-> \BS@ figsetptname{Name}}%
    \immediate\write16{Point marker = \the\c@nsymb \space -> \BS@ figsetmark{Mark}}%
    \immediate\write16{Print rounded coordinates = \ifr@undcoord yes\else no\fi
     \space-> \BS@ figsetroundcoord{yes/no}}%
    \immediate\write16{ --- GRAPHICAL (general) ---}%
    \immediate\write16{First-level (or primary) settings:}%
    \immediate\write16{ Color = \curr@ntcolor \space-> \BS@ psset(color=ColorDefinition)}%
    \immediate\write16{ Filling mode = \iffillm@de yes\else no\fi
     \space-> \BS@ psset(fillmode=yes/no)}%
    \immediate\write16{ Line join = \curr@ntjoin \space-> \BS@ psset(join=miter/round/bevel)}%
    \immediate\write16{ Line style = \curr@ntdash \space-> \BS@ psset(dash=Index/Pattern)}%
    \immediate\write16{ Line width = \curr@ntwidth
     \space-> \BS@ psset(width=real in PostScript units)}%
    \immediate\write16{Second-level (or secondary) settings:}%
    \immediate\write16{ Color = \sec@ndcolor \space-> \BS@ psset second(color=ColorDefinition)}%
    \immediate\write16{ Line style = \curr@ntseconddash
     \space-> \BS@ psset second(dash=Index/Pattern)}%
    \immediate\write16{ Line width = \curr@ntsecondwidth
     \space-> \BS@ psset second(width=real in PostScript units)}%
    \immediate\write16{Third-level (or ternary) settings:}%
    \immediate\write16{ Color = \th@rdcolor \space-> \BS@ psset third(color=ColorDefinition)}%
    \immediate\write16{ --- GRAPHICAL (specific) ---}%
    \immediate\write16{Arrow-head:}%
    \immediate\write16{ (half-)Angle = \@rrowheadangle
     \space-> \BS@ psset arrowhead(angle=real in degrees)}%
    \immediate\write16{ Filling mode = \if@rrowhfill yes\else no\fi
     \space-> \BS@ psset arrowhead(fillmode=yes/no)}%
    \immediate\write16{ "Outside" = \if@rrowhout yes\else no\fi
     \space-> \BS@ psset arrowhead(out=yes/no)}%
    \immediate\write16{ Length = \@rrowheadlength
     \if@rrowratio\space(not active)\else\space(active)\fi
     \space-> \BS@ psset arrowhead(length=real in user coord.)}%
    \immediate\write16{ Ratio = \@rrowheadratio
     \if@rrowratio\space(active)\else\space(not active)\fi
     \space-> \BS@ psset arrowhead(ratio=real in [0,1])}%
    \immediate\write16{Curve: Roundness = \curv@roundness
     \space-> \BS@ psset curve(roundness=real in [0,0.5])}%
    \immediate\write16{Mesh: Diagonal = \c@ntrolmesh
     \space-> \BS@ psset mesh(diag=integer in {-1,0,1})}%
    \immediate\write16{Flow chart:}%
    \immediate\write16{ Arrow position = \@rrowp@s
     \space-> \BS@ psset flowchart(arrowposition=real in [0,1])}%
    \immediate\write16{ Arrow reference point = \ifcase\@rrowr@fpt start\else end\fi
     \space-> \BS@ psset flowchart(arrowrefpt = start/end)}%
    \immediate\write16{ Line type = \ifcase\fclin@typ@ curve\else polygon\fi
     \space-> \BS@ psset flowchart(line=polygon/curve)}%
    \immediate\write16{ Padding = (\Xp@dd, \Yp@dd)
     \space-> \BS@ psset flowchart(padding = real in user coord.)}%
    \immediate\write16{\space\space\space\space(or
     \BS@ psset flowchart(xpadding=real, ypadding=real) )}%
    \immediate\write16{ Radius = \fclin@r@d
     \space-> \BS@ psset flowchart(radius=positive real in user coord.)}%
    \immediate\write16{ Shape = \fcsh@pe
     \space-> \BS@ psset flowchart(shape = rectangle, ellipse or lozenge)}%
    \immediate\write16{ Thickness = \thickn@ss
     \space-> \BS@ psset flowchart(thickness = real in user coord.)}%
    \ifTr@isDim%
    \immediate\write16{ --- 3D to 2D PROJECTION ---}%
    \immediate\write16{Projection : \typ@proj \space-> \BS@ figinit{ScaleFactorUnit, ProjType}}%
    \immediate\write16{Longitude (psi) = \v@lPsi \space-> \BS@ figset proj(psi=real in degrees)}%
    \ifcase\curr@ntproj\immediate\write16{Depth coeff. (Lambda)
     \space = \v@lTheta \space-> \BS@ figset proj(lambda=real in [0,1])}%
    \else\immediate\write16{Latitude (theta)
     \space = \v@lTheta \space-> \BS@ figset proj(theta=real in degrees)}%
    \fi%
    \ifnum\curr@ntproj=\tw@%
    \immediate\write16{Observation distance = \disob@unit
     \space-> \BS@ figset proj(dist=real in user coord.)}%
    \immediate\write16{Target point = \t@rgetpt \space-> \BS@ figset proj(targetpt=pt number)}%
     \v@lX=\ptT@unit@\wd\Bt@rget\v@lY=\ptT@unit@\ht\Bt@rget\v@lZ=\ptT@unit@\dp\Bt@rget%
    \immediate\write16{ Its coordinates are
     (\repdecn@mb{\v@lX}, \repdecn@mb{\v@lY}, \repdecn@mb{\v@lZ})}%
    \fi%
    \fi%
    \immediate\write16{====================================================================}%
    \ignorespaces}}
\ctr@ln@w{newif}\ifitis@vect@r
\ctr@ld@f\def\figvectC#1(#2,#3){{\itis@vect@rtrue\figpt#1:(#2,#3)}\ignorespaces}
\ctr@ld@f\def\Figv@ctCreg#1(#2,#3){{\itis@vect@rtrue\Figp@intreg#1:(#2,#3)}\ignorespaces}
\ctr@ln@m\figvectDBezier
\ctr@ld@f\def\figvectDBezierDD#1:#2,#3[#4,#5,#6,#7]{\ifps@cri{\s@uvc@ntr@l\et@tfigvectDBezierDD%
    \FigvectDBezier@#2,#3[#4,#5,#6,#7]\v@lX=\c@ef\v@lX\v@lY=\c@ef\v@lY%
    \Figv@ctCreg#1(\v@lX,\v@lY)\resetc@ntr@l\et@tfigvectDBezierDD}\ignorespaces\fi}
\ctr@ld@f\def\figvectDBezierTD#1:#2,#3[#4,#5,#6,#7]{\ifps@cri{\s@uvc@ntr@l\et@tfigvectDBezierTD%
    \FigvectDBezier@#2,#3[#4,#5,#6,#7]\v@lX=\c@ef\v@lX\v@lY=\c@ef\v@lY\v@lZ=\c@ef\v@lZ%
    \Figv@ctCreg#1(\v@lX,\v@lY,\v@lZ)\resetc@ntr@l\et@tfigvectDBezierTD}\ignorespaces\fi}
\ctr@ld@f\def\FigvectDBezier@#1,#2[#3,#4,#5,#6]{\setc@ntr@l{2}%
    \edef\T@{#2}\v@leur=\p@\advance\v@leur-#2pt\edef\UNmT@{\repdecn@mb{\v@leur}}%
    \ifnum#1=\tw@\def\c@ef{6}\else\def\c@ef{3}\fi%
    \figptcopy-4:/#3/\figptcopy-3:/#4/\figptcopy-2:/#5/\figptcopy-1:/#6/%
    \l@mbd@un=-4 \l@mbd@de=-\thr@@\p@rtent=\m@ne\c@lDecast%
    \ifnum#1=\tw@\c@lDCDeux{-4}{-3}\c@lDCDeux{-3}{-2}\c@lDCDeux{-4}{-3}\else%
    \l@mbd@un=-4 \l@mbd@de=-\thr@@\p@rtent=-\tw@\c@lDecast%
    \c@lDCDeux{-4}{-3}\fi\Figg@tXY{-4}}
\ctr@ln@m\c@lDCDeux
\ctr@ld@f\def\c@lDCDeuxDD#1#2{\Figg@tXY{#2}\Figg@tXYa{#1}%
    \advance\v@lX-\v@lXa\advance\v@lY-\v@lYa\Figp@intregDD#1:(\v@lX,\v@lY)}
\ctr@ld@f\def\c@lDCDeuxTD#1#2{\Figg@tXY{#2}\Figg@tXYa{#1}\advance\v@lX-\v@lXa%
    \advance\v@lY-\v@lYa\advance\v@lZ-\v@lZa\Figp@intregTD#1:(\v@lX,\v@lY,\v@lZ)}
\ctr@ln@m\figvectN
\ctr@ld@f\def\figvectNDD#1[#2,#3]{\ifps@cri{\Figg@tXYa{#2}\Figg@tXY{#3}%
    \advance\v@lX-\v@lXa\advance\v@lY-\v@lYa%
    \Figv@ctCreg#1(-\v@lY,\v@lX)}\ignorespaces\fi}
\ctr@ld@f\def\figvectNTD#1[#2,#3,#4]{\ifps@cri{\vecunitC@TD[#2,#4]\v@lmin=\v@lX\v@lmax=\v@lY%
    \v@leur=\v@lZ\vecunitC@TD[#2,#3]\c@lprovec{#1}}\ignorespaces\fi}
\ctr@ln@m\figvectNV
\ctr@ld@f\def\figvectNVDD#1[#2]{\ifps@cri{\Figg@tXY{#2}\Figv@ctCreg#1(-\v@lY,\v@lX)}\ignorespaces\fi}
\ctr@ld@f\def\figvectNVTD#1[#2,#3]{\ifps@cri{\vecunitCV@TD{#3}\v@lmin=\v@lX\v@lmax=\v@lY%
    \v@leur=\v@lZ\vecunitCV@TD{#2}\c@lprovec{#1}}\ignorespaces\fi}
\ctr@ln@m\figvectP
\ctr@ld@f\def\figvectPDD#1[#2,#3]{\ifps@cri{\Figg@tXYa{#2}\Figg@tXY{#3}%
    \advance\v@lX-\v@lXa\advance\v@lY-\v@lYa%
    \Figv@ctCreg#1(\v@lX,\v@lY)}\ignorespaces\fi}
\ctr@ld@f\def\figvectPTD#1[#2,#3]{\ifps@cri{\Figg@tXYa{#2}\Figg@tXY{#3}%
    \advance\v@lX-\v@lXa\advance\v@lY-\v@lYa\advance\v@lZ-\v@lZa%
    \Figv@ctCreg#1(\v@lX,\v@lY,\v@lZ)}\ignorespaces\fi}
\ctr@ln@m\figvectU
\ctr@ld@f\def\figvectUDD#1[#2]{\ifps@cri{\n@rmeuc\v@leur{#2}\invers@\v@leur\v@leur%
    \delt@=\repdecn@mb{\v@leur}\unit@\edef\v@ldelt@{\repdecn@mb{\delt@}}%
    \Figg@tXY{#2}\v@lX=\v@ldelt@\v@lX\v@lY=\v@ldelt@\v@lY%
    \Figv@ctCreg#1(\v@lX,\v@lY)}\ignorespaces\fi}
\ctr@ld@f\def\figvectUTD#1[#2]{\ifps@cri{\n@rmeuc\v@leur{#2}\invers@\v@leur\v@leur%
    \delt@=\repdecn@mb{\v@leur}\unit@\edef\v@ldelt@{\repdecn@mb{\delt@}}%
    \Figg@tXY{#2}\v@lX=\v@ldelt@\v@lX\v@lY=\v@ldelt@\v@lY\v@lZ=\v@ldelt@\v@lZ%
    \Figv@ctCreg#1(\v@lX,\v@lY,\v@lZ)}\ignorespaces\fi}
\ctr@ld@f\def\figvisu#1#2#3{\c@ldefproj\initb@undb@x\xdef\figforTeXFigno{\figforTeXnextFigno}%
    \s@mme=\figforTeXnextFigno\advance\s@mme\@ne\xdef\figforTeXnextFigno{\number\s@mme}%
    \setbox\b@xvisu=\hbox{\ifnum\@utoFN>\z@\figinsert{}\gdef\@utoFInDone{0}\fi\ignorespaces#3}%
    \gdef\@utoFInDone{1}\gdef\@utoFN{0}%
    \v@lXa=-\c@@rdYmin\v@lYa=\c@@rdYmax\advance\v@lYa-\c@@rdYmin%
    \v@lX=\c@@rdXmax\advance\v@lX-\c@@rdXmin%
    \setbox#1=\hbox{#2}\v@lY=-\v@lX\maxim@m{\v@lX}{\v@lX}{\wd#1}%
    \advance\v@lY\v@lX\divide\v@lY\tw@\advance\v@lY-\c@@rdXmin%
    \setbox#1=\vbox{\parindent0mm\hsize=\v@lX\vskip\v@lYa%
    \rlap{\hskip\v@lY\smash{\raise\v@lXa\box\b@xvisu}}%
    \def\t@xt@{#2}\ifx\t@xt@\empty\else\medskip\centerline{#2}\fi}\wd#1=\v@lX}
\ctr@ld@f\def\figDecrementFigno{{\xdef\figforTeXnextFigno{\figforTeXFigno}%
    \s@mme=\figforTeXFigno\advance\s@mme\m@ne\xdef\figforTeXFigno{\number\s@mme}}}
\ctr@ln@w{newbox}\Bt@rget\setbox\Bt@rget=\null
\ctr@ln@w{newbox}\BminTD@\setbox\BminTD@=\null
\ctr@ln@w{newbox}\BmaxTD@\setbox\BmaxTD@=\null
\ctr@ln@w{newif}\ifnewt@rgetpt\ctr@ln@w{newif}\ifnewdis@b
\ctr@ld@f\def\b@undb@xTD#1#2#3{%
    \relax\ifdim#1<\wd\BminTD@\global\wd\BminTD@=#1\fi%
    \relax\ifdim#2<\ht\BminTD@\global\ht\BminTD@=#2\fi%
    \relax\ifdim#3<\dp\BminTD@\global\dp\BminTD@=#3\fi%
    \relax\ifdim#1>\wd\BmaxTD@\global\wd\BmaxTD@=#1\fi%
    \relax\ifdim#2>\ht\BmaxTD@\global\ht\BmaxTD@=#2\fi%
    \relax\ifdim#3>\dp\BmaxTD@\global\dp\BmaxTD@=#3\fi}
\ctr@ld@f\def\c@ldefdisob{{\ifdim\wd\BminTD@<\maxdimen\v@leur=\wd\BmaxTD@\advance\v@leur-\wd\BminTD@%
    \delt@=\ht\BmaxTD@\advance\delt@-\ht\BminTD@\maxim@m{\v@leur}{\v@leur}{\delt@}%
    \delt@=\dp\BmaxTD@\advance\delt@-\dp\BminTD@\maxim@m{\v@leur}{\v@leur}{\delt@}%
    \v@leur=5\v@leur\else\v@leur=800pt\fi\c@ldefdisob@{\v@leur}}}
\ctr@ln@m\disob@intern
\ctr@ln@m\disob@
\ctr@ln@m\divf@ctproj
\ctr@ld@f\def\c@ldefdisob@#1{{\v@leur=#1\ifdim\v@leur<\p@\v@leur=800pt\fi%
    \xdef\disob@intern{\repdecn@mb{\v@leur}}%
    \delt@=\ptT@unit@\v@leur\xdef\disob@unit{\repdecn@mb{\delt@}}%
    \f@ctech=\@ne\loop\ifdim\v@leur>\t@n pt\divide\v@leur\t@n\multiply\f@ctech\t@n\repeat%
    \xdef\disob@{\repdecn@mb{\v@leur}}\xdef\divf@ctproj{\the\f@ctech}}%
    \global\newdis@btrue}
\ctr@ln@m\t@rgetpt
\ctr@ld@f\def\c@ldeft@rgetpt{\newt@rgetpttrue\def\t@rgetpt{CenterBoundBox}{%
    \delt@=\wd\BmaxTD@\advance\delt@-\wd\BminTD@\divide\delt@\tw@%
    \v@leur=\wd\BminTD@\advance\v@leur\delt@\global\wd\Bt@rget=\v@leur%
    \delt@=\ht\BmaxTD@\advance\delt@-\ht\BminTD@\divide\delt@\tw@%
    \v@leur=\ht\BminTD@\advance\v@leur\delt@\global\ht\Bt@rget=\v@leur%
    \delt@=\dp\BmaxTD@\advance\delt@-\dp\BminTD@\divide\delt@\tw@%
    \v@leur=\dp\BminTD@\advance\v@leur\delt@\global\dp\Bt@rget=\v@leur}}
\ctr@ln@m\c@ldefproj
\ctr@ld@f\def\c@ldefprojTD{\ifnewt@rgetpt\else\c@ldeft@rgetpt\fi\ifnewdis@b\else\c@ldefdisob\fi}
\ctr@ld@f\def\c@lprojcav{
    \v@lZa=\cxa@\v@lY\advance\v@lX\v@lZa%
    \v@lZa=\cxb@\v@lY\v@lY=\v@lZ\advance\v@lY\v@lZa\ignorespaces}
\ctr@ln@m\v@lcoef
\ctr@ld@f\def\c@lprojrea{
    \advance\v@lX-\wd\Bt@rget\advance\v@lY-\ht\Bt@rget\advance\v@lZ-\dp\Bt@rget%
    \v@lZa=\cza@\v@lX\advance\v@lZa\czb@\v@lY\advance\v@lZa\czc@\v@lZ%
    \divide\v@lZa\divf@ctproj\advance\v@lZa\disob@ pt\invers@{\v@lZa}{\v@lZa}%
    \v@lZa=\disob@\v@lZa\edef\v@lcoef{\repdecn@mb{\v@lZa}}%
    \v@lXa=\cxa@\v@lX\advance\v@lXa\cxb@\v@lY\v@lXa=\v@lcoef\v@lXa%
    \v@lY=\cyb@\v@lY\advance\v@lY\cya@\v@lX\advance\v@lY\cyc@\v@lZ%
    \v@lY=\v@lcoef\v@lY\v@lX=\v@lXa\ignorespaces}
\ctr@ld@f\def\c@lprojort{
    \v@lXa=\cxa@\v@lX\advance\v@lXa\cxb@\v@lY%
    \v@lY=\cyb@\v@lY\advance\v@lY\cya@\v@lX\advance\v@lY\cyc@\v@lZ%
    \v@lX=\v@lXa\ignorespaces}
\ctr@ld@f\def\Figptpr@j#1:#2/#3/{{\Figg@tXY{#3}\superc@lprojSP%
    \Figp@intregDD#1:{#2}(\v@lX,\v@lY)}\ignorespaces}
\ctr@ln@m\figsetobdist
\ctr@ld@f\def\figsetobdistDD{\un@v@ilable{figsetobdist}}
\ctr@ld@f\def\figsetobdistTD(#1){{\ifcurr@ntPS%
    \immediate\write16{*** \BS@ figsetobdist is ignored inside a
     \BS@ psbeginfig-\BS@ psendfig block.}%
    \else\v@leur=#1\unit@\c@ldefdisob@{\v@leur}\fi}\ignorespaces}
\ctr@ln@m\c@lprojSP
\ctr@ln@m\curr@ntproj
\ctr@ln@m\typ@proj
\ctr@ln@m\superc@lprojSP
\ctr@ld@f\def\Figs@tproj#1{%
    \if#13 \d@faultproj\else\if#1c\d@faultproj%
    \else\if#1o\xdef\curr@ntproj{1}\xdef\typ@proj{orthogonal}%
         \figsetviewTD(\def@ultpsi,\def@ulttheta)%
         \global\let\c@lprojSP=\c@lprojort\global\let\superc@lprojSP=\c@lprojort%
    \else\if#1r\xdef\curr@ntproj{2}\xdef\typ@proj{realistic}%
         \figsetviewTD(\def@ultpsi,\def@ulttheta)%
         \global\let\c@lprojSP=\c@lprojrea\global\let\superc@lprojSP=\c@lprojrea%
    \else\d@faultproj\message{*** Unknown projection. Cavalier projection assumed.}%
    \fi\fi\fi\fi}
\ctr@ld@f\def\d@faultproj{\xdef\curr@ntproj{0}\xdef\typ@proj{cavalier}\figsetviewTD(\def@ultpsi,0.5)%
         \global\let\c@lprojSP=\c@lprojcav\global\let\superc@lprojSP=\c@lprojcav}
\ctr@ln@m\figsettarget
\ctr@ld@f\def\figsettargetDD{\un@v@ilable{figsettarget}}
\ctr@ld@f\def\figsettargetTD[#1]{{\ifcurr@ntPS%
    \immediate\write16{*** \BS@ figsettarget is ignored inside a
     \BS@ psbeginfig-\BS@ psendfig block.}%
    \else\global\newt@rgetpttrue\xdef\t@rgetpt{#1}\Figg@tXY{#1}\global\wd\Bt@rget=\v@lX%
    \global\ht\Bt@rget=\v@lY\global\dp\Bt@rget=\v@lZ\fi}\ignorespaces}
\ctr@ln@m\figsetview
\ctr@ld@f\def\figsetviewDD{\un@v@ilable{figsetview}}
\ctr@ld@f\def\figsetviewTD(#1){\ifcurr@ntPS%
     \immediate\write16{*** \BS@ figsetview is ignored inside a
     \BS@ psbeginfig-\BS@ psendfig block.}\else\Figsetview@#1,:\fi\ignorespaces}
\ctr@ld@f\def\Figsetview@#1,#2:{{\xdef\v@lPsi{#1}\def\t@xt@{#2}%
    \ifx\t@xt@\empty\def\@rgdeux{\v@lTheta}\else\X@rgdeux@#2\fi%
    \c@ssin{\costhet@}{\sinthet@}{#1}\v@lmin=\costhet@ pt\v@lmax=\sinthet@ pt%
    \ifcase\curr@ntproj%
    \v@leur=\@rgdeux\v@lmin\xdef\cxa@{\repdecn@mb{\v@leur}}%
    \v@leur=\@rgdeux\v@lmax\xdef\cxb@{\repdecn@mb{\v@leur}}\v@leur=\@rgdeux pt%
    \relax\ifdim\v@leur>\p@\message{*** Lambda too large ! See \BS@ figset proj() !}\fi%
    \else%
    \v@lmax=-\v@lmax\xdef\cxa@{\repdecn@mb{\v@lmax}}\xdef\cxb@{\costhet@}%
    \ifx\t@xt@\empty\edef\@rgdeux{\def@ulttheta}\fi\c@ssin{\C@}{\S@}{\@rgdeux}%
    \v@lmax=-\S@ pt%
    \v@leur=\v@lmax\v@leur=\costhet@\v@leur\xdef\cya@{\repdecn@mb{\v@leur}}%
    \v@leur=\v@lmax\v@leur=\sinthet@\v@leur\xdef\cyb@{\repdecn@mb{\v@leur}}%
    \xdef\cyc@{\C@}\v@lmin=-\C@ pt%
    \v@leur=\v@lmin\v@leur=\costhet@\v@leur\xdef\cza@{\repdecn@mb{\v@leur}}%
    \v@leur=\v@lmin\v@leur=\sinthet@\v@leur\xdef\czb@{\repdecn@mb{\v@leur}}%
    \xdef\czc@{\repdecn@mb{\v@lmax}}\fi%
    \xdef\v@lTheta{\@rgdeux}}}
\ctr@ld@f\def\def@ultpsi{40}
\ctr@ld@f\def\def@ulttheta{25}
\ctr@ln@m\l@debut
\ctr@ln@m\n@mref
\ctr@ld@f\def\figset#1(#2){\def\t@xt@{#1}\ifx\t@xt@\empty\trtlis@rg{#2}{\Figsetwr@te}
    \else\keln@mde#1|%
    \def\n@mref{pr}\ifx\l@debut\n@mref\ifcurr@ntPS
     \immediate\write16{*** \BS@ figset proj(...) is ignored inside a
     \BS@ psbeginfig-\BS@ psendfig block.}\else\trtlis@rg{#2}{\Figsetpr@j}\fi\else%
    \def\n@mref{wr}\ifx\l@debut\n@mref\trtlis@rg{#2}{\Figsetwr@te}\else
    \immediate\write16{*** Unknown keyword: \BS@ figset #1(...)}%
    \fi\fi\fi\ignorespaces}
\ctr@ld@f\def\Figsetpr@j#1=#2|{\keln@mtr#1|%
    \def\n@mref{dep}\ifx\l@debut\n@mref\Figsetd@p{#2}\else
    \def\n@mref{dis}\ifx\l@debut\n@mref%
     \ifnum\curr@ntproj=\tw@\figsetobdist(#2)\else\Figset@rr\fi\else
    \def\n@mref{lam}\ifx\l@debut\n@mref\Figsetd@p{#2}\else
    \def\n@mref{lat}\ifx\l@debut\n@mref\Figsetth@{#2}\else
    \def\n@mref{lon}\ifx\l@debut\n@mref\figsetview(#2)\else
    \def\n@mref{psi}\ifx\l@debut\n@mref\figsetview(#2)\else
    \def\n@mref{tar}\ifx\l@debut\n@mref%
     \ifnum\curr@ntproj=\tw@\figsettarget[#2]\else\Figset@rr\fi\else
    \def\n@mref{the}\ifx\l@debut\n@mref\Figsetth@{#2}\else
    \immediate\write16{*** Unknown attribute: \BS@ figset proj(..., #1=...).}%
    \fi\fi\fi\fi\fi\fi\fi\fi}
\ctr@ld@f\def\Figsetd@p#1{\ifnum\curr@ntproj=\z@\figsetview(\v@lPsi,#1)\else\Figset@rr\fi}
\ctr@ld@f\def\Figsetth@#1{\ifnum\curr@ntproj=\z@\Figset@rr\else\figsetview(\v@lPsi,#1)\fi}
\ctr@ld@f\def\Figset@rr{\message{*** \BS@ figset proj(): Attribute "\n@mref" ignored, incompatible
    with current projection}}
\ctr@ld@f\def\initb@undb@xTD{\wd\BminTD@=\maxdimen\ht\BminTD@=\maxdimen\dp\BminTD@=\maxdimen%
    \wd\BmaxTD@=-\maxdimen\ht\BmaxTD@=-\maxdimen\dp\BmaxTD@=-\maxdimen}
\ctr@ln@w{newbox}\Gb@x      
\ctr@ln@w{newbox}\Gb@xSC    
\ctr@ln@w{newtoks}\c@nsymb  
\ctr@ln@w{newif}\ifr@undcoord\ctr@ln@w{newif}\ifunitpr@sent
\ctr@ld@f\def\unssqrttw@{0.707106 }
\ctr@ld@f\def\figAst{\raise-1.15ex\hbox{$\ast$}}
\ctr@ld@f\def\figBullet{\raise-1.15ex\hbox{$\bullet$}}
\ctr@ld@f\def\figCirc{\raise-1.15ex\hbox{$\circ$}}
\ctr@ld@f\def\figDiamond{\raise-1.15ex\hbox{$\diamond$}}%
\ctr@ld@f\def\boxit#1#2{\leavevmode\hbox{\vrule\vbox{\hrule\vglue#1%
    \vtop{\hbox{\kern#1{#2}\kern#1}\vglue#1\hrule}}\vrule}}
\ctr@ld@f
\ctr@ld@f
\ctr@ld@f\def\c@nterpt{\ignorespaces%
    \kern-.5\wd\Gb@xSC%
    \raise-.5\ht\Gb@xSC\rlap{\hbox{\raise.5\dp\Gb@xSC\hbox{\copy\Gb@xSC}}}%
    \kern .5\wd\Gb@xSC\ignorespaces}
\ctr@ld@f\def\b@undb@xSC#1#2{{\v@lXa=#1\v@lYa=#2%
    \v@leur=\ht\Gb@xSC\advance\v@leur\dp\Gb@xSC%
    \advance\v@lXa-.5\wd\Gb@xSC\advance\v@lYa-.5\v@leur\b@undb@x{\v@lXa}{\v@lYa}%
    \advance\v@lXa\wd\Gb@xSC\advance\v@lYa\v@leur\b@undb@x{\v@lXa}{\v@lYa}}}
\ctr@ln@m\Dist@n
\ctr@ln@m\l@suite
\ctr@ld@f\def\@keldist#1#2{\edef\Dist@n{#2}\y@tiunit{\Dist@n}%
    \ifunitpr@sent#1=\Dist@n\else#1=\Dist@n\unit@\fi}
\ctr@ld@f\def\y@tiunit#1{\unitpr@sentfalse\expandafter\y@tiunit@#1:}
\ctr@ld@f\def\y@tiunit@#1#2:{\ifcat#1a\unitpr@senttrue\else\def\l@suite{#2}%
    \ifx\l@suite\empty\else\y@tiunit@#2:\fi\fi}
\ctr@ln@m\figcoord
\ctr@ld@f\def\figcoordDD#1{{\v@lX=\ptT@unit@\v@lX\v@lY=\ptT@unit@\v@lY%
    \ifr@undcoord\ifcase#1\v@leur=0.5pt\or\v@leur=0.05pt\or\v@leur=0.005pt%
    \or\v@leur=0.0005pt\else\v@leur=\z@\fi%
    \ifdim\v@lX<\z@\advance\v@lX-\v@leur\else\advance\v@lX\v@leur\fi%
    \ifdim\v@lY<\z@\advance\v@lY-\v@leur\else\advance\v@lY\v@leur\fi\fi%
    (\@ffichnb{#1}{\repdecn@mb{\v@lX}},\ifmmode\else\thinspace\fi%
    \@ffichnb{#1}{\repdecn@mb{\v@lY}})}}
\ctr@ld@f\def\@ffichnb#1#2{{\def\@@ffich{\@ffich#1(}\edef\n@mbre{#2}%
    \expandafter\@@ffich\n@mbre)}}
\ctr@ld@f\def\@ffich#1(#2.#3){{#2\ifnum#1>\z@.\fi\def\dig@ts{#3}\s@mme=\z@%
    \loop\ifnum\s@mme<#1\expandafter\@ffichdec\dig@ts:\advance\s@mme\@ne\repeat}}
\ctr@ld@f\def\@ffichdec#1#2:{\relax#1\def\dig@ts{#20}}
\ctr@ld@f\def\figcoordTD#1{{\v@lX=\ptT@unit@\v@lX\v@lY=\ptT@unit@\v@lY\v@lZ=\ptT@unit@\v@lZ%
    \ifr@undcoord\ifcase#1\v@leur=0.5pt\or\v@leur=0.05pt\or\v@leur=0.005pt%
    \or\v@leur=0.0005pt\else\v@leur=\z@\fi%
    \ifdim\v@lX<\z@\advance\v@lX-\v@leur\else\advance\v@lX\v@leur\fi%
    \ifdim\v@lY<\z@\advance\v@lY-\v@leur\else\advance\v@lY\v@leur\fi%
    \ifdim\v@lZ<\z@\advance\v@lZ-\v@leur\else\advance\v@lZ\v@leur\fi\fi%
    (\@ffichnb{#1}{\repdecn@mb{\v@lX}},\ifmmode\else\thinspace\fi%
     \@ffichnb{#1}{\repdecn@mb{\v@lY}},\ifmmode\else\thinspace\fi%
     \@ffichnb{#1}{\repdecn@mb{\v@lZ}})}}
\ctr@ld@f\def\figsetroundcoord#1{\expandafter\Figsetr@undcoord#1:\ignorespaces}
\ctr@ld@f\def\Figsetr@undcoord#1#2:{\if#1n\r@undcoordfalse\else\r@undcoordtrue\fi}
\ctr@ld@f\def\Figsetwr@te#1=#2|{\keln@mun#1|%
    \def\n@mref{m}\ifx\l@debut\n@mref\figsetmark{#2}\else
    \immediate\write16{*** Unknown attribute: \BS@ figset (..., #1=...)}%
    \fi}
\ctr@ld@f\def\figsetmark#1{\c@nsymb={#1}\setbox\Gb@xSC=\hbox{\the\c@nsymb}\ignorespaces}
\ctr@ln@m\ptn@me
\ctr@ld@f\def\figsetptname#1{\def\ptn@me##1{#1}\ignorespaces}
\ctr@ld@f\def\FigWrit@L#1:#2(#3,#4){\ignorespaces\@keldist\v@leur{#3}\@keldist\delt@{#4}%
    \C@rp@r@m\def\list@num{#1}\@ecfor\p@int:=\list@num\do{\FigWrit@pt{\p@int}{#2}}}
\ctr@ld@f\def\FigWrit@pt#1#2{\FigWp@r@m{#1}{#2}\Vc@rrect\figWp@si%
    \ifdim\wd\Gb@xSC>\z@\b@undb@xSC{\v@lX}{\v@lY}\fi\figWBB@x}
\ctr@ld@f\def\FigWp@r@m#1#2{\Figg@tXY{#1}%
    \setbox\Gb@x=\hbox{\def\t@xt@{#2}\ifx\t@xt@\empty\Figg@tT{#1}\else#2\fi}\c@lprojSP}
\ctr@ld@f\let\Vc@rrect=\relax
\ctr@ld@f\let\C@rp@r@m=\relax
\ctr@ld@f\def\figwrite[#1]#2{{\ignorespaces\def\list@num{#1}\@ecfor\p@int:=\list@num\do{%
    \setbox\Gb@x=\hbox{\def\t@xt@{#2}\ifx\t@xt@\empty\Figg@tT{\p@int}\else#2\fi}%
    \Figwrit@{\p@int}}}\ignorespaces}
\ctr@ld@f\def\Figwrit@#1{\Figg@tXY{#1}\c@lprojSP%
    \rlap{\kern\v@lX\raise\v@lY\hbox{\unhcopy\Gb@x}}\v@leur=\v@lY%
    \advance\v@lY\ht\Gb@x\b@undb@x{\v@lX}{\v@lY}\advance\v@lX\wd\Gb@x%
    \v@lY=\v@leur\advance\v@lY-\dp\Gb@x\b@undb@x{\v@lX}{\v@lY}}
\ctr@ld@f\def\figwritec[#1]#2{{\ignorespaces\def\list@num{#1}%
    \@ecfor\p@int:=\list@num\do{\Figwrit@c{\p@int}{#2}}}\ignorespaces}
\ctr@ld@f\def\Figwrit@c#1#2{\FigWp@r@m{#1}{#2}%
    \rlap{\kern\v@lX\raise\v@lY\hbox{\rlap{\kern-.5\wd\Gb@x%
    \raise-.5\ht\Gb@x\hbox{\raise.5\dp\Gb@x\hbox{\unhcopy\Gb@x}}}}}%
    \v@leur=\ht\Gb@x\advance\v@leur\dp\Gb@x%
    \advance\v@lX-.5\wd\Gb@x\advance\v@lY-.5\v@leur\b@undb@x{\v@lX}{\v@lY}%
    \advance\v@lX\wd\Gb@x\advance\v@lY\v@leur\b@undb@x{\v@lX}{\v@lY}}
\ctr@ld@f\def\figwritep[#1]{{\ignorespaces\def\list@num{#1}\setbox\Gb@x=\hbox{\c@nterpt}%
    \@ecfor\p@int:=\list@num\do{\Figwrit@{\p@int}}}\ignorespaces}
\ctr@ld@f\def\figwritew#1:#2(#3){\figwritegcw#1:{#2}(#3,0pt)}
\ctr@ld@f\def\figwritee#1:#2(#3){\figwritegce#1:{#2}(#3,0pt)}
\ctr@ld@f\def\figwriten#1:#2(#3){{\def\Vc@rrect{\v@lZ=\v@leur\advance\v@lZ\dp\Gb@x}%
    \Figwrit@NS#1:{#2}(#3)}\ignorespaces}
\ctr@ld@f\def\figwrites#1:#2(#3){{\def\Vc@rrect{\v@lZ=-\v@leur\advance\v@lZ-\ht\Gb@x}%
    \Figwrit@NS#1:{#2}(#3)}\ignorespaces}
\ctr@ld@f\def\Figwrit@NS#1:#2(#3){\let\figWp@si=\FigWp@siNS\let\figWBB@x=\FigWBB@xNS%
    \FigWrit@L#1:{#2}(#3,0pt)}
\ctr@ld@f\def\FigWp@siNS{\rlap{\kern\v@lX\raise\v@lY\hbox{\rlap{\kern-.5\wd\Gb@x%
    \raise\v@lZ\hbox{\unhcopy\Gb@x}}\c@nterpt}}}
\ctr@ld@f\def\FigWBB@xNS{\advance\v@lY\v@lZ%
    \advance\v@lY-\dp\Gb@x\advance\v@lX-.5\wd\Gb@x\b@undb@x{\v@lX}{\v@lY}%
    \advance\v@lY\ht\Gb@x\advance\v@lY\dp\Gb@x%
    \advance\v@lX\wd\Gb@x\b@undb@x{\v@lX}{\v@lY}}
\ctr@ld@f\def\figwritenw#1:#2(#3){{\let\figWp@si=\FigWp@sigW\let\figWBB@x=\FigWBB@xgWE%
    \def\C@rp@r@m{\v@leur=\unssqrttw@\v@leur\delt@=\v@leur%
    \ifdim\delt@=\z@\delt@=\epsil@n\fi}\let@xte={-}\FigWrit@L#1:{#2}(#3,0pt)}\ignorespaces}
\ctr@ld@f\def\figwritesw#1:#2(#3){{\let\figWp@si=\FigWp@sigW\let\figWBB@x=\FigWBB@xgWE%
    \def\C@rp@r@m{\v@leur=\unssqrttw@\v@leur\delt@=-\v@leur%
    \ifdim\delt@=\z@\delt@=-\epsil@n\fi}\let@xte={-}\FigWrit@L#1:{#2}(#3,0pt)}\ignorespaces}
\ctr@ld@f\def\figwritene#1:#2(#3){{\let\figWp@si=\FigWp@sigE\let\figWBB@x=\FigWBB@xgWE%
    \def\C@rp@r@m{\v@leur=\unssqrttw@\v@leur\delt@=\v@leur%
    \ifdim\delt@=\z@\delt@=\epsil@n\fi}\let@xte={}\FigWrit@L#1:{#2}(#3,0pt)}\ignorespaces}
\ctr@ld@f\def\figwritese#1:#2(#3){{\let\figWp@si=\FigWp@sigE\let\figWBB@x=\FigWBB@xgWE%
    \def\C@rp@r@m{\v@leur=\unssqrttw@\v@leur\delt@=-\v@leur%
    \ifdim\delt@=\z@\delt@=-\epsil@n\fi}\let@xte={}\FigWrit@L#1:{#2}(#3,0pt)}\ignorespaces}
\ctr@ld@f\def\figwritegw#1:#2(#3,#4){{\let\figWp@si=\FigWp@sigW\let\figWBB@x=\FigWBB@xgWE%
    \let@xte={-}\FigWrit@L#1:{#2}(#3,#4)}\ignorespaces}
\ctr@ld@f\def\figwritege#1:#2(#3,#4){{\let\figWp@si=\FigWp@sigE\let\figWBB@x=\FigWBB@xgWE%
    \let@xte={}\FigWrit@L#1:{#2}(#3,#4)}\ignorespaces}
\ctr@ld@f\def\FigWp@sigW{\v@lXa=\z@\v@lYa=\ht\Gb@x\advance\v@lYa\dp\Gb@x%
    \ifdim\delt@>\z@\relax%
    \rlap{\kern\v@lX\raise\v@lY\hbox{\rlap{\kern-\wd\Gb@x\kern-\v@leur%
          \raise\delt@\hbox{\raise\dp\Gb@x\hbox{\unhcopy\Gb@x}}}\c@nterpt}}%
    \else\ifdim\delt@<\z@\relax\v@lYa=-\v@lYa%
    \rlap{\kern\v@lX\raise\v@lY\hbox{\rlap{\kern-\wd\Gb@x\kern-\v@leur%
          \raise\delt@\hbox{\raise-\ht\Gb@x\hbox{\unhcopy\Gb@x}}}\c@nterpt}}%
    \else\v@lXa=-.5\v@lYa%
    \rlap{\kern\v@lX\raise\v@lY\hbox{\rlap{\kern-\wd\Gb@x\kern-\v@leur%
          \raise-.5\ht\Gb@x\hbox{\raise.5\dp\Gb@x\hbox{\unhcopy\Gb@x}}}\c@nterpt}}%
    \fi\fi}
\ctr@ld@f\def\FigWp@sigE{\v@lXa=\z@\v@lYa=\ht\Gb@x\advance\v@lYa\dp\Gb@x%
    \ifdim\delt@>\z@\relax%
    \rlap{\kern\v@lX\raise\v@lY\hbox{\c@nterpt\kern\v@leur%
          \raise\delt@\hbox{\raise\dp\Gb@x\hbox{\unhcopy\Gb@x}}}}%
    \else\ifdim\delt@<\z@\relax\v@lYa=-\v@lYa%
    \rlap{\kern\v@lX\raise\v@lY\hbox{\c@nterpt\kern\v@leur%
          \raise\delt@\hbox{\raise-\ht\Gb@x\hbox{\unhcopy\Gb@x}}}}%
    \else\v@lXa=-.5\v@lYa%
    \rlap{\kern\v@lX\raise\v@lY\hbox{\c@nterpt\kern\v@leur%
          \raise-.5\ht\Gb@x\hbox{\raise.5\dp\Gb@x\hbox{\unhcopy\Gb@x}}}}%
    \fi\fi}
\ctr@ld@f\def\FigWBB@xgWE{\advance\v@lY\delt@%
    \advance\v@lX\the\let@xte\v@leur\advance\v@lY\v@lXa\b@undb@x{\v@lX}{\v@lY}%
    \advance\v@lX\the\let@xte\wd\Gb@x\advance\v@lY\v@lYa\b@undb@x{\v@lX}{\v@lY}}
\ctr@ld@f\def\figwritegcw#1:#2(#3,#4){{\let\figWp@si=\FigWp@sigcW\let\figWBB@x=\FigWBB@xgcWE%
    \let@xte={-}\FigWrit@L#1:{#2}(#3,#4)}\ignorespaces}
\ctr@ld@f\def\figwritegce#1:#2(#3,#4){{\let\figWp@si=\FigWp@sigcE\let\figWBB@x=\FigWBB@xgcWE%
    \let@xte={}\FigWrit@L#1:{#2}(#3,#4)}\ignorespaces}
\ctr@ld@f\def\FigWp@sigcW{\rlap{\kern\v@lX\raise\v@lY\hbox{\rlap{\kern-\wd\Gb@x\kern-\v@leur%
     \raise-.5\ht\Gb@x\hbox{\raise\delt@\hbox{\raise.5\dp\Gb@x\hbox{\unhcopy\Gb@x}}}}%
     \c@nterpt}}}
\ctr@ld@f\def\FigWp@sigcE{\rlap{\kern\v@lX\raise\v@lY\hbox{\c@nterpt\kern\v@leur%
    \raise-.5\ht\Gb@x\hbox{\raise\delt@\hbox{\raise.5\dp\Gb@x\hbox{\unhcopy\Gb@x}}}}}}
\ctr@ld@f\def\FigWBB@xgcWE{\v@lZ=\ht\Gb@x\advance\v@lZ\dp\Gb@x%
    \advance\v@lX\the\let@xte\v@leur\advance\v@lY\delt@\advance\v@lY.5\v@lZ%
    \b@undb@x{\v@lX}{\v@lY}%
    \advance\v@lX\the\let@xte\wd\Gb@x\advance\v@lY-\v@lZ\b@undb@x{\v@lX}{\v@lY}}
\ctr@ld@f\def\figwritebn#1:#2(#3){{\def\Vc@rrect{\v@lZ=\v@leur}\Figwrit@NS#1:{#2}(#3)}\ignorespaces}
\ctr@ld@f\def\figwritebs#1:#2(#3){{\def\Vc@rrect{\v@lZ=-\v@leur}\Figwrit@NS#1:{#2}(#3)}\ignorespaces}
\ctr@ld@f\def\figwritebw#1:#2(#3){{\let\figWp@si=\FigWp@sibW\let\figWBB@x=\FigWBB@xbWE%
    \let@xte={-}\FigWrit@L#1:{#2}(#3,0pt)}\ignorespaces}
\ctr@ld@f\def\figwritebe#1:#2(#3){{\let\figWp@si=\FigWp@sibE\let\figWBB@x=\FigWBB@xbWE%
    \let@xte={}\FigWrit@L#1:{#2}(#3,0pt)}\ignorespaces}
\ctr@ld@f\def\FigWp@sibW{\rlap{\kern\v@lX\raise\v@lY\hbox{\rlap{\kern-\wd\Gb@x\kern-\v@leur%
          \hbox{\unhcopy\Gb@x}}\c@nterpt}}}
\ctr@ld@f\def\FigWp@sibE{\rlap{\kern\v@lX\raise\v@lY\hbox{\c@nterpt\kern\v@leur%
          \hbox{\unhcopy\Gb@x}}}}
\ctr@ld@f\def\FigWBB@xbWE{\v@lZ=\ht\Gb@x\advance\v@lZ\dp\Gb@x%
    \advance\v@lX\the\let@xte\v@leur\advance\v@lY\ht\Gb@x\b@undb@x{\v@lX}{\v@lY}%
    \advance\v@lX\the\let@xte\wd\Gb@x\advance\v@lY-\v@lZ\b@undb@x{\v@lX}{\v@lY}}
\ctr@ln@w{newread}\frf@g  \ctr@ln@w{newwrite}\fwf@g
\ctr@ln@w{newif}\ifcurr@ntPS
\ctr@ln@w{newif}\ifps@cri
\ctr@ln@w{newif}\ifUse@llipse
\ctr@ln@w{newif}\ifpsdebugmode \psdebugmodefalse 
\ctr@ln@w{newif}\ifPDFm@ke
\ifx\pdfliteral\undefined\else\ifnum\pdfoutput>\z@\PDFm@ketrue\fi\fi
\ctr@ld@f\def\initPDF@rDVI{%
\ifPDFm@ke
 \let\figscan=\figscan@E
 \let\newGr@FN=\newGr@FNPDF
 \ctr@ld@f\def\c@mcurveto{c}
 \ctr@ld@f\def\c@mfill{f}
 \ctr@ld@f\def\c@mgsave{q}
 \ctr@ld@f\def\c@mgrestore{Q}
 \ctr@ld@f\def\c@mlineto{l}
 \ctr@ld@f\def\c@mmoveto{m}
 \ctr@ld@f\def\c@msetgray{g}     \ctr@ld@f\def\c@msetgrayStroke{G}
 \ctr@ld@f\def\c@msetcmykcolor{k}\ctr@ld@f\def\c@msetcmykcolorStroke{K}
 \ctr@ld@f\def\c@msetrgbcolor{rg}\ctr@ld@f\def\c@msetrgbcolorStroke{RG}
 \ctr@ld@f\def\d@fprimarC@lor{\curr@ntcolor\space\curr@ntcolorc@md%
               \space\curr@ntcolor\space\curr@ntcolorc@mdStroke}
 \ctr@ld@f\def\d@fsecondC@lor{\sec@ndcolor\space\sec@ndcolorc@md%
               \space\sec@ndcolor\space\sec@ndcolorc@mdStroke}
 \ctr@ld@f\def\d@fthirdC@lor{\th@rdcolor\space\th@rdcolorc@md%
              \space\th@rdcolor\space\th@rdcolorc@mdStroke}
 \ctr@ld@f\def\c@msetdash{d}
 \ctr@ld@f\def\c@msetlinejoin{j}
 \ctr@ld@f\def\c@msetlinewidth{w}
 \ctr@ld@f\def\f@gclosestroke{\immediate\write\fwf@g{s}}
 \ctr@ld@f\def\f@gfill{\immediate\write\fwf@g{\fillc@md}}
 \ctr@ld@f\def\f@gnewpath{}
 \ctr@ld@f\def\f@gstroke{\immediate\write\fwf@g{S}}
\else
 \let\figinsertE=\figinsert
 \let\newGr@FN=\newGr@FNDVI
 \ctr@ld@f\def\c@mcurveto{curveto}
 \ctr@ld@f\def\c@mfill{fill}
 \ctr@ld@f\def\c@mgsave{gsave}
 \ctr@ld@f\def\c@mgrestore{grestore}
 \ctr@ld@f\def\c@mlineto{lineto}
 \ctr@ld@f\def\c@mmoveto{moveto}
 \ctr@ld@f\def\c@msetgray{setgray}          \ctr@ld@f\def\c@msetgrayStroke{}
 \ctr@ld@f\def\c@msetcmykcolor{setcmykcolor}\ctr@ld@f\def\c@msetcmykcolorStroke{}
 \ctr@ld@f\def\c@msetrgbcolor{setrgbcolor}  \ctr@ld@f\def\c@msetrgbcolorStroke{}
 \ctr@ld@f\def\d@fprimarC@lor{\curr@ntcolor\space\curr@ntcolorc@md}
 \ctr@ld@f\def\d@fsecondC@lor{\sec@ndcolor\space\sec@ndcolorc@md}
 \ctr@ld@f\def\d@fthirdC@lor{\th@rdcolor\space\th@rdcolorc@md}
 \ctr@ld@f\def\c@msetdash{setdash}
 \ctr@ld@f\def\c@msetlinejoin{setlinejoin}
 \ctr@ld@f\def\c@msetlinewidth{setlinewidth}
 \ctr@ld@f\def\f@gclosestroke{\immediate\write\fwf@g{closepath\space stroke}}
 \ctr@ld@f\def\f@gfill{\immediate\write\fwf@g{\fillc@md}}
 \ctr@ld@f\def\f@gnewpath{\immediate\write\fwf@g{newpath}}
 \ctr@ld@f\def\f@gstroke{\immediate\write\fwf@g{stroke}}
\fi}
\ctr@ld@f\def\c@pypsfile#1#2{\c@pyfil@{\immediate\write#1}{#2}}
\ctr@ld@f\def\Figinclud@PDF#1#2{\openin\frf@g=#1\pdfliteral{q #2 0 0 #2 0 0 cm}%
    \c@pyfil@{\pdfliteral}{\frf@g}\pdfliteral{Q}\closein\frf@g}
\ctr@ln@w{newif}\ifmored@ta
\ctr@ln@m\bl@nkline
\ctr@ld@f\def\c@pyfil@#1#2{\def\bl@nkline{\par}{\catcode`\%=12
    \loop\ifeof#2\mored@tafalse\else\mored@tatrue\immediate\read#2 to\tr@c
    \ifx\tr@c\bl@nkline\else#1{\tr@c}\fi\fi\ifmored@ta\repeat}}
\ctr@ld@f\def\keln@mun#1#2|{\def\l@debut{#1}\def\l@suite{#2}}
\ctr@ld@f\def\keln@mde#1#2#3|{\def\l@debut{#1#2}\def\l@suite{#3}}
\ctr@ld@f\def\keln@mtr#1#2#3#4|{\def\l@debut{#1#2#3}\def\l@suite{#4}}
\ctr@ld@f\def\keln@mqu#1#2#3#4#5|{\def\l@debut{#1#2#3#4}\def\l@suite{#5}}
\ctr@ld@f\let\@psffilein=\frf@g 
\ctr@ln@w{newif}\if@psffileok    
\ctr@ln@w{newif}\if@psfbbfound   
\ctr@ln@w{newif}\if@psfverbose   
\@psfverbosetrue
\ctr@ln@m\@psfllx \ctr@ln@m\@psflly
\ctr@ln@m\@psfurx \ctr@ln@m\@psfury
\ctr@ln@m\resetcolonc@tcode
\ctr@ld@f\def\@psfgetbb#1{\global\@psfbbfoundfalse%
\global\def\@psfllx{0}\global\def\@psflly{0}%
\global\def\@psfurx{30}\global\def\@psfury{30}%
\openin\@psffilein=#1\relax
\ifeof\@psffilein\errmessage{I couldn't open #1, will ignore it}\else
   \edef\resetcolonc@tcode{\catcode`\noexpand\:\the\catcode`\:\relax}%
   {\@psffileoktrue \chardef\other=12
    \def\do##1{\catcode`##1=\other}\dospecials \catcode`\ =10 \resetcolonc@tcode
    \loop
       \read\@psffilein to \@psffileline
       \ifeof\@psffilein\@psffileokfalse\else
          \expandafter\@psfaux\@psffileline:. \\%
       \fi
   \if@psffileok\repeat
   \if@psfbbfound\else
    \if@psfverbose\message{No bounding box comment in #1; using defaults}\fi\fi
   }\closein\@psffilein\fi}%
\ctr@ln@m\@psfbblit
\ctr@ln@m\@psfpercent
{\catcode`\%=12 \global\let\@psfpercent=
\ctr@ln@m\@psfaux
\long\def\@psfaux#1#2:#3\\{\ifx#1\@psfpercent
   \def\testit{#2}\ifx\testit\@psfbblit
      \@psfgrab #3 . . . \\%
      \@psffileokfalse
      \global\@psfbbfoundtrue
   \fi\else\ifx#1\par\else\@psffileokfalse\fi\fi}%
\ctr@ld@f\def\@psfempty{}%
\ctr@ld@f\def\@psfgrab #1 #2 #3 #4 #5\\{%
\global\def\@psfllx{#1}\ifx\@psfllx\@psfempty
      \@psfgrab #2 #3 #4 #5 .\\\else
   \global\def\@psflly{#2}%
   \global\def\@psfurx{#3}\global\def\@psfury{#4}\fi}%
\ctr@ld@f\def\PSwrit@cmd#1#2#3{{\Figg@tXY{#1}\c@lprojSP\b@undb@x{\v@lX}{\v@lY}%
    \v@lX=\ptT@ptps\v@lX\v@lY=\ptT@ptps\v@lY%
    \immediate\write#3{\repdecn@mb{\v@lX}\space\repdecn@mb{\v@lY}\space#2}}}
\ctr@ld@f\def\PSwrit@cmdS#1#2#3#4#5{{\Figg@tXY{#1}\c@lprojSP\b@undb@x{\v@lX}{\v@lY}%
    \global\result@t=\v@lX\global\result@@t=\v@lY%
    \v@lX=\ptT@ptps\v@lX\v@lY=\ptT@ptps\v@lY%
    \immediate\write#3{\repdecn@mb{\v@lX}\space\repdecn@mb{\v@lY}\space#2}}%
    \edef#4{\the\result@t}\edef#5{\the\result@@t}}
\ctr@ld@f\def\psaltitude#1[#2,#3,#4]{{\ifcurr@ntPS\ifps@cri%
    \PSc@mment{psaltitude Square Dim=#1, Triangle=[#2 / #3,#4]}%
    \s@uvc@ntr@l\et@tpsaltitude\resetc@ntr@l{2}\figptorthoprojline-5:=#2/#3,#4/%
    \figvectP -1[#3,#4]\n@rminf{\v@leur}{-1}\vecunit@{-3}{-1}%
    \figvectP -1[-5,#3]\n@rminf{\v@lmin}{-1}\figvectP -2[-5,#4]\n@rminf{\v@lmax}{-2}%
    \ifdim\v@lmin<\v@lmax\s@mme=#3\else\v@lmax=\v@lmin\s@mme=#4\fi%
    \figvectP -4[-5,#2]\vecunit@{-4}{-4}\delt@=#1\unit@%
    \edef\t@ille{\repdecn@mb{\delt@}}\figpttra-1:=-5/\t@ille,-3/%
    \figptstra-3=-5,-1/\t@ille,-4/\psline[#2,-5]\s@uvdash{\typ@dash}%
    \pssetdash{\defaultdash}\psline[-1,-2,-3]\pssetdash{\typ@dash}%
    \ifdim\v@leur<\v@lmax\Pss@tsecondSt\psline[-5,\the\s@mme]\Psrest@reSt\fi%
    \PSc@mment{End psaltitude}\resetc@ntr@l\et@tpsaltitude\fi\fi}}
\ctr@ld@f\def\Ps@rcerc#1;#2(#3,#4){\ellBB@x#1;#2,#2(#3,#4,0)%
    \f@gnewpath{\delt@=#2\unit@\delt@=\ptT@ptps\delt@%
    \BdingB@xfalse%
    \PSwrit@cmd{#1}{\repdecn@mb{\delt@}\space #3\space #4\space arc}{\fwf@g}}}
\ctr@ln@m\psarccirc
\ctr@ld@f\def\psarccircDD#1;#2(#3,#4){\ifcurr@ntPS\ifps@cri%
    \PSc@mment{psarccircDD Center=#1 ; Radius=#2 (Ang1=#3, Ang2=#4)}%
    \iffillm@de\Ps@rcerc#1;#2(#3,#4)%
    \f@gfill%
    \else\Ps@rcerc#1;#2(#3,#4)\f@gstroke\fi%
    \PSc@mment{End psarccircDD}\fi\fi}
\ctr@ld@f\def\psarccircTD#1,#2,#3;#4(#5,#6){{\ifcurr@ntPS\ifps@cri\s@uvc@ntr@l\et@tpsarccircTD%
    \PSc@mment{psarccircTD Center=#1,P1=#2,P2=#3 ; Radius=#4 (Ang1=#5, Ang2=#6)}%
    \setc@ntr@l{2}\c@lExtAxes#1,#2,#3(#4)\psarcellPATD#1,-4,-5(#5,#6)%
    \PSc@mment{End psarccircTD}\resetc@ntr@l\et@tpsarccircTD\fi\fi}}
\ctr@ld@f\def\c@lExtAxes#1,#2,#3(#4){%
    \figvectPTD-5[#1,#2]\vecunit@{-5}{-5}\figvectNTD-4[#1,#2,#3]\vecunit@{-4}{-4}%
    \figvectNVTD-3[-4,-5]\delt@=#4\unit@\edef\r@yon{\repdecn@mb{\delt@}}%
    \figpttra-4:=#1/\r@yon,-5/\figpttra-5:=#1/\r@yon,-3/}
\ctr@ln@m\psarccircP
\ctr@ld@f\def\psarccircPDD#1;#2[#3,#4]{{\ifcurr@ntPS\ifps@cri\s@uvc@ntr@l\et@tpsarccircPDD%
    \PSc@mment{psarccircPDD Center=#1; Radius=#2, [P1=#3, P2=#4]}%
    \Ps@ngleparam#1;#2[#3,#4]\ifdim\v@lmin>\v@lmax\advance\v@lmax\DePI@deg\fi%
    \edef\@ngdeb{\repdecn@mb{\v@lmin}}\edef\@ngfin{\repdecn@mb{\v@lmax}}%
    \psarccirc#1;\r@dius(\@ngdeb,\@ngfin)%
    \PSc@mment{End psarccircPDD}\resetc@ntr@l\et@tpsarccircPDD\fi\fi}}
\ctr@ld@f\def\psarccircPTD#1;#2[#3,#4,#5]{{\ifcurr@ntPS\ifps@cri\s@uvc@ntr@l\et@tpsarccircPTD%
    \PSc@mment{psarccircPTD Center=#1; Radius=#2, [P1=#3, P2=#4, P3=#5]}%
    \setc@ntr@l{2}\c@lExtAxes#1,#3,#5(#2)\psarcellPP#1,-4,-5[#3,#4]%
    \PSc@mment{End psarccircPTD}\resetc@ntr@l\et@tpsarccircPTD\fi\fi}}
\ctr@ld@f\def\Ps@ngleparam#1;#2[#3,#4]{\setc@ntr@l{2}%
    \figvectPDD-1[#1,#3]\vecunit@{-1}{-1}\Figg@tXY{-1}\arct@n\v@lmin(\v@lX,\v@lY)%
    \figvectPDD-2[#1,#4]\vecunit@{-2}{-2}\Figg@tXY{-2}\arct@n\v@lmax(\v@lX,\v@lY)%
    \v@lmin=\rdT@deg\v@lmin\v@lmax=\rdT@deg\v@lmax%
    \v@leur=#2pt\maxim@m{\mili@u}{-\v@leur}{\v@leur}%
    \edef\r@dius{\repdecn@mb{\mili@u}}}
\ctr@ld@f\def\Ps@rcercBz#1;#2(#3,#4){\Ps@rellBz#1;#2,#2(#3,#4,0)}
\ctr@ld@f\def\Ps@rellBz#1;#2,#3(#4,#5,#6){%
    \ellBB@x#1;#2,#3(#4,#5,#6)\BdingB@xfalse%
    \c@lNbarcs{#4}{#5}\v@leur=#4pt\setc@ntr@l{2}\figptell-13::#1;#2,#3(#4,#6)%
    \f@gnewpath\PSwrit@cmd{-13}{\c@mmoveto}{\fwf@g}%
    \s@mme=\z@\bcl@rellBz#1;#2,#3(#6)\BdingB@xtrue}
\ctr@ld@f\def\bcl@rellBz#1;#2,#3(#4){\relax%
    \ifnum\s@mme<\p@rtent\advance\s@mme\@ne%
    \advance\v@leur\delt@\edef\@ngle{\repdecn@mb\v@leur}\figptell-14::#1;#2,#3(\@ngle,#4)%
    \advance\v@leur\delt@\edef\@ngle{\repdecn@mb\v@leur}\figptell-15::#1;#2,#3(\@ngle,#4)%
    \advance\v@leur\delt@\edef\@ngle{\repdecn@mb\v@leur}\figptell-16::#1;#2,#3(\@ngle,#4)%
    \figptscontrolDD-18[-13,-14,-15,-16]%
    \PSwrit@cmd{-18}{}{\fwf@g}\PSwrit@cmd{-17}{}{\fwf@g}%
    \PSwrit@cmd{-16}{\c@mcurveto}{\fwf@g}%
    \figptcopyDD-13:/-16/\bcl@rellBz#1;#2,#3(#4)\fi}
\ctr@ld@f\def\Ps@rell#1;#2,#3(#4,#5,#6){\ellBB@x#1;#2,#3(#4,#5,#6)%
    \f@gnewpath{\v@lmin=#2\unit@\v@lmin=\ptT@ptps\v@lmin%
    \v@lmax=#3\unit@\v@lmax=\ptT@ptps\v@lmax\BdingB@xfalse%
    \PSwrit@cmd{#1}%
    {#6\space\repdecn@mb{\v@lmin}\space\repdecn@mb{\v@lmax}\space #4\space #5\space ellipse}{\fwf@g}}%
    \global\Use@llipsetrue}
\ctr@ln@m\psarcell
\ctr@ld@f\def\psarcellDD#1;#2,#3(#4,#5,#6){{\ifcurr@ntPS\ifps@cri%
    \PSc@mment{psarcellDD Center=#1 ; XRad=#2, YRad=#3 (Ang1=#4, Ang2=#5, Inclination=#6)}%
    \iffillm@de\Ps@rell#1;#2,#3(#4,#5,#6)%
    \f@gfill%
    \else\Ps@rell#1;#2,#3(#4,#5,#6)\f@gstroke\fi%
    \PSc@mment{End psarcellDD}\fi\fi}}
\ctr@ld@f\def\psarcellTD#1;#2,#3(#4,#5,#6){{\ifcurr@ntPS\ifps@cri\s@uvc@ntr@l\et@tpsarcellTD%
    \PSc@mment{psarcellTD Center=#1 ; XRad=#2, YRad=#3 (Ang1=#4, Ang2=#5, Inclination=#6)}%
    \setc@ntr@l{2}\figpttraC -8:=#1/#2,0,0/\figpttraC -7:=#1/0,#3,0/%
    \figvectC -4(0,0,1)\figptsrot -8=-8,-7/#1,#6,-4/\psarcellPATD#1,-8,-7(#4,#5)%
    \PSc@mment{End psarcellTD}\resetc@ntr@l\et@tpsarcellTD\fi\fi}}
\ctr@ln@m\psarcellPA
\ctr@ld@f\def\psarcellPADD#1,#2,#3(#4,#5){{\ifcurr@ntPS\ifps@cri\s@uvc@ntr@l\et@tpsarcellPADD%
    \PSc@mment{psarcellPADD Center=#1,PtAxis1=#2,PtAxis2=#3 (Ang1=#4, Ang2=#5)}%
    \setc@ntr@l{2}\figvectPDD-1[#1,#2]\vecunit@DD{-1}{-1}\v@lX=\ptT@unit@\result@t%
    \edef\XR@d{\repdecn@mb{\v@lX}}\Figg@tXY{-1}\arct@n\v@lmin(\v@lX,\v@lY)%
    \v@lmin=\rdT@deg\v@lmin\edef\Inclin@{\repdecn@mb{\v@lmin}}%
    \figgetdist\YR@d[#1,#3]\psarcellDD#1;\XR@d,\YR@d(#4,#5,\Inclin@)%
    \PSc@mment{End psarcellPADD}\resetc@ntr@l\et@tpsarcellPADD\fi\fi}}
\ctr@ld@f\def\psarcellPATD#1,#2,#3(#4,#5){{\ifcurr@ntPS\ifps@cri\s@uvc@ntr@l\et@tpsarcellPATD%
    \PSc@mment{psarcellPATD Center=#1,PtAxis1=#2,PtAxis2=#3 (Ang1=#4, Ang2=#5)}%
    \iffillm@de\Ps@rellPATD#1,#2,#3(#4,#5)%
    \f@gfill%
    \else\Ps@rellPATD#1,#2,#3(#4,#5)\f@gstroke\fi%
    \PSc@mment{End psarcellPATD}\resetc@ntr@l\et@tpsarcellPATD\fi\fi}}
\ctr@ld@f\def\Ps@rellPATD#1,#2,#3(#4,#5){\let\c@lprojSP=\relax%
    \setc@ntr@l{2}\figvectPTD-1[#1,#2]\figvectPTD-2[#1,#3]\c@lNbarcs{#4}{#5}%
    \v@leur=#4pt\c@lptellP{#1}{-1}{-2}\Figptpr@j-5:/-3/%
    \f@gnewpath\PSwrit@cmdS{-5}{\c@mmoveto}{\fwf@g}{\X@un}{\Y@un}%
    \edef\C@nt@r{#1}\s@mme=\z@\bcl@rellPATD}
\ctr@ld@f\def\bcl@rellPATD{\relax%
    \ifnum\s@mme<\p@rtent\advance\s@mme\@ne%
    \advance\v@leur\delt@\c@lptellP{\C@nt@r}{-1}{-2}\Figptpr@j-4:/-3/%
    \advance\v@leur\delt@\c@lptellP{\C@nt@r}{-1}{-2}\Figptpr@j-6:/-3/%
    \advance\v@leur\delt@\c@lptellP{\C@nt@r}{-1}{-2}\Figptpr@j-3:/-3/%
    \v@lX=\z@\v@lY=\z@\Figtr@nptDD{-5}{-5}\Figtr@nptDD{2}{-3}%
    \divide\v@lX\@vi\divide\v@lY\@vi%
    \Figtr@nptDD{3}{-4}\Figtr@nptDD{-1.5}{-6}\v@lmin=\v@lX\v@lmax=\v@lY%
    \v@lX=\z@\v@lY=\z@\Figtr@nptDD{2}{-5}\Figtr@nptDD{-5}{-3}%
    \divide\v@lX\@vi\divide\v@lY\@vi\Figtr@nptDD{-1.5}{-4}\Figtr@nptDD{3}{-6}%
    \BdingB@xfalse%
    \Figp@intregDD-4:(\v@lmin,\v@lmax)\PSwrit@cmdS{-4}{}{\fwf@g}{\X@de}{\Y@de}%
    \Figp@intregDD-4:(\v@lX,\v@lY)\PSwrit@cmdS{-4}{}{\fwf@g}{\X@tr}{\Y@tr}%
    \BdingB@xtrue\PSwrit@cmdS{-3}{\c@mcurveto}{\fwf@g}{\X@qu}{\Y@qu}%
    \B@zierBB@x{1}{\Y@un}(\X@un,\X@de,\X@tr,\X@qu)%
    \B@zierBB@x{2}{\X@un}(\Y@un,\Y@de,\Y@tr,\Y@qu)%
    \edef\X@un{\X@qu}\edef\Y@un{\Y@qu}\figptcopyDD-5:/-3/\bcl@rellPATD\fi}
\ctr@ld@f\def\c@lNbarcs#1#2{%
    \delt@=#2pt\advance\delt@-#1pt\maxim@m{\v@lmax}{\delt@}{-\delt@}%
    \v@leur=\v@lmax\divide\v@leur45 \p@rtentiere{\p@rtent}{\v@leur}\advance\p@rtent\@ne%
    \s@mme=\p@rtent\multiply\s@mme\thr@@\divide\delt@\s@mme}
\ctr@ld@f\def\psarcellPP#1,#2,#3[#4,#5]{{\ifcurr@ntPS\ifps@cri\s@uvc@ntr@l\et@tpsarcellPP%
    \PSc@mment{psarcellPP Center=#1,PtAxis1=#2,PtAxis2=#3 [Point1=#4, Point2=#5]}%
    \setc@ntr@l{2}\figvectP-2[#1,#3]\vecunit@{-2}{-2}\v@lmin=\result@t%
    \invers@{\v@lmax}{\v@lmin}%
    \figvectP-1[#1,#2]\vecunit@{-1}{-1}\v@leur=\result@t%
    \v@leur=\repdecn@mb{\v@lmax}\v@leur\edef\AsB@{\repdecn@mb{\v@leur}}
    \c@lAngle{#1}{#4}{\v@lmin}\edef\@ngdeb{\repdecn@mb{\v@lmin}}%
    \c@lAngle{#1}{#5}{\v@lmax}\ifdim\v@lmin>\v@lmax\advance\v@lmax\DePI@deg\fi%
    \edef\@ngfin{\repdecn@mb{\v@lmax}}\psarcellPA#1,#2,#3(\@ngdeb,\@ngfin)%
    \PSc@mment{End psarcellPP}\resetc@ntr@l\et@tpsarcellPP\fi\fi}}
\ctr@ld@f\def\c@lAngle#1#2#3{\figvectP-3[#1,#2]%
    \c@lproscal\delt@[-3,-1]\c@lproscal\v@leur[-3,-2]%
    \v@leur=\AsB@\v@leur\arct@n#3(\delt@,\v@leur)#3=\rdT@deg#3}
\ctr@ln@w{newif}\if@rrowratio\@rrowratiotrue
\ctr@ln@w{newif}\if@rrowhfill
\ctr@ln@w{newif}\if@rrowhout
\ctr@ld@f\def\Psset@rrowhe@d#1=#2|{\keln@mun#1|%
    \def\n@mref{a}\ifx\l@debut\n@mref\pssetarrowheadangle{#2}\else
    \def\n@mref{f}\ifx\l@debut\n@mref\pssetarrowheadfill{#2}\else
    \def\n@mref{l}\ifx\l@debut\n@mref\pssetarrowheadlength{#2}\else
    \def\n@mref{o}\ifx\l@debut\n@mref\pssetarrowheadout{#2}\else
    \def\n@mref{r}\ifx\l@debut\n@mref\pssetarrowheadratio{#2}\else
    \immediate\write16{*** Unknown attribute: \BS@ psset arrowhead(..., #1=...)}%
    \fi\fi\fi\fi\fi}
\ctr@ln@m\@rrowheadangle
\ctr@ln@m\C@AHANG \ctr@ln@m\S@AHANG \ctr@ln@m\UNSS@N
\ctr@ld@f\def\pssetarrowheadangle#1{\edef\@rrowheadangle{#1}{\c@ssin{\C@}{\S@}{#1}%
    \xdef\C@AHANG{\C@}\xdef\S@AHANG{\S@}\v@lmax=\S@ pt%
    \invers@{\v@leur}{\v@lmax}\maxim@m{\v@leur}{\v@leur}{-\v@leur}%
    \xdef\UNSS@N{\the\v@leur}}}
\ctr@ld@f\def\pssetarrowheadfill#1{\expandafter\set@rrowhfill#1:}
\ctr@ld@f\def\set@rrowhfill#1#2:{\if#1n\@rrowhfillfalse\else\@rrowhfilltrue\fi}
\ctr@ld@f\def\pssetarrowheadout#1{\expandafter\set@rrowhout#1:}
\ctr@ld@f\def\set@rrowhout#1#2:{\if#1n\@rrowhoutfalse\else\@rrowhouttrue\fi}
\ctr@ln@m\@rrowheadlength
\ctr@ld@f\def\pssetarrowheadlength#1{\edef\@rrowheadlength{#1}\@rrowratiofalse}
\ctr@ln@m\@rrowheadratio
\ctr@ld@f\def\pssetarrowheadratio#1{\edef\@rrowheadratio{#1}\@rrowratiotrue}
\ctr@ln@m\defaultarrowheadlength
\ctr@ld@f\def\psresetarrowhead{%
    \pssetarrowheadangle{\defaultarrowheadangle}%
    \pssetarrowheadfill{\defaultarrowheadfill}%
    \pssetarrowheadout{\defaultarrowheadout}%
    \pssetarrowheadratio{\defaultarrowheadratio}%
    \d@fm@cdim\defaultarrowheadlength{\defaulth@rdahlength}
    \pssetarrowheadlength{\defaultarrowheadlength}}
\ctr@ld@f\def\defaultarrowheadratio{0.1}
\ctr@ld@f\def\defaultarrowheadangle{20}
\ctr@ld@f\def\defaultarrowheadfill{no}
\ctr@ld@f\def\defaultarrowheadout{no}
\ctr@ld@f\def\defaulth@rdahlength{8pt}
\ctr@ln@m\psarrow
\ctr@ld@f\def\psarrowDD[#1,#2]{{\ifcurr@ntPS\ifps@cri\s@uvc@ntr@l\et@tpsarrow%
    \PSc@mment{psarrowDD [Pt1,Pt2]=[#1,#2]}\pssetfillmode{no}%
    \psarrowheadDD[#1,#2]\setc@ntr@l{2}\psline[#1,-3]%
    \PSc@mment{End psarrowDD}\resetc@ntr@l\et@tpsarrow\fi\fi}}
\ctr@ld@f\def\psarrowTD[#1,#2]{{\ifcurr@ntPS\ifps@cri\s@uvc@ntr@l\et@tpsarrowTD%
    \PSc@mment{psarrowTD [Pt1,Pt2]=[#1,#2]}\resetc@ntr@l{2}%
    \Figptpr@j-5:/#1/\Figptpr@j-6:/#2/\let\c@lprojSP=\relax\psarrowDD[-5,-6]%
    \PSc@mment{End psarrowTD}\resetc@ntr@l\et@tpsarrowTD\fi\fi}}
\ctr@ln@m\psarrowhead
\ctr@ld@f\def\psarrowheadDD[#1,#2]{{\ifcurr@ntPS\ifps@cri\s@uvc@ntr@l\et@tpsarrowheadDD%
    \if@rrowhfill\def\@hangle{-\@rrowheadangle}\else\def\@hangle{\@rrowheadangle}\fi%
    \if@rrowratio%
    \if@rrowhout\def\@hratio{-\@rrowheadratio}\else\def\@hratio{\@rrowheadratio}\fi%
    \PSc@mment{psarrowheadDD Ratio=\@hratio, Angle=\@hangle, [Pt1,Pt2]=[#1,#2]}%
    \Ps@rrowhead\@hratio,\@hangle[#1,#2]%
    \else%
    \if@rrowhout\def\@hlength{-\@rrowheadlength}\else\def\@hlength{\@rrowheadlength}\fi%
    \PSc@mment{psarrowheadDD Length=\@hlength, Angle=\@hangle, [Pt1,Pt2]=[#1,#2]}%
    \Ps@rrowheadfd\@hlength,\@hangle[#1,#2]%
    \fi%
    \PSc@mment{End psarrowheadDD}\resetc@ntr@l\et@tpsarrowheadDD\fi\fi}}
\ctr@ld@f\def\psarrowheadTD[#1,#2]{{\ifcurr@ntPS\ifps@cri\s@uvc@ntr@l\et@tpsarrowheadTD%
    \PSc@mment{psarrowheadTD [Pt1,Pt2]=[#1,#2]}\resetc@ntr@l{2}%
    \Figptpr@j-5:/#1/\Figptpr@j-6:/#2/\let\c@lprojSP=\relax\psarrowheadDD[-5,-6]%
    \PSc@mment{End psarrowheadTD}\resetc@ntr@l\et@tpsarrowheadTD\fi\fi}}
\ctr@ld@f\def\Ps@rrowhead#1,#2[#3,#4]{\v@leur=#1\p@\maxim@m{\v@leur}{\v@leur}{-\v@leur}%
    \ifdim\v@leur>\Cepsil@n{
    \PSc@mment{ps@rrowhead Ratio=#1, Angle=#2, [Pt1,Pt2]=[#3,#4]}\v@leur=\UNSS@N%
    \v@leur=\curr@ntwidth\v@leur\v@leur=\ptpsT@pt\v@leur\delt@=.5\v@leur
    \setc@ntr@l{2}\figvectPDD-3[#4,#3]%
    \Figg@tXY{-3}\v@lX=#1\v@lX\v@lY=#1\v@lY\Figv@ctCreg-3(\v@lX,\v@lY)%
    \vecunit@{-4}{-3}\mili@u=\result@t%
    \ifdim#2pt>\z@\v@lXa=-\C@AHANG\delt@%
     \edef\c@ef{\repdecn@mb{\v@lXa}}\figpttraDD-3:=-3/\c@ef,-4/\fi%
    \edef\c@ef{\repdecn@mb{\delt@}}%
    \v@lXa=\mili@u\v@lXa=\C@AHANG\v@lXa%
    \v@lYa=\ptpsT@pt\p@\v@lYa=\curr@ntwidth\v@lYa\v@lYa=\sDcc@ngle\v@lYa%
    \advance\v@lXa-\v@lYa\gdef\sDcc@ngle{0}%
    \ifdim\v@lXa>\v@leur\edef\c@efendpt{\repdecn@mb{\v@leur}}%
    \else\edef\c@efendpt{\repdecn@mb{\v@lXa}}\fi%
    \Figg@tXY{-3}\v@lmin=\v@lX\v@lmax=\v@lY%
    \v@lXa=\C@AHANG\v@lmin\v@lYa=\S@AHANG\v@lmax\advance\v@lXa\v@lYa%
    \v@lYa=-\S@AHANG\v@lmin\v@lX=\C@AHANG\v@lmax\advance\v@lYa\v@lX%
    \setc@ntr@l{1}\Figg@tXY{#4}\advance\v@lX\v@lXa\advance\v@lY\v@lYa%
    \setc@ntr@l{2}\Figp@intregDD-2:(\v@lX,\v@lY)%
    \v@lXa=\C@AHANG\v@lmin\v@lYa=-\S@AHANG\v@lmax\advance\v@lXa\v@lYa%
    \v@lYa=\S@AHANG\v@lmin\v@lX=\C@AHANG\v@lmax\advance\v@lYa\v@lX%
    \setc@ntr@l{1}\Figg@tXY{#4}\advance\v@lX\v@lXa\advance\v@lY\v@lYa%
    \setc@ntr@l{2}\Figp@intregDD-1:(\v@lX,\v@lY)%
    \ifdim#2pt<\z@\fillm@detrue\psline[-2,#4,-1]
    \else\figptstraDD-3=#4,-2,-1/\c@ef,-4/\psline[-2,-3,-1]\fi
    \ifdim#1pt>\z@\figpttraDD-3:=#4/\c@efendpt,-4/\else\figptcopyDD-3:/#4/\fi%
    \PSc@mment{End ps@rrowhead}}\fi}
\ctr@ld@f\def\sDcc@ngle{0}
\ctr@ld@f\def\Ps@rrowheadfd#1,#2[#3,#4]{{%
    \PSc@mment{ps@rrowheadfd Length=#1, Angle=#2, [Pt1,Pt2]=[#3,#4]}%
    \setc@ntr@l{2}\figvectPDD-1[#3,#4]\n@rmeucDD{\v@leur}{-1}\v@leur=\ptT@unit@\v@leur%
    \invers@{\v@leur}{\v@leur}\v@leur=#1\v@leur\edef\R@tio{\repdecn@mb{\v@leur}}%
    \Ps@rrowhead\R@tio,#2[#3,#4]\PSc@mment{End ps@rrowheadfd}}}
\ctr@ln@m\psarrowBezier
\ctr@ld@f\def\psarrowBezierDD[#1,#2,#3,#4]{{\ifcurr@ntPS\ifps@cri\s@uvc@ntr@l\et@tpsarrowBezierDD%
    \PSc@mment{psarrowBezierDD Control points=#1,#2,#3,#4}\setc@ntr@l{2}%
    \if@rrowratio\c@larclengthDD\v@leur,10[#1,#2,#3,#4]\else\v@leur=\z@\fi%
    \Ps@rrowB@zDD\v@leur[#1,#2,#3,#4]%
    \PSc@mment{End psarrowBezierDD}\resetc@ntr@l\et@tpsarrowBezierDD\fi\fi}}
\ctr@ld@f\def\psarrowBezierTD[#1,#2,#3,#4]{{\ifcurr@ntPS\ifps@cri\s@uvc@ntr@l\et@tpsarrowBezierTD%
    \PSc@mment{psarrowBezierTD Control points=#1,#2,#3,#4}\resetc@ntr@l{2}%
    \Figptpr@j-7:/#1/\Figptpr@j-8:/#2/\Figptpr@j-9:/#3/\Figptpr@j-10:/#4/%
    \let\c@lprojSP=\relax\ifnum\curr@ntproj<\tw@\psarrowBezierDD[-7,-8,-9,-10]%
    \else\f@gnewpath\PSwrit@cmd{-7}{\c@mmoveto}{\fwf@g}%
    \if@rrowratio\c@larclengthDD\mili@u,10[-7,-8,-9,-10]\else\mili@u=\z@\fi%
    \p@rtent=\NBz@rcs\advance\p@rtent\m@ne\subB@zierTD\p@rtent[#1,#2,#3,#4]%
    \f@gstroke%
    \advance\v@lmin\p@rtent\delt@
    \v@leur=\v@lmin\advance\v@leur0.33333 \delt@\edef\unti@rs{\repdecn@mb{\v@leur}}%
    \v@leur=\v@lmin\advance\v@leur0.66666 \delt@\edef\deti@rs{\repdecn@mb{\v@leur}}%
    \figptcopyDD-8:/-10/\c@lsubBzarc\unti@rs,\deti@rs[#1,#2,#3,#4]%
    \figptcopyDD-8:/-4/\figptcopyDD-9:/-3/\Ps@rrowB@zDD\mili@u[-7,-8,-9,-10]\fi%
    \PSc@mment{End psarrowBezierTD}\resetc@ntr@l\et@tpsarrowBezierTD\fi\fi}}
\ctr@ld@f\def\c@larclengthDD#1,#2[#3,#4,#5,#6]{{\p@rtent=#2\figptcopyDD-5:/#3/%
    \delt@=\p@\divide\delt@\p@rtent\c@rre=\z@\v@leur=\z@\s@mme=\z@%
    \loop\ifnum\s@mme<\p@rtent\advance\s@mme\@ne\advance\v@leur\delt@%
    \edef\T@{\repdecn@mb{\v@leur}}\figptBezierDD-6::\T@[#3,#4,#5,#6]%
    \figvectPDD-1[-5,-6]\n@rmeucDD{\mili@u}{-1}\advance\c@rre\mili@u%
    \figptcopyDD-5:/-6/\repeat\global\result@t=\ptT@unit@\c@rre}#1=\result@t}
\ctr@ld@f\def\Ps@rrowB@zDD#1[#2,#3,#4,#5]{{\pssetfillmode{no}%
    \if@rrowratio\delt@=\@rrowheadratio#1\else\delt@=\@rrowheadlength pt\fi%
    \v@leur=\C@AHANG\delt@\edef\R@dius{\repdecn@mb{\v@leur}}%
    \FigptintercircB@zDD-5::0,\R@dius[#5,#4,#3,#2]%
    \pssetarrowheadlength{\repdecn@mb{\delt@}}\psarrowheadDD[-5,#5]%
    \let\n@rmeuc=\n@rmeucDD\figgetdist\R@dius[#5,-3]%
    \FigptintercircB@zDD-6::0,\R@dius[#5,#4,#3,#2]%
    \figptBezierDD-5::0.33333[#5,#4,#3,#2]\figptBezierDD-3::0.66666[#5,#4,#3,#2]%
    \figptscontrolDD-5[-6,-5,-3,#2]\psBezierDD1[-6,-5,-4,#2]}}
\ctr@ln@m\psarrowcirc
\ctr@ld@f\def\psarrowcircDD#1;#2(#3,#4){{\ifcurr@ntPS\ifps@cri\s@uvc@ntr@l\et@tpsarrowcircDD%
    \PSc@mment{psarrowcircDD Center=#1 ; Radius=#2 (Ang1=#3,Ang2=#4)}%
    \pssetfillmode{no}\Pscirc@rrowhead#1;#2(#3,#4)%
    \setc@ntr@l{2}\figvectPDD -4[#1,-3]\vecunit@{-4}{-4}%
    \Figg@tXY{-4}\arct@n\v@lmin(\v@lX,\v@lY)%
    \v@lmin=\rdT@deg\v@lmin\v@leur=#4pt\advance\v@leur-\v@lmin%
    \maxim@m{\v@leur}{\v@leur}{-\v@leur}%
    \ifdim\v@leur>\DemiPI@deg\relax\ifdim\v@lmin<#4pt\advance\v@lmin\DePI@deg%
    \else\advance\v@lmin-\DePI@deg\fi\fi\edef\ar@ngle{\repdecn@mb{\v@lmin}}%
    \ifdim#3pt<#4pt\psarccirc#1;#2(#3,\ar@ngle)\else\psarccirc#1;#2(\ar@ngle,#3)\fi%
    \PSc@mment{End psarrowcircDD}\resetc@ntr@l\et@tpsarrowcircDD\fi\fi}}
\ctr@ld@f\def\psarrowcircTD#1,#2,#3;#4(#5,#6){{\ifcurr@ntPS\ifps@cri\s@uvc@ntr@l\et@tpsarrowcircTD%
    \PSc@mment{psarrowcircTD Center=#1,P1=#2,P2=#3 ; Radius=#4 (Ang1=#5, Ang2=#6)}%
    \resetc@ntr@l{2}\c@lExtAxes#1,#2,#3(#4)\let\c@lprojSP=\relax%
    \figvectPTD-11[#1,-4]\figvectPTD-12[#1,-5]\c@lNbarcs{#5}{#6}%
    \if@rrowratio\v@lmax=\degT@rd\v@lmax\edef\D@lpha{\repdecn@mb{\v@lmax}}\fi%
    \advance\p@rtent\m@ne\mili@u=\z@%
    \v@leur=#5pt\c@lptellP{#1}{-11}{-12}\Figptpr@j-9:/-3/%
    \f@gnewpath\PSwrit@cmdS{-9}{\c@mmoveto}{\fwf@g}{\X@un}{\Y@un}%
    \edef\C@nt@r{#1}\s@mme=\z@\bcl@rcircTD\f@gstroke%
    \advance\v@leur\delt@\c@lptellP{#1}{-11}{-12}\Figptpr@j-5:/-3/%
    \advance\v@leur\delt@\c@lptellP{#1}{-11}{-12}\Figptpr@j-6:/-3/%
    \advance\v@leur\delt@\c@lptellP{#1}{-11}{-12}\Figptpr@j-10:/-3/%
    \figptscontrolDD-8[-9,-5,-6,-10]%
    \if@rrowratio\c@lcurvradDD0.5[-9,-8,-7,-10]\advance\mili@u\result@t%
    \maxim@m{\mili@u}{\mili@u}{-\mili@u}\mili@u=\ptT@unit@\mili@u%
    \mili@u=\D@lpha\mili@u\advance\p@rtent\@ne\divide\mili@u\p@rtent\fi%
    \Ps@rrowB@zDD\mili@u[-9,-8,-7,-10]%
    \PSc@mment{End psarrowcircTD}\resetc@ntr@l\et@tpsarrowcircTD\fi\fi}}
\ctr@ld@f\def\bcl@rcircTD{\relax%
    \ifnum\s@mme<\p@rtent\advance\s@mme\@ne%
    \advance\v@leur\delt@\c@lptellP{\C@nt@r}{-11}{-12}\Figptpr@j-5:/-3/%
    \advance\v@leur\delt@\c@lptellP{\C@nt@r}{-11}{-12}\Figptpr@j-6:/-3/%
    \advance\v@leur\delt@\c@lptellP{\C@nt@r}{-11}{-12}\Figptpr@j-10:/-3/%
    \figptscontrolDD-8[-9,-5,-6,-10]\BdingB@xfalse%
    \PSwrit@cmdS{-8}{}{\fwf@g}{\X@de}{\Y@de}\PSwrit@cmdS{-7}{}{\fwf@g}{\X@tr}{\Y@tr}%
    \BdingB@xtrue\PSwrit@cmdS{-10}{\c@mcurveto}{\fwf@g}{\X@qu}{\Y@qu}%
    \if@rrowratio\c@lcurvradDD0.5[-9,-8,-7,-10]\advance\mili@u\result@t\fi%
    \B@zierBB@x{1}{\Y@un}(\X@un,\X@de,\X@tr,\X@qu)%
    \B@zierBB@x{2}{\X@un}(\Y@un,\Y@de,\Y@tr,\Y@qu)%
    \edef\X@un{\X@qu}\edef\Y@un{\Y@qu}\figptcopyDD-9:/-10/\bcl@rcircTD\fi}
\ctr@ld@f\def\Pscirc@rrowhead#1;#2(#3,#4){{%
    \PSc@mment{pscirc@rrowhead Center=#1 ; Radius=#2 (Ang1=#3,Ang2=#4)}%
    \v@leur=#2\unit@\edef\s@glen{\repdecn@mb{\v@leur}}\v@lY=\z@\v@lX=\v@leur%
    \resetc@ntr@l{2}\Figv@ctCreg-3(\v@lX,\v@lY)\figpttraDD-5:=#1/1,-3/%
    \figptrotDD-5:=-5/#1,#4/%
    \figvectPDD-3[#1,-5]\Figg@tXY{-3}\v@leur=\v@lX%
    \ifdim#3pt<#4pt\v@lX=\v@lY\v@lY=-\v@leur\else\v@lX=-\v@lY\v@lY=\v@leur\fi%
    \Figv@ctCreg-3(\v@lX,\v@lY)\vecunit@{-3}{-3}%
    \if@rrowratio\v@leur=#4pt\advance\v@leur-#3pt\maxim@m{\mili@u}{-\v@leur}{\v@leur}%
    \mili@u=\degT@rd\mili@u\v@leur=\s@glen\mili@u\edef\s@glen{\repdecn@mb{\v@leur}}%
    \mili@u=#2\mili@u\mili@u=\@rrowheadratio\mili@u\else\mili@u=\@rrowheadlength pt\fi%
    \figpttraDD-6:=-5/\s@glen,-3/\v@leur=#2pt\v@leur=2\v@leur%
    \invers@{\v@leur}{\v@leur}\c@rre=\repdecn@mb{\v@leur}\mili@u
    \mili@u=\c@rre\mili@u=\repdecn@mb{\c@rre}\mili@u%
    \v@leur=\p@\advance\v@leur-\mili@u
    \invers@{\mili@u}{2\v@leur}\delt@=\c@rre\delt@=\repdecn@mb{\mili@u}\delt@%
    \xdef\sDcc@ngle{\repdecn@mb{\delt@}}
    \sqrt@{\mili@u}{\v@leur}\arct@n\v@leur(\mili@u,\c@rre)%
    \v@leur=\rdT@deg\v@leur
    \ifdim#3pt<#4pt\v@leur=-\v@leur\fi%
    \if@rrowhout\v@leur=-\v@leur\fi\edef\cor@ngle{\repdecn@mb{\v@leur}}%
    \figptrotDD-6:=-6/-5,\cor@ngle/\psarrowheadDD[-6,-5]%
    \PSc@mment{End pscirc@rrowhead}}}
\ctr@ln@m\psarrowcircP
\ctr@ld@f\def\psarrowcircPDD#1;#2[#3,#4]{{\ifcurr@ntPS\ifps@cri%
    \PSc@mment{psarrowcircPDD Center=#1; Radius=#2, [P1=#3,P2=#4]}%
    \s@uvc@ntr@l\et@tpsarrowcircPDD\Ps@ngleparam#1;#2[#3,#4]%
    \ifdim\v@leur>\z@\ifdim\v@lmin>\v@lmax\advance\v@lmax\DePI@deg\fi%
    \else\ifdim\v@lmin<\v@lmax\advance\v@lmin\DePI@deg\fi\fi%
    \edef\@ngdeb{\repdecn@mb{\v@lmin}}\edef\@ngfin{\repdecn@mb{\v@lmax}}%
    \psarrowcirc#1;\r@dius(\@ngdeb,\@ngfin)%
    \PSc@mment{End psarrowcircPDD}\resetc@ntr@l\et@tpsarrowcircPDD\fi\fi}}
\ctr@ld@f\def\psarrowcircPTD#1;#2[#3,#4,#5]{{\ifcurr@ntPS\ifps@cri\s@uvc@ntr@l\et@tpsarrowcircPTD%
    \PSc@mment{psarrowcircPTD Center=#1; Radius=#2, [P1=#3,P2=#4,P3=#5]}%
    \figgetangleTD\@ngfin[#1,#3,#4,#5]\v@leur=#2pt%
    \maxim@m{\mili@u}{-\v@leur}{\v@leur}\edef\r@dius{\repdecn@mb{\mili@u}}%
    \ifdim\v@leur<\z@\v@lmax=\@ngfin pt\advance\v@lmax-\DePI@deg%
    \edef\@ngfin{\repdecn@mb{\v@lmax}}\fi\psarrowcircTD#1,#3,#5;\r@dius(0,\@ngfin)%
    \PSc@mment{End psarrowcircPTD}\resetc@ntr@l\et@tpsarrowcircPTD\fi\fi}}
\ctr@ld@f\def\psaxes#1(#2){{\ifcurr@ntPS\ifps@cri\s@uvc@ntr@l\et@tpsaxes%
    \PSc@mment{psaxes Origin=#1 Range=(#2)}\an@lys@xes#2,:\resetc@ntr@l{2}%
    \ifx\t@xt@\empty\ifTr@isDim\ps@xes#1(0,#2,0,#2,0,#2)\else\ps@xes#1(0,#2,0,#2)\fi%
    \else\ps@xes#1(#2)\fi\PSc@mment{End psaxes}\resetc@ntr@l\et@tpsaxes\fi\fi}}
\ctr@ld@f\def\an@lys@xes#1,#2:{\def\t@xt@{#2}}
\ctr@ln@m\ps@xes
\ctr@ld@f\def\ps@xesDD#1(#2,#3,#4,#5){%
    \figpttraC-5:=#1/#2,0/\figpttraC-6:=#1/#3,0/\psarrowDD[-5,-6]%
    \figpttraC-5:=#1/0,#4/\figpttraC-6:=#1/0,#5/\psarrowDD[-5,-6]}
\ctr@ld@f\def\ps@xesTD#1(#2,#3,#4,#5,#6,#7){%
    \figpttraC-7:=#1/#2,0,0/\figpttraC-8:=#1/#3,0,0/\psarrowTD[-7,-8]%
    \figpttraC-7:=#1/0,#4,0/\figpttraC-8:=#1/0,#5,0/\psarrowTD[-7,-8]%
    \figpttraC-7:=#1/0,0,#6/\figpttraC-8:=#1/0,0,#7/\psarrowTD[-7,-8]}
\ctr@ln@m\newGr@FN
\ctr@ld@f\def\newGr@FNPDF#1{\s@mme=\Gr@FNb\advance\s@mme\@ne\xdef\Gr@FNb{\number\s@mme}}
\ctr@ld@f\def\newGr@FNDVI#1{\newGr@FNPDF{}\xdef#1{\jobname GI\Gr@FNb.anx}}
\ctr@ld@f\def\psbeginfig#1{\newGr@FN\DefGIfilen@me\gdef\@utoFN{0}%
    \def\t@xt@{#1}\relax\ifx\t@xt@\empty\psupdatem@detrue%
    \gdef\@utoFN{1}\Psb@ginfig\DefGIfilen@me\else\expandafter\Psb@ginfigNu@#1 :\fi}
\ctr@ld@f\def\Psb@ginfigNu@#1 #2:{\def\t@xt@{#1}\relax\ifx\t@xt@\empty\def\t@xt@{#2}%
    \ifx\t@xt@\empty\psupdatem@detrue\gdef\@utoFN{1}\Psb@ginfig\DefGIfilen@me%
    \else\Psb@ginfigNu@#2:\fi\else\Psb@ginfig{#1}\fi}
\ctr@ln@m\PSfilen@me \ctr@ln@m\auxfilen@me
\ctr@ld@f\def\Psb@ginfig#1{\ifcurr@ntPS\else%
    \edef\PSfilen@me{#1}\edef\auxfilen@me{\jobname.anx}%
    \ifpsupdatem@de\ps@critrue\else\openin\frf@g=\PSfilen@me\relax%
    \ifeof\frf@g\ps@critrue\else\ps@crifalse\fi\closein\frf@g\fi%
    \curr@ntPStrue\c@ldefproj\expandafter\setupd@te\defaultupdate:%
    \ifps@cri\initb@undb@x%
    \immediate\openout\fwf@g=\auxfilen@me\initpss@ttings\fi%
    \fi}
\ctr@ld@f\def\Gr@FNb{0}
\ctr@ld@f\def\figforTeXFileno{\Gr@FNb}
\ctr@ld@f\def\figforTeXFigno{0 }
\ctr@ld@f\def\figforTeXnextFigno{1 }
\ctr@ld@f\edef\DefGIfilen@me{\jobname GI.anx}
\ctr@ld@f\def\initpss@ttings{\psreset{arrowhead,curve,first,flowchart,mesh,second,third}%
    \Use@llipsefalse}
\ctr@ld@f\def\B@zierBB@x#1#2(#3,#4,#5,#6){{\c@rre=\t@n\epsil@n
    \v@lmax=#4\advance\v@lmax-#5\v@lmax=\thr@@\v@lmax\advance\v@lmax#6\advance\v@lmax-#3%
    \mili@u=#4\mili@u=-\tw@\mili@u\advance\mili@u#3\advance\mili@u#5%
    \v@lmin=#4\advance\v@lmin-#3\maxim@m{\v@leur}{-\v@lmax}{\v@lmax}%
    \maxim@m{\delt@}{-\mili@u}{\mili@u}\maxim@m{\v@leur}{\v@leur}{\delt@}%
    \maxim@m{\delt@}{-\v@lmin}{\v@lmin}\maxim@m{\v@leur}{\v@leur}{\delt@}%
    \ifdim\v@leur>\c@rre\invers@{\v@leur}{\v@leur}\edef\Uns@rM@x{\repdecn@mb{\v@leur}}%
    \v@lmax=\Uns@rM@x\v@lmax\mili@u=\Uns@rM@x\mili@u\v@lmin=\Uns@rM@x\v@lmin%
    \maxim@m{\v@leur}{-\v@lmax}{\v@lmax}\ifdim\v@leur<\c@rre%
    \maxim@m{\v@leur}{-\mili@u}{\mili@u}\ifdim\v@leur<\c@rre\else%
    \invers@{\mili@u}{\mili@u}\v@leur=-0.5\v@lmin%
    \v@leur=\repdecn@mb{\mili@u}\v@leur\m@jBBB@x{\v@leur}{#1}{#2}(#3,#4,#5,#6)\fi%
    \else\delt@=\repdecn@mb{\mili@u}\mili@u\v@leur=\repdecn@mb{\v@lmax}\v@lmin%
    \advance\delt@-\v@leur\ifdim\delt@<\z@\else\invers@{\v@lmax}{\v@lmax}%
    \edef\Uns@rAp{\repdecn@mb{\v@lmax}}\sqrt@{\delt@}{\delt@}%
    \v@leur=-\mili@u\advance\v@leur\delt@\v@leur=\Uns@rAp\v@leur%
    \m@jBBB@x{\v@leur}{#1}{#2}(#3,#4,#5,#6)%
    \v@leur=-\mili@u\advance\v@leur-\delt@\v@leur=\Uns@rAp\v@leur%
    \m@jBBB@x{\v@leur}{#1}{#2}(#3,#4,#5,#6)\fi\fi\fi}}
\ctr@ld@f\def\m@jBBB@x#1#2#3(#4,#5,#6,#7){{\relax\ifdim#1>\z@\ifdim#1<\p@%
    \edef\T@{\repdecn@mb{#1}}\v@lX=\p@\advance\v@lX-#1\edef\UNmT@{\repdecn@mb{\v@lX}}%
    \v@lX=#4\v@lY=#5\v@lZ=#6\v@lXa=#7\v@lX=\UNmT@\v@lX\advance\v@lX\T@\v@lY%
    \v@lY=\UNmT@\v@lY\advance\v@lY\T@\v@lZ\v@lZ=\UNmT@\v@lZ\advance\v@lZ\T@\v@lXa%
    \v@lX=\UNmT@\v@lX\advance\v@lX\T@\v@lY\v@lY=\UNmT@\v@lY\advance\v@lY\T@\v@lZ%
    \v@lX=\UNmT@\v@lX\advance\v@lX\T@\v@lY%
    \ifcase#2\or\v@lY=#3\or\v@lY=\v@lX\v@lX=#3\fi\b@undb@x{\v@lX}{\v@lY}\fi\fi}}
\ctr@ld@f\def\PsB@zier#1[#2]{{\f@gnewpath%
    \s@mme=\z@\def\list@num{#2,0}\extrairelepremi@r\p@int\de\list@num%
    \PSwrit@cmdS{\p@int}{\c@mmoveto}{\fwf@g}{\X@un}{\Y@un}\p@rtent=#1\bclB@zier}}
\ctr@ld@f\def\bclB@zier{\relax%
    \ifnum\s@mme<\p@rtent\advance\s@mme\@ne\BdingB@xfalse%
    \extrairelepremi@r\p@int\de\list@num\PSwrit@cmdS{\p@int}{}{\fwf@g}{\X@de}{\Y@de}%
    \extrairelepremi@r\p@int\de\list@num\PSwrit@cmdS{\p@int}{}{\fwf@g}{\X@tr}{\Y@tr}%
    \BdingB@xtrue%
    \extrairelepremi@r\p@int\de\list@num\PSwrit@cmdS{\p@int}{\c@mcurveto}{\fwf@g}{\X@qu}{\Y@qu}%
    \B@zierBB@x{1}{\Y@un}(\X@un,\X@de,\X@tr,\X@qu)%
    \B@zierBB@x{2}{\X@un}(\Y@un,\Y@de,\Y@tr,\Y@qu)%
    \edef\X@un{\X@qu}\edef\Y@un{\Y@qu}\bclB@zier\fi}
\ctr@ln@m\psBezier
\ctr@ld@f\def\psBezierDD#1[#2]{\ifcurr@ntPS\ifps@cri%
    \PSc@mment{psBezierDD N arcs=#1, Control points=#2}%
    \iffillm@de\PsB@zier#1[#2]%
    \f@gfill%
    \else\PsB@zier#1[#2]\f@gstroke\fi%
    \PSc@mment{End psBezierDD}\fi\fi}
\ctr@ln@m\et@tpsBezierTD
\ctr@ld@f\def\psBezierTD#1[#2]{\ifcurr@ntPS\ifps@cri\s@uvc@ntr@l\et@tpsBezierTD%
    \PSc@mment{psBezierTD N arcs=#1, Control points=#2}%
    \iffillm@de\PsB@zierTD#1[#2]%
    \f@gfill%
    \else\PsB@zierTD#1[#2]\f@gstroke\fi%
    \PSc@mment{End psBezierTD}\resetc@ntr@l\et@tpsBezierTD\fi\fi}
\ctr@ld@f\def\PsB@zierTD#1[#2]{\ifnum\curr@ntproj<\tw@\PsB@zier#1[#2]\else\PsB@zier@TD#1[#2]\fi}
\ctr@ld@f\def\PsB@zier@TD#1[#2]{{\f@gnewpath%
    \s@mme=\z@\def\list@num{#2,0}\extrairelepremi@r\p@int\de\list@num%
    \let\c@lprojSP=\relax\setc@ntr@l{2}\Figptpr@j-7:/\p@int/%
    \PSwrit@cmd{-7}{\c@mmoveto}{\fwf@g}%
    \loop\ifnum\s@mme<#1\advance\s@mme\@ne\extrairelepremi@r\p@intun\de\list@num%
    \extrairelepremi@r\p@intde\de\list@num\extrairelepremi@r\p@inttr\de\list@num%
    \subB@zierTD\NBz@rcs[\p@int,\p@intun,\p@intde,\p@inttr]\edef\p@int{\p@inttr}\repeat}}
\ctr@ld@f\def\subB@zierTD#1[#2,#3,#4,#5]{\delt@=\p@\divide\delt@\NBz@rcs\v@lmin=\z@%
    {\Figg@tXY{-7}\edef\X@un{\the\v@lX}\edef\Y@un{\the\v@lY}%
    \s@mme=\z@\loop\ifnum\s@mme<#1\advance\s@mme\@ne%
    \v@leur=\v@lmin\advance\v@leur0.33333 \delt@\edef\unti@rs{\repdecn@mb{\v@leur}}%
    \v@leur=\v@lmin\advance\v@leur0.66666 \delt@\edef\deti@rs{\repdecn@mb{\v@leur}}%
    \advance\v@lmin\delt@\edef\trti@rs{\repdecn@mb{\v@lmin}}%
    \figptBezierTD-8::\trti@rs[#2,#3,#4,#5]\Figptpr@j-8:/-8/%
    \c@lsubBzarc\unti@rs,\deti@rs[#2,#3,#4,#5]\BdingB@xfalse%
    \PSwrit@cmdS{-4}{}{\fwf@g}{\X@de}{\Y@de}\PSwrit@cmdS{-3}{}{\fwf@g}{\X@tr}{\Y@tr}%
    \BdingB@xtrue\PSwrit@cmdS{-8}{\c@mcurveto}{\fwf@g}{\X@qu}{\Y@qu}%
    \B@zierBB@x{1}{\Y@un}(\X@un,\X@de,\X@tr,\X@qu)%
    \B@zierBB@x{2}{\X@un}(\Y@un,\Y@de,\Y@tr,\Y@qu)%
    \edef\X@un{\X@qu}\edef\Y@un{\Y@qu}\figptcopyDD-7:/-8/\repeat}}
\ctr@ld@f\def\NBz@rcs{2}
\ctr@ld@f\def\c@lsubBzarc#1,#2[#3,#4,#5,#6]{\figptBezierTD-5::#1[#3,#4,#5,#6]%
    \figptBezierTD-6::#2[#3,#4,#5,#6]\Figptpr@j-4:/-5/\Figptpr@j-5:/-6/%
    \figptscontrolDD-4[-7,-4,-5,-8]}
\ctr@ln@m\pscirc
\ctr@ld@f\def\pscircDD#1(#2){\ifcurr@ntPS\ifps@cri\PSc@mment{pscircDD Center=#1 (Radius=#2)}%
    \psarccircDD#1;#2(0,360)\PSc@mment{End pscircDD}\fi\fi}
\ctr@ld@f\def\pscircTD#1,#2,#3(#4){\ifcurr@ntPS\ifps@cri%
    \PSc@mment{pscircTD Center=#1,P1=#2,P2=#3 (Radius=#4)}%
    \psarccircTD#1,#2,#3;#4(0,360)\PSc@mment{End pscircTD}\fi\fi}
\ctr@ln@m\p@urcent
{\catcode`\%=12\gdef\p@urcent{
\ctr@ld@f\def\PSc@mment#1{\ifpsdebugmode\immediate\write\fwf@g{\p@urcent\space#1}\fi}
\ctr@ln@m\acc@louv \ctr@ln@m\acc@lfer
{\catcode`\[=1\catcode`\{=12\gdef\acc@louv[{}}
{\catcode`\]=2\catcode`\}=12\gdef\acc@lfer{}]]
\ctr@ld@f\def\PSdict@{\ifUse@llipse%
    \immediate\write\fwf@g{/ellipsedict 9 dict def ellipsedict /mtrx matrix put}%
    \immediate\write\fwf@g{/ellipse \acc@louv ellipsedict begin}%
    \immediate\write\fwf@g{ /endangle exch def /startangle exch def}%
    \immediate\write\fwf@g{ /yrad exch def /xrad exch def}%
    \immediate\write\fwf@g{ /rotangle exch def /y exch def /x exch def}%
    \immediate\write\fwf@g{ /savematrix mtrx currentmatrix def}%
    \immediate\write\fwf@g{ x y translate rotangle rotate xrad yrad scale}%
    \immediate\write\fwf@g{ 0 0 1 startangle endangle arc}%
    \immediate\write\fwf@g{ savematrix setmatrix end\acc@lfer def}%
    \fi\PShe@der{EndProlog}}
\ctr@ld@f\def\Pssetc@rve#1=#2|{\keln@mun#1|%
    \def\n@mref{r}\ifx\l@debut\n@mref\pssetroundness{#2}\else
    \immediate\write16{*** Unknown attribute: \BS@ psset curve(..., #1=...)}%
    \fi}
\ctr@ln@m\curv@roundness
\ctr@ld@f\def\pssetroundness#1{\edef\curv@roundness{#1}}
\ctr@ld@f\def\defaultroundness{0.2} 
\ctr@ln@m\pscurve
\ctr@ld@f\def\pscurveDD[#1]{{\ifcurr@ntPS\ifps@cri\PSc@mment{pscurveDD Points=#1}%
    \s@uvc@ntr@l\et@tpscurveDD%
    \iffillm@de\Psc@rveDD\curv@roundness[#1]%
    \f@gfill%
    \else\Psc@rveDD\curv@roundness[#1]\f@gstroke\fi%
    \PSc@mment{End pscurveDD}\resetc@ntr@l\et@tpscurveDD\fi\fi}}
\ctr@ld@f\def\pscurveTD[#1]{{\ifcurr@ntPS\ifps@cri%
    \PSc@mment{pscurveTD Points=#1}\s@uvc@ntr@l\et@tpscurveTD\let\c@lprojSP=\relax%
    \iffillm@de\Psc@rveTD\curv@roundness[#1]%
    \f@gfill%
    \else\Psc@rveTD\curv@roundness[#1]\f@gstroke\fi%
    \PSc@mment{End pscurveTD}\resetc@ntr@l\et@tpscurveTD\fi\fi}}
\ctr@ld@f\def\Psc@rveDD#1[#2]{%
    \def\list@num{#2}\extrairelepremi@r\Ak@\de\list@num%
    \extrairelepremi@r\Ai@\de\list@num\extrairelepremi@r\Aj@\de\list@num%
    \f@gnewpath\PSwrit@cmdS{\Ai@}{\c@mmoveto}{\fwf@g}{\X@un}{\Y@un}%
    \setc@ntr@l{2}\figvectPDD -1[\Ak@,\Aj@]%
    \@ecfor\Ak@:=\list@num\do{\figpttraDD-2:=\Ai@/#1,-1/\BdingB@xfalse%
       \PSwrit@cmdS{-2}{}{\fwf@g}{\X@de}{\Y@de}%
       \figvectPDD -1[\Ai@,\Ak@]\figpttraDD-2:=\Aj@/-#1,-1/%
       \PSwrit@cmdS{-2}{}{\fwf@g}{\X@tr}{\Y@tr}\BdingB@xtrue%
       \PSwrit@cmdS{\Aj@}{\c@mcurveto}{\fwf@g}{\X@qu}{\Y@qu}%
       \B@zierBB@x{1}{\Y@un}(\X@un,\X@de,\X@tr,\X@qu)%
       \B@zierBB@x{2}{\X@un}(\Y@un,\Y@de,\Y@tr,\Y@qu)%
       \edef\X@un{\X@qu}\edef\Y@un{\Y@qu}\edef\Ai@{\Aj@}\edef\Aj@{\Ak@}}}
\ctr@ld@f\def\Psc@rveTD#1[#2]{\ifnum\curr@ntproj<\tw@\Psc@rvePPTD#1[#2]\else\Psc@rveCPTD#1[#2]\fi}
\ctr@ld@f\def\Psc@rvePPTD#1[#2]{\setc@ntr@l{2}%
    \def\list@num{#2}\extrairelepremi@r\Ak@\de\list@num\Figptpr@j-5:/\Ak@/%
    \extrairelepremi@r\Ai@\de\list@num\Figptpr@j-3:/\Ai@/%
    \extrairelepremi@r\Aj@\de\list@num\Figptpr@j-4:/\Aj@/%
    \f@gnewpath\PSwrit@cmdS{-3}{\c@mmoveto}{\fwf@g}{\X@un}{\Y@un}%
    \figvectPDD -1[-5,-4]%
    \@ecfor\Ak@:=\list@num\do{\Figptpr@j-5:/\Ak@/\figpttraDD-2:=-3/#1,-1/%
       \BdingB@xfalse\PSwrit@cmdS{-2}{}{\fwf@g}{\X@de}{\Y@de}%
       \figvectPDD -1[-3,-5]\figpttraDD-2:=-4/-#1,-1/%
       \PSwrit@cmdS{-2}{}{\fwf@g}{\X@tr}{\Y@tr}\BdingB@xtrue%
       \PSwrit@cmdS{-4}{\c@mcurveto}{\fwf@g}{\X@qu}{\Y@qu}%
       \B@zierBB@x{1}{\Y@un}(\X@un,\X@de,\X@tr,\X@qu)%
       \B@zierBB@x{2}{\X@un}(\Y@un,\Y@de,\Y@tr,\Y@qu)%
       \edef\X@un{\X@qu}\edef\Y@un{\Y@qu}\figptcopyDD-3:/-4/\figptcopyDD-4:/-5/}}
\ctr@ld@f\def\Psc@rveCPTD#1[#2]{\setc@ntr@l{2}%
    \def\list@num{#2}\extrairelepremi@r\Ak@\de\list@num%
    \extrairelepremi@r\Ai@\de\list@num\extrairelepremi@r\Aj@\de\list@num%
    \Figptpr@j-7:/\Ai@/%
    \f@gnewpath\PSwrit@cmd{-7}{\c@mmoveto}{\fwf@g}%
    \figvectPTD -9[\Ak@,\Aj@]%
    \@ecfor\Ak@:=\list@num\do{\figpttraTD-10:=\Ai@/#1,-9/%
       \figvectPTD -9[\Ai@,\Ak@]\figpttraTD-11:=\Aj@/-#1,-9/%
       \subB@zierTD\NBz@rcs[\Ai@,-10,-11,\Aj@]\edef\Ai@{\Aj@}\edef\Aj@{\Ak@}}}
\ctr@ld@f\def\psendfig{\ifcurr@ntPS\ifps@cri\immediate\closeout\fwf@g%
    \immediate\openout\fwf@g=\PSfilen@me\relax%
    \ifPDFm@ke\PSBdingB@x\else%
    \immediate\write\fwf@g{\p@urcent\string!PS-Adobe-2.0 EPSF-2.0}%
    \PShe@der{Creator\string: TeX (fig4tex.tex)}%
    \PShe@der{Title\string: \PSfilen@me}%
    \PShe@der{CreationDate\string: \the\day/\the\month/\the\year}%
    \PSBdingB@x%
    \PShe@der{EndComments}\PSdict@\fi%
    \immediate\write\fwf@g{\c@mgsave}%
    \openin\frf@g=\auxfilen@me\c@pypsfile\fwf@g\frf@g\closein\frf@g%
    \immediate\write\fwf@g{\c@mgrestore}%
    \PSc@mment{End of file.}\immediate\closeout\fwf@g%
    \immediate\openout\fwf@g=\auxfilen@me\immediate\closeout\fwf@g%
    \immediate\write16{File \PSfilen@me\space created.}\fi\fi\curr@ntPSfalse\ps@critrue}
\ctr@ld@f\def\PShe@der#1{\immediate\write\fwf@g{\p@urcent\p@urcent#1}}
\ctr@ld@f\def\PSBdingB@x{{\v@lX=\ptT@ptps\c@@rdXmin\v@lY=\ptT@ptps\c@@rdYmin%
     \v@lXa=\ptT@ptps\c@@rdXmax\v@lYa=\ptT@ptps\c@@rdYmax%
     \PShe@der{BoundingBox\string: \repdecn@mb{\v@lX}\space\repdecn@mb{\v@lY}%
     \space\repdecn@mb{\v@lXa}\space\repdecn@mb{\v@lYa}}}}
\ctr@ld@f\def\psfcconnect[#1]{{\ifcurr@ntPS\ifps@cri\PSc@mment{psfcconnect Points=#1}%
    \pssetfillmode{no}\s@uvc@ntr@l\et@tpsfcconnect\resetc@ntr@l{2}%
    \fcc@nnect@[#1]\resetc@ntr@l\et@tpsfcconnect\PSc@mment{End psfcconnect}\fi\fi}}
\ctr@ld@f\def\fcc@nnect@[#1]{\let\N@rm=\n@rmeucDD\def\list@num{#1}%
    \extrairelepremi@r\Ai@\de\list@num\edef\pr@m{\Ai@}\v@leur=\z@\p@rtent=\@ne\c@llgtot%
    \ifcase\fclin@typ@\edef\list@num{[\pr@m,#1,\Ai@}\expandafter\pscurve\list@num]%
    \else\ifdim\fclin@r@d\p@>\z@\Pslin@conge[#1]\else\psline[#1]\fi\fi%
    \v@leur=\@rrowp@s\v@leur\edef\list@num{#1,\Ai@,0}%
    \extrairelepremi@r\Ai@\de\list@num\mili@u=\epsil@n\c@llgpart%
    \advance\mili@u-\epsil@n\advance\mili@u-\delt@\advance\v@leur-\mili@u%
    \ifcase\fclin@typ@\invers@\mili@u\delt@%
    \ifnum\@rrowr@fpt>\z@\advance\delt@-\v@leur\v@leur=\delt@\fi%
    \v@leur=\repdecn@mb\v@leur\mili@u\edef\v@lt{\repdecn@mb\v@leur}%
    \extrairelepremi@r\Ak@\de\list@num%
    \figvectPDD-1[\pr@m,\Aj@]\figpttraDD-6:=\Ai@/\curv@roundness,-1/%
    \figvectPDD-1[\Ak@,\Ai@]\figpttraDD-7:=\Aj@/\curv@roundness,-1/%
    \delt@=\@rrowheadlength\p@\delt@=\C@AHANG\delt@\edef\R@dius{\repdecn@mb{\delt@}}%
    \ifcase\@rrowr@fpt%
    \FigptintercircB@zDD-8::\v@lt,\R@dius[\Ai@,-6,-7,\Aj@]\psarrowheadDD[-5,-8]\else%
    \FigptintercircB@zDD-8::\v@lt,\R@dius[\Aj@,-7,-6,\Ai@]\psarrowheadDD[-8,-5]\fi%
    \else\advance\delt@-\v@leur%
    \p@rtentiere{\p@rtent}{\delt@}\edef\C@efun{\the\p@rtent}%
    \p@rtentiere{\p@rtent}{\v@leur}\edef\C@efde{\the\p@rtent}%
    \figptbaryDD-5:[\Ai@,\Aj@;\C@efun,\C@efde]\ifcase\@rrowr@fpt%
    \delt@=\@rrowheadlength\unit@\delt@=\C@AHANG\delt@\edef\t@ille{\repdecn@mb{\delt@}}%
    \figvectPDD-2[\Ai@,\Aj@]\vecunit@{-2}{-2}\figpttraDD-5:=-5/\t@ille,-2/\fi%
    \psarrowheadDD[\Ai@,-5]\fi}
\ctr@ld@f\def\c@llgtot{\@ecfor\Aj@:=\list@num\do{\figvectP-1[\Ai@,\Aj@]\N@rm\delt@{-1}%
    \advance\v@leur\delt@\advance\p@rtent\@ne\edef\Ai@{\Aj@}}}
\ctr@ld@f\def\c@llgpart{\extrairelepremi@r\Aj@\de\list@num\figvectP-1[\Ai@,\Aj@]\N@rm\delt@{-1}%
    \advance\mili@u\delt@\ifdim\mili@u<\v@leur\edef\pr@m{\Ai@}\edef\Ai@{\Aj@}\c@llgpart\fi}
\ctr@ld@f\def\Pslin@conge[#1]{\ifnum\p@rtent>\tw@{\def\list@num{#1}%
    \extrairelepremi@r\Ai@\de\list@num\extrairelepremi@r\Aj@\de\list@num%
    \figptcopy-6:/\Ai@/\figvectP-3[\Ai@,\Aj@]\vecunit@{-3}{-3}\v@lmax=\result@t%
    \@ecfor\Ak@:=\list@num\do{\figvectP-4[\Aj@,\Ak@]\vecunit@{-4}{-4}%
    \minim@m\v@lmin\v@lmax\result@t\v@lmax=\result@t%
    \det@rm\delt@[-3,-4]\maxim@m\mili@u{\delt@}{-\delt@}\ifdim\mili@u>\Cepsil@n%
    \ifdim\delt@>\z@\figgetangleDD\Angl@[\Aj@,\Ak@,\Ai@]\else%
    \figgetangleDD\Angl@[\Aj@,\Ai@,\Ak@]\fi%
    \v@leur=\PI@deg\advance\v@leur-\Angl@\p@\divide\v@leur\tw@%
    \edef\Angl@{\repdecn@mb\v@leur}\c@ssin{\C@}{\S@}{\Angl@}\v@leur=\fclin@r@d\unit@%
    \v@leur=\S@\v@leur\mili@u=\C@\p@\invers@\mili@u\mili@u%
    \v@leur=\repdecn@mb{\mili@u}\v@leur%
    \minim@m\v@leur\v@leur\v@lmin\edef\t@ille{\repdecn@mb{\v@leur}}%
    \figpttra-5:=\Aj@/-\t@ille,-3/\psline[-6,-5]\figpttra-6:=\Aj@/\t@ille,-4/%
    \figvectNVDD-3[-3]\figvectNVDD-8[-4]\inters@cDD-7:[-5,-3;-6,-8]%
    \ifdim\delt@>\z@\psarccircP-7;\fclin@r@d[-5,-6]\else\psarccircP-7;\fclin@r@d[-6,-5]\fi%
    \else\psline[-6,\Aj@]\figptcopy-6:/\Aj@/\fi
    \edef\Ai@{\Aj@}\edef\Aj@{\Ak@}\figptcopy-3:/-4/}\psline[-6,\Aj@]}\else\psline[#1]\fi}
\ctr@ld@f\def\psfcnode[#1]#2{{\ifcurr@ntPS\ifps@cri\PSc@mment{psfcnode Points=#1}%
    \s@uvc@ntr@l\et@tpsfcnode\resetc@ntr@l{2}%
    \def\t@xt@{#2}\ifx\t@xt@\empty\def\g@tt@xt{\setbox\Gb@x=\hbox{\Figg@tT{\p@int}}}%
    \else\def\g@tt@xt{\setbox\Gb@x=\hbox{#2}}\fi%
    \v@lmin=\h@rdfcXp@dd\advance\v@lmin\Xp@dd\unit@\multiply\v@lmin\tw@%
    \v@lmax=\h@rdfcYp@dd\advance\v@lmax\Yp@dd\unit@\multiply\v@lmax\tw@%
    \Figv@ctCreg-8(\unit@,-\unit@)\def\list@num{#1}%
    \delt@=\curr@ntwidth bp\divide\delt@\tw@%
    \fcn@de\PSc@mment{End psfcnode}\resetc@ntr@l\et@tpsfcnode\fi\fi}}
\ctr@ld@f\def\d@butn@de{\g@tt@xt\v@lX=\wd\Gb@x%
    \v@lY=\ht\Gb@x\advance\v@lY\dp\Gb@x\advance\v@lX\v@lmin\advance\v@lY\v@lmax}
\ctr@ld@f\def\fcn@deE{%
    \@ecfor\p@int:=\list@num\do{\d@butn@de\v@lX=\unssqrttw@\v@lX\v@lY=\unssqrttw@\v@lY%
    \ifdim\thickn@ss\p@>\z@
    \v@lXa=\v@lX\advance\v@lXa\delt@\v@lXa=\ptT@unit@\v@lXa\edef\XR@d{\repdecn@mb\v@lXa}%
    \v@lYa=\v@lY\advance\v@lYa\delt@\v@lYa=\ptT@unit@\v@lYa\edef\YR@d{\repdecn@mb\v@lYa}%
    \arct@n\v@leur(\v@lXa,\v@lYa)\v@leur=\rdT@deg\v@leur\edef\@nglde{\repdecn@mb\v@leur}%
    {\c@lptellDD-2::\p@int;\XR@d,\YR@d(\@nglde)}
    \advance\v@leur-\PI@deg\edef\@nglun{\repdecn@mb\v@leur}%
    {\c@lptellDD-3::\p@int;\XR@d,\YR@d(\@nglun)}%
    \figptstra-6=-3,-2,\p@int/\thickn@ss,-8/\pssetfillmode{yes}\us@secondC@lor%
    \psline[-2,-3,-6,-5]\psarcell-4;\XR@d,\YR@d(\@nglun,\@nglde,0)\fi
    \v@lX=\ptT@unit@\v@lX\v@lY=\ptT@unit@\v@lY%
    \edef\XR@d{\repdecn@mb\v@lX}\edef\YR@d{\repdecn@mb\v@lY}%
    \pssetfillmode{yes}\us@thirdC@lor\psarcell\p@int;\XR@d,\YR@d(0,360,0)%
    \pssetfillmode{no}\us@primarC@lor\psarcell\p@int;\XR@d,\YR@d(0,360,0)}}
\ctr@ld@f\def\fcn@deL{\delt@=\ptT@unit@\delt@\edef\t@ille{\repdecn@mb\delt@}%
    \@ecfor\p@int:=\list@num\do{\Figg@tXYa{\p@int}\d@butn@de%
    \ifdim\v@lX>\v@lY\itis@Ktrue\else\itis@Kfalse\fi%
    \advance\v@lXa-\v@lX\Figp@intreg-1:(\v@lXa,\v@lYa)%
    \advance\v@lXa\v@lX\advance\v@lYa-\v@lY\Figp@intreg-2:(\v@lXa,\v@lYa)%
    \advance\v@lXa\v@lX\advance\v@lYa\v@lY\Figp@intreg-3:(\v@lXa,\v@lYa)%
    \advance\v@lXa-\v@lX\advance\v@lYa\v@lY\Figp@intreg-4:(\v@lXa,\v@lYa)%
    \ifdim\thickn@ss\p@>\z@\Figg@tXYa{\p@int}\pssetfillmode{yes}\us@secondC@lor
    \c@lpt@xt{-1}{-4}\c@lpt@xt@\v@lXa\v@lYa\v@lX\v@lY\c@rre\delt@%
    \Figp@intregDD-9:(\v@lZ,\v@lYa)\Figp@intregDD-11:(\v@lZa,\v@lYa)%
    \c@lpt@xt{-4}{-3}\c@lpt@xt@\v@lYa\v@lXa\v@lY\v@lX\delt@\c@rre%
    \Figp@intregDD-12:(\v@lXa,\v@lZ)\Figp@intregDD-10:(\v@lXa,\v@lZa)%
    \ifitis@K\figptstra-7=-9,-10,-11/\thickn@ss,-8/\psline[-9,-11,-5,-6,-7]\else%
    \figptstra-7=-10,-11,-12/\thickn@ss,-8/\psline[-10,-12,-5,-6,-7]\fi\fi
    \pssetfillmode{yes}\us@thirdC@lor\psline[-1,-2,-3,-4]%
    \pssetfillmode{no}\us@primarC@lor\psline[-1,-2,-3,-4,-1]}}
\ctr@ld@f\def\c@lpt@xt#1#2{\figvectN-7[#1,#2]\vecunit@{-7}{-7}\figpttra-5:=#1/\t@ille,-7/%
    \figvectP-7[#1,#2]\Figg@tXY{-7}\c@rre=\v@lX\delt@=\v@lY\Figg@tXY{-5}}
\ctr@ld@f\def\c@lpt@xt@#1#2#3#4#5#6{\v@lZ=#6\invers@{\v@lZ}{\v@lZ}\v@leur=\repdecn@mb{#5}\v@lZ%
    \v@lZ=#2\advance\v@lZ-#4\mili@u=\repdecn@mb{\v@leur}\v@lZ%
    \v@lZ=#3\advance\v@lZ\mili@u\v@lZa=-\v@lZ\advance\v@lZa\tw@#1}
\ctr@ld@f\def\fcn@deR{\@ecfor\p@int:=\list@num\do{\Figg@tXYa{\p@int}\d@butn@de%
    \advance\v@lXa-0.5\v@lX\advance\v@lYa-0.5\v@lY\Figp@intreg-1:(\v@lXa,\v@lYa)%
    \advance\v@lXa\v@lX\Figp@intreg-2:(\v@lXa,\v@lYa)%
    \advance\v@lYa\v@lY\Figp@intreg-3:(\v@lXa,\v@lYa)%
    \advance\v@lXa-\v@lX\Figp@intreg-4:(\v@lXa,\v@lYa)%
    \ifdim\thickn@ss\p@>\z@\pssetfillmode{yes}\us@secondC@lor
    \Figv@ctCreg-5(-\delt@,-\delt@)\figpttra-9:=-1/1,-5/%
    \Figv@ctCreg-5(\delt@,-\delt@)\figpttra-10:=-2/1,-5/%
    \Figv@ctCreg-5(\delt@,\delt@)\figpttra-11:=-3/1,-5/%
    \figptstra-7=-9,-10,-11/\thickn@ss,-8/\psline[-9,-11,-5,-6,-7]\fi
    \pssetfillmode{yes}\us@thirdC@lor\psline[-1,-2,-3,-4]%
    \pssetfillmode{no}\us@primarC@lor\psline[-1,-2,-3,-4,-1]}}
\ctr@ln@m\@rrowp@s
\ctr@ln@m\Xp@dd     \ctr@ln@m\Yp@dd
\ctr@ln@m\fclin@r@d \ctr@ln@m\thickn@ss
\ctr@ld@f\def\Pssetfl@wchart#1=#2|{\keln@mtr#1|%
    \def\n@mref{arr}\ifx\l@debut\n@mref\expandafter\keln@mtr\l@suite|%
     \def\n@mref{owp}\ifx\l@debut\n@mref\edef\@rrowp@s{#2}\else
     \def\n@mref{owr}\ifx\l@debut\n@mref\setfcr@fpt#2|\else
     \immediate\write16{*** Unknown attribute: \BS@ psset flowchart(..., #1=...)}%
     \fi\fi\else%
    \def\n@mref{lin}\ifx\l@debut\n@mref\setfccurv@#2|\else
    \def\n@mref{pad}\ifx\l@debut\n@mref\edef\Xp@dd{#2}\edef\Yp@dd{#2}\else
    \def\n@mref{rad}\ifx\l@debut\n@mref\edef\fclin@r@d{#2}\else
    \def\n@mref{sha}\ifx\l@debut\n@mref\setfcshap@#2|\else
    \def\n@mref{thi}\ifx\l@debut\n@mref\edef\thickn@ss{#2}\else
    \def\n@mref{xpa}\ifx\l@debut\n@mref\edef\Xp@dd{#2}\else
    \def\n@mref{ypa}\ifx\l@debut\n@mref\edef\Yp@dd{#2}\else
    \immediate\write16{*** Unknown attribute: \BS@ psset flowchart(..., #1=...)}%
    \fi\fi\fi\fi\fi\fi\fi\fi}
\ctr@ln@m\@rrowr@fpt \ctr@ln@m\fclin@typ@
\ctr@ld@f\def\setfcr@fpt#1#2|{\if#1e\def\@rrowr@fpt{1}\else\def\@rrowr@fpt{0}\fi}
\ctr@ld@f\def\setfccurv@#1#2|{\if#1c\def\fclin@typ@{0}\else\def\fclin@typ@{1}\fi}
\ctr@ln@m\h@rdfcXp@dd \ctr@ln@m\h@rdfcYp@dd
\ctr@ln@m\fcn@de \ctr@ln@m\fcsh@pe
\ctr@ld@f\def\setfcshap@#1#2|{%
    \if#1e\let\fcn@de=\fcn@deE\def\h@rdfcXp@dd{4pt}\def\h@rdfcYp@dd{4pt}%
     \edef\fcsh@pe{ellipse}\else%
    \if#1l\let\fcn@de=\fcn@deL\def\h@rdfcXp@dd{4pt}\def\h@rdfcYp@dd{4pt}%
     \edef\fcsh@pe{lozenge}\else%
          \let\fcn@de=\fcn@deR\def\h@rdfcXp@dd{6pt}\def\h@rdfcYp@dd{6pt}%
     \edef\fcsh@pe{rectangle}\fi\fi}
\ctr@ld@f\def\psline[#1]{{\ifcurr@ntPS\ifps@cri\PSc@mment{psline Points=#1}%
    \let\pslign@=\Pslign@P\Pslin@{#1}\PSc@mment{End psline}\fi\fi}}
\ctr@ld@f\def\pslineF#1{{\ifcurr@ntPS\ifps@cri\PSc@mment{pslineF Filename=#1}%
    \let\pslign@=\Pslign@F\Pslin@{#1}\PSc@mment{End pslineF}\fi\fi}}
\ctr@ld@f\def\pslineC(#1){{\ifcurr@ntPS\ifps@cri\PSc@mment{pslineC}%
    \let\pslign@=\Pslign@C\Pslin@{#1}\PSc@mment{End pslineC}\fi\fi}}
\ctr@ld@f\def\Pslin@#1{\iffillm@de\pslign@{#1}%
    \f@gfill%
    \else\pslign@{#1}\ifx\derp@int\premp@int%
    \f@gclosestroke%
    \else\f@gstroke\fi\fi}
\ctr@ld@f\def\Pslign@P#1{\def\list@num{#1}\extrairelepremi@r\p@int\de\list@num%
    \edef\premp@int{\p@int}\f@gnewpath%
    \PSwrit@cmd{\p@int}{\c@mmoveto}{\fwf@g}%
    \@ecfor\p@int:=\list@num\do{\PSwrit@cmd{\p@int}{\c@mlineto}{\fwf@g}%
    \edef\derp@int{\p@int}}}
\ctr@ld@f\def\Pslign@F#1{\s@uvc@ntr@l\et@tPslign@F\setc@ntr@l{2}\openin\frf@g=#1\relax%
    \ifeof\frf@g\message{*** File #1 not found !}\end\else%
    \read\frf@g to\tr@c\edef\premp@int{\tr@c}\expandafter\extr@ctCF\tr@c:%
    \f@gnewpath\PSwrit@cmd{-1}{\c@mmoveto}{\fwf@g}%
    \loop\read\frf@g to\tr@c\ifeof\frf@g\mored@tafalse\else\mored@tatrue\fi%
    \ifmored@ta\expandafter\extr@ctCF\tr@c:\PSwrit@cmd{-1}{\c@mlineto}{\fwf@g}%
    \edef\derp@int{\tr@c}\repeat\fi\closein\frf@g\resetc@ntr@l\et@tPslign@F}
\ctr@ln@m\extr@ctCF
\ctr@ld@f\def\extr@ctCFDD#1 #2:{\v@lX=#1\unit@\v@lY=#2\unit@\Figp@intregDD-1:(\v@lX,\v@lY)}
\ctr@ld@f\def\extr@ctCFTD#1 #2 #3:{\v@lX=#1\unit@\v@lY=#2\unit@\v@lZ=#3\unit@%
    \Figp@intregTD-1:(\v@lX,\v@lY,\v@lZ)}
\ctr@ld@f\def\Pslign@C#1{\s@uvc@ntr@l\et@tPslign@C\setc@ntr@l{2}%
    \def\list@num{#1}\extrairelepremi@r\p@int\de\list@num%
    \edef\premp@int{\p@int}\f@gnewpath%
    \expandafter\Pslign@C@\p@int:\PSwrit@cmd{-1}{\c@mmoveto}{\fwf@g}%
    \@ecfor\p@int:=\list@num\do{\expandafter\Pslign@C@\p@int:%
    \PSwrit@cmd{-1}{\c@mlineto}{\fwf@g}\edef\derp@int{\p@int}}%
    \resetc@ntr@l\et@tPslign@C}
\ctr@ld@f\def\Pslign@C@#1 #2:{{\def\t@xt@{#1}\ifx\t@xt@\empty\Pslign@C@#2:
    \else\extr@ctCF#1 #2:\fi}}
\ctr@ln@m\c@ntrolmesh
\ctr@ld@f\def\Pssetm@sh#1=#2|{\keln@mun#1|%
    \def\n@mref{d}\ifx\l@debut\n@mref\pssetmeshdiag{#2}\else
    \immediate\write16{*** Unknown attribute: \BS@ psset mesh(..., #1=...)}%
    \fi}
\ctr@ld@f\def\pssetmeshdiag#1{\edef\c@ntrolmesh{#1}}
\ctr@ld@f\def\defaultmeshdiag{0}    
\ctr@ld@f\def\psmesh#1,#2[#3,#4,#5,#6]{{\ifcurr@ntPS\ifps@cri%
    \PSc@mment{psmesh N1=#1, N2=#2, Quadrangle=[#3,#4,#5,#6]}%
    \s@uvc@ntr@l\et@tpsmesh\Pss@tsecondSt\setc@ntr@l{2}%
    \ifnum#1>\@ne\Psmeshp@rt#1[#3,#4,#5,#6]\fi%
    \ifnum#2>\@ne\Psmeshp@rt#2[#4,#5,#6,#3]\fi%
    \ifnum\c@ntrolmesh>\z@\Psmeshdi@g#1,#2[#3,#4,#5,#6]\fi%
    \ifnum\c@ntrolmesh<\z@\Psmeshdi@g#2,#1[#4,#5,#6,#3]\fi\Psrest@reSt%
    \psline[#3,#4,#5,#6,#3]\PSc@mment{End psmesh}\resetc@ntr@l\et@tpsmesh\fi\fi}}
\ctr@ld@f\def\Psmeshp@rt#1[#2,#3,#4,#5]{{\l@mbd@un=\@ne\l@mbd@de=#1\loop%
    \ifnum\l@mbd@un<#1\advance\l@mbd@de\m@ne\figptbary-1:[#2,#3;\l@mbd@de,\l@mbd@un]%
    \figptbary-2:[#5,#4;\l@mbd@de,\l@mbd@un]\psline[-1,-2]\advance\l@mbd@un\@ne\repeat}}
\ctr@ld@f\def\Psmeshdi@g#1,#2[#3,#4,#5,#6]{\figptcopy-2:/#3/\figptcopy-3:/#6/%
    \l@mbd@un=\z@\l@mbd@de=#1\loop\ifnum\l@mbd@un<#1%
    \advance\l@mbd@un\@ne\advance\l@mbd@de\m@ne\figptcopy-1:/-2/\figptcopy-4:/-3/%
    \figptbary-2:[#3,#4;\l@mbd@de,\l@mbd@un]%
    \figptbary-3:[#6,#5;\l@mbd@de,\l@mbd@un]\Psmeshdi@gp@rt#2[-1,-2,-3,-4]\repeat}
\ctr@ld@f\def\Psmeshdi@gp@rt#1[#2,#3,#4,#5]{{\l@mbd@un=\z@\l@mbd@de=#1\loop%
    \ifnum\l@mbd@un<#1\figptbary-5:[#2,#5;\l@mbd@de,\l@mbd@un]%
    \advance\l@mbd@de\m@ne\advance\l@mbd@un\@ne%
    \figptbary-6:[#3,#4;\l@mbd@de,\l@mbd@un]\psline[-5,-6]\repeat}}
\ctr@ln@m\psnormal
\ctr@ld@f\def\psnormalDD#1,#2[#3,#4]{{\ifcurr@ntPS\ifps@cri%
    \PSc@mment{psnormal Length=#1, Lambda=#2 [Pt1,Pt2]=[#3,#4]}%
    \s@uvc@ntr@l\et@tpsnormal\resetc@ntr@l{2}\figptendnormal-6::#1,#2[#3,#4]%
    \figptcopyDD-5:/-1/\psarrow[-5,-6]%
    \PSc@mment{End psnormal}\resetc@ntr@l\et@tpsnormal\fi\fi}}
\ctr@ld@f\def\psreset#1{\trtlis@rg{#1}{\Psreset@}}
\ctr@ld@f\def\Psreset@#1|{\keln@mde#1|%
    \def\n@mref{ar}\ifx\l@debut\n@mref\psresetarrowhead\else
    \def\n@mref{cu}\ifx\l@debut\n@mref\psset curve(roundness=\defaultroundness)\else
    \def\n@mref{fi}\ifx\l@debut\n@mref\psset (color=\defaultcolor,dash=\defaultdash,%
         fill=\defaultfill,join=\defaultjoin,width=\defaultwidth)\else
    \def\n@mref{fl}\ifx\l@debut\n@mref\psset flowchart(arrowp=\defaultfcarrowposition,%
	arrowr=\defaultfcarrowrefpt,line=\defaultfcline,xpadd=\defaultfcxpadding,%
	ypadd=\defaultfcypadding,radius=\defaultfcradius,shape=\defaultfcshape,%
	thick=\defaultfcthickness)\else
    \def\n@mref{me}\ifx\l@debut\n@mref\psset mesh(diag=\defaultmeshdiag)\else
    \def\n@mref{se}\ifx\l@debut\n@mref\psresetsecondsettings\else
    \def\n@mref{th}\ifx\l@debut\n@mref\psset third(color=\defaultthirdcolor)\else
    \immediate\write16{*** Unknown keyword #1 (\BS@ psreset).}%
    \fi\fi\fi\fi\fi\fi\fi}
\ctr@ld@f\def\psset#1(#2){\def\t@xt@{#1}\ifx\t@xt@\empty\trtlis@rg{#2}{\Pssetf@rst}
    \else\keln@mde#1|%
    \def\n@mref{ar}\ifx\l@debut\n@mref\trtlis@rg{#2}{\Psset@rrowhe@d}\else
    \def\n@mref{cu}\ifx\l@debut\n@mref\trtlis@rg{#2}{\Pssetc@rve}\else
    \def\n@mref{fi}\ifx\l@debut\n@mref\trtlis@rg{#2}{\Pssetf@rst}\else
    \def\n@mref{fl}\ifx\l@debut\n@mref\trtlis@rg{#2}{\Pssetfl@wchart}\else
    \def\n@mref{me}\ifx\l@debut\n@mref\trtlis@rg{#2}{\Pssetm@sh}\else
    \def\n@mref{se}\ifx\l@debut\n@mref\trtlis@rg{#2}{\Pssets@cond}\else
    \def\n@mref{th}\ifx\l@debut\n@mref\trtlis@rg{#2}{\Pssetth@rd}\else
    \immediate\write16{*** Unknown keyword: \BS@ psset #1(...)}%
    \fi\fi\fi\fi\fi\fi\fi\fi}
\ctr@ld@f\def\pssetdefault#1(#2){\ifcurr@ntPS\immediate\write16{*** \BS@ pssetdefault is ignored
    inside a \BS@ psbeginfig-\BS@ psendfig block.}%
    \immediate\write16{*** It must be called before \BS@ psbeginfig.}\else%
    \def\t@xt@{#1}\ifx\t@xt@\empty\trtlis@rg{#2}{\Pssd@f@rst}\else\keln@mde#1|%
    \def\n@mref{ar}\ifx\l@debut\n@mref\trtlis@rg{#2}{\Pssd@@rrowhe@d}\else
    \def\n@mref{cu}\ifx\l@debut\n@mref\trtlis@rg{#2}{\Pssd@c@rve}\else
    \def\n@mref{fi}\ifx\l@debut\n@mref\trtlis@rg{#2}{\Pssd@f@rst}\else
    \def\n@mref{fl}\ifx\l@debut\n@mref\trtlis@rg{#2}{\Pssd@fl@wchart}\else
    \def\n@mref{me}\ifx\l@debut\n@mref\trtlis@rg{#2}{\Pssd@m@sh}\else
    \def\n@mref{se}\ifx\l@debut\n@mref\trtlis@rg{#2}{\Pssd@s@cond}\else
    \def\n@mref{th}\ifx\l@debut\n@mref\trtlis@rg{#2}{\Pssd@th@rd}\else
    \immediate\write16{*** Unknown keyword: \BS@ pssetdefault #1(...)}%
    \fi\fi\fi\fi\fi\fi\fi\fi\initpss@ttings\fi}
\ctr@ld@f\def\Pssd@f@rst#1=#2|{\keln@mun#1|%
    \def\n@mref{c}\ifx\l@debut\n@mref\edef\defaultcolor{#2}\else
    \def\n@mref{d}\ifx\l@debut\n@mref\edef\defaultdash{#2}\else
    \def\n@mref{f}\ifx\l@debut\n@mref\edef\defaultfill{#2}\else
    \def\n@mref{j}\ifx\l@debut\n@mref\edef\defaultjoin{#2}\else
    \def\n@mref{u}\ifx\l@debut\n@mref\edef\defaultupdate{#2}\pssetupdate{#2}\else
    \def\n@mref{w}\ifx\l@debut\n@mref\edef\defaultwidth{#2}\else
    \immediate\write16{*** Unknown attribute: \BS@ pssetdefault (..., #1=...)}%
    \fi\fi\fi\fi\fi\fi}
\ctr@ld@f\def\Pssd@@rrowhe@d#1=#2|{\keln@mun#1|%
    \def\n@mref{a}\ifx\l@debut\n@mref\edef\defaultarrowheadangle{#2}\else
    \def\n@mref{f}\ifx\l@debut\n@mref\edef\defaultarrowheadangle{#2}\else
    \def\n@mref{l}\ifx\l@debut\n@mref\y@tiunit{#2}\ifunitpr@sent%
     \edef\defaulth@rdahlength{#2}\else\edef\defaulth@rdahlength{#2pt}%
     \message{*** \BS@ pssetdefault (..., #1=#2, ...) : unit is missing, pt is assumed.}%
     \fi\else
    \def\n@mref{o}\ifx\l@debut\n@mref\edef\defaultarrowheadout{#2}\else
    \def\n@mref{r}\ifx\l@debut\n@mref\edef\defaultarrowheadratio{#2}\else
    \immediate\write16{*** Unknown attribute: \BS@ pssetdefault arrowhead(..., #1=...)}%
    \fi\fi\fi\fi\fi}
\ctr@ld@f\def\Pssd@c@rve#1=#2|{\keln@mun#1|%
    \def\n@mref{r}\ifx\l@debut\n@mref\edef\defaultroundness{#2}\else%
    \immediate\write16{*** Unknown attribute: \BS@ pssetdefault curve(..., #1=...)}%
    \fi}
\ctr@ld@f\def\Pssd@fl@wchart#1=#2|{\keln@mtr#1|%
    \def\n@mref{arr}\ifx\l@debut\n@mref\expandafter\keln@mtr\l@suite|%
     \def\n@mref{owp}\ifx\l@debut\n@mref\edef\defaultfcarrowposition{#2}\else
     \def\n@mref{owr}\ifx\l@debut\n@mref\edef\defaultfcarrowrefpt{#2}\else
     \immediate\write16{*** Unknown attribute: \BS@ pssetdefault flowchart(..., #1=...)}%
     \fi\fi\else%
    \def\n@mref{lin}\ifx\l@debut\n@mref\edef\defaultfcline{#2}\else
    \def\n@mref{pad}\ifx\l@debut\n@mref\edef\defaultfcxpadding{#2}%
                    \edef\defaultfcypadding{#2}\else
    \def\n@mref{rad}\ifx\l@debut\n@mref\edef\defaultfcradius{#2}\else
    \def\n@mref{sha}\ifx\l@debut\n@mref\edef\defaultfcshape{#2}\else
    \def\n@mref{thi}\ifx\l@debut\n@mref\edef\defaultfcthickness{#2}\else
    \def\n@mref{xpa}\ifx\l@debut\n@mref\edef\defaultfcxpadding{#2}\else
    \def\n@mref{ypa}\ifx\l@debut\n@mref\edef\defaultfcypadding{#2}\else
    \immediate\write16{*** Unknown attribute: \BS@ pssetdefault flowchart(..., #1=...)}%
    \fi\fi\fi\fi\fi\fi\fi\fi}
\ctr@ld@f\def\defaultfcarrowposition{0.5}
\ctr@ld@f\def\defaultfcarrowrefpt{start}
\ctr@ld@f\def\defaultfcline{polygon}
\ctr@ld@f\def\defaultfcradius{0}
\ctr@ld@f\def\defaultfcshape{rectangle}
\ctr@ld@f\def\defaultfcthickness{0}
\ctr@ld@f\def\defaultfcxpadding{0}
\ctr@ld@f\def\defaultfcypadding{0}
\ctr@ld@f\def\Pssd@m@sh#1=#2|{\keln@mun#1|%
    \def\n@mref{d}\ifx\l@debut\n@mref\edef\defaultmeshdiag{#2}\else%
    \immediate\write16{*** Unknown attribute: \BS@ pssetdefault mesh(..., #1=...)}%
    \fi}
\ctr@ld@f\def\Pssd@s@cond#1=#2|{\keln@mun#1|%
    \def\n@mref{c}\ifx\l@debut\n@mref\edef\defaultsecondcolor{#2}\else%
    \def\n@mref{d}\ifx\l@debut\n@mref\edef\defaultseconddash{#2}\else%
    \def\n@mref{w}\ifx\l@debut\n@mref\edef\defaultsecondwidth{#2}\else%
    \immediate\write16{*** Unknown attribute: \BS@ pssetdefault second(..., #1=...)}%
    \fi\fi\fi}
\ctr@ld@f\def\Pssd@th@rd#1=#2|{\keln@mun#1|%
    \def\n@mref{c}\ifx\l@debut\n@mref\edef\defaultthirdcolor{#2}\else%
    \immediate\write16{*** Unknown attribute: \BS@ pssetdefault third(..., #1=...)}%
    \fi}
\ctr@ln@w{newif}\iffillm@de
\ctr@ld@f\def\pssetfillmode#1{\expandafter\setfillm@de#1:}
\ctr@ld@f\def\setfillm@de#1#2:{\if#1n\fillm@defalse\else\fillm@detrue\fi}
\ctr@ld@f\def\defaultfill{no}     
\ctr@ln@w{newif}\ifpsupdatem@de
\ctr@ld@f\def\pssetupdate#1{\ifcurr@ntPS\immediate\write16{*** \BS@ pssetupdate is ignored inside a
     \BS@ psbeginfig-\BS@ psendfig block.}%
    \immediate\write16{*** It must be called before \BS@ psbeginfig.}%
    \else\expandafter\setupd@te#1:\fi}
\ctr@ld@f\def\setupd@te#1#2:{\if#1n\psupdatem@defalse\else\psupdatem@detrue\fi}
\ctr@ld@f\def\defaultupdate{no}     
\ctr@ln@m\curr@ntcolor \ctr@ln@m\curr@ntcolorc@md
\ctr@ld@f\def\Pssetc@lor#1{\ifps@cri\result@tent=\@ne\expandafter\c@lnbV@l#1 :%
    \def\curr@ntcolor{}\def\curr@ntcolorc@md{}%
    \ifcase\result@tent\or\pssetgray{#1}\or\or\pssetrgb{#1}\or\pssetcmyk{#1}\fi\fi}
\ctr@ln@m\curr@ntcolorc@mdStroke
\ctr@ld@f\def\pssetcmyk#1{\ifps@cri\def\curr@ntcolor{#1}\def\curr@ntcolorc@md{\c@msetcmykcolor}%
    \def\curr@ntcolorc@mdStroke{\c@msetcmykcolorStroke}%
    \ifcurr@ntPS\PSc@mment{pssetcmyk Color=#1}\us@primarC@lor\fi\fi}
\ctr@ld@f\def\pssetrgb#1{\ifps@cri\def\curr@ntcolor{#1}\def\curr@ntcolorc@md{\c@msetrgbcolor}%
    \def\curr@ntcolorc@mdStroke{\c@msetrgbcolorStroke}%
    \ifcurr@ntPS\PSc@mment{pssetrgb Color=#1}\us@primarC@lor\fi\fi}
\ctr@ld@f\def\pssetgray#1{\ifps@cri\def\curr@ntcolor{#1}\def\curr@ntcolorc@md{\c@msetgray}%
    \def\curr@ntcolorc@mdStroke{\c@msetgrayStroke}%
    \ifcurr@ntPS\PSc@mment{pssetgray Gray level=#1}\us@primarC@lor\fi\fi}
\ctr@ln@m\fillc@md
\ctr@ld@f\def\us@primarC@lor{\immediate\write\fwf@g{\d@fprimarC@lor}%
    \let\fillc@md=\prfillc@md}
\ctr@ld@f\def\prfillc@md{\d@fprimarC@lor\space\c@mfill}
\ctr@ld@f\def\defaultcolor{0}       
\ctr@ld@f\def\c@lnbV@l#1 #2:{\def\t@xt@{#1}\relax\ifx\t@xt@\empty\c@lnbV@l#2:
    \else\c@lnbV@l@#1 #2:\fi}
\ctr@ld@f\def\c@lnbV@l@#1 #2:{\def\t@xt@{#2}\ifx\t@xt@\empty%
    \def\t@xt@{#1}\ifx\t@xt@\empty\advance\result@tent\m@ne\fi
    \else\advance\result@tent\@ne\c@lnbV@l@#2:\fi}
\ctr@ld@f\def\Blackcmyk{0 0 0 1}
\ctr@ld@f\def\Whitecmyk{0 0 0 0}
\ctr@ld@f\def\Cyancmyk{1 0 0 0}
\ctr@ld@f\def\Magentacmyk{0 1 0 0}
\ctr@ld@f\def\Yellowcmyk{0 0 1 0}
\ctr@ld@f\def\Redcmyk{0 1 1 0}
\ctr@ld@f\def\Greencmyk{1 0 1 0}
\ctr@ld@f\def\Bluecmyk{1 1 0 0}
\ctr@ld@f\def\Graycmyk{0 0 0 0.50}
\ctr@ld@f\def\BrickRedcmyk{0 0.89 0.94 0.28} 
\ctr@ld@f\def\Browncmyk{0 0.81 1 0.60} 
\ctr@ld@f\def\ForestGreencmyk{0.91 0 0.88 0.12} 
\ctr@ld@f\def\Goldenrodcmyk{ 0 0.10 0.84 0} 
\ctr@ld@f\def\Marooncmyk{0 0.87 0.68 0.32} 
\ctr@ld@f\def\Orangecmyk{0 0.61 0.87 0} 
\ctr@ld@f\def\Purplecmyk{0.45 0.86 0 0} 
\ctr@ld@f\def\RoyalBluecmyk{1. 0.50 0 0} 
\ctr@ld@f\def\Violetcmyk{0.79 0.88 0 0} 
\ctr@ld@f\def\Blackrgb{0 0 0}
\ctr@ld@f\def\Whitergb{1 1 1}
\ctr@ld@f\def\Redrgb{1 0 0}
\ctr@ld@f\def\Greenrgb{0 1 0}
\ctr@ld@f\def\Bluergb{0 0 1}
\ctr@ld@f\def\Cyanrgb{0 1 1}
\ctr@ld@f\def\Magentargb{1 0 1}
\ctr@ld@f\def\Yellowrgb{1 1 0}
\ctr@ld@f\def\Grayrgb{0.5 0.5 0.5}
\ctr@ld@f\def\Chocolatergb{0.824 0.412 0.118}
\ctr@ld@f\def\DarkGoldenrodrgb{0.722 0.525 0.043}
\ctr@ld@f\def\DarkOrangergb{1 0.549 0}
\ctr@ld@f\def\Firebrickrgb{0.698 0.133 0.133}
\ctr@ld@f\def\ForestGreenrgb{0.133 0.545 0.133}
\ctr@ld@f\def\Goldrgb{1 0.843 0}
\ctr@ld@f\def\HotPinkrgb{1 0.412 0.706}
\ctr@ld@f\def\Maroonrgb{0.690 0.188 0.376}
\ctr@ld@f\def\Pinkrgb{1 0.753 0.796}
\ctr@ld@f\def\RoyalBluergb{0.255 0.412 0.882}
\ctr@ld@f\def\Pssetf@rst#1=#2|{\keln@mun#1|%
    \def\n@mref{c}\ifx\l@debut\n@mref\Pssetc@lor{#2}\else
    \def\n@mref{d}\ifx\l@debut\n@mref\pssetdash{#2}\else
    \def\n@mref{f}\ifx\l@debut\n@mref\pssetfillmode{#2}\else
    \def\n@mref{j}\ifx\l@debut\n@mref\pssetjoin{#2}\else
    \def\n@mref{u}\ifx\l@debut\n@mref\pssetupdate{#2}\else
    \def\n@mref{w}\ifx\l@debut\n@mref\pssetwidth{#2}\else
    \immediate\write16{*** Unknown attribute: \BS@ psset (..., #1=...)}%
    \fi\fi\fi\fi\fi\fi}
\ctr@ln@m\curr@ntdash
\ctr@ld@f\def\s@uvdash#1{\edef#1{\curr@ntdash}}
\ctr@ld@f\def\defaultdash{1}        
\ctr@ld@f\def\pssetdash#1{\ifps@cri\edef\curr@ntdash{#1}\ifcurr@ntPS\expandafter\Pssetd@sh#1 :\fi\fi}
\ctr@ld@f\def\Pssetd@shI#1{\PSc@mment{pssetdash Index=#1}\ifcase#1%
    \or\immediate\write\fwf@g{[] 0 \c@msetdash}
    \or\immediate\write\fwf@g{[6 2] 0 \c@msetdash}
    \or\immediate\write\fwf@g{[4 2] 0 \c@msetdash}
    \or\immediate\write\fwf@g{[2 2] 0 \c@msetdash}
    \or\immediate\write\fwf@g{[1 2] 0 \c@msetdash}
    \or\immediate\write\fwf@g{[2 4] 0 \c@msetdash}
    \or\immediate\write\fwf@g{[3 5] 0 \c@msetdash}
    \or\immediate\write\fwf@g{[3 3] 0 \c@msetdash}
    \or\immediate\write\fwf@g{[3 5 1 5] 0 \c@msetdash}
    \or\immediate\write\fwf@g{[6 4 2 4] 0 \c@msetdash}
    \fi}
\ctr@ld@f\def\Pssetd@sh#1 #2:{{\def\t@xt@{#1}\ifx\t@xt@\empty\Pssetd@sh#2:
    \else\def\t@xt@{#2}\ifx\t@xt@\empty\Pssetd@shI{#1}\else\s@mme=\@ne\def\debutp@t{#1}%
    \an@lysd@sh#2:\ifodd\s@mme\edef\debutp@t{\debutp@t\space\finp@t}\def\finp@t{0}\fi%
    \PSc@mment{pssetdash Pattern=#1 #2}%
    \immediate\write\fwf@g{[\debutp@t] \finp@t\space\c@msetdash}\fi\fi}}
\ctr@ld@f\def\an@lysd@sh#1 #2:{\def\t@xt@{#2}\ifx\t@xt@\empty\def\finp@t{#1}\else%
    \edef\debutp@t{\debutp@t\space#1}\advance\s@mme\@ne\an@lysd@sh#2:\fi}
\ctr@ln@m\curr@ntwidth
\ctr@ld@f\def\s@uvwidth#1{\edef#1{\curr@ntwidth}}
\ctr@ld@f\def\defaultwidth{0.4}     
\ctr@ld@f\def\pssetwidth#1{\ifps@cri\edef\curr@ntwidth{#1}\ifcurr@ntPS%
    \PSc@mment{pssetwidth Width=#1}\immediate\write\fwf@g{#1 \c@msetlinewidth}\fi\fi}
\ctr@ln@m\curr@ntjoin
\ctr@ld@f\def\pssetjoin#1{\ifps@cri\edef\curr@ntjoin{#1}\ifcurr@ntPS\expandafter\Pssetj@in#1:\fi\fi}
\ctr@ld@f\def\Pssetj@in#1#2:{\PSc@mment{pssetjoin join=#1}%
    \if#1r\def\t@xt@{1}\else\if#1b\def\t@xt@{2}\else\def\t@xt@{0}\fi\fi%
    \immediate\write\fwf@g{\t@xt@\space\c@msetlinejoin}}
\ctr@ld@f\def\defaultjoin{miter}   
\ctr@ld@f\def\Pssets@cond#1=#2|{\keln@mun#1|%
    \def\n@mref{c}\ifx\l@debut\n@mref\Pssets@condcolor{#2}\else%
    \def\n@mref{d}\ifx\l@debut\n@mref\pssetseconddash{#2}\else%
    \def\n@mref{w}\ifx\l@debut\n@mref\pssetsecondwidth{#2}\else%
    \immediate\write16{*** Unknown attribute: \BS@ psset second(..., #1=...)}%
    \fi\fi\fi}
\ctr@ln@m\curr@ntseconddash
\ctr@ld@f\def\pssetseconddash#1{\edef\curr@ntseconddash{#1}}
\ctr@ld@f\def\defaultseconddash{4}  
\ctr@ln@m\curr@ntsecondwidth
\ctr@ld@f\def\pssetsecondwidth#1{\edef\curr@ntsecondwidth{#1}}
\ctr@ld@f\edef\defaultsecondwidth{\defaultwidth} 
\ctr@ld@f\def\psresetsecondsettings{%
    \pssetseconddash{\defaultseconddash}\pssetsecondwidth{\defaultsecondwidth}%
    \Pssets@condcolor{\defaultsecondcolor}}
\ctr@ln@m\sec@ndcolor \ctr@ln@m\sec@ndcolorc@md
\ctr@ld@f\def\Pssets@condcolor#1{\ifps@cri\result@tent=\@ne\expandafter\c@lnbV@l#1 :%
    \def\sec@ndcolor{}\def\sec@ndcolorc@md{}%
    \ifcase\result@tent\or\pssetsecondgray{#1}\or\or\pssetsecondrgb{#1}%
    \or\pssetsecondcmyk{#1}\fi\fi}
\ctr@ln@m\sec@ndcolorc@mdStroke
\ctr@ld@f\def\pssetsecondcmyk#1{\def\sec@ndcolor{#1}\def\sec@ndcolorc@md{\c@msetcmykcolor}%
    \def\sec@ndcolorc@mdStroke{\c@msetcmykcolorStroke}}
\ctr@ld@f\def\pssetsecondrgb#1{\def\sec@ndcolor{#1}\def\sec@ndcolorc@md{\c@msetrgbcolor}%
    \def\sec@ndcolorc@mdStroke{\c@msetrgbcolorStroke}}
\ctr@ld@f\def\pssetsecondgray#1{\def\sec@ndcolor{#1}\def\sec@ndcolorc@md{\c@msetgray}%
    \def\sec@ndcolorc@mdStroke{\c@msetgrayStroke}}
\ctr@ld@f\def\us@secondC@lor{\immediate\write\fwf@g{\d@fsecondC@lor}%
    \let\fillc@md=\sdfillc@md}
\ctr@ld@f\def\sdfillc@md{\d@fsecondC@lor\space\c@mfill}
\ctr@ld@f\edef\defaultsecondcolor{\defaultcolor} 
\ctr@ld@f\def\Pss@tsecondSt{%
    \s@uvdash{\typ@dash}\pssetdash{\curr@ntseconddash}%
    \s@uvwidth{\typ@width}\pssetwidth{\curr@ntsecondwidth}\us@secondC@lor}
\ctr@ld@f\def\Psrest@reSt{\pssetwidth{\typ@width}\pssetdash{\typ@dash}\us@primarC@lor}
\ctr@ld@f\def\Pssetth@rd#1=#2|{\keln@mun#1|%
    \def\n@mref{c}\ifx\l@debut\n@mref\Pssetth@rdcolor{#2}\else%
    \immediate\write16{*** Unknown attribute: \BS@ psset third(..., #1=...)}%
    \fi}
\ctr@ln@m\th@rdcolor \ctr@ln@m\th@rdcolorc@md
\ctr@ld@f\def\Pssetth@rdcolor#1{\ifps@cri\result@tent=\@ne\expandafter\c@lnbV@l#1 :%
    \def\th@rdcolor{}\def\th@rdcolorc@md{}%
    \ifcase\result@tent\or\Pssetth@rdgray{#1}\or\or\Pssetth@rdrgb{#1}%
    \or\Pssetth@rdcmyk{#1}\fi\fi}
\ctr@ln@m\th@rdcolorc@mdStroke
\ctr@ld@f\def\Pssetth@rdcmyk#1{\def\th@rdcolor{#1}\def\th@rdcolorc@md{\c@msetcmykcolor}%
    \def\th@rdcolorc@mdStroke{\c@msetcmykcolorStroke}}
\ctr@ld@f\def\Pssetth@rdrgb#1{\def\th@rdcolor{#1}\def\th@rdcolorc@md{\c@msetrgbcolor}%
    \def\th@rdcolorc@mdStroke{\c@msetrgbcolorStroke}}
\ctr@ld@f\def\Pssetth@rdgray#1{\def\th@rdcolor{#1}\def\th@rdcolorc@md{\c@msetgray}%
    \def\th@rdcolorc@mdStroke{\c@msetgrayStroke}}
\ctr@ld@f\def\us@thirdC@lor{\immediate\write\fwf@g{\d@fthirdC@lor}%
    \let\fillc@md=\thfillc@md}
\ctr@ld@f\def\thfillc@md{\d@fthirdC@lor\space\c@mfill}
\ctr@ld@f\def\defaultthirdcolor{1}  
\ctr@ld@f\def\pstrimesh#1[#2,#3,#4]{{\ifcurr@ntPS\ifps@cri%
    \PSc@mment{pstrimesh Type=#1, Triangle=[#2,#3,#4]}%
    \s@uvc@ntr@l\et@tpstrimesh\ifnum#1>\@ne\Pss@tsecondSt\setc@ntr@l{2}%
    \Pstrimeshp@rt#1[#2,#3,#4]\Pstrimeshp@rt#1[#3,#4,#2]%
    \Pstrimeshp@rt#1[#4,#2,#3]\Psrest@reSt\fi\psline[#2,#3,#4,#2]%
    \PSc@mment{End pstrimesh}\resetc@ntr@l\et@tpstrimesh\fi\fi}}
\ctr@ld@f\def\Pstrimeshp@rt#1[#2,#3,#4]{{\l@mbd@un=\@ne\l@mbd@de=#1\loop\ifnum\l@mbd@de>\@ne%
    \advance\l@mbd@de\m@ne\figptbary-1:[#2,#3;\l@mbd@de,\l@mbd@un]%
    \figptbary-2:[#2,#4;\l@mbd@de,\l@mbd@un]\psline[-1,-2]%
    \advance\l@mbd@un\@ne\repeat}}
\initpr@lim\initpss@ttings\initPDF@rDVI
\ctr@ln@w{newbox}\figBoxA
\ctr@ln@w{newbox}\figBoxB
\ctr@ln@w{newbox}\figBoxC
\catcode`\@=12

\pssetdefault(update=yes)
\newbox\demo


\newtheorem{theorem}{Theorem}[section]
\newtheorem{lemma}[theorem]{Lemma}
\newtheorem{proposition}[theorem]{Proposition}
\newtheorem{corollary}[theorem]{Corollary}

\theoremstyle{definition}
\newtheorem{definition}[theorem]{Definition}
\newtheorem{assumption}[theorem]{Assumption}

\theoremstyle{remark}
\newtheorem{remark}[theorem]{Remark}
\newtheorem{example}[theorem]{Example}

\numberwithin{equation}{section}


\let\ssz\scriptscriptstyle
\let\sz\scriptstyle
\let\te\textstyle
\let\di\displaystyle
\let\ov\overline
\let\un\underline

\DeclareMathOperator{\supp}{supp}
\DeclareMathOperator{\dist}{dist}
\renewcommand{\Re}{\operatorname{Re}}
\renewcommand{\Im}{\operatorname{Im}}

\newcommand  {\C}{\mathbb{C}}
\newcommand{\Fbb}{\mathbb{F}}
\newcommand  {\N}{\mathbb{N}}
\renewcommand{\P}{\mathbb{P}}
\newcommand  {\Q}{\mathbb{Q}}
\newcommand  {\R}{\mathbb{R}}
\newcommand{\Sbb}{\mathbb{S}}
\newcommand  {\T}{\mathbb{T}}
\newcommand{\Ubb}{\mathbb{U}}
\newcommand  {\Z}{\mathbb{Z}}

\newcommand  {\bA}{\boldsymbol{\mathsf A}}
\newcommand  {\bB}{\boldsymbol{\mathsf B}}
\newcommand  {\bD}{\boldsymbol{\mathsf D}}
\newcommand  {\bE}{\boldsymbol{\mathsf E}}
\newcommand  {\bF}{\boldsymbol{\mathsf F}}
\newcommand  {\bG}{\boldsymbol{\mathsf G}}
\newcommand  {\bH}{\boldsymbol{\mathsf H}}
\newcommand  {\bJ}{\boldsymbol{\mathsf J}}
\newcommand  {\bK}{\boldsymbol{\mathsf K}}
\newcommand  {\bL}{\boldsymbol{\mathsf L}}
\newcommand  {\bM}{\boldsymbol{\mathsf M}}
\newcommand  {\bN}{\boldsymbol{\mathsf N}}
\newcommand  {\bP}{\boldsymbol{\mathsf P}}
\newcommand  {\bQ}{\boldsymbol{\mathsf Q}}
\newcommand  {\bR}{\boldsymbol{\mathsf R}}
\newcommand  {\bS}{\boldsymbol{\mathsf S}}
\newcommand  {\bT}{\boldsymbol{\mathsf T}}
\newcommand  {\bU}{\boldsymbol{\mathsf U}}
\newcommand  {\bV}{\boldsymbol{\mathsf V}}
\newcommand  {\bW}{\boldsymbol{\mathsf W}}
\newcommand  {\bX}{\boldsymbol{\mathsf X}}
\newcommand  {\bY}{\boldsymbol{\mathsf Y}}
\newcommand  {\bZ}{\boldsymbol{\mathsf Z}}

\newcommand  {\ba}{{\boldsymbol{\mathsf a}}}
\newcommand  {\bb}{{\boldsymbol{\mathsf b}}}
\newcommand  {\bc}{{\boldsymbol{\mathsf c}}}
\newcommand  {\bd}{{\boldsymbol{\mathsf d}}}
\newcommand  {\be}{{\boldsymbol{\mathsf e}}}
\newcommand {\bff}{{\boldsymbol{\mathsf f}}}
\newcommand  {\bg}{{\boldsymbol{\mathsf g}}}
\newcommand  {\bj}{{\boldsymbol{j}}}
\newcommand  {\bk}{{\boldsymbol{\mathsf k}}}
\newcommand  {\bh}{{\boldsymbol{\mathsf h}}}
\newcommand  {\bm}{{\boldsymbol{\mathsf m}}}
\newcommand  {\bn}{{\boldsymbol{\mathsf n}}}
\newcommand  {\bp}{{\boldsymbol{\mathsf p}}}
\newcommand  {\bq}{{\boldsymbol{\mathsf q}}}
\newcommand  {\br}{{\boldsymbol{\mathsf r}}}
\newcommand  {\bs}{{\boldsymbol{\mathsf s}}}
\newcommand  {\bt}{{\boldsymbol{\mathsf t}}}
\newcommand  {\bu}{{\boldsymbol{\mathsf u}}}
\newcommand  {\bv}{{\boldsymbol{\mathsf v}}}
\newcommand  {\bw}{{\boldsymbol{\mathsf w}}}
\newcommand  {\bx}{{\boldsymbol{\mathsf x}}}
\newcommand  {\by}{{\boldsymbol{\mathsf y}}}
\newcommand  {\bz}{{\boldsymbol{\mathsf z}}}

\newcommand  {\bfz}{{\boldsymbol0}}
\newcommand  {\betab}{{\hskip0.05em\underline{\hskip0.35em}\hskip-0.45em\beta}}
\newcommand  {\ub}{{\underline{b}}}
\newcommand  {\deltad}{{\underline{\delta}}}
\newcommand  {\etae}{{\boldsymbol{\eta}}}
\newcommand  {\gammag}{{\underline{\gamma}}}
\newcommand  {\omegaw}{{\boldsymbol{\omega}}}
\newcommand  {\taut}{{\boldsymbol{\tau}}}
\newcommand  {\chic}{{\boldsymbol{\chi}}}
\newcommand  {\xix}{{\boldsymbol{\xi}}}
\newcommand  {\varphif}{{\boldsymbol{\varphi}}}
\newcommand  {\psiy}{{\boldsymbol{\psi}}}
\newcommand  {\phif}{{\boldsymbol{\phi}}}
\newcommand  {\Phif}{{\boldsymbol{\Phi}}}
\newcommand  {\Psiy}{{\boldsymbol{\Psi}}}
\newcommand  {\Xix}{{\boldsymbol{\Xi}}}

\newcommand  {\cB}{\mathcal{B}}
\newcommand  {\cC}{\mathcal{C}}
\newcommand  {\cD}{\mathcal{D}}
\newcommand  {\cI}{\mathcal{I}}
\newcommand  {\cK}{\mathcal{K}}
\newcommand  {\cL}{\mathcal{L}}
\newcommand  {\cM}{\mathcal{M}}
\newcommand  {\cN}{\mathcal{N}}
\newcommand  {\cO}{\mathcal{O}}
\newcommand  {\cP}{\mathcal{P}}
\newcommand  {\cU}{\mathcal{U}}
\newcommand  {\cV}{\mathcal{V}}
\newcommand  {\cW}{\mathcal{W}}
\newcommand  {\cX}{\mathcal{X}}
\newcommand  {\cY}{\mathcal{Y}}
\newcommand  {\cZ}{\mathcal{Z}}

\newcommand  {\ff}{\boldsymbol{f}}
\newcommand  {\fp}{\boldsymbol{p}}
\newcommand  {\fq}{\boldsymbol{q}}
\newcommand  {\fu}{\boldsymbol{u}}
\newcommand  {\fv}{\boldsymbol{v}}
\newcommand  {\fw}{\boldsymbol{w}}
\newcommand  {\fW}{\boldsymbol{W}}

\newcommand  {\gA}{{\mathfrak A}}
\newcommand  {\gB}{{\mathfrak B}}
\newcommand  {\gC}{{\mathfrak C}}
\newcommand  {\gE}{{\mathfrak E}}
\newcommand  {\gG}{{\mathfrak G}}
\newcommand  {\gH}{{\mathfrak H}}
\newcommand  {\gK}{{\mathfrak K}}
\newcommand  {\gL}{{\mathfrak L}}
\newcommand  {\gM}{{\mathfrak M}}
\newcommand  {\gP}{{\mathfrak P}}
\newcommand  {\gQ}{{\mathfrak Q}}
\newcommand  {\gR}{{\mathfrak R}}
\newcommand  {\gS}{{\mathfrak S}}
\newcommand  {\gT}{{\mathfrak T}}
\newcommand  {\gU}{{\mathfrak U}}
\newcommand  {\gY}{{\mathfrak Y}}
\newcommand  {\gh}{{\mathfrak h}}
\newcommand  {\gs}{{\mathfrak s}}
\newcommand  {\gt}{{\mathfrak t}}
\newcommand  {\gbs}{{\boldsymbol{\mathfrak s}}}

\newcommand{\sA}{{\mathscr A}}
\newcommand{\sB}{{\mathscr B}}
\newcommand{\sC}{{\mathscr C}}
\newcommand{\sD}{{\mathscr D}}
\newcommand{\sE}{{\mathscr E}}
\newcommand{\sF}{{\mathscr F}}
\newcommand{\sG}{{\mathscr G}}
\newcommand{\sI}{{\mathscr I}}
\newcommand{\sK}{{\mathscr K}}
\newcommand{\sL}{{\mathscr L}}
\newcommand{\sM}{{\mathscr M}}
\newcommand{\sN}{{\mathscr N}}
\newcommand{\sO}{{\mathscr O}}
\newcommand{\sP}{{\mathscr P}}
\newcommand{\sR}{{\mathscr R}}
\newcommand{\sS}{{\mathscr S}}
\newcommand{\sT}{{\mathscr T}}
\newcommand{\sV}{{\mathscr V}}
\newcommand{\sX}{{\mathscr X}}

\newcommand  {\rA}{{\mathsf A}}
\newcommand  {\rB}{{\mathsf B}}
\newcommand  {\rD}{{\mathrm D}}
\newcommand  {\rE}{{\mathsf E}}
\newcommand  {\rG}{{\mathsf G}}
\newcommand  {\rH}{{\mathsf H}}
\newcommand  {\rJ}{{\mathsf J}}
\newcommand  {\rK}{{\mathsf K}}
\newcommand  {\rL}{{\mathsf L}}
\newcommand  {\rM}{{\mathsf M}}
\newcommand  {\rN}{{\mathsf N}}
\newcommand  {\rP}{{\mathsf P}}
\newcommand  {\rR}{{\mathsf R}}
\newcommand  {\rS}{{\mathsf S}}
\newcommand  {\rU}{{\mathsf U}}
\newcommand  {\rV}{{\mathsf V}}
\newcommand  {\rW}{{\mathsf W}}
\newcommand  {\rX}{{\mathsf X}}
\newcommand  {\rY}{{\mathsf Y}}
\newcommand  {\rZ}{{\mathsf Z}}

\newcommand  {\ra}{{\mathsf a}}
\newcommand  {\rb}{{\mathsf b}}
\newcommand  {\rd}{{\mathrm d}}
\newcommand  {\re}{{\mathrm e}}
\newcommand  {\rf}{{\mathsf f}}
\newcommand  {\rg}{{\mathsf g}}
\newcommand  {\rp}{{\mathsf p}}
\newcommand  {\rt}{{\mathsf t}}
\newcommand  {\ru}{{\mathsf u}}
\newcommand  {\rv}{{\mathsf v}}
\newcommand  {\rx}{{\mathsf x}}

\newcommand{\ppD}{{\ee\underline{\me D\me}\ee}}
\newcommand{\ppL}{{\ee\underline{\me L\me}\ee}}
\newcommand{\ppT}{{\ee\underline{\me T\me}\ee}}

\newcommand{\bule}{\mbox{$\star\ $}}
\newcommand{\iti}[1]{\par\vspace{1.5pt}\noindent\mbox{{\it (\romannumeral #1)\/}}}
\newcommand{\itj}[1]{\mbox{{\it (\romannumeral #1)\/}}}
\newcommand{\ite}{\item[\bule]}

\newcommand{\dd}[1]{_{\raise-0.25ex\hbox{$\scriptstyle #1$}}}
\newcommand{\ee}{\hskip 0.15ex}
\newcommand{\me}{\hskip-0.15ex}
\newcommand{\on}[1]{\big|_{#1}}
\newcommand{\loc}{{\mathsf{loc}}}
\newcommand{\Dir}{{\mathsf{Dir}}}
\newcommand{\Neu}{{\mathsf{Neu}}}

\newcommand{\cmpl}[1]{\ov{#1\me\me}^{\ee*}}

\newcommand{\vpha}{\left.\vphantom{T^{j_0}_{j_0}}\!\!\right.}
\newcommand {\Norm}[2]{\|#1\|\vpha_{#2}}           
\newcommand{\SNorm}[2]{|#1|\vpha_{#2}}             
\newcommand{\REF}[1]{\mbox{\rm \ref{#1}}}         

\newcommand{\Cinf}{{\mathscr C}^\infty}

\newcommand{\prm}{\hskip0.125ex'\hskip-0.8ex}

\hyphenation{wave-let Galer-kin}

\title{Analytic Regularity for Linear Elliptic Systems in Polygons and Polyhedra}

\date{\today}

\author{Martin Costabel, Monique Dauge and Serge Nicaise}

\address{IRMAR, Universit\'{e} de Rennes 1,
Campus de Beaulieu,
35042, Rennes Cedex, France}
\email{martin.costabel@univ-rennes1.fr}

\address{IRMAR, Universit\'{e} de Rennes 1,
Campus de Beaulieu,
35042, Rennes Cedex, France}
\email{monique.dauge@univ-rennes1.fr}

\address{LAMAV,
FR CNRS 2956, Universit\'e Lille Nord de France, UVHC\\ 59313
Valenciennes Cedex 9, France}
\email{snicaise@univ-valenciennes.fr}

\subjclass[2000]{
35B65, 
35J25, 
65N30  
}

\keywords{weighted anisotropic Sobolev spaces, regularity estimates}

\begin{document}

\begin{abstract}
We prove weighted anisotropic analytic estimates for solutions of second order elliptic boundary value problems in polyhedra. The weighted analytic classes which we use are the same as those introduced by Guo in 1993 in view of establishing exponential convergence for $hp$ finite element methods in polyhedra. We first give a simple proof of the known weighted analytic regularity in a polygon, relying on a new formulation of elliptic a priori estimates in smooth domains with analytic control of derivatives. The technique is based on dyadic partitions near the corners. This technique can successfully be extended to polyhedra, providing isotropic analytic regularity. This is not optimal, because it does not take advantage of the full regularity along the edges. We combine it with a nested open set technique to obtain the desired three-dimensional anisotropic analytic regularity result. Our proofs are global and do not require the analysis of singular functions.
\end{abstract}

\maketitle

\tableofcontents

\section*{Introduction}
\label{sec0}

\subsection*{Motivation}
Solutions of elliptic boundary value problems with analytic data are analytic. This classical result has played an important role in the analysis of harmonic functions since Cauchy's time and in the analysis of more general elliptic problems since Hilbert formulated it as his 19th problem. Hilbert's problem for second order nonlinear problems in variational form in two variables was solved by Bernstein in 1904 \cite{Bernstein04}. After this, many techniques were developed for proving analyticity, culminating in the 1957 paper \cite{MorreyNirenberg57} by Morrey and Nirenberg on linear problems, where Agmon's elliptic regularity estimates in nested open sets were refined to get Cauchy-type analytic estimates, both in the interior of a domain and near analytic parts of its boundary.

Analyticity means exponentially fast approximation by polynomials, and therefore it plays an important role in numerical analysis, too. Analytic estimates have gained a renewed interest through the development of the $p$ and $hp$ versions of the finite element method by Babu\v{s}ka and others. In this context, applications often involve boundaries that are not globally analytic, but only piecewise analytic due to the presence of corners and edges, and therefore global elliptic regularity results cannot be used directly. 

Elliptic boundary value problems in domains with corners and
edges have been investigated by many authors. Let us quote the
pioneering papers of  Kondrat'ev \cite{Kondratev67} and
of Maz'ya and  Plamenevskii \cite{MazyaPlamenevskii73, MazyaPlamenevskii77,MazyaPlamenevskii80b,MazyaPlamenevskii84b}. In these
works, the regularity of the solution and its singular behavior near edges and corners is described  in terms
of weighted Sobolev spaces.
Besides their own theoretical interest, these results are the basis for the convergence analysis of finite element approximations of the boundary value problems. 

But whereas these results on elliptic regularity of finite order allow to prove optimal convergence estimates for the $h$ version or the $p$ version of the finite element method, they are not sufficient for proving the (numerically observed) exponential convergence rate of the $hp$-version  of the finite element method.
Indeed, as has been shown for two-dimensional problems by Babu\v{s}ka and
Guo in \cite{BabuskaGuo88,BabuskaGuo89}, the convergence
analysis of the $hp$-FEM requires the introduction of weighted
spaces with analytic-type control of all derivatives, so-called  ``countably normed spaces''. Babu\v{s}ka and Guo proved corresponding weighted analytic
regularity results for several model problems
\cite{BabuskaGuo88,BabuskaGuo89,GuoBabuska93,GuoSchwab06}.

In three-dimensional domains, as soon as \emph{edges} are present, there is higher regularity in the direction along the edge, and in the $hp$-version one introduces anisotropic refinement,  performed only in the direction transverse to the edge \cite{BabuskaGuo96}. The corresponding weighted spaces have to take this anisotropy into account. In \cite{GuoBabuska97a, GuoBabuska97b} Babu\v{s}ka and Guo have started proving estimates in such spaces in a model situation.

For three-dimensional polyhedra (containing edges and corners) Guo
has introduced the corresponding relevant spaces in
1993 \cite{Guo95}: The anisotropy along edges has to be combined with the distance to corners in a specific way. Since that time, the proof that the regularity of solutions of elliptic boundary value problems with analytic right hand sides is described by these spaces has been an open problem, even for the simplest cases of the Laplace equation with Dirichlet or Neumann
boundary conditions. In the error analysis of $hp$-FEM, such regularity estimates have been taken as an assumption
\cite{Guo95, GuoStephan98,Schotzau09}.

In this paper, we first give a simple proof of the 2D weighted analytic regularity result on polygons, for
Dirichlet and Neumann conditions, using a dyadic partition technique.
Then, relying on a nested open set
technique, we prove anisotropic regularity along edges in the framework of the anisotropic weighted spaces introduced and used in
\cite{BuffaCoDa03, BuffaCoDa04}, but now with analytic-type estimates for all derivatives. Combining the previous two steps with a 3D dyadic partition technique at polyhedral corners, we obtain the desired analytic weighted regularity in a 3D polyhedron.

We use two types of weighted spaces of analytic functions. The first type is constructed from weighted Sobolev spaces of Kondrat'ev type. These spaces with ``homogeneous norms'' are suitable for the description of the regularity in the presence of Dirichlet boundary conditions. For Neumann conditions, a new class of weighted analytic function spaces, constructed from Maz'ya-Plamenevskii-type weighted Sobolev spaces with ``non-homogeneous norms'', has to be used.

It is important to notice that the above spaces naturally contain the singular parts of solutions, and give an accurate account of their generic regularity. Thus, in contrast with investigations such as \cite{HolmMaischakStephan08}, we do not need to address separately vertex, edge and edge-vertex singularities. Our estimates cover regular and singular parts at the same time.

Analytic regularity estimates consist of regularity estimates of arbitrary finite order in which the dependency of the constants on the order is controlled in a Cauchy-type manner. The results of this paper contain therefore, in particular, finite regularity estimates of any order in anisotropic weighted Sobolev spaces. For polyhedra, these finite regularity results are also new in this generality. In particular, our proof covers the statements formulated   in \cite[sec.\ 3]{BuffaCoDa03}, and our results generalize those of \cite{ApelNicaise98,BacutaNistor07}. 

Our proof of analytic regularity estimates is modular in the sense that it starts from low-regularity a-priori estimates on smooth domains and proceeds to singular points, edges, and finally polyhedral corners by employing the two techniques of dyadic partitions and nested open sets. In order to avoid drowning this clear structure in too many technical difficulties, we mainly restrict ourselves to the situation of homogeneous elliptic equations with constant coefficients. Generalizations to operators with lower order terms and variable coefficients will be briefly indicated. They will be discussed in more detail in our forthcoming book \cite{GLC}.

\subsection*{Organization of the paper}
The main results of the paper are Theorems \REF{5T2} and \REF{6T3} in sections 
\ref{sec5} and \ref{sec6}. The hypotheses of these theorems as well as the definitions \ref{5Dflag} and \ref{5DB} of the relevant function spaces are necessarily rather complicated. 
To facilitate their understanding, we begin in section \ref{secIllu} by illustrating the general results in the classical simple examples of the Dirichlet and Neumann boundary value problems for the Laplace equation. We also give simplified definitions for the function spaces in the case of a rectangular polyhedron where all edges are parallel to the coordinate axes. 

We start the analysis in section \ref{sec1} by quoting from \cite{GLCI} an elliptic a priori estimate on smooth domains with analytic control of derivatives. This estimate improves the readability and efficiency of classical proofs of analytic regularity in smooth domains as can be found in \cite{MorreyNirenberg57,Morrey66, LionsMagenes70}. We then prove a refinement of this estimate in view of tackling problems of Neumann type. 

In section \ref{sec2}, we make use of a dyadic partition technique to construct weighted analytic estimates in plane sectors. This technique has been used in a similar framework in \cite{BolleyCamusDauge85} for weighted Gevrey regularity.  It has been first employed for corner domains in \cite{Kondratev67}, then for domains with edges \cite{MazyaPlamenevskii80b}, and even for the Laplace operator on a polygon with non-linear boundary conditions \cite{Kawohl80}.
The technique of dyadic partitions is a powerful tool to prove what we call {\em natural regularity shift} results near corners.  This expression means that from two ingredients, namely \emph{basic} regularity, i.e.\ a certain weighted Sobolev regularity of low order, of the \emph{solution}, and \emph{improved} regularity, i.e.\  high order weighted Sobolev regularity or weighted analytic regularity, of the \emph{right hand side}, one deduces improved regularity \emph{with the same weight} of the solution. This type of regularity result requires very few hypotheses on the weight exponents, none at all in the class of spaces ($\bK$ and $\bA$) with homogeneous norms and only a bound from below in the class of spaces ($\bJ$ and $\bB$) with nonhomogeneous norms.
 
In section \ref{sec3}, we combine the local estimates to obtain the natural regularity shift for polygons in analytic weighted spaces.

In section \ref{sec4} we start the three-dimensional investigation with estimates along an edge. The fact that there is additional regularity along the edge has been known and studied for a long time (see \cite[Theorem 16.13]{Dauge88}, \cite[Satz 3.1]{MazyaRossmann88}, \cite[Theorem 4.4]{CostabelDauge93a}). We therefore introduce anisotropic weighted spaces in which derivatives along the direction of the edge are less singular than transversal derivatives. There are again two classes of spaces -- with homogeneous norm (spaces $\bM$) and with non homogeneous norm (spaces $\bN$).
Under the assumption of a certain local a priori estimate of low order in the neighborhood of an edge point, we prove local analytic anisotropic regularity shift along this edge, by combining dyadic partition technique and the classical (and delicate) tool of nested open sets. 

In section \ref{sec5}, we treat polyhedral corners. Relying on suitable definitions of various families of weighted spaces (similar to \cite{MazyaRossmann03}, but with anisotropy along edges), we are able to prove the analytic regularity shift for polyhedra by dyadic partitions around each corner of a polyhedron.

In section \ref{sec6}, we prove the main analytic regularity result for solutions of problems given in variational form. The proof combines our analytic regularity shift results with known results on basic regularity for the variational solutions. On polygons, we use for this purpose Kondrat'ev's classical regularity results in weighted Sobolev spaces, and on polyhedra, we use recent regularity results by Maz'ya and Rossmann \cite{MazyaRossmann03}. In this way, we finally obtain the weighted analytic regularity of variational solutions in the right functional classes of \cite{Guo95}. For polygons, we thus prove again in a different and simpler way results which were first established by Babu\v{s}ka and Guo \cite{BabuskaGuo88,GuoBabuska93}. For polyhedra, the results are new.

We conclude our paper in section \ref{sec8}  by discussing various generalizations. For our proofs, we choose in this paper the simplest possible framework of second order homogeneous systems with constant coefficients and zero boundary data on domains with piecewise straight or plane boundaries. In dimension 2, it is a mere technicality to generalize these proofs to the case of second order elliptic systems with analytic coefficients and non-zero boundary data. In dimension 3, the possible variation of coefficients along edges introduces more serious complications and would require to estimate commutators in a systematic way as in \cite[Lemmas 1.6.2 \& 2.6.2]{GLCI}. In comparison, the generalization to homogeneous transmission problems with constant coefficients on a polyhedral partition would be much less difficult. Whereas the Stokes system could be considered similarly, things are different for regularized harmonic Maxwell equations, for which it is necessary to detach the first singularity if one wants to obtain a valuable result, see \cite{CostabelDaugeSchwab05} in dimension two.

\medskip
We denote by $\rH^m(\Omega)$ the usual Hilbert Sobolev space of exponent $m$, by $\|{\,\cdot\,}\|_{m;\,\Omega}$ and $|{\,\cdot\,}|_{m;\,\Omega}$ its norm and semi-norm. The $\rL^2(\Omega)$-norm is denoted by $\|{\,\cdot\,}\|_{0;\,\Omega}$ or simply by $\|{\,\cdot\,}\|_{\Omega}\,$. Boldface letters like $\bH^m(\Omega)$ indicate spaces of vector functions.

\section{Illustration}
\label{secIllu}

In this section, we explain the main definitions and results that culminate in Theorems \REF{5T2} and \REF{6T3}, for a simple class of geometrical configurations and for the simplest elliptic boundary value problems, namely the Dirichlet and Neumann problems for the Laplace equation.

\subsection{Dirichlet conditions}
Let us consider the Dirichlet problem for the Laplace operator on a domain $\Omega$
\begin{equation}
\label{0Ebvp}
   \left\{ \begin{array}{rclll}
   \Delta\,u &=& f \quad & \mbox{in}\ \ \Omega, \\[0.3ex]
   u &=& 0  & \mbox{on}\ \ \partial\Omega\,,
   \end{array}\right.
\end{equation}
for right hand side $f\in\rH^{-1}(\Omega)$. There exists a unique solution $u\in \rH^1(\Omega)$, owing to the Lax-Milgram lemma applied to a variational formulation that is strongly elliptic on $\rH_{0}^1(\Omega)$.

If $\Omega$ has a smooth boundary, there holds what can be called the {\em elliptic regularity shift}: For any natural number $n$, if $f\in\rH^{n}(\Omega)$, then $u\in\rH^{n+2}(\Omega)$. Moreover, if the boundary is analytic and $f$ belongs to the class $\rA(\Omega)$ of functions analytic up to the boundary of $\Omega$, then $u\in\rA(\Omega)$.

\subsubsection{Polygons}
If $\Omega$ has a {\em polygonal} boundary, the situation is quite different: If for instance $\Omega$ has a non-convex angle, the solution $u$ does not belong to $\rH^2(\Omega)$ in general when $f\in\rL^2(\Omega)$. Instead there hold expansions in regular and singular parts: If $f$ is smooth, then for any natural number $n$ we can write
\begin{equation}
\label{0E2}
   u = v_n + \sum_{\bc\in\sC} w_{\bc,n}, \quad v_n\in \rH^{n+2}(\Omega).
\end{equation}
Here $\sC$ is the set of the corners $\bc$ of $\Omega$. Let $\omega_\bc$ be the angle of $\Omega$ at the corner $\bc$. Each corner singular part has the form\footnote{When $\frac{k\pi}{\omega_\bc}$ is an integer, there is a logarithmic term instead of $r_\bc^{\frac{k\pi}{\omega_\bc}} \sin\Big(\frac{k\pi\theta_\bc}{\omega_\bc}\Big)$.} 
\begin{equation}
\label{0E3}
   w_{\bc,n} = \chi_\bc(r_\bc) \sum_{k\in\N,\ 0<\frac{k\pi}{\omega_\bc}\le n+1} d_{\bc,k}\,
   r_\bc^{\frac{k\pi}{\omega_\bc}} \sin\Big(\frac{k\pi\theta_\bc}{\omega_\bc}\Big)\,.
\end{equation}
The cut-off function $\chi_\bc$, the polar coordinates $(r_\bc,\theta_\bc)$ and the coefficients $d_{\bc,k}$ are related to $\bc$. The regularity implication
\begin{equation}
\label{0E4}
   u\in\rH^1(\Omega) \ \ \mbox{and}\ \  f\in\rH^{n}(\Omega) 
   \quad\Longrightarrow\quad u\in\rH^{n+2}(\Omega)
\end{equation}
holds only if $n+1<\frac\pi{\omega_\bc}$ for all $\bc$. This precludes any regularity in the analytic class.

It is known since Kondrat'ev that the use of weighted Sobolev spaces allows a better description of the regularity of solutions. We introduce now the Kondrat'ev spaces with a notation of our own --- which facilitates the definition of weighted analytic classes. With $r_\bc=r_\br(\bx)$ the distance function to the corner $\bc$ and $\betab=(\beta_\bc)\dd{\bc\in\sC}\in\R^{\#\sC}$ a weight multi-exponent we define the weighted semi-norms
\begin{equation}
\label{0E0K}
   \SNorm{u}{\rK;\,k,\betab\,;\,\Omega} =
   \Big\{ \sum_{|\alpha|=k}
   \Big\| \big(\prod_{\bc\in\sC} r_\bc^{\beta_\bc + |\alpha|}\big) \,\partial^\alpha_\bx u
   \Big\|_{0;\, \Omega}^2
   \Big\}^{\frac12},\quad k\in\N\,.
\end{equation}
The space $\rK^m_\betab(\Omega)$ is the space of distributions $u$ such that the norm
\[
   \Norm{u}{\rK^m_\betab(\Omega)} = 
   \Big\{ \sum_{k=0}^m \SNorm{u}{\rK;\,k,\betab\,;\,\Omega}^2\Big\}^{\frac12} \quad\mbox{is finite}.
\]
These norms are qualified as {\em homogeneous} because of the shift $+ |\alpha|$ for the weight exponent, which makes each term homogeneous with respect to dilations with center in the corresponding corner. 
In the case of homogeneous Dirichlet conditions as in problem \eqref{0Ebvp}, an angular Poincar\'e inequality allows to establish the estimate
\begin{equation}
\label{0Epoi}
   \Norm{\big(\prod_{\bc\in\sC} r_\bc^{-1} \big) u}{0;\Omega} 
   \le C\SNorm{u}{1;\Omega},\quad u\in\rH^1_0(\Omega),
\end{equation}
whence the embedding
\begin{equation}
\label{0EKm1}
   \rH^1_0(\Omega) \subset \rK^1_{-1}(\Omega).
\end{equation}
This is one of the reasons why the $\rK$ spaces are appropriate for describing the regularity of Dirichlet solutions.

Kondrat'ev's result for problem \eqref{0Ebvp} can be phrased as follows

\begin{theorem}\cite[section 5.4]{Kondratev67}
\label{0T1}
If the following condition holds for the polygon $\Omega$ and the exponents $\beta_\bc$
\begin{equation}
\label{0Eco1}
   0\le -\beta_\bc-1<\frac\pi{\omega_\bc}\quad\forall\bc\in\sC
\end{equation}
then for any natural number $n$ the solution of problem \eqref{0Ebvp} satisfies the regularity result:
\begin{equation}
\label{0E6}
   u\in\rH^1(\Omega) \ \ \mbox{and}\ \  f\in\rK^{n}_{\betab+2}(\Omega) 
   \quad\Longrightarrow\quad u\in\rK^{n+2}_{\betab}(\Omega).
\end{equation}
\end{theorem}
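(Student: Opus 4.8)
The plan is to localize the problem near each corner and, there, to reduce the analysis to a one-dimensional operator pencil via the Mellin transform, following Kondrat'ev's original strategy. First I would fix a partition of unity $\{\chi_\bc\}_{\bc\in\sC}\cup\{\chi_0\}$ subordinate to a cover of $\Omega$, in which each $\chi_\bc$ is supported in a neighborhood of the corner $\bc$ containing no other corner, while $\chi_0$ is supported away from all corners. On $\supp\chi_0$ every weight $r_\bc$ is bounded above and below, so $\Norm{\,\cdot\,}{\rK^{m}_\betab(\Omega)}$ is equivalent to the ordinary Sobolev norm there, and the classical elliptic regularity shift $f\in\rH^{n}\Rightarrow u\in\rH^{n+2}$ on smooth pieces (including straight boundary segments) disposes of this part. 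The remaining work is purely local near a single corner, where only one factor $r_\bc$ is active; I write $r=r_\bc$, $\omega=\omega_\bc$, $\beta=\beta_\bc$ and work in the model infinite sector $K=\{(r,\theta):r>0,\ 0<\theta<\omega\}$.

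Next I would flatten the dilation structure by the substitution $t=\log r$, which maps $K$ onto the strip $\Sigma=\R\times(0,\omega)$ and turns the scaling-homogeneous Laplacian into the translation-invariant operator $r^2\Delta=\partial_t^2+\partial_\theta^2$. Accounting for the measure $\rd x=e^{2t}\,\rd t\,\rd\theta$, the homogeneous weighted spaces $\rK^{m}_\beta$ become ordinary Sobolev spaces on $\Sigma$ weighted by $e^{(\beta+1)t}$; the key point is that the weight exponent $+|\alpha|$ in \eqref{0E0K} is precisely what makes every derivative term fall on the same weight line. The Mellin transform in $r$ — the Fourier transform in $t$ along the line $\Re\lambda=-\beta-1$ — then converts the equation into the family of boundary value problems $\mathcal{L}(\lambda)\tilde u=\tilde f$, where $\mathcal{L}(\lambda)=\partial_\theta^2+\lambda^2$ acts on $(0,\omega)$ with homogeneous Dirichlet conditions at $\theta=0,\omega$.

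The heart of the proof is the spectral analysis of this pencil. The values of $\lambda$ for which $\mathcal{L}(\lambda)$ fails to be invertible are exactly $\lambda_k=k\pi/\omega$, $k\ge1$, with eigenfunctions $\sin(k\pi\theta/\omega)$, which produce the singular functions of \eqref{0E3}. I would show that off these exceptional values $\mathcal{L}(\lambda)$ is an isomorphism from $\rH^{2}(0,\omega)\cap\rH^1_0(0,\omega)$ onto $\rL^2(0,\omega)$, with parameter-dependent resolvent estimates that are uniform as $|\Im\lambda|\to\infty$ along the fixed vertical line and that gain two derivatives (and more, by iterating in the $\theta$ variable, up to order $n+2$). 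The arithmetic condition $0\le-\beta-1<\pi/\omega$ is designed exactly so that the integration line $\Re\lambda=-\beta-1$ lies in the open strip between $\lambda=0$ (not an eigenvalue, since the Dirichlet problem for $\partial_\theta^2$ has no kernel) and the first singular exponent $\lambda_1=\pi/\omega$; hence no eigenvalue is crossed and the uniform estimate holds on the whole line. Inverting the Mellin transform by Parseval and returning to $r$ yields the local weighted bound controlling $\Norm{u}{\rK^{n+2}_\beta}$ by $\Norm{f}{\rK^{n}_{\beta+2}}$ plus a lower-order remainder near the corner.

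Finally I would reassemble the global estimate from the local ones. Summing over the partition produces commutator terms $[\Delta,\chi_\bc]u$ supported in annular regions where $r_\bc$ stays bounded away from $0$; these are lower order and are absorbed using the already-controlled $\rH^1$ norm of $u$ together with a bootstrap in the regularity order. The low-order a priori control that launches the bootstrap comes from the hypothesis $u\in\rH^1(\Omega)$ via the embedding \eqref{0EKm1}, that is $\rH^1_0\subset\rK^1_{-1}$, which is exactly where the angular Poincar\'e inequality \eqref{0Epoi} and the constraint $\beta\le-1$ (from $-\beta-1\ge0$) enter. The main obstacle I anticipate is the uniform-in-$\lambda$ pencil estimate underlying the third step: one must establish the resolvent bounds with explicit control as $|\lambda|\to\infty$ and propagate the two-derivative gain to arbitrary order $n$, since it is this uniformity — not the mere pointwise invertibility of $\mathcal{L}(\lambda)$ — that guarantees the Mellin integral converges and reproduces the correct weighted norm.
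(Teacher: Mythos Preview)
The paper does not supply its own proof of this statement: it is quoted directly from Kondrat'ev, with the citation appearing in the theorem heading. Your outline is a faithful sketch of Kondrat'ev's original Mellin-transform argument and is correct in substance---localization, the change $t=\log r$ that turns the homogeneous weights into exponential weights on a strip, the identification of the singular set of the pencil $\partial_\theta^2+\lambda^2$ on $(0,\omega)$ with Dirichlet conditions as $\{k\pi/\omega:k\in\Z\setminus\{0\}\}$, and the observation that condition \eqref{0Eco1} places the Mellin line $\Re\lambda=-\beta_\bc-1$ in the pole-free strip $[0,\pi/\omega_\bc)$.

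One comment on how this interacts with the paper's own program. The paper separates the implication \eqref{0E6} into two pieces: a \emph{basic regularity} step, $u\in\rH^1_0\subset\rK^1_{-1}$ together with $f\in\rK^0_{\betab+2}$ implies $u\in\rK^2_{\betab}$, and a \emph{natural regularity shift}, $u\in\rK^1_{\betab}$ and $f\in\rK^n_{\betab+2}$ implies $u\in\rK^{n+2}_{\betab}$ (Theorem~\ref{0T3}). The second step the paper proves by dyadic partitions, with no spectral hypothesis on $\betab$ whatsoever; only the first step---moving from the line $\Re\lambda=0$ to $\Re\lambda=-\beta_\bc-1$---genuinely needs the pole-free strip and hence the condition \eqref{0Eco1}. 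Your sketch handles both steps simultaneously through the uniform-in-$\lambda$ pencil estimate iterated to order $n+2$, which is closer to Kondrat'ev's original presentation. Either route is valid; the paper's decomposition has the advantage of isolating exactly where the spectral condition enters.
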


The analytic class that we associate with the family of semi-norms $\SNorm{u}{\rK;\,k,\betab\,;\,\Omega}$ is
\begin{equation}
\label{0E4b}
   \rA_\betab (\Omega) = \Big\{\  u \in \bigcap_{m\ge0}\rK^m_\betab(\Omega) \ : \
    \exists C>0, \forall m\in\N,\quad
   \SNorm{u}{\rK;\,m,\betab\,;\,\Omega} \le C^{m+1} m!   \Big\},
\end{equation}
and our regularity result is the following.

\begin{theorem}[see Thm \ref{6T1} for the general case]
\label{0T2}
If condition \eqref{0Eco1} holds for the polygon $\Omega$ and the exponents $\beta_\bc$ then the solution of problem \eqref{0Ebvp} satisfies the regularity result:
\begin{equation}
\label{0E8}
   u\in\rH^1(\Omega) \ \ \mbox{and}\ \  f\in\rA_{\betab+2}(\Omega) 
   \quad\Longrightarrow\quad u\in\rA_{\betab}(\Omega).
\end{equation}
\end{theorem}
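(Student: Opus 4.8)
The plan is to upgrade the finite-order Kondrat'ev shift of Theorem \ref{0T1} to Cauchy-type control of all derivatives by means of the dyadic partition technique of section \ref{sec2}, fed by the elliptic a priori estimate with analytic control of derivatives from section \ref{sec1}. First I would localize. Away from the corners the boundary $\partial\Omega$ is a finite union of straight segments, hence analytic, and every distance function $r_\bc$ stays bounded above and below there; so on a fixed subdomain away from $\sC$ the weighted semi-norms $\SNorm{u}{\rK;\,m,\betab\,;\,\Omega}$ are equivalent to the plain $\rH^m$ semi-norms, and the bound $\SNorm{u}{\rK;\,m,\betab\,;\,\Omega}\le C^{m+1}m!$ is nothing but the classical Morrey--Nirenberg analytic regularity in the interior and near analytic boundary pieces, which follows from the analytic a priori estimate of section \ref{sec1}. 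It therefore suffices to prove \eqref{0E8} in a fixed neighborhood of each corner, and I would fix one corner $\bc$ and drop the subscript.

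Near $\bc$ I would cover a punctured neighborhood by overlapping dyadic annular sectors $\cV_j=\{2^{-j-1}<r_\bc<2^{-j},\ 0<\theta_\bc<\omega_\bc\}$ for $j\ge j_0$, and rescale each $\cV_j$ to a single \emph{fixed} reference sector $\cV$ of inner and outer radii of order one through the dilation $\hat\bx\mapsto 2^{-j}\hat\bx$. Setting $\hat u_j(\hat\bx)=u(2^{-j}\hat\bx)$ and $\hat f_j(\hat\bx)=f(2^{-j}\hat\bx)$, the equation becomes $\Delta\hat u_j=2^{-2j}\hat f_j$ on $\cV$, with homogeneous Dirichlet conditions on the two radial sides, which are analytic arcs of $\partial\cV$ separated from the artificial circular cuts. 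On this one reference sector, and \emph{uniformly in} $j$, I would apply the a priori estimate of section \ref{sec1} in its interior form and in its form near the analytic Dirichlet boundary, bounding the order $k+2$ derivatives of $\hat u_j$ on a slightly smaller sector $\cV'$ by the order $k$ derivatives of $2^{-2j}\hat f_j$ on $\cV$ together with lower-order norms of $\hat u_j$, with the Cauchy-type constants that estimate supplies. Using overlapping annuli lets the data side be read on the larger sector, so the circular cuts never enter the argument.

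The reassembly rests on the homogeneity of the weight $r_\bc^{\beta_\bc+|\alpha|}$: under the dilation, the contribution of $\cV_j$ to $\SNorm{u}{\rK;\,k,\betab\,;\,\Omega}^2$ equals $2^{-2j(\beta_\bc+1)}$ times the $\rH^k(\cV)$ semi-norm squared of $\hat u_j$, up to a constant independent of both $j$ and $k$; the crucial point is that this factor is independent of the differentiation order $k$, and the extra $2^{-2j}$ produced by rescaling the Laplacian is exactly absorbed by the shift $+2$ in the weight $\betab+2$ carried by $f$. Summing the rescaled local estimates over $j\ge j_0$ then assembles the left-hand sides into $\SNorm{u}{\rK;\,k+2,\betab\,;\,\Omega}$ and the right-hand sides into $\SNorm{f}{\rK;\,k,\betab+2\,;\,\Omega}$ plus weighted lower-order norms of $u$. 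Since $f\in\rA_{\betab+2}(\Omega)$ the former is bounded by $C^{k+1}k!$, while the lower-order $u$ terms are controlled by the \emph{basic} weighted regularity $u\in\rK^2_\betab(\Omega)$ — supplied by Theorem \ref{0T1} under hypothesis \eqref{0Eco1}, using the embedding \eqref{0EKm1} to launch the finite shift — and, at higher orders, by the induction hypothesis. Running the induction on $m$ then yields $\SNorm{u}{\rK;\,m,\betab\,;\,\Omega}\le C^{m+1}m!$ near $\bc$; combined with the away-from-corner bound and taken over the finitely many corners, this is \eqref{0E8}.

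I expect the main obstacle to be the bookkeeping of constants: one must check that the factorial growth $C^{m+1}m!$ in the differentiation order $m$ survives the summation over the infinitely many dyadic scales $j$. This works because the two growths are genuinely decoupled — the analytic (factorial) behaviour lives in the order $k$ and is produced once and for all, uniformly in $j$, by the estimate of section \ref{sec1} on the fixed sector $\cV$, whereas the spatial decay lives in $j$ through the $k$-\emph{independent} weight factor $2^{-2j(\beta_\bc+1)}$. The delicate requirements are that the per-scale a priori constant be genuinely uniform in $j$ (guaranteed by rescaling to a single reference sector), that the bounded overlap of the annuli contribute only a fixed multiplicative constant, and that the recursion coupling order $k+2$ of $u$ to order $k$ of $f$ and to lower orders of $u$ close with a single constant $C$ independent of both $m$ and $j$; organizing this interplay cleanly is precisely the purpose of the analytic a priori estimate of section \ref{sec1} and of the natural regularity shift formalism of section \ref{sec2}.
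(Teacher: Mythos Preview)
Your proposal is correct and follows essentially the same route as the paper: localize, handle the smooth part via Proposition~\ref{1T1}, treat each corner by dyadic scaling to a fixed reference sector where the analytic a priori estimate \eqref{1E1} applies uniformly, and use the homogeneity $r_\bc^{\beta_\bc+|\alpha|}$ of the weight so that the scaling factors cancel (this is exactly Theorem~\ref{2T1}); then feed in the basic regularity $u\in\bK^1_\betab(\Omega)$ obtained from Kondrat'ev via \eqref{0EKm1} and condition~\eqref{0Eco1}.

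One small simplification relative to your outline: the induction on $m$ you anticipate is not needed. The reference estimate \eqref{1E1} already bounds $|u|_{k}$ by $\sum_{\ell\le k-2}|f|_\ell$ plus only $\|u\|_{1}$ --- no intermediate-order $u$ terms appear --- so after scaling and summing over the dyadic annuli you get \eqref{2EK4} (equivalently \eqref{3EK3est}) directly for every $k$, with the sole $u$-contribution on the right being the fixed $\bK^1_\betab$ norm. The analytic bound then follows by summing the geometric series in the $f$-terms, as in the proof of Theorem~\ref{3T1}.
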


Our proof consists of the combination of Theorem \ref{0T1} with the proof of what we call {\em natural regularity shift} which holds for any weight exponent without limitation

\begin{theorem}[see Thm\ \ref{3T1} for the general case]
\label{0T3}
For any multi-exponent $\betab$ the following regularity result holds for solutions of problem \eqref{0Ebvp} in the polygon $\Omega$ 
\begin{equation}
\label{0E8nat}
   u\in\rK^1_{\betab}(\Omega)\ \ \mbox{and}\ \  f\in\rA_{\betab+2}(\Omega) 
   \quad\Longrightarrow\quad u\in\rA_{\betab}(\Omega).
\end{equation}
\end{theorem}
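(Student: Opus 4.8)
The plan is to split $\Omega$ into a region away from the corners, where ordinary analytic regularity applies, and dyadic neighborhoods of each corner, where a scaling argument reduces everything to a single fixed reference estimate. Away from the corners the distance functions $r_\bc$ are bounded above and below, so on that region the class $\rA_\betab$ coincides with the usual class of functions analytic up to the boundary; since $\partial\Omega$ is locally a straight (hence analytic) segment and $\Delta u = f$ with $f$ analytic there, the analytic a priori estimate on smooth domains quoted in section \ref{sec1}, in its interior and boundary versions, delivers the required Cauchy-type bounds $\SNorm{u}{\rK;m,\betab;\Omega'}\le C^{m+1}m!$. It then remains to treat a fixed neighborhood $\Omega_\bc$ of each corner $\bc$, where after a rigid change of coordinates $\Omega$ coincides with a plane sector $K$ of opening $\omega_\bc$.

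Near $\bc$ I would introduce the dyadic crowns $C_j = \{2^{-j-1} < r_\bc < 2^{-j}\}\cap K$, $j\ge 0$, and rescale each to a fixed reference crown $C = \{\tfrac12 < r < 1\}\cap K$ by $\bx = 2^{-j}\bX$. Because the operator is $\Delta$ with constant coefficients and the sides of $K$ are straight, the rescaled function $u_j(\bX) = u(2^{-j}\bX)$ solves exactly $\Delta u_j = 2^{-2j} f_j$ on $C$, with $f_j(\bX)=f(2^{-j}\bX)$ and $u_j=0$ on the straight part of $\partial C$, so the reference problem is literally scale independent. The decisive bookkeeping is that the weight exponent $\beta_\bc+|\alpha|$ in the homogeneous seminorm \eqref{0E0K} is tuned so that the order-dependent powers of $2^{-j}$ cancel: a direct change of variables gives (in dimension $d=2$) the equivalence $\SNorm{u}{\rK;k,\betab;C_j}\simeq 2^{-j(\beta_\bc+1)}\,\SNorm{u_j}{k;C}$, with the prefactor $2^{-j(\beta_\bc+1)}$ independent of $k$. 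The parallel identity for $f$ in weight $\betab+2$ makes the shift $+2$ exactly absorb the factor $2^{-2j}$ produced by $\Delta$, so that on $C$ the quantity $2^{-2j}f_j$ carries precisely the same prefactor $2^{-j(\beta_\bc+1)}$ as $u_j$.

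On the reference crown I would apply the analytic a priori estimate of section \ref{sec1} in its nested-open-set form: it controls all seminorms $\SNorm{u_j}{m;C'}$ on a slightly smaller crown $C'$ by $C^{m+1}m!$ times the weighted analytic norm of the right-hand side $2^{-2j}f_j$ on $C$, plus the plain norm $\Norm{u_j}{1;C}$, with a constant $C$ that by scale invariance does not depend on $j$. The two straight sides carry the Dirichlet condition and are used for the boundary estimate; the two circular arcs lie in the interior of $K$, so near them, and near the artificial right-angle corners of $C$, only interior control is needed, supplied by overlapping consecutive crowns. Translating back to $C_j$ through the scaling equivalences and using that $2^{-2j}f_j$ and $u_j$ share the prefactor, I obtain, uniformly in $m$ and $j$, a bound of $\SNorm{u}{\rK;m,\betab;C_j}$ by $C^{m+1}m!$ times the weighted analytic norm of $f$ on $C_j$ plus $\Norm{u}{\rK^1_\betab(C_j)}$.

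Finally I would take the $\ell^2$-sum over $j$: the weighted seminorm over $\Omega_\bc$ is the $\ell^2$-sum of its restrictions to the $C_j$, so summation yields $\SNorm{u}{\rK;m,\betab;\Omega_\bc}\le C^{m+1}m!$, where the right-hand-side contributions sum to $\SNorm{f}{\rK;m,\betab+2;\Omega_\bc}\le C_f^{m+1}m!$ by $f\in\rA_{\betab+2}$, and the solution contributions sum to the finite quantity $\Norm{u}{\rK^1_\betab(\Omega_\bc)}$ provided by the hypothesis $u\in\rK^1_\betab(\Omega)$. Combining the corner neighborhoods with the away-from-corners region gives $u\in\rA_\betab(\Omega)$. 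The hard part will be not the geometry but the uniform propagation of the Cauchy constant: one must check that the factorial bound on the reference crown is genuinely independent of the crown index $j$ (this is exactly where constant coefficients and straight sides are used) and that this single constant survives the summation over $j$, so that one $C$ governs the global analytic bound for all orders $m$ at once.
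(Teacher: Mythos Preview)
Your proposal is correct and follows essentially the same dyadic-partition argument as the paper (Theorem~\ref{2T1} feeding into Theorem~\ref{3T1}): rescale annuli near each corner to a fixed reference crown, apply the Cauchy-type estimate of Proposition~\ref{1T1} there, exploit the homogeneity of the $\rK$-weights so that the scaling prefactor is independent of the derivative order, and sum in $\ell^2$, combining with the smooth estimate away from corners. One small point worth tightening is the geometry of the crowns: the paper takes the reference annulus to span two octaves, $\widehat\cV=\{\tfrac14<r<1\}$, together with a slightly larger companion $\widehat\cV'$, so that consecutive scaled annuli genuinely overlap and the circular arcs (and the artificial corners where they meet the sides) are absorbed automatically, rather than requiring the separate ``overlapping consecutive crowns'' patching you allude to.
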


This theorem is ``simply'' the analytic version of the well-known regularity shift result
\begin{equation}
\label{0E8b}
   u\in\rK^1_{\betab}(\Omega)\ \ \mbox{and}\ \  f\in\rK^n_{\betab+2}(\Omega) 
   \quad\Longrightarrow\quad u\in\rK^{n+2}_{\betab}(\Omega)
\end{equation}
valid for any $n\ge0$ and any $\betab$. In fact, the proof of Theorem \ref{0T3} consists in showing that the norm estimates corresponding to \eqref{0E8b} are uniform of Cauchy type in the order $n$, namely there exists a constant $C>0$ such that for all integer $k\ge2$ and all solutions of \eqref{0Ebvp}
\begin{equation}
\label{0E8c}
   \frac{1}{k!} \SNorm{u}{\rK;\,k,\betab\,;\,\Omega} \le C^{k+1}\Big \{
   \sum_{\ell=0}^{k-2} \frac{1}{\ell!} \SNorm{f}{\rK;\,\ell,\betab\,;\,\Omega} +
   \sum_{\ell=0}^1 \SNorm{u}{\rK;\,\ell,\betab\,;\,\Omega} \Big\}.
\end{equation}
The proof of this family of estimates uses local dyadic partitions and Cauchy type estimates for smooth domains with analytic boundary, see Theorem \ref{2T1}.

\smallskip
From this short introduction, we see that the expression ``regularity result'', which generally means the existence of estimates for derivatives of the solution, involves a triple of function spaces $(\Ubb_0,\Fbb,\Ubb)$ and states the implication
\begin{equation}
\label{0EUF}
   u\in\Ubb_0\ \ \mbox{and}\ \  f\in\Fbb
   \quad\Longrightarrow\quad u\in\Ubb.
\end{equation}
In principle $\Ubb$ is in a certain sense optimal with respect to $\Fbb$. Comparing the assumptions of Theorems \ref{0T1} and \ref{0T3}, we can see the important role of the different choices of the basic regularity $\Ubb_0$. In Theorem~\REF{0T3}, the natural regularity shift result is based on a space $\Ubb_0=\rK^1_{\betab}(\Omega)$ that has the same weight exponent $\betab$ as the space $\Ubb$ of improved regularity, which is $\rK^{n+2}_{\betab}(\Omega)$ in \eqref{0E8b} and $\rA_{\betab}(\Omega)$ in \eqref{0E8nat}.
In Theorem~\REF{0T1}, the regularity result for the solution of the variational problem is based on the choice of the energy space $\rH^1(\Omega)$ for the space of basic regularity $\Ubb_0$.

\subsubsection{Polyhedra}
If $\Omega$ has a {\em polyhedral} boundary, at first glance the situation is similar. Besides the set of corners, we have the set $\sE$ of the edges $\be$ and the distance functions $r_\be$ to each edge $\be$. For $\betab=\{\beta_\bc\}_{\bc\in \sC}\cup\{\beta_\be\}_{\be\in \sE}$ the weighted semi-norms are defined as
\begin{equation}
\label{0E0K3d}
   \SNorm{u}{\rK;\,k,\betab\,;\,\Omega} =
   \Big\{ \sum_{|\alpha|=k}
   \Big\| \big\{ \prod_{\bc\in\sC} r_\bc^{\beta_\bc + |\alpha|}\big\}
   \big\{ \prod_{\be\in\sE} \big( \frac{r_\be}{r_\sC} \big)^{\beta_\be + |\alpha|} \big\}
   \,\partial^\alpha_\bx u\,
   \Big\|_{0;\, \cV}^2
   \Big\}^{\frac12},\quad k\in\N.
\end{equation}
where $r_\sC$ denotes the distance function to the set $\sC$ of corners (note that $r_\sC$ is equivalent to the product $\prod_{\bc\in\sC} r_\bc$ on $\overline\Omega$). The space $\rK^m_\betab(\Omega)$ is the space of distributions $u$ such that the sum
$\sum_{k=0}^m \SNorm{u}{\rK;\,k,\betab\,;\,\Omega}$ is finite.
The conditions leading to a statement corresponding to Theorem \ref{0T1} depend on
\begin{itemize}
\item[i)] The opening $\omega_\be$ of the dihedral angle tangent to $\Omega$ along the edge $\be$,
\item[ii)] The Dirichlet limiting exponent $\lambda^\Dir_\bc$ at the corner $\bc$ defined as
\begin{equation}
\label{0ELamdir}
   \lambda^\Dir_\bc = -\frac12 + \sqrt{\mu^\Dir_{\bc,1}+\frac14}
\end{equation}
where $\mu^\Dir_{\bc,1}$ is the first eigenvalue of the Laplace-Beltrami operator with Dirichlet conditions on the spherical cap $G_\bc$ cut out by the cone $\Gamma_\bc$ tangent to $\Omega$ at $\bc$.
\end{itemize}

\begin{theorem} \cite[Thm 2]{MazyaRossmann91b}
\label{0T4}
If the following condition holds for the polyhedron $\Omega$ and the exponents $\betab$
\begin{equation}
\label{0Eco2}
   0\le -\beta_\be-1<\frac\pi{\omega_\be}\quad\forall\be\in\sE
   \quad\mbox{and}\quad
   -\tfrac12\le -\beta_\bc-\tfrac32<\lambda^\Dir_\bc \quad\forall\bc\in\sC
\end{equation}
then for any natural number $n$ the solution of problem \eqref{0Ebvp} satisfies the regularity result:
\begin{equation}
\label{0E6b}
   u\in\rH^1(\Omega) \ \ \mbox{and}\ \  f\in\rK^{n}_{\betab+2}(\Omega) 
   \quad\Longrightarrow\quad u\in\rK^{n+2}_{\betab}(\Omega).
\end{equation}
\end{theorem}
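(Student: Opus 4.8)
The plan is to split the statement into two essentially independent parts, in the modular spirit advocated in the introduction: a \emph{basic regularity} step establishing the low-order membership $u\in\rK^2_\betab(\Omega)$, which genuinely uses the spectral conditions \eqref{0Eco2}, followed by a \emph{natural regularity shift} $f\in\rK^n_{\betab+2}(\Omega)\Rightarrow u\in\rK^{n+2}_\betab(\Omega)$ that carries no spectral hypothesis and upgrades $\rK^2_\betab$ to $\rK^{n+2}_\betab$. For the first part I would fix a partition of unity on $\overline\Omega$ subordinate to a cover by interior balls, patches meeting the smooth part of $\partial\Omega$, edge neighbourhoods, and corner neighbourhoods. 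Away from $\sE\cup\sC$ the weights $r_\be,r_\bc$ are bounded above and below, so classical interior and boundary elliptic regularity give $u\in\rH^2$ locally, which is the weighted statement there; the real work sits in the edge and corner neighbourhoods.

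Near the interior of an edge $\be$ I would freeze coefficients to the model dihedron $K_\be\times\R$ and Mellin-transform in the transverse radial variable $r_\be$. The associated operator pencil is the (shifted) Dirichlet Laplace--Beltrami operator on the arc of opening $\omega_\be$, whose characteristic exponents are $\pm k\pi/\omega_\be$ with $k\ge1$. The upper bound $-\beta_\be-1<\pi/\omega_\be$ places the weight line $\Re\lambda=-\beta_\be-1$ strictly below the first exponent $\pi/\omega_\be$, while the lower bound $0\le-\beta_\be-1$ keeps it in the exponent-free strip $[0,\pi/\omega_\be)$ and compatible with the energy embedding \eqref{0EKm1}. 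On this line the symbol is invertible, so inverse-Mellin estimates yield two gained transverse derivatives together with full regularity in the edge direction, i.e.\ the required $\rK^2_\betab$ bound with no singular residue.

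Near a corner $\bc$ I would pass to the model cone $\Gamma_\bc$ and Mellin-transform in $r_\bc$; the corner pencil is $-\Delta_{G_\bc}$ on the spherical cap with Dirichlet conditions, whose positive characteristic exponents are precisely the $\lambda^\Dir_{\bc,j}$ of \eqref{0ELamdir} and whose negative ones are the reflections $-1-\lambda^\Dir_{\bc,j}\le-1$. The condition $-\tfrac12\le-\beta_\bc-\tfrac32<\lambda^\Dir_\bc$ puts the weight line $\Re\lambda=-\beta_\bc-\tfrac32$ in the exponent-free strip $[-\tfrac12,\lambda^\Dir_\bc)$, the left endpoint being the energy threshold $\Re\lambda=-\tfrac12$, giving invertibility and hence the clean corner estimate. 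The delicate point here---and the step I expect to be the main obstacle---is the edge--vertex coupling: $G_\bc$ is itself a spherical polygon whose corners are the traces of the edges abutting $\bc$, so the natural domain of the corner pencil is not a plain Sobolev space but a \emph{weighted} one governed by the edge exponents. Closing the corner estimate therefore requires feeding the edge analysis of the previous paragraph into the two-dimensional Kondrat'ev theory (Theorem~\ref{0T1}) on $G_\bc$, and checking that the edge part of \eqref{0Eco2} is exactly what forces the cap eigenfunctions to lie in the right weighted class.

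Once $u\in\rK^2_\betab(\Omega)$ is secured, the second part is the regularity shift \eqref{0E8b}, valid for \emph{any} weight. I would prove it by a dyadic partition: cover each edge and corner neighbourhood by dyadic annuli, rescale each to unit size so that the frozen problem becomes one fixed smooth-domain problem, apply there the plain elliptic shift $f\in\rH^n\Rightarrow u\in\rH^{n+2}$, and reassemble. The homogeneous shift $+|\alpha|$ built into the semi-norm \eqref{0E0K3d} is precisely what renders the rescaled estimates uniform across dyadic scales, so that the sum over annuli converges and yields $u\in\rK^{n+2}_\betab(\Omega)$. Combining the two parts gives \eqref{0E6b}.
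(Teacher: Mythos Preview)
The paper does not give its own proof of Theorem~\ref{0T4}: this result is stated in the illustrative Section~\ref{secIllu} as a known theorem and attributed to \cite[Thm~2]{MazyaRossmann91b}. The paper's own contribution begins one level higher, with Theorems~\ref{0T5} and~\ref{0T6}, where Theorem~\ref{0T4} is used as a black box for the basic regularity $u\in\bK^2_\betab(\Omega)$ and combined with the natural (isotropic, then anisotropic) regularity shift that the paper proves in Sections~\ref{sec4}--\ref{sec5}.

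That said, your two-step outline is precisely the classical route and matches the modular philosophy the paper advocates. Your second step --- the isotropic regularity shift $u\in\bK^1_\betab,\ f\in\bK^n_{\betab+2}\Rightarrow u\in\bK^{n+2}_\betab$ via dyadic annuli --- is in fact proved in the paper (Theorem~\ref{4T1}\,(i) for edge neighbourhoods, Proposition~\ref{5P2} for pure corner regions, and the same dyadic-from-reference argument for edge--vertex regions), and indeed carries no spectral hypothesis. Your first step is a faithful sketch of the Maz'ya--Plamenevskii/Maz'ya--Rossmann machinery; the one imprecision is that near an edge you need a partial Fourier transform in the tangential variable $x_\be^\parallel$ \emph{together with} the Mellin transform in $r_\be$ (cf.\ Remark~\ref{4RGA} and the spaces $\bE^m_\beta(\cK)$), not a Mellin transform alone. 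Your identification of the edge--vertex coupling as the genuine obstacle is correct: the corner pencil acts on a spherical polygon and one must run the two-dimensional weighted theory on $G_\bc$ with edge weights before closing the three-dimensional corner estimate --- this is exactly what \cite{MazyaRossmann91b} does.
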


By some direct extension to the technique leading to Theorem \ref{0T2} we could prove the corresponding statement in the analytic classes associated with the seminorms $\SNorm{u}{\rK;\,k,\betab\,;\,\Omega}$. But such a result {\em would not be of any use} for proving exponential convergence of finite element approximations, which is one of the main motivations for requiring such analytic regularity estimates, cf.\ \cite{BabuskaGuo96}. To achieve exponential convergence, one has to use anisotropic mesh refinements along the edges, and for the proof one needs corresponding anisotropic norm estimates.

Fortunately, tangential regularity along edges holds, which allows (with a little additional effort) to work in {\em anisotropic weighted spaces}. To simplify this preliminary exposition, we assume that all edges are parallel to one of the Cartesian axes (this is the case if $\Omega$ is a cube, a thick L-shaped domain or a Fichera corner). Then for each edge $\be$ we particularize the derivatives in the directions transverse $\partial^{\alpha_{\be}^\perp}_\bx$ or parallel $\partial^{\alpha_{\be}^\parallel}\dd\bx$ to that edge so that
\[
   \partial^\alpha_\bx = \partial^{\alpha_{\be}^\perp}_\bx\, 
   \partial^{\alpha_{\be}^\parallel}\dd\bx\,.
\]
We define the anisotropic weighted semi-norms
\begin{equation}
\label{0E0M}
   \SNorm{u}{\rM;\,k,\betab\,;\,\Omega} =
   \Big\{ \sum_{|\alpha|=k}
   \Big\| \big\{ \prod_{\bc\in\sC} r_\bc^{\beta_\bc + |\alpha|}\big\}
   \big\{ \prod_{\be\in\sE} \big( \frac{r_\be}{r_\sC} \big)^{\beta_\be + |\alpha^\perp_\be|} \big\}
   \,\partial^\alpha_\bx u\,
   \Big\|_{0;\, \Omega}^2
   \Big\}^{\frac12},\quad k\in\N.
\end{equation}
Note that \eqref{0E0K3d} and \eqref{0E0M} differ by the exponent $\beta_\be + |\alpha|$ replaced with $\beta_\be + |\alpha^\perp_\be|$. For a general polyhedral domain, the definition is given in \eqref{5EnormM}. The space $\rM^m_\betab(\Omega)$ is the space of distributions $u$ such that the sum $\sum_{k=0}^m \SNorm{u}{\rM;\,k,\betab\,;\,\Omega}$ is finite and we denote by $\rA_\betab(\Omega)$ the corresponding analytic class :
\begin{equation}
\label{0EA}
   \rA_\betab (\Omega) = \Big\{\  u \in \bigcap_{m\ge0}\rM^m_\betab(\Omega) \ : \
    \exists C>0, \forall m\in\N,\quad
   \SNorm{u}{\rM;\,m,\betab\,;\,\Omega} \le C^{m+1} m!   \Big\}.
\end{equation}
The anisotropic weighted analytic regularity result is the following.

\begin{theorem}[see Cor \ref{6T4} for the general case]
\label{0T5}
If condition \eqref{0Eco2} holds for the polyhedron $\Omega$ and the exponents $\betab$, then the solution of problem \eqref{0Ebvp} satisfies the regularity result:
\begin{equation}
\label{0E83d}
   u\in\rH^1(\Omega) \ \ \mbox{and}\ \  f\in\rA_{\betab+2}(\Omega) 
   \quad\Longrightarrow\quad u\in\rA_{\betab}(\Omega).
\end{equation}
\end{theorem}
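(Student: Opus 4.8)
The plan is to follow exactly the modular scheme used for the polygon (Theorems \ref{0T1}--\ref{0T3}), separating the arithmetic conditions on the weights from the analytic bookkeeping. I would factor the statement into two independent ingredients. First, a \emph{basic regularity} input: since $\rA_{\betab+2}(\Omega)\subset\rK^0_{\betab+2}(\Omega)$ (the seminorms \eqref{0E0K3d} and \eqref{0E0M} coincide at order $0$), condition \eqref{0Eco2} lets me invoke the Maz'ya--Rossmann result (Theorem \ref{0T4}) with $n=0$ to get $u\in\rK^2_\betab(\Omega)$ for the variational solution. Second, a \emph{natural anisotropic analytic regularity shift} --- the three-dimensional, anisotropic analogue of Theorem \ref{0T3}, i.e.\ Corollary \ref{6T4} / Theorem \ref{6T3} --- which takes low-order weighted regularity of $u$ together with $f\in\rA_{\betab+2}(\Omega)$ and returns $u\in\rA_{\betab}(\Omega)$. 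The crucial structural point, as in the polygon case, is that this shift holds with essentially no restriction on the weight exponents; thus all of the sharp arithmetic in \eqref{0Eco2} is consumed solely by the first ingredient, and the theorem follows by chaining the two.

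Everything therefore reduces to proving the natural shift, which I would establish by localization subordinate to the singular stratification of $\partial\Omega$. Using a partition of unity I split $\Omega$ into interior and smooth-boundary patches, edge neighborhoods, and vertex neighborhoods. On the interior and smooth-boundary patches the anisotropic weights are harmless and the Cauchy-type a priori estimate for smooth domains quoted in Section \ref{sec1} (Theorem \ref{2T1}) directly yields bounds of the form $\SNorm{u}{\rM;\,m,\betab\,;\,\cdot}\le C^{m+1}m!$. The entire difficulty is concentrated in the edge and vertex contributions, where the anisotropic weight $(r_\be/r_\sC)^{\beta_\be+|\alpha_\be^\perp|}$ --- whose exponent feels only the \emph{transverse} order $|\alpha_\be^\perp|$ --- must be produced.

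The heart of the argument is the edge analysis of Section \ref{sec4}. Near an edge $\be$ I would localize, freeze the geometry to a straight dihedron, and run two techniques simultaneously. In the two-dimensional plane transverse to $\be$ I use the dyadic partition technique of Sections \ref{sec2}--\ref{sec3}: each transverse slice is treated as a plane sector, so that transverse derivatives acquire the correct powers of $r_\be$ together with constants that are Cauchy-uniform in the transverse order. In the tangential direction, where the coefficients are smooth and the problem is of interior type, I use the nested open set (Morrey--Nirenberg) technique to gain \emph{analytic} control of tangential derivatives carrying only the lighter weight $(r_\be/r_\sC)^{\beta_\be}$ of the function itself; this is precisely the mechanism that upgrades the isotropic basic regularity to the anisotropic class $\rM$. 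The main obstacle --- and the delicate technical core of the whole paper --- is to interleave these two procedures so that a \emph{single} Cauchy constant controls the mixed derivatives $\partial_\bx^{\alpha_\be^\perp}\partial_\bx^{\alpha_\be^\parallel}u$ uniformly in both orders and across all dyadic scales, while keeping exact track of the split weight exponent $\beta_\be+|\alpha_\be^\perp|$.

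Finally I would treat the polyhedral corners (Section \ref{sec5}) by a three-dimensional dyadic partition centered at each vertex $\bc$: on each dyadic shell I rescale to a fixed reference configuration in which the edge estimates just obtained apply with constants independent of the shell, and then sum the resulting geometric series in the dyadic parameter. Here the corner weight $r_\bc^{\beta_\bc+|\alpha|}$ enters, but because this is again a \emph{natural} shift the summation converges for any exponent and no condition on $\beta_\bc$ is needed. Reassembling the interior, edge and vertex bounds through the partition of unity, the individual estimates $\SNorm{u}{\rM;\,m,\betab\,;\,\Omega}\le C^{m+1}m!$ combine into membership $u\in\rA_{\betab}(\Omega)$, which is the assertion.
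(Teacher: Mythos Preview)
Your overall architecture is right and matches the paper: combine Theorem~\ref{0T4} (basic regularity from Maz'ya--Rossmann, consuming condition~\eqref{0Eco2}) with a natural anisotropic regularity shift (Theorem~\ref{0T6} / Theorem~\ref{5T1}). But there is a genuine gap in your understanding of the shift, and it shows up twice.

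First, your claim that the anisotropic shift ``holds with essentially no restriction on the weight exponents'' is false for polyhedra. The paper states Theorem~\ref{0T6} under condition~\eqref{0Eco3} on the edge exponents, and explicitly remarks afterwards that ``we need this condition for polyhedra whereas no condition at all was required in the polygonal case''. Your proof of Theorem~\ref{0T5} still goes through, because \eqref{0Eco2} implies \eqref{0Eco3}; but the assertion that all arithmetic is consumed by the first ingredient is wrong.

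Second, and this is the source of the first error, your description of the edge mechanism is not correct. You write that in the tangential direction ``the problem is of interior type'' and that Morrey--Nirenberg nested open sets alone give analytic tangential control. This is not how the paper proceeds, and it would not work: the edge singularity does not disappear when you look tangentially. What the paper actually uses (Section~\ref{sec4}) is a local Peetre-type a~priori estimate, Assumption~\ref{4GA}/\ref{5GA}:
\[
   \Norm{\bu}{\bK^{2}_{\beta}(\cW)} \leq
   C\big(\Norm{L\bu}{\bK^0_{\beta+2}(\cW')}
   + \Norm{\bu}{\bK^1_{\beta+1}(\cW')}\big).
\]
This estimate is applied to difference quotients $\partial_{x_3}^\ell\bu$ (which satisfy the same boundary value problem because the coefficients are constant), and then iterated through nested domains with $\rho\sim 1/\ell$ to get Cauchy-type control (Lemma~\ref{4L2}, Corollary~\ref{4C2}). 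The Peetre estimate is \emph{not free}: by Remark~\ref{4RGA} it is equivalent to injectivity with closed range of the transversal Fourier symbol $\hat L(\pm1)$ on $\bE^2_\beta(\cK_\be)$, which in turn forces $-\beta_\be-1$ to avoid the real parts of the edge Mellin spectrum $\sigma(\gA_\be)$. That is exactly condition~\eqref{0Eco3}. The dyadic partition alone gives only the isotropic estimate \eqref{4EK4}; upgrading to the anisotropic spaces $\rM^m_\beta$ genuinely needs this additional input.
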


Once again our proof consists of the combination of Theorem \ref{0T4} with the proof of a {\em natural anisotropic regularity shift}:

\begin{theorem}[see Thm \ref{5T1} for the general case]
\label{0T6}
For any multi-exponent $\betab$ such that
\begin{equation}
\label{0Eco3}
   0\le -\beta_\be-1\quad\mbox{and}\quad 
   -\beta_\be-1\neq\frac{k\pi}{\omega_\be}\quad\forall k\in\N,\ \forall\be\in\sE
\end{equation}
the following regularity result holds for solutions of problem \eqref{0Ebvp} in the polyhedron $\Omega$ 
\begin{equation}
\label{0E8natani}
   u\in\rK^1_{\betab}(\Omega)\ \ \mbox{and}\ \  f\in\rA_{\betab+2}(\Omega) 
   \quad\Longrightarrow\quad u\in\rA_{\betab}(\Omega).
\end{equation}
\end{theorem}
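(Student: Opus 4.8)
The plan is to reduce the analytic statement \eqref{0E8natani} to a single family of Cauchy-type estimates on the anisotropic seminorms, namely to produce a constant $C>0$, independent of $k$, such that for every integer $k\ge 2$ and every solution of \eqref{0Ebvp},
\begin{equation*}
   \frac{1}{k!}\,\SNorm{u}{\rM;\,k,\betab\,;\,\Omega}
   \le C^{k+1}\Big\{ \sum_{\ell=0}^{k-2}\frac{1}{\ell!}\,\SNorm{f}{\rM;\,\ell,\betab+2\,;\,\Omega}
   + \Norm{u}{\rK^1_\betab(\Omega)}\Big\}.
\end{equation*}
Once such an estimate is available, inserting the hypotheses $f\in\rA_{\betab+2}(\Omega)$ (so that $\frac1{\ell!}\SNorm{f}{\rM;\ell,\betab+2;\Omega}\le C_f^{\ell+1}$) and $u\in\rK^1_\betab(\Omega)$ and summing the resulting geometric series yields $\SNorm{u}{\rM;\,k,\betab\,;\,\Omega}\le C^{k+1}k!$, which is exactly $u\in\rA_\betab(\Omega)$ by \eqref{0EA}. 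To build the estimate I would fix a finite partition of unity subordinate to a covering of $\overline\Omega$ by interior patches, patches meeting only the smooth part of $\partial\Omega$, edge neighborhoods $\cV_\be$ disjoint from the corners, and corner neighborhoods $\cV_\bc$. On the first two types of patch the weights $r_\bc,r_\be$ are bounded above and below, the two seminorm families agree up to constants with ordinary Sobolev seminorms, and the required Cauchy control is precisely the analytic a priori estimate for smooth domains, Theorem \ref{2T1}.

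On an edge neighborhood $\cV_\be$ away from the corners I would invoke the local anisotropic analytic regularity shift along edges established in Section \ref{sec4}. There the domain is, in suitable coordinates, a product of a plane sector of opening $\omega_\be$ with an interval along the edge; the combination of a dyadic partition transverse to the edge with the nested-open-set technique in the tangential direction produces exactly the anisotropic Cauchy estimate for the $\rM$-seminorm localized to $\cV_\be$, where tangential derivatives carry no weight shift. It is here, and only here, that the hypotheses \eqref{0Eco3} on the edge exponents are used: the lower bound $0\le-\beta_\be-1$ and the non-resonance condition $-\beta_\be-1\neq k\pi/\omega_\be$ are precisely what the underlying two-dimensional sector analysis requires.

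The heart of the matter is the corner neighborhood $\cV_\bc$, which I would treat by a three-dimensional dyadic partition $\cV_\bc=\bigcup_j\cC_j$ into annular shells $r_\bc\sim 2^{-j}$. Because $\Delta$ is homogeneous of degree $-2$ and the $\rK$-weight carries the homogeneous shift $+|\alpha|$, the dilation $\bx=2^{-j}\bX$ maps each shell onto one fixed reference shell $\hat{\cC}$ of the infinite polyhedral cone tangent to $\Omega$ at $\bc$, and transforms both $\SNorm{u}{\rM;k,\betab;\cC_j}$ and $\SNorm{f}{\rM;\ell,\betab+2;\cC_j}$ into the \emph{same} scalar factor $2^{-j(\beta_\bc+3/2)}$ times the corresponding seminorm on $\hat{\cC}$, the $|\alpha|$-dependence of the power of $2^{-j}$ cancelling exactly. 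The reference shell meets only finitely many edge segments of the cone and no vertex; moreover the geometric edges are radial rays from $\bc$, so the along-edge direction is preserved by the dilation and is consistent with the anisotropy of the $\rM$-norm, while the weight $(r_\be/r_\sC)^{\beta_\be+|\alpha^\perp_\be|}$ becomes, on $r_\sC\sim 2^{-j}$, a genuine transverse edge weight on $\hat{\cC}$. Hence on $\hat{\cC}$ I may apply the edge estimate of Section \ref{sec4} together with Theorem \ref{2T1} uniformly in $j$, obtaining a single reference Cauchy constant. Summing over $j$ is then immediate: the common prefactor $2^{-j(\beta_\bc+3/2)}$ transfers identically to both sides, so \emph{no} condition on $\beta_\bc$ is needed --- this is exactly why the natural shift is unrestricted at the corners --- and the low-order remainder is controlled by the basic regularity $u\in\rK^1_\betab(\Omega)$.

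The step I expect to be the main obstacle is making the corner-dyadic scaling and the edge anisotropy genuinely compatible in the edge-vertex region: one must verify that the edge estimate on the reference shell $\hat{\cC}$ carries constants independent of the shell index $j$, and that the transformed weight is literally of the transverse form to which the Section \ref{sec4} result applies, with the tangential (radial) higher regularity surviving the scaling intact. Once this uniformity is secured, assembling the patch estimates through the partition of unity --- the commutators with the cutoffs contributing only lower-order terms that are absorbed into the $f$- and basic-regularity sums --- delivers the displayed global Cauchy estimate, and hence the theorem; this is the content of the general Theorem \ref{5T1}.
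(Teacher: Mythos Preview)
Your proposal is correct and follows essentially the same architecture as the paper's proof of Theorem~\ref{5T1}: a fourfold covering of $\Omega$ by smooth, pure-edge, pure-corner, and edge-vertex regions, with Propositions~\ref{5P1} and~\ref{5P2} handling the latter two via corner-centered dyadic partitions combined with the anisotropic edge estimate of Theorem~\ref{4T3} (this is precisely where condition~\eqref{0Eco3} enters, as the sufficient condition for Assumption~\ref{5GA}). One simplification worth noting: the paper assembles the local Cauchy estimates by direct summation over the covering using nested domains $\Omega_X\subset\Omega'_X$ rather than a partition of unity, so no commutator bookkeeping is needed at all; also, in the anisotropic edge and edge-vertex estimates the $\ell$-sum runs up to $k$ rather than $k-2$ (see \eqref{4E7c}, \eqref{5E7e}, \eqref{5E7ce}), and the low-order term appears as $\Norm{u}{\rK^1_{\betab+1}}$, though this is of course controlled by your $\Norm{u}{\rK^1_\betab}$.
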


Note that condition \eqref{0Eco3} --- which in the simple situation discussed here is a  sufficient condition for Assumption~\ref{5GA} to hold --- is far less restrictive than \eqref{0Eco2}. Nevertheless, we need this condition for polyhedra whereas no condition at all was required in the polygonal case (Theorem \REF{0E3}).

\subsection{Neumann conditions}
Let us consider now the Neumann problem for the Laplace operator on the domain $\Omega$
\begin{equation}
\label{0EbvpN}
   \left\{ \begin{array}{rclll}
   \Delta\,u &=& f \quad & \mbox{in}\ \ \Omega, \\[0.3ex]
   \partial_n u &=& 0  & \mbox{on}\ \ \partial\Omega\,,
   \end{array}\right.
\end{equation}
for an $\rL^2(\Omega)$ right hand side with zero mean value. There exists a solution 
$u\in \rH^1(\Omega)$, unique up to the addition of a constant. In a smooth domain, $u$ satisfies the elliptic regularity shift.

\subsubsection{Polygons}
The solutions have singular expansions like \eqref{0E2} with $\cos$ functions instead of $\sin$.  In particular, independent constants are present at each corner.  The scale of weighted spaces $\rK^n_\betab(\Omega)$ cannot yield optimal regularity results because   solutions of problem \eqref{0EbvpN}, even regular, do not belong to $\rK^1_\betab(\Omega)$ for $\beta_\bc=-1$ in general. 
To overcome this difficulty, we consider weighted spaces $\rJ^n_\betab(\Omega)$ with {\em non-homogeneous norms} defined  as   
\begin{equation}
\label{0E0J}
   \Norm{u}{\rJ^n_\betab(\Omega)} =
   \Big\{ \sum_{|\alpha|\le n}
   \Big\| \big(\prod_{\bc\in\sC} r_\bc^{\beta_\bc + n}\big) \,\partial^\alpha_\bx u
   \Big\|_{0;\, \Omega}^2
   \Big\}^{\frac12},
\end{equation}
instead of
\[
   \Norm{u}{\rK^n_\betab(\Omega)} =
   \Big\{ \sum_{|\alpha|\le n}
   \Big\| \big(\prod_{\bc\in\sC} r_\bc^{\beta_\bc + |\alpha|}\big) \,\partial^\alpha_\bx u
   \Big\|_{0;\, \Omega}^2
   \Big\}^{\frac12},
\]
for $\rK^n_\betab(\Omega)$: the exponent in $\rJ$ spaces does not depend on the order of derivation. As a particular case we obtain the standard Sobolev spaces
\[
   \rH^n(\Omega) = \rJ^n_{-n}(\Omega).
\]
The advantage of this notation is a natural notion of the corresponding analytic class.
For a multi-exponent $\betab$ we first set
\begin{equation}
\label{0E5a}
   \framebox{$\vphantom{\Big(}\di\kappa_\betab:=\max_{\bc\in\sC}-\beta_\bc$}
\end{equation}
An important property of the spaces $\rJ^n_\betab(\Omega)$ is that if $m\ge\kappa_\betab$, then the norm $\Norm{u}{\rJ^n_\betab(\Omega)}$ is equivalent to the following ``step-weighted'' norm
\begin{equation}
\label{0E0J0}
   \Big\{ \sum_{|\alpha|\le n}
   \Big\| \big(\prod_{\bc\in\sC} r_\bc^{\max\{\beta_\bc + |\alpha|,0\}}\big) \,
   \partial^\alpha_\bx u
   \Big\|_{0;\, \Omega}^2
   \Big\}^{\frac12}.
\end{equation}
This implies that we have the continuous embedding of $\rJ^{m+1}_\betab(\Omega)$ into $\rJ^m_\betab(\Omega)$ when $m\ge\kappa_\betab$, which leads to the definition of the analytic class
\begin{equation}
\label{0E5b}
   \rB_\betab (\Omega) = \Big\{\  u \in \bigcap_{m\ge\kappa_\betab}\rJ^m_\betab(\Omega) \ : \
    \exists C>0, \forall m\ge\kappa_\betab \quad
   \Norm{u}{\rJ^m_\betab(\Omega)} \le C^{m+1} m!   \Big\}.
\end{equation}
If $\beta_\bc\not\in\Z$, near the corner $\bc$ the analytic classes $\rA_\betab(\Omega)$ and $\rB_\betab(\Omega)$ differ by polynomial functions of degree $\le [-\beta_\bc-1]$ in Cartesian variables: Such functions   are present in $\rB_\betab(\Omega)$  but not in $\rA_\betab(\Omega)$, see Remark \ref{3R1}. The counterparts of Theorems \ref{0T1} to \ref{0T3} are\footnote{The fact that the same condition \eqref{0Eco1} works for both Neumann and Dirichlet boundary conditions is a very particular case (Laplace operator in 2D).}

\begin{theorem}\cite[Thm 7.2.4 and section 7.3.4]{KozlovMazyaRossmann97b}
\label{0T1N}
If condition \eqref{0Eco1} holds, then for all integer $n$ such that $n+2\ge\kappa_\betab$ solutions of problem \eqref{0EbvpN} satisfy:
\begin{equation}
\label{0E6N}
   u\in\rH^1(\Omega) \ \ \mbox{and}\ \  f\in\rJ^{n}_{\betab+2}(\Omega) 
   \quad\Longrightarrow\quad u\in\rJ^{n+2}_{\betab}(\Omega).
\end{equation}
\end{theorem}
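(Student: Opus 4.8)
The plan is to localize at the corners and reduce to a model Neumann problem on an infinite plane sector, which is then analysed by the Mellin transform. Fix a partition of unity subordinate to a covering of $\overline\Omega$ by one corner patch around each $\bc\in\sC$ together with patches that stay away from $\sC$. On the latter every weight $r_\bc$ is bounded above and below, so the $\rJ$-norms reduce to ordinary Sobolev norms and the classical Neumann elliptic regularity shift $f\in\rH^{n}\Rightarrow u\in\rH^{n+2}$ on a smooth domain (or on a half-disc carrying a flat Neumann edge) yields the assertion directly. The commutators $[\Delta,\chi_\bc]u$ produced by the cut-offs $\chi_\bc$ are of order $\le1$ and are supported in the overlaps, hence are controlled by lower-order $\rJ$-norms of $u$ and absorbed.

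Near a corner $\bc$ of opening $\omega_\bc$ pass to polar coordinates $(r_\bc,\theta_\bc)$, so that $r_\bc^{2}\Delta=(r_\bc\partial_{r_\bc})^{2}+\partial_{\theta_\bc}^{2}$, and apply the Mellin transform in $r_\bc$. The operator becomes the holomorphic family $\gA_\bc(\lambda)=\lambda^{2}+\partial_{\theta_\bc}^{2}$ of two-point boundary value problems on $(0,\omega_\bc)$ with Neumann conditions $\partial_{\theta_\bc}=0$ at both ends. Its spectrum --- the poles of $\gA_\bc(\lambda)^{-1}$ --- consists of $\lambda=\pm k\pi/\omega_\bc$, $k\in\N$, with eigenfunctions $\cos(k\pi\theta_\bc/\omega_\bc)$; the value $k=0$ gives the pole $\lambda=0$ with constant eigenfunction. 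On any line $\Re\lambda=\gamma$ free of these poles, $\gA_\bc(\lambda)^{-1}$ obeys the standard elliptic bound, and Parseval for the Mellin transform turns this into the homogeneous weighted shift
\begin{equation*}
   \Norm{u}{\rK^{n+2}_\betab(\Omega)}\ \le\ C\,\Norm{f}{\rK^{n}_{\betab+2}(\Omega)},
\end{equation*}
valid whenever $-\beta_\bc-1\notin\{k\pi/\omega_\bc:k\in\Z\}$ at every corner.

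The crux is to account for the non-homogeneous norm. The hypothesis $0\le-\beta_\bc-1$ admits the critical value $\beta_\bc=-1$, for which the weight line $\Re\lambda=-\beta_\bc-1$ runs exactly through the pole $\lambda=0$; there the homogeneous estimate breaks down, precisely because the constant mode does not lie in $\rK^{1}_{-1}$. The space $\rJ^{n}_\betab$ is built to carry exactly this low-degree polynomial content: using the step-weighted norm equivalence \eqref{0E0J0} --- legitimate since $n+2\ge\kappa_\betab$ --- I extract at each corner the Taylor polynomial of degree $\le[-\beta_\bc-1]$, namely the span of the eigenfunctions lying on or below the weight line, and apply the homogeneous shift to the remainder along a slightly shifted, pole-free line. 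The upper bound $-\beta_\bc-1<\pi/\omega_\bc$ guarantees that only the $k=0$ mode is involved, so the extracted polynomial is a single constant and the remainder sits in a homogeneous space whose weight lies strictly between two consecutive poles.

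It remains to reassemble the corner and interior contributions through the partition of unity and to invoke the step-weighted equivalence once more to identify the sum with the $\rJ^{n+2}_\betab(\Omega)$-norm. The main obstacle is the third step: the splitting ``homogeneous part $+$ polynomial'' is consistent with the non-homogeneous norm only because $n+2\ge\kappa_\betab$ makes \eqref{0E0J0} hold at every derivative order simultaneously, and producing a single constant $C$ --- uniform over the corners, some of which may sit on the critical line while others do not --- demands careful bookkeeping of the Mellin contour shifts and of the cut-off commutators.
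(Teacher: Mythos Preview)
The paper does not prove this statement; it is quoted from \cite{KozlovMazyaRossmann97b}. The closest the paper comes to an argument is in the proof of Theorem~\ref{6T2}, which invokes exactly the circle of ideas you describe: localisation, Mellin analysis in the $\rK$-scale, and a polynomial correction accounting for the difference between $\rJ$ and $\rK$ spaces.

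Your plan is therefore the standard one, but there is a genuine confusion in the third step. You identify ``the Taylor polynomial of degree $\le[-\beta_\bc-1]$'' with ``the span of the eigenfunctions lying on or below the weight line'' and then deduce from $-\beta_\bc-1<\pi/\omega_\bc$ that the extracted polynomial is a single constant. These are two different objects. The polynomial space $\P^{[-\beta_\bc-1]}$ distinguishing $\rJ^m_{\beta_\bc}$ from $\rK^m_{\beta_\bc}$ (Remark~\ref{2R1}) has degree $[-\beta_\bc-1]$, which can exceed $0$ whenever the corner is convex: if $\omega_\bc<\pi$ then $\pi/\omega_\bc>1$ and condition \eqref{0Eco1} allows $-\beta_\bc-1\in[1,\pi/\omega_\bc)$, hence linear (or higher) Taylor parts. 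The Mellin eigenfunctions $r_\bc^{k\pi/\omega_\bc}\cos(k\pi\theta_\bc/\omega_\bc)$ with $0\le\Re\lambda<-\beta_\bc-1$ are indeed reduced to the constant mode $k=0$, but that is a different statement, governing the contour shift for the $\rK$-remainder, not the size of the polynomial part. What actually closes the argument is the condition of \emph{injectivity modulo polynomials} alluded to in the proof of Theorem~\ref{6T2}: every polynomial right-hand side of degree $\le[-\beta_\bc-1]-2$ on the infinite sector $\cK_\bc$ admits a polynomial solution of the Neumann problem. In two dimensions this is a consequence of \eqref{0Eco1}, but it is a separate verification that your sketch conflates with the spectral gap.
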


\begin{theorem}[see Thm \ref{6T2} for the general case]
\label{0T2N}
If condition \eqref{0Eco1} holds, then solutions of problem \eqref{0EbvpN} satisfy:
\begin{equation}
\label{0E8N}
   u\in\rH^1(\Omega) \ \ \mbox{and}\ \  f\in\rB_{\betab+2}(\Omega) 
   \quad\Longrightarrow\quad u\in\rB_{\betab}(\Omega).
\end{equation}
\end{theorem}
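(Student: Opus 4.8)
The plan is to follow exactly the two-step pattern used for the Dirichlet statement of Theorem~\ref{0T2}: combine the finite-order regularity of Theorem~\ref{0T1N} with a \emph{natural regularity shift} in the non-homogeneous analytic class $\rB_\betab$, the decisive point being that the constants in the finite estimates \eqref{0E6N} can be made uniform of Cauchy type in the order $n$, as in \eqref{0E8c} but now for the $\rJ$-norms. Since $\rB_{\betab+2}(\Omega)\subset\rJ^{n}_{\betab+2}(\Omega)$ for every admissible $n$, one application of Theorem~\ref{0T1N} already provides the \emph{basic} regularity $u\in\rJ^{\kappa_\betab}_\betab(\Omega)$ required by the natural shift; the analytic conclusion $u\in\rB_\betab(\Omega)$ is then produced by that shift.

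The core of the argument is therefore to establish the Neumann analogue of Theorem~\ref{0T3}, namely
\[
   u\in\rJ^{\kappa_\betab}_\betab(\Omega)\ \ \mbox{and}\ \ f\in\rB_{\betab+2}(\Omega)
   \quad\Longrightarrow\quad u\in\rB_\betab(\Omega),
\]
valid under the single lower bound $-\beta_\bc-1\ge0$ for all $\bc\in\sC$. First I would localise near each corner $\bc$ with the cut-off $\chi_\bc$ and cover the punctured neighbourhood by dyadic annuli $\{2^{-j-1}\le r_\bc\le 2^{-j}\}$. On each annulus I would rescale by the factor $2^{j}$ onto a fixed reference annulus sitting in a smooth sector, and apply the smooth-domain analytic a priori estimate of Theorem~\ref{2T1}, in the Neumann refinement announced in section~\ref{sec1}, which delivers Cauchy-type control $C^{m+1}m!$ of all derivatives on the reference configuration in terms of the rescaled right-hand side and of a low-order norm of the rescaled solution. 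Summing the rescaled estimates back against the weight, using the step-weighted description \eqref{0E0J0} of the $\rJ$-norm and the embedding $\rJ^{m+1}_\betab(\Omega)\subset\rJ^{m}_\betab(\Omega)$ valid for $m\ge\kappa_\betab$, would yield $\Norm{u}{\rJ^m_\betab(\Omega)}\le C^{m+1}m!$, that is $u\in\rB_\betab(\Omega)$.

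The hard part will be to accommodate the \emph{non-homogeneity} of the $\rJ$-norm, which is precisely what distinguishes the Neumann analysis from the Dirichlet one. In the homogeneous $\rK$-case the weight $r_\bc^{\beta_\bc+|\alpha|}$ rescales perfectly under the dyadic dilations, so every annulus contributes identically up to a fixed power of $2^{j}$; here the weight saturates at $r_\bc^{0}=1$ for derivative orders $|\alpha|<\kappa_\betab$, so the annuli closest to the corner must be treated separately and the bookkeeping no longer scales uniformly. Controlling the transition across the threshold $|\alpha|\approx\kappa_\betab$ --- and checking that the Cauchy constant $C^{m+1}m!$ survives it, even though the number of ``saturated'' orders grows with $\kappa_\betab$ --- is the delicate technical step; it is also where the polynomial components of degree $\le[-\beta_\bc-1]$ that separate $\rB_\betab$ from $\rA_\betab$ (cf.\ Remark~\ref{3R1}) enter and must be shown not to degrade the estimate.

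A second point specific to Neumann conditions is that, in contrast with the Dirichlet case, there is no Poincar\'e-type embedding such as \eqref{0EKm1}: Neumann solutions carry independent constants at the corners and do not lie in $\rK^1_{-1}(\Omega)$. Consequently the basic regularity cannot be read off from the energy space alone and must be imported from Theorem~\ref{0T1N}; this is exactly why the non-homogeneous spaces $\rJ$ and the parameter $\kappa_\betab$ are introduced, and why the starting regularity $u\in\rJ^{\kappa_\betab}_\betab(\Omega)$ in the natural shift is calibrated to match the output of \eqref{0E6N}. Granting the natural shift, Theorem~\ref{0T2N} follows by the combination described above.
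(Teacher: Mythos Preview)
Your overall architecture is correct and matches the paper: combine the finite-order result (Theorem~\ref{0T1N}) to obtain basic regularity $u\in\rJ^m_\betab(\Omega)$, then invoke the natural regularity shift in the $\rB$-class (Theorem~\ref{0T3N}, proved in full generality as Theorem~\ref{3T1b}).

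However, the ``hard part'' you anticipate --- the non-homogeneous weight saturating at $r_\bc^0$ for low derivative orders and breaking the dyadic scaling --- does not actually arise in the paper's execution, and the reason is precisely the tool you mention but whose role you underestimate. The ``Neumann refinement'' of section~\ref{sec1} is Corollary~\ref{1C1}: via Bramble--Hilbert it replaces the full $\rH^1$-norm of $\bu$ on the right-hand side of \eqref{1E1} by the $m$-th \emph{semi}-norm alone, yielding estimate~\eqref{1E2}. When this is fed into the dyadic machinery (Theorem~\ref{2T2}), the resulting estimate~\eqref{2EJ4} involves, on both sides, only weights of the form $r^{\beta+|\alpha|}$ with $|\alpha|\ge m-1$. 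These are $\rK$-type semi-norms and scale \emph{homogeneously} under dilation, exactly as in the Dirichlet case. There is no saturation threshold to cross, no separate treatment of annuli near the corner, and no polynomial components to handle inside the estimate; the non-homogeneity of the full $\rJ$-norm is simply never seen, because only the top semi-norm of $\bu$ enters.

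In short: once Corollary~\ref{1C1} is in hand, the proof of the natural shift (Theorem~\ref{2T2}) is line-for-line the same as the homogeneous case (Theorem~\ref{2T1}), just with \eqref{1E2} replacing \eqref{1E1} as the reference estimate. Your concern about the step-weighted description~\eqref{0E0J0} and the embedding $\rJ^{m+1}_\betab\subset\rJ^m_\betab$ is relevant only for interpreting the final result, not for the estimate itself.
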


\begin{theorem}[see Thm \ref{3T1b} for the general case]
\label{0T3N}
For any multi-exponent $\betab$ and any integer $m\ge\kappa_\betab$ solutions of problem \eqref{0EbvpN} satisfy: 
\begin{equation}
\label{0E8natN}
   u\in\rJ^m_{\betab}(\Omega)\ \ \mbox{and}\ \  f\in\rB_{\betab+2}(\Omega) 
   \quad\Longrightarrow\quad u\in\rB_{\betab}(\Omega).
\end{equation}
\end{theorem}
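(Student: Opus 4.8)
The plan is to establish Theorem~\ref{0T3N} as the analytic counterpart of the finite-order non-homogeneous regularity shift
\begin{equation*}
   u\in\rJ^m_{\betab}(\Omega)\ \ \mbox{and}\ \  f\in\rJ^n_{\betab+2}(\Omega)
   \quad\Longrightarrow\quad u\in\rJ^{n+2}_{\betab}(\Omega),
   \qquad n\ge0,\ m\ge\kappa_\betab,
\end{equation*}
by showing that the associated norm bounds are uniform of Cauchy type in the order $n$, exactly as \eqref{0E8c} does in the Dirichlet/homogeneous case. Concretely, I would prove that there is a constant $C>0$, independent of $n$, such that every solution of \eqref{0EbvpN} satisfies
\begin{equation*}
   \frac{1}{(n+2)!}\,\Norm{u}{\rJ^{n+2}_\betab(\Omega)}
   \le C^{n+3}\Big\{ \sum_{\ell=0}^{n}\frac{1}{\ell!}\,\Norm{f}{\rJ^\ell_{\betab+2}(\Omega)}
   + \Norm{u}{\rJ^m_\betab(\Omega)} \Big\}.
\end{equation*}
Granting this family, the hypothesis $f\in\rB_{\betab+2}(\Omega)$ makes the right-hand series geometrically summable, and the factorial prefactors match the defining growth $\Norm{u}{\rJ^m_\betab}\le C^{m+1}m!$ of the class $\rB_\betab(\Omega)$, so that $u\in\rB_\betab(\Omega)$ follows at once.

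To produce this family of estimates I would localize with a partition of unity subordinate to a covering of $\Omega$ by neighbourhoods of the corners together with a region at positive distance from $\sC$. On the latter the boundary is straight, hence analytic, and the analytic a priori estimate of Theorem~\ref{2T1}---in the refinement adapted to Neumann conditions announced in Section~\ref{sec1}---applies directly. Near a corner $\bc$, placed at the origin with opening $\omega_\bc$, I would exploit the self-similarity of the sector under dilations: introduce dyadic rings $\{2^{-j-1}<r_\bc<2^{-j}\}$ and rescale the $j$-th ring by $2^{j}$ onto a fixed reference annulus. Since the Laplacian is homogeneous and the Neumann condition is dilation invariant, the rescaled functions solve the same boundary value problem on a fixed analytic configuration, to which Theorem~\ref{2T1} applies uniformly in $j$, yielding Cauchy-type control of all derivatives on each ring in terms of $f$ and a basic norm of $u$ on a slightly enlarged ring.

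The decisive step is the summation over $j$ with the \emph{non-homogeneous} weights. Here I would pass to the step-weighted norm \eqref{0E0J0}, which is equivalent to $\Norm{\cdot}{\rJ^m_\betab(\Omega)}$ precisely because $m\ge\kappa_\betab$: for orders $|\alpha|\ge\kappa_\betab$ the exponent $\beta_\bc+|\alpha|$ is nonnegative and coincides with $\max\{\beta_\bc+|\alpha|,0\}$, so the top-order contributions scale exactly as in the homogeneous case of Theorem~\ref{0T3} and the dyadic series converges; for the finitely many lower orders the step weight is $1$, and these terms---which carry the constant and low-degree polynomial behaviour of a Neumann solution near a corner, the very functions distinguishing $\rB_\betab$ from $\rA_\betab$ (Remark~\ref{3R1})---are absorbed into the plain $\rL^2$ part of the basic norm $\Norm{u}{\rJ^m_\betab(\Omega)}$. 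Collecting the ring estimates together with the contribution away from $\sC$ then yields the displayed Cauchy bound.

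I expect the main obstacle to be exactly this non-homogeneous bookkeeping. Unlike the homogeneous $\rK$/$\rA$ setting, where every derivative carries a weight tied to its own order and the dyadic sums telescope with no condition on $\betab$, the fixed exponent of the $\rJ$ norm forces one to split derivatives at the threshold $|\alpha|=\kappa_\betab$ and to track the borderline orders carefully. Controlling the passage between the $\rJ^n$ norms and the step-weighted norms uniformly in $n$, while keeping the Cauchy constants order-independent and correctly handling the near-corner constants, is the technically delicate point; it is also the reason the lower bound $m\ge\kappa_\betab$ must be imposed, whereas no constraint on $\betab$ was needed in the Dirichlet case.
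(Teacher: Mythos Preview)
Your overall strategy (dyadic partition at each corner, analytic estimate away from corners, summation) is the right one, and you correctly single out the condition $m\ge\kappa_\betab$ as the hinge. But there is a real gap in the corner step, and it lies on the \emph{right-hand side}, not on the left.

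You propose to apply on each rescaled ring an analytic a priori estimate whose right-hand side carries ``a basic norm of $u$'', and then to absorb the low-order contributions into the $\rL^2$ part of $\Norm{u}{\rJ^m_\betab}$ via the step-weighted description \eqref{0E0J0}. This does not survive the dyadic summation. If the reference estimate is the one of Proposition~\ref{1T1} (right-hand side $\Norm{\hat u}{1}$), then under the scaling $r\sim 2^{-j}$ the $\Norm{\hat u}{0}$ term, after multiplying by the factor that normalizes the left-hand side to the $\rK$-seminorm, becomes $\Norm{r_\bc^{\beta_\bc} u}{0;\,V'_j}$. Summing over $j$ yields $\Norm{r_\bc^{\beta_\bc} u}{0;\,\Omega_\bc}$, i.e.\ the $\rK^0_{\beta_\bc}$ norm. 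For $\beta_\bc\le -1$ this is \emph{not} controlled by $\Norm{u}{\rJ^m_\betab}$ (a nonzero constant lies in $\rJ^m_\betab$ but not in $\rK^0_{\beta_\bc}$). So the ``absorption into plain $\rL^2$'' fails precisely for the functions---constants and low-degree polynomials---that distinguish the $\rJ$/$\rB$ scale from the $\rK$/$\rA$ scale.

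The paper's fix is to use, on the reference annulus itself, the seminorm version of the analytic estimate (Corollary~\ref{1C1}), obtained from Proposition~\ref{1T1} by the Bramble--Hilbert lemma: since $L$, $T$, $D$ are homogeneous, one may subtract an arbitrary polynomial of degree $\le m-1$ and replace $\Norm{\hat u}{1}$ by the pure seminorm $\SNorm{\hat u}{m}$ on the right. After scaling and summing, this seminorm becomes $\SNorm{u}{\rK;\,m,\beta_\bc}$, which coincides with the top seminorm of $\rJ^m_{\beta_\bc}$ and is therefore bounded by $\Norm{u}{\rJ^m_\betab}$; the condition $m\ge\kappa_\betab$ guarantees $\beta_\bc+m\ge0$ so that no negative power of $r_\bc$ appears. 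You invoke this ``refinement adapted to Neumann conditions'' only in the smooth region; it is exactly at the corners that it is indispensable. Once it is in place, the step-weighted bookkeeping you sketch becomes unnecessary: the right-hand side already contains only the order-$m$ seminorm of $u$ and the $\rK$-seminorms of $f$ of order $\ge m-1$, all of which scale homogeneously, and the proof concludes as in Theorem~\ref{0T3}.
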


\subsubsection{Polyhedra}
We have to introduce the isotropic and anisotropic weighted spaces with non-homogeneous norms $\rJ^n_\betab(\Omega)$ and $\rN^n_\betab(\Omega)$ corresponding to $\rK^n_\betab(\Omega)$ and $\rM^n_\betab(\Omega)$, respectively. They are defined by their norms as follows:
\begin{equation}
\label{0E0J3d}
   \Norm{u}{\rJ^n_\betab(\Omega)} =
   \Big\{ \sum_{|\alpha|\le n}
   \Big\| \big\{ \prod_{\bc\in\sC} r_\bc^{\beta_\bc + n}\big\}
   \big\{ \prod_{\be\in\sE} \big( \frac{r_\be}{r_\sC} \big)^{\beta_\be + n} \big\}
   \partial^\alpha_\bx u
   \Big\|_{0;\, \Omega}^2
   \Big\}^{\frac12},
\end{equation}
and, when the edges are parallel to the axes:
\begin{equation}
\label{0E0N}
   \Norm{u}{\rN^n_\betab(\Omega)} =
   \Big\{ \sum_{|\alpha|\le n}
   \Big\| \big\{ \prod_{\bc\in\sC} r_\bc^{\max\{\beta_\bc + |\alpha|,0\}}\big\}
   \big\{ \prod_{\be\in\sE} \big( \frac{r_\be}{r_\sC} \big)^{\max\{
   \beta_\be + |\alpha^\perp_\be|,0\}} \big\}
   \partial^\alpha_\bx u
   \Big\|_{0;\, \Omega}^2
   \Big\}^{\frac12},
\end{equation}
for any $n\ge\kappa_\betab$ where 
\begin{equation}
\label{0E5a3d}
   \framebox{$\di\kappa_\betab:=\max\Big\{ \max_{\bc\in\sC}-\beta_\bc, 
   \max_{\be\in\sE}-\beta_\be\Big\}$}
\end{equation}
Note that the definitions \eqref{0E0J3d} and \eqref{0E0N} are coherent since when $n\ge\kappa_\betab$, the norm of $\rJ^n_\betab(\Omega)$ is equivalent to the norm obtained by replacing $\beta_\bc + n$ by $\max\{\beta_\bc + |\alpha|,0\}$ and $\beta_\be + n$ by $\max\{\beta_\be + |\alpha|,0\}$.

The anisotropic weighted analytic classes are then defined as
\begin{equation}
\label{0E5b3d}
   \rB_\betab (\Omega) = \Big\{\  u \in \bigcap_{m\ge\kappa_\betab}\rN^m_\betab(\Omega) \ : \
    \exists C>0, \forall m\ge\kappa_\betab \quad
   \Norm{u}{\rN^m_\betab(\Omega)} \le C^{m+1} m!   \Big\}.
\end{equation}

On the same model as \eqref{0ELamdir} we define the 3D Neumann limiting exponent $\lambda^\Neu_\bc$ at the corner $\bc$ as
\begin{equation}
\label{0ELamneu}
   \lambda^\Neu_\bc = -\frac12 + \sqrt{\mu^\Neu_{\bc,2}+\frac14}
\end{equation}
where $\mu^\Neu_{\bc,2}$ is the {\em second} eigenvalue\footnote{The first Neumann eigenvalue $\mu^\Neu_{\bc,1}$ is zero.} of the Laplace-Beltrami operator with Neumann conditions on the spherical cap $G_\bc$. With this we can state the counterparts of Theorem \REF{0T4}--\REF{0T6}.

\begin{theorem}[{cf \cite[Thm 7.1]{MazyaRossmann03} and \cite[Chap 8]{Dauge88}}]
\label{0T10}
If the following condition holds for the polyhedron $\Omega$ and the exponents $\betab$
\begin{equation}
\label{0Eco2N}
   0\le -\beta_\be-1<\frac\pi{\omega_\be}\quad\forall\be\in\sE
   \quad\mbox{and}\quad
   -\tfrac12\le -\beta_\bc-\tfrac32<\min\{2,\lambda^\Neu_\bc\} \quad\forall\bc\in\sC
\end{equation}
then for any natural number $n$ such that $n+2\ge\kappa_\betab$ solutions of problem \eqref{0EbvpN} satisfy:
\begin{equation}
\label{0E6bN}
   u\in\rH^1(\Omega) \ \ \mbox{and}\ \  f\in\rJ^{n}_{\betab+2}(\Omega) 
   \quad\Longrightarrow\quad u\in\rJ^{n+2}_{\betab}(\Omega).
\end{equation}
\end{theorem}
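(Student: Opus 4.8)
The plan is to obtain this finite-order weighted shift by the classical program of \emph{localization} followed by \emph{model-problem analysis} through Mellin and Fourier transforms, as in \cite{Dauge88,MazyaRossmann03}. First I would fix a partition of unity on $\overline\Omega$ subordinate to a finite cover by interior balls, charts flattening the smooth part of $\partial\Omega$, edge neighborhoods at positive distance from $\sC$, and conical neighborhoods of the vertices $\bc\in\sC$. Away from the singular set $\sC\cup\bigcup_{\be\in\sE}\be$ the weights $r_\bc$ and $r_\be/r_\sC$ are bounded above and below, so the ordinary interior and smooth-boundary elliptic shift $\rH^n\to\rH^{n+2}$ already gives the conclusion there with no restriction on $\betab$. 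The real content is thus the derivation of weighted a priori estimates near edges and near corners, after which one patches by commuting the cut-offs through $\Delta$; the commutators are of lower differential order and are absorbed, with the induction started from the energy bound $u\in\rH^1=\rJ^1_{-1}(\Omega)$.

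\textbf{Edges.} Near an interior point of an edge $\be$ I would flatten the geometry so that $\Omega$ becomes locally the dihedron $K_\be\times\R$, with $K_\be$ the plane sector of opening $\omega_\be$, and take a partial Fourier transform in the edge variable. This reduces the Neumann problem to a parameter-dependent family of two-dimensional Neumann problems on $K_\be$, whose Mellin operator pencil has the eigenvalues $\tfrac{k\pi}{\omega_\be}$, $k\in\N$ (the value $k=0$ being the constant mode). The non-homogeneous weighted estimate holds on the line $\Re\lambda=-\beta_\be-1$ exactly when this line carries no eigenvalue and sits in the admissible band; the hypothesis $0\le-\beta_\be-1<\tfrac\pi{\omega_\be}$ places it strictly between the constant eigenvalue $0$ (which remains admissible for the non-homogeneous $\rJ$ norm, since constants are allowed there) and the first positive eigenvalue $\tfrac\pi{\omega_\be}$. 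Reassembling the parameter-dependent estimates yields the dihedral estimate with the two-derivative gain at weight $\beta_\be$.

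\textbf{Corners.} Near a vertex $\bc$ the domain is locally the cone $\Gamma_\bc$ over the spherical polygon $G_\bc\subset\Sbb^2$, and I would use the Mellin transform in $r_\bc$. Its symbol is the operator pencil of the Laplace--Beltrami operator on $G_\bc$ under Neumann conditions, with exponents $\lambda^{(0)}=0$ (the constant), $\lambda^{(1)}=\lambda^\Neu_\bc$ built from $\mu^\Neu_{\bc,2}$ as in \eqref{0ELamneu}, and so on. The crucial point is that $G_\bc$ itself carries corners, the traces of the edges meeting at $\bc$, so the vertex estimate has to be \emph{uniform on $G_\bc$} and to carry the edge weights $r_\be/r_\sC$; this is exactly where the edge estimate of the previous step reenters, now applied on the spherical polygon. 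The condition $-\tfrac12\le-\beta_\bc-\tfrac32<\min\{2,\lambda^\Neu_\bc\}$ puts the Mellin line $\Re\lambda=-\beta_\bc-\tfrac32$ in the strip between $0$ and $\min\{2,\lambda^\Neu_\bc\}$ with no pencil eigenvalue on it (the eigenvalue $0$ being again absorbed by the non-homogeneous norm); the cap at $2$ is forced by the two-derivative shift together with the $\rJ$-weight bookkeeping, which requires the admissible strip to have width compatible with a single non-homogeneous weight.

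\textbf{Assembly and the main obstacle.} Recombining the local estimates through the partition of unity and bootstrapping in $n$ then gives $u\in\rJ^{n+2}_\betab(\Omega)$; the threshold $n+2\ge\kappa_\betab$ is precisely what makes the non-homogeneous norm \eqref{0E0J3d} equivalent to its step-weighted form, so that the weight $\max\{\beta+|\alpha|,0\}$ is the active one and the spaces behave correctly. I expect the hard part to be the \emph{edge--vertex interaction}: in a region where $r_\bc\to0$ and $r_\be/r_\sC\to0$ at once, one must prove estimates uniform in the ratio of the two distances. This is resolved by treating $G_\bc$ as a two-dimensional corner domain whose corner estimates feed into the three-dimensional Mellin symbol analysis, so that edge and vertex contributions stay consistent; tracking the constant Neumann modes --- which belong to the $\rJ$ spaces but not to the homogeneous $\rK$ spaces, and which is why $\mu^\Neu_{\bc,2}$ rather than $\mu^\Neu_{\bc,1}=0$ governs the corner condition --- throughout this interaction is the delicate bookkeeping on which the whole argument turns.
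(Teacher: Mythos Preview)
The paper does not prove Theorem~\ref{0T10}; it is quoted from the literature (Maz'ya--Rossmann \cite{MazyaRossmann03} and Dauge \cite{Dauge88}) and then used as the ``basic regularity'' input to the paper's own analytic shift results (Theorems~\ref{5T2}, \ref{6T3}, \ref{6T5}). So there is no in-paper proof to compare against; your outline is, in broad strokes, a fair sketch of the program carried out in those references.

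One point is not right, however: your explanation of the cap at $2$ in the corner condition $-\tfrac12\le -\beta_\bc-\tfrac32<\min\{2,\lambda^\Neu_\bc\}$. You attribute it to ``the two-derivative shift together with the $\rJ$-weight bookkeeping, which requires the admissible strip to have width compatible with a single non-homogeneous weight.'' That is not the mechanism. The cap comes from the \emph{injectivity modulo polynomials} condition of \cite{Dauge88} (see the paragraph after \eqref{0Eco2N} and \S\ref{sec62}\ref{7Simp}, Example~\ref{6X4}): in the $\rJ$-scale one must show that the model Neumann problem on the infinite cone $\cK_\bc$ sends polynomial data to polynomial solutions at each integer degree crossed by the Mellin line. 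This is automatic at degree $0$ (constants) and can be checked at degree $1$ when $1\notin\sigma(\gA_\bc)$, which is why the eigenvalue $0$ is harmless for the non-homogeneous norm; but the general argument is not carried beyond degree $1$, and that is exactly what produces the $\min\{2,\lambda^\Neu_\bc\}$. Your handling of the constant Neumann mode is correct in spirit --- it \emph{is} the injectivity-modulo-polynomials condition at $\lambda=0$ --- but the cap at $2$ is a statement about this same condition at higher integer degrees, not about strip widths or derivative counts.
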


The minimum of $\lambda_\bc$ with $2$ in \eqref{0Eco2N} comes from the conditions of injectivity modulo polynomials of \cite{Dauge88} which replaces the usual spectral conditions when polynomial right hand sides are involved,  see section \ref{sec62} \ref{7Simp}. Then the anisotropic weighted analytic regularity result in non-homogeneous norms is the following.

\begin{theorem}[see Thm \ref{6T5} for the general case]
\label{0T11}
If condition \eqref{0Eco2N} holds for the polyhedron $\Omega$ and the exponents $\betab$ then solutions of problem \eqref{0EbvpN} satisfy:
\begin{equation}
\label{0E83e}
   u\in\rH^1(\Omega) \ \ \mbox{and}\ \  f\in\rB_{\betab+2}(\Omega) 
   \quad\Longrightarrow\quad u\in\rB_{\betab}(\Omega).
\end{equation}
\end{theorem}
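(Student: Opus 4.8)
The plan is to reproduce the structure of the Dirichlet result (Theorem \ref{0T5}): the analytic estimate is obtained by composing a \emph{basic finite-order regularity} result with a \emph{natural anisotropic regularity shift}. In the present Neumann setting the basic regularity is furnished by Theorem \ref{0T10}, and the shift must be performed in the non-homogeneous analytic class $\rB_\betab(\Omega)$ built on the spaces $\rN$, rather than in the class $\rA_\betab(\Omega)$ built on $\rM$; this substitution of the norm family is the only structural change with respect to the Dirichlet argument.

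First I would fix an integer $n_0 \ge \kappa_{\betab+2}$ and note that $f \in \rB_{\betab+2}(\Omega)$ implies in particular $f \in \rJ^{n_0}_{\betab+2}(\Omega)$. Since \eqref{0Eco2N} is exactly the hypothesis of Theorem \ref{0T10}, that theorem yields the finite-order isotropic bound $u \in \rJ^{n_0+2}_\betab(\Omega)$, which serves as the basic-regularity ingredient. The second step applies the natural anisotropic regularity shift of Theorem \ref{5T1}, in its non-homogeneous ($\rB$) form, which converts isotropic low-order regularity of the solution into anisotropic analytic regularity: under the edge non-resonance condition \eqref{0Eco3} one has the implication $u \in \rJ^{m_0}_\betab(\Omega)$ and $f \in \rB_{\betab+2}(\Omega) \Rightarrow u \in \rB_\betab(\Omega)$. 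It then remains to verify that \eqref{0Eco2N} supplies this edge hypothesis: the bound $0 \le -\beta_\be - 1 < \pi/\omega_\be$ excludes every resonance $k\pi/\omega_\be$ with $k \ge 1$, while the borderline value $-\beta_\be - 1 = 0$, corresponding to constants along the edge, is harmless in the non-homogeneous class because of the $\max\{\cdot,0\}$ truncation in \eqref{0E0N}. Composing the two steps delivers $u \in \rB_\betab(\Omega)$.

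The main obstacle is the natural anisotropic shift itself. As in estimate \eqref{0E8c} for the polygon, its proof amounts to establishing the finite-order estimates underlying \eqref{0E6bN} with constants whose dependence on the order of differentiation is of Cauchy type, uniformly both near the edges --- through the nested-open-set technique that exploits the extra tangential regularity along each $\be$ --- and near the polyhedral corners --- through three-dimensional dyadic partitions. The genuinely new bookkeeping, absent from the Dirichlet case, concerns the non-homogeneous weights: the truncations $\max\{\beta_\bc + |\alpha|, 0\}$ and $\max\{\beta_\be + |\alpha^\perp_\be|, 0\}$ in \eqref{0E0N} record the constant and polynomial contributions that Neumann solutions carry at corners and edges, so the dyadic sums must be split at the threshold $\kappa_\betab$ and the corner estimates must invoke injectivity \emph{modulo polynomials}; this is precisely why the spectral bound in \eqref{0Eco2N} involves $\min\{2,\lambda^\Neu_\bc\}$ rather than $\lambda^\Neu_\bc$ alone. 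Once these Cauchy-type shift estimates are secured, their assembly into the global analytic class $\rB_\betab(\Omega)$ is routine.
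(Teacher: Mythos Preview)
Your two-step outline --- basic regularity via Theorem \ref{0T10}, then the natural anisotropic shift in non-homogeneous norms --- is exactly the paper's approach (see the proof of Theorem \ref{6T5}). Two corrections are in order. First, the shift result you need is Theorem \ref{5T2} together with Remark \ref{5R2} (or its illustrative form Theorem \ref{0T12}), not Theorem \ref{5T1}, which is the homogeneous-norm version. Second, in your third paragraph you misplace the role of injectivity modulo polynomials: that condition, and the $\min\{2,\lambda^{\Neu}_\bc\}$ it produces, belongs entirely to the \emph{basic regularity} step (Theorem \ref{0T10}), not to the shift. The natural regularity shift (Theorems \ref{0T12}/\ref{5T2}) carries no corner hypotheses whatsoever --- its only spectral input is the edge condition \eqref{0Eco3}, which as you correctly note is implied by \eqref{0Eco2N}. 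The dyadic corner estimates inside the shift proof are pure scaling arguments and need no spectral or injectivity information at the vertices.
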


The corresponding natural regularity shift result is the following.
\begin{theorem}[see Thm \ref{5T2} and   Remark \ref{5R2}  for the general case]
\label{0T12}
For any multi-exponent $\betab$ such that condition \eqref{0Eco3} holds, we have
the following regularity result for solutions of problem \eqref{0EbvpN} in the polyhedron $\Omega$ 
\begin{equation}
\label{0E8nataniN}
   (u\in\rJ^m_{\betab}(\Omega)\ \ \mbox{with}\ \ m\ge\kappa_\betab)
   \ \ \mbox{and}\ \  (f\in\rB_{\betab+2}(\Omega) )
   \quad\Longrightarrow\quad u\in\rB_{\betab}(\Omega).
\end{equation}
\end{theorem}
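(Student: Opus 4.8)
The plan is to obtain \eqref{0E8nataniN} as the analytic limit of a family of finite-order anisotropic regularity shift estimates, in exact parallel with the passage from \eqref{0E8b} to \eqref{0E8nat} in the Dirichlet polygon case, but now carried out in the non-homogeneous spaces $\rJ$, $\rN$, $\rB$ and in three dimensions. Concretely, I would prove that there is a constant $C$, independent of the order $k$, such that for every integer $k$
\[
   \tfrac1{k!}\Norm{u}{\rN^k_\betab(\Omega)} \le C^{k+1}\bigl\{ \textstyle\sum_{\ell} \tfrac1{\ell!}\,(\text{$\ell$-th order seminorm of }f) + \Norm{u}{\rJ^m_\betab(\Omega)} \bigr\}.
\]
Since $f\in\rB_{\betab+2}(\Omega)$ bounds the $\ell$-th order seminorms of $f$ by $C_f^{\ell+1}\ell!$, the sum over $\ell$ is a convergent geometric series, and one recovers $\Norm{u}{\rN^k_\betab(\Omega)}\le \widetilde C^{\,k+1}k!$, i.e.\ $u\in\rB_\betab(\Omega)$. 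The whole problem is thus reduced to proving these finite-order estimates with constants uniform in $k$ and, crucially, uniform across the dyadic scales at edges and corners.

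The architecture is modular, following the three localizations announced in the introduction. Away from edges and corners the boundary is piecewise flat and analytic, so the Neumann-adapted Cauchy-type a priori estimate on smooth domains (the refinement of Theorem \ref{2T1} proved in section \ref{sec1}) furnishes interior and boundary analytic control directly. The first genuinely weighted step is the edge estimate: near an interior point of an edge $\be$ I would set up a dyadic partition in the transverse distance $r_\be$, rescaling each transverse annulus $\{r_\be\sim 2^{-j}\}$ to unit size, where the rescaled Neumann problem lives on a fixed smooth reference neighborhood of a dihedral wedge. On this reference configuration the edge-parallel regularity is obtained by the nested-open-set technique, which produces estimates for arbitrarily many tangential derivatives carrying a weight that does \emph{not} grow with the number of tangential derivatives; this is precisely the anisotropy encoded in $\rN^n_\betab$ through $|\alpha^\perp_\be|$ rather than $|\alpha|$. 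Condition \eqref{0Eco3}, namely $-\beta_\be-1\ge0$ and $-\beta_\be-1\neq k\pi/\omega_\be$, is exactly what keeps the weight line off the singular exponents of the model dihedral Neumann problem, so that the per-scale a priori estimate holds with a constant uniform in $j$.

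The corner estimate then uses a three-dimensional dyadic partition around each vertex $\bc$: I rescale each annulus $\{r_\bc\sim 2^{-j}\}$ to unit size, obtaining on a fixed reference cone-neighborhood a configuration that still contains pieces of the edges abutting $\bc$, to which I apply the edge estimate just established, with constants uniform in $j$. The anisotropic factor $r_\be/r_\sC$ is designed so that this double rescaling is compatible and the parallel and transverse weights transform correctly. The new ingredient compared with Theorem \ref{0T6} (general form Theorem \ref{5T1}) is the non-homogeneous norm, and here I would invoke the step-weighted description \eqref{0E0N}: since $m\ge\kappa_\betab$, every exponent is $\max\{\beta+|\alpha|,0\}$, so the finitely many low-order derivatives for which $\beta_\bc+|\alpha|\le0$ (respectively $\beta_\be+|\alpha^\perp_\be|\le0$) carry no weight and are controlled directly by the basic-regularity hypothesis $u\in\rJ^m_\betab(\Omega)$, while the remaining, genuinely weighted, higher-order derivatives are handled by the same homogeneous-type dyadic machinery as in the Dirichlet case. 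Splitting each dyadic contribution into these two regimes and reassembling is what replaces the Poincar\'e-type embedding \eqref{0EKm1} available only for Dirichlet data; the one-dimensional version of this bookkeeping is already Theorem \ref{0T3N}.

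The step I expect to be the main obstacle is precisely this bookkeeping of the non-homogeneous norm, interlocked with the double (edge and corner) dyadic rescaling. One must check that the constants present in Neumann solutions because of corner and edge polynomials are absorbed uniformly, that the transition between the weighted and the unweighted regime is compatible across all dyadic scales simultaneously at edges and at corners, and that upon summation the factorials $m!$ coming from $\rB_{\betab+2}$ combine with the geometric decay of the dyadic series to yield a convergent Cauchy-type bound rather than a divergent one. A secondary difficulty is the edge--vertex interaction: because the corner scaling does not act isotropically on derivatives in the anisotropic norm, the reference-domain estimate must itself be anisotropic, and one must track how the edge-parallel direction is chained through successive dyadic annuli. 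Once these uniformities are secured, the conclusion $u\in\rB_\betab(\Omega)$ follows by summation, exactly as \eqref{0E8nat} followed from \eqref{0E8c}.
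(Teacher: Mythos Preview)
Your proposal is correct and mirrors the paper's approach: Theorem \ref{5T2} together with Remark \ref{5R2}\,(ii) assembles exactly the ingredients you list --- smooth-case Cauchy estimates, anisotropic edge estimates in non-homogeneous norms (Theorem \ref{4T4}), a corner dyadic partition applying the edge estimate on reference annuli (as in Proposition \ref{5P1}), and the step-weighted/seminorm device (Corollary \ref{1C1}, Bramble--Hilbert) to handle the non-homogeneous norm at corners.

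One point to sharpen in your description of the edge step: condition \eqref{0Eco3} is \emph{not} what makes the per-dyadic-annulus estimate uniform --- those reference annuli are smooth and need only ellipticity. Rather, \eqref{0Eco3} guarantees the wedge-level Peetre-type a priori estimate (Assumption \ref{5GB}), which is applied on the full edge neighbourhood and seeds the nested-open-set iteration (Lemma \ref{4L2}, Corollary \ref{4C2} and their $\rJ$-analogues) controlling tangential derivatives $\partial_{x_3}^\ell$; the transverse dyadic decomposition enters separately, for the isotropic part (Theorem \ref{4T1}). In the paper these two mechanisms are then merged in Theorem \ref{4T4}, not nested one inside the other as your wording suggests.
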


\begin{remark}
In Theorem \REF{0T12}, condition \eqref{0Eco3} plays the role of a necessary and sufficient condition for the validity of Assumption \REF{5GB}. The fact that this is the same condition as the one in Theorem~\REF{0T6} is again a particularity of the Laplace operator in 2D, like we have seen for condition \eqref{0Eco1}.
More precisely, whereas in the Neumann case the lower bound $0\le -\beta_\be-1$ is necessary, in the Dirichlet case of Theorem~\REF{0T6}, the sharp lower bound would be $-\frac\pi{\omega_\be}<-\beta_\be-1$, if we allow stepping out of the variational space $\rH^1(\Omega)$,   see \S \ref{sec62}\ref{6Sapriori}.
\end{remark}

\begin{remark}
Theorem \REF{0T12} and, if we replace $\lambda^\Neu_\bc$ by $\lambda^\Dir_\bc$, also Theorems \REF{0T10} and \REF{0T11}, are true for the solution of the Dirichlet problem \eqref{0Ebvp}.
\end{remark}

\begin{example}
\iti1 For the unit cube $\Box=(0,1)^3$, 
\[
   \frac\pi{\omega_\be} = 2,\quad
   \lambda^\Dir_\bc = 3 \quad\mbox{and}\quad \lambda^\Neu_\bc = 2.
\]
\iti2 For the Fichera corner $F=(-1,1)^3\setminus (0,1)^3$, at its non-convex corner $\bc_0=(0,0,0)$ and its non-convex edges $\be_i$, $i=1,2,3$ we have
\[
   \frac\pi{\omega_{\be_i}} = \frac23,\quad
   \lambda^\Dir_{\bc_0} \simeq 0.45418 \quad\mbox{and}\quad \lambda^\Neu_{\bc_0} \simeq 0.84001.
\]
Here the 3D Dirichlet and Neumann limiting exponents $\lambda^\Dir_{\bc_0}$ and $\lambda^\Neu_{\bc_0}$ have been computed by Th. Apel using the method of \cite{ApelMehrmannWatkins02}; for the Dirichlet exponent see also \cite{SchmitzVolkWendland93}. 
\end{example}

\subsection{Bibliographical comments} On {\em polyhedra}, the first proof of a Fredholm theory in type $\rK$ weighted spaces is due to Maz'ya-Plamenevskii \cite{MazyaPlamenevskii77}. Optimal regularity and Fredholm results for coercive variational problems are then proved in \cite{Dauge88}  using unweighted Sobolev spaces $\rH^m=\rJ^m_{-m}$, in \cite{MazyaRossmann91b} using type $\rK$ weighted spaces (cf Theorem \REF{0T4}), and in \cite{MazyaRossmann03} using type $\rJ$ weighted spaces (cf Theorem \REF{0T10}). Let us mention that regularity results in $\rK$ weighted spaces are also proved in \cite{BacutaNistor05,MazzucatoNistor10} by an approach distinct from \cite{MazyaRossmann91b}. 

Concerning finite anisotropic regularity, the first partial result is due to Apel-Nicaise \cite{ApelNicaise98} for the Laplace-Dirichlet problem; more complete and general statements using spaces $\rM$ are announced in \cite{BuffaCoDa03} for the same problem; a proof of anisotropic regularity using distinct, but similar spaces, is provided in \cite{BacutaNistor07}. Finally, concerning analytic weighted regularity, prior to the present work, we only find two papers relating to three-dimensional domains: \cite{Guo95} where Guo describes the suitable weighted analytic classes of type $\rB$, and \cite{GuoBabuska97b} where estimates along edges are given for the Laplace-Dirichlet problem.

Recently higher order regularity for the Dirichlet problem on hypercubes was proved in a related, but different class of anisotropic weighted Sobolev spaces
\cite{DaugeStevenson09}. This was also motivated by questions of convergence of numerical approximations, but with a different aim, namely optimal approximations intended to overcome the curse of dimensionality.

\section{Local Cauchy-type estimates in smooth domains}
\label{sec1}

The starting and key point is a local Cauchy-type (``analytic'') estimate in
smooth domains that is proved by using nested open sets on model
problems, like in the Morrey-Nirenberg proof  \cite{MorreyNirenberg57}  of analytic regularity, 
and a Fa\`a di Bruno formula for local coordinate transformations, see
\cite[Theorem 2.7.1]{GLCI}. This proof, which is given in detail in \cite{GLCI}, will not be repeated here.  

\begin{proposition}
\label{1T1}
Let $\Omega$ be a bounded domain in $\R^n, n\geq 2$.  Let $\Gamma$
be an analytic part of the boundary of $\Omega$. Let $L$ be a
$N\times N$ elliptic system of second order operators with analytic
coefficients over $\Omega\cup\Gamma$.  Let $\{ T,D \}$ be a
  set of boundary operators  on
$\Gamma$ of order $1$ and $0$, respectively, with analytic coefficients, satisfying the Shapiro-Lopatinskii covering condition with respect to  $L$ on $\Gamma$.
Let two bounded subdomains $\widehat\Omega=\cU\cap\Omega$ and
$\widehat\Omega'=\cU'\cap\Omega$ be given with $\cU$ and $\cU'$ open
in $\R^n$ and $\ov\cU\subset \cU'$. We assume that
$\widehat\Gamma':=\partial\widehat\Omega'\cap\partial\Omega$ is
contained in $\Gamma$. Then there exists a constant $A$ such that
any $\bu\in\bH^2(\widehat\Omega)$ satisfies for all $k\in\N$,
$k\ge2$, the improved a priori estimates (``finite analytic
estimates'')
\begin{equation}
\label{1E1}
   \frac{1}{k!} \SNorm{\bu}{k;\,\widehat\Omega} \le A^{k+1} \Big\{
   \sum_{\ell=0}^{k-2} \frac{1}{\ell!}
   \Big(\SNorm{L\bu}{\ell; \,\widehat\Omega'} \!
   + \Norm{T\bu}{\ell+\frac12;\,\widehat\Gamma'} \!
   + \Norm{D\bu}{\ell+\frac32;\,\widehat\Gamma'}\Big)
   +
   \Norm{\bu}{1;\,\widehat\Omega'} \Big\}.
\end{equation}
\end{proposition}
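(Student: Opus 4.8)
The plan is to adapt the Morrey--Nirenberg method of nested open sets \cite{MorreyNirenberg57}, keeping explicit track of the dependence of every constant on the order of differentiation so as to extract the Cauchy-type factor $A^{k+1}$ together with the factorial $k!$. The foundation is the classical finite-order a priori estimate for the elliptic system $L$ with boundary operators $\{T,D\}$ satisfying the Shapiro--Lopatinskii condition \cite{LionsMagenes70}: after flattening, on a pair of concentric half-balls $B_r\subset B_{r'}$ lying in one coordinate patch,
\[
   \SNorm{\bu}{2;B_r} \le C\Big( \Norm{L\bu}{0;B_{r'}} + \Norm{T\bu}{\frac12;\partial B_{r'}}
   + \Norm{D\bu}{\frac32;\partial B_{r'}} + (r'-r)^{-1}\SNorm{\bu}{1;B_{r'}}
   + (r'-r)^{-2}\Norm{\bu}{0;B_{r'}}\Big),
\]
with $C$ independent of $r,r'$. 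The negative powers of $r'-r$ are the quantitative ingredient on which the iteration feeds, and this estimate is exactly the case $k=2$ of \eqref{1E1}.

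Before iterating I would localize and flatten. Covering $\overline{\widehat\Omega}$ by finitely many balls --- interior ones, and boundary ones centred on $\widehat\Gamma'\subset\Gamma$ --- I would straighten the analytic arc $\Gamma$ by analytic diffeomorphisms. The reason analyticity of $\Gamma$ (not mere smoothness) is required is that the transformed operator and boundary operators must retain analytic coefficients with Cauchy-type bounds $\Norm{\partial^\gamma a}{\infty}\le C\,R^{-|\gamma|}\,|\gamma|!$; it is precisely here that the Fa\`a di Bruno formula for high-order derivatives of a composition \cite[Theorem 2.7.1]{GLCI} is invoked to preserve control of the constants through the change of variables.

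The heart is the induction on $k$, with base case $k=2$ the estimate above. To bound $\SNorm{\bu}{k;\widehat\Omega}$ for $k\ge 3$, apply the second-order estimate not to $\bu$ but to each derivative $\partial^\beta\bu$, $|\beta|=k-2$, over a continuous family of nested domains interpolating between $\widehat\Omega$ and $\widehat\Omega'$ whose mutual distances scale linearly in the interpolation parameter. Writing $L(\partial^\beta\bu)=\partial^\beta(L\bu)-[\partial^\beta,L]\bu$ and expanding the commutator by Leibniz produces terms $\binom{\beta}{\gamma}(\partial^\gamma a)\,\partial^{\beta-\gamma}\partial_i\partial_j\bu$ with $0\neq\gamma\le\beta$, in which $\bu$ is differentiated to order $k-|\gamma|\le k-1$; the boundary contributions $T\partial^\beta\bu$ and $D\partial^\beta\bu$ are handled the same way after separating tangential from normal derivatives. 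Inserting the analytic coefficient bounds and the induction hypothesis for the lower seminorms $\SNorm{\bu}{k-|\gamma|}$ leaves a combinatorial sum of schematic form $\sum_{j\ge1}\binom{k-2}{j}\,j!\,(k-j)!\,R^{-j}A^{k-j+1}$, where $\binom{k-2}{j}=\sum_{|\gamma|=j,\,\gamma\le\beta}\binom{\beta}{\gamma}$ collects the multi-indices.

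The main obstacle --- and essentially the whole content of the argument --- is to close this induction: one must calibrate the weighting by powers of the inter-domain distance and choose $A$ large enough that the commutator sum reassembles into $A^{k+1}k!$ rather than diverging. This rests on the elementary but decisive inequality $\binom{k-2}{j}\,j!\,(k-j)!\le k!$, which makes each summand at most $k!\,R^{-j}A^{k-j+1}$, so that after dividing by $k!$ the series $\sum_{j\ge1}(RA)^{-j}$ converges once $A$ is large, each differentiation of a coefficient spending exactly the one power of distance gained from the factor $(r'-r)^{-1}$ in the base estimate. The fractional-order boundary norms and the need to commute the flattening with differentiation make the bookkeeping heavier near $\widehat\Gamma'$ than in the interior, but the mechanism is identical. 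Summing the finitely many localized contributions and absorbing the surviving lower-order terms $\SNorm{\bu}{1}$ and $\Norm{\bu}{0}$ into $\Norm{\bu}{1;\widehat\Omega'}$ then yields \eqref{1E1}.
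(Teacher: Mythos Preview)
Your proposal is correct and follows essentially the same approach that the paper indicates: the paper does not actually prove this proposition but refers to \cite{GLCI} for the detailed argument, describing it only as the Morrey--Nirenberg nested-open-set iteration combined with a Fa\`a di Bruno formula for the analytic coordinate changes --- exactly the ingredients you invoke, down to the same citation of \cite[Theorem 2.7.1]{GLCI}.
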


We will know prove a refinement of this estimate where in the right-hand side of \eqref{1E1} the $\rH^1$-norm is replaced by the $\rH^1$-semi-norm.
This will be convenient for boundary value problems of Neumann type.
When $L$, $T$ and $D$ are homogeneous with constant
coefficients, this version is a consequence of the previous result, obtained by a simple argument based on the Bramble-Hilbert lemma. 

\begin{corollary}
\label{1C1}
We assume that the operators $L$, $T$ and $D$ are homogeneous with constant coefficients.
Let $m\ge1$. There exists a constant $A$ independent of $k$ such that there hold the following estimates for all $k\ge m$ and all  $\bu\in\bH^2(\hat\Omega')$   satisfying the zero boundary conditions $T\bu=0$ and $D\bu=0$ on $\hat\Gamma$:
\begin{equation}
\label{1E2}
   \frac{1}{k!} \SNorm{\bu}{k;\,\widehat\Omega} \le A^{k+1} \Big\{
   \sum_{\ell=m-1}^{k-2} \frac{1}{\ell!}
   \SNorm{L\bu}{\ell; \,\widehat\Omega'}
   + \SNorm{\bu}{m;\,\widehat\Omega'} \Big\}.
\end{equation}
\end{corollary}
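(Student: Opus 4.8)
The plan is to deduce \eqref{1E2} from the full estimate \eqref{1E1} of Proposition \ref{1T1} by applying the latter not to $\bu$ but to $\bw:=\bu-P$, where $P$ is a polynomial of degree at most $m-1$ furnished by the Bramble--Hilbert (Deny--Lions) lemma. Since $\widehat\Omega'$ is a bounded Lipschitz domain, there is a $k$-independent constant $C_{BH}$ and a $P$ (for instance the componentwise $\rL^2(\widehat\Omega')$-projection of $\bu$ onto polynomials of degree $\le m-1$) with $\Norm{\bu-P}{m;\widehat\Omega'}\le C_{BH}\,\SNorm{\bu}{m;\widehat\Omega'}$. The reason for subtracting $P$ is that the left-hand side of \eqref{1E2} does not see it: for $k\ge m>\deg P$ one has $\partial^\alpha P=0$ when $|\alpha|=k$, hence $\SNorm{\bw}{k;\widehat\Omega}=\SNorm{\bu}{k;\widehat\Omega}$.

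First I would apply \eqref{1E1} to $\bw$ and sort the right-hand side into volume, energy, and boundary contributions. For the volume sum, homogeneity of $L$ (order $2$, constant coefficients) gives $LP\in\P_{m-3}$, so $\SNorm{L\bw}{\ell;\widehat\Omega'}=\SNorm{L\bu}{\ell;\widehat\Omega'}$ for every $\ell\ge m-1$; this reproduces exactly the sum $\sum_{\ell=m-1}^{k-2}$ of \eqref{1E2}. The finitely many low-order terms $\ell\le m-2$ are harmless: bounding $\SNorm{L\bw}{\ell;\widehat\Omega'}\le\Norm{L\bw}{m-2;\widehat\Omega'}\le C\Norm{\bw}{m;\widehat\Omega'}$ (second-order operator) and summing the weights $\tfrac1{\ell!}\le e$, they are absorbed into $C\,C_{BH}\,\SNorm{\bu}{m;\widehat\Omega'}$. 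The energy term is treated identically, using $m\ge1$: $\Norm{\bw}{1;\widehat\Omega'}\le\Norm{\bw}{m;\widehat\Omega'}\le C_{BH}\,\SNorm{\bu}{m;\widehat\Omega'}$.

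The crux is the boundary terms, and this is where the hypotheses $T\bu=0$ and $D\bu=0$ enter decisively. They give $T\bw=-TP$ and $D\bw=-DP$ on $\widehat\Gamma'$, so the boundary contribution is $\sum_{\ell}\tfrac1{\ell!}\big(\Norm{TP}{\ell+\frac12;\widehat\Gamma'}+\Norm{DP}{\ell+\frac32;\widehat\Gamma'}\big)$. Since $T,D$ are homogeneous with constant coefficients, $TP$ and $DP$ are polynomials of degree $\le m-2$ and $\le m-1$; on the straight or plane boundary piece $\widehat\Gamma'$ to which this corollary is applied, their restrictions are again polynomials in the surface variables, so the surface Sobolev norms saturate, $\Norm{TP}{\ell+\frac12;\widehat\Gamma'}\le\Norm{TP}{m-2;\widehat\Gamma'}$ and likewise for $DP$, uniformly in $\ell$. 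Converting back through $TP=-T\bw$, $DP=-D\bw$ and a fixed-order trace theorem gives $\Norm{TP}{m-2;\widehat\Gamma'}\le C\,\Norm{\bw}{m;\widehat\Omega'}\le C\,C_{BH}\,\SNorm{\bu}{m;\widehat\Omega'}$ (and similarly for $DP$); summing the weights $\tfrac1{\ell!}\le e$ keeps the whole boundary series bounded by $C\,\SNorm{\bu}{m;\widehat\Omega'}$. I expect this uniform-in-$\ell$ control of the boundary series to be the main obstacle, precisely because it rests on the polynomial traces having bounded-order surface norms, which is exactly where the flat geometry together with the homogeneity and constant-coefficient assumptions are needed.

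Finally I would collect the pieces. All auxiliary constants ($e$, $C_{BH}$, the trace and $L$-continuity constants) are independent of $k$; writing their product as $C_*\ge1$ and replacing $A$ by $A':=A\,C_*$ turns the assembled bound into $\frac1{k!}\SNorm{\bu}{k;\widehat\Omega}\le (A')^{k+1}\big\{\sum_{\ell=m-1}^{k-2}\tfrac1{\ell!}\SNorm{L\bu}{\ell;\widehat\Omega'}+\SNorm{\bu}{m;\widehat\Omega'}\big\}$, which is \eqref{1E2} with a constant independent of $k$.
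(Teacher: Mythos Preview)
Your proposal is correct and follows essentially the same route as the paper: apply \eqref{1E1} to $\bu-P$ for a polynomial $P\in\P^{m-1}$ and invoke the Bramble--Hilbert lemma, using homogeneity of $L,T,D$ so that the high-order semi-norms are unaffected by $P$. The only organizational difference is that the paper first splits the right-hand side of \eqref{1E1} into a part $B^*$ containing all semi-norms of order $\ge m-1$ (which is invariant under subtraction of $\varphi\in\P^{m-1}$) and a remainder $B_*$ bounded by $\Norm{\bu-\varphi}{m;\widehat\Omega'}$, and imposes the zero boundary conditions only at the very end to kill the $T\bu$, $D\bu$ contributions in $B^*$; you instead use the zero conditions earlier to write $T\bw=-TP$, $D\bw=-DP$ and then control the resulting polynomial traces uniformly in $\ell$. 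Both arguments rely, as you note, on the boundary piece being flat so that traces of polynomials remain polynomials; the paper's version makes this reliance slightly less visible but needs it just the same.
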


\begin{proof}
We start with any $\bu\in\bH^k(\hat\Omega')$ and use estimate \eqref{1E1}. We split the right hand side of the inequality into two pieces according to:
\[
   \sum_{\ell=0}^{k-2} \frac{1}{\ell!}
   \Big(\SNorm{L\bu}{\ell; \,\widehat\Omega'} \!
   + \Norm{T\bu}{\ell+\frac12;\,\widehat\Gamma'} \!
   + \Norm{D\bu}{\ell+\frac32;\,\widehat\Gamma'}\Big)
   + \Norm{\bu}{1;\,\widehat\Omega'}= B^*(\bu) + B_*(\bu)
\]
with
\begin{align*}
B^*(\bu) &= \sum_{\ell=m-1}^{k-2} \frac{1}{\ell!}
   \Big(\SNorm{L\bu}{\ell; \,\widehat\Omega'}
   + \SNorm{T\bu}{\ell+\frac12;\,\widehat\Gamma'}
   + \!\!\sum_{j=m-1}^\ell\!  \SNorm{T\bu}{j;\,\widehat\Gamma'}
   + \SNorm{D\bu}{\ell+\frac32;\,\widehat\Gamma'}
   + \sum_{j=m}^{\ell+1} \SNorm{D\bu}{j;\,\widehat\Gamma'}
   \Big)\\
B_*(\bu) &=  \sum_{\ell=0}^{m-2} \frac{1}{\ell!}
   \Big(\SNorm{L\bu}{\ell; \,\widehat\Omega'}
   + \SNorm{T\bu}{\ell+\frac12;\,\widehat\Gamma'}
   + \SNorm{D\bu}{\ell+\frac32;\,\widehat\Gamma'}
   \Big) \\
   &+  \sum_{\ell=0}^{k-2} \frac{1}{\ell!}
   \Big( \sum_{j=0}^{\min\{\ell,m-2\}}\!  \SNorm{T\bu}{j;\,\widehat\Gamma'}
   + \sum_{j=0}^{{\min\{\ell+1,m-1\}}} \SNorm{D\bu}{j;\,\widehat\Gamma'}
   \Big) + \Norm{\bu}{1;\,\widehat\Omega'}
\end{align*}
Since the orders of $L$, $T$ and $D$ are $2$, $1$ and $0$ respectively, we obtain
\[
   B_*(\bu) \le C_m \Norm{\bu}{m;\,\widehat\Omega'}
\]
Since, moreover, the operators $L$, $T$ and $D$ are homogeneous, we have the invariance of $B^*(\bu)$ by subtraction of polynomials of degree less than $m-1$
\[
   B^*(\bu-\varphif) = B^*(\bu),\quad \forall\varphif\in\P^{m-1}(\hat\Omega').
\]
Altogether, using \eqref{1E1} for $\bu-\varphif$ we obtain for all $k\ge m$
\[
   \frac{1}{k!} \SNorm{\bu}{k;\,\widehat\Omega} \le A^{k+1}
   \big\{ B^*(\bu) + C_m  \Norm{\bu-\varphif}{m;\,\widehat\Omega'} \big\}
   ,\quad \forall\varphif\in\P^{m-1}(\hat\Omega').
\]
With the Bramble-Hilbert lemma \cite{BrambleHilbert70}, this gives
\[
   \frac{1}{k!} \SNorm{\bu}{k;\,\widehat\Omega} \le A^{k+1}
   \big\{ B^*(\bu) + C'_m  \SNorm{\bu}{m;\,\widehat\Omega'} \big\} .
\]
Applying this to functions $\bu$ satisfying zero boundary conditions, we obtain \eqref{1E2}.
\end{proof}

\section{Weighted Cauchy-type estimates in plane sectors}
\label{sec2}
The model singular domains in two dimensions are the infinite plane sectors. Let $\cK$ be an infinite sector with vertex at the coordinate origin $\bfz=(0,0)$. In polar coordinates $(r,\theta)$ such a sector can be described as
\begin{equation}
\label{2E0}
   \cK = \{\bx\in\R^2\ : \  \omega_1<\theta<\omega_2\},
\end{equation}
where $\omega_2=\omega_1+\omega$ with $\omega_1\in(-\pi,\pi)$, and $\omega\in(0,2\pi]$ is the opening of the sector $\cK$. For $i=1,2$, let $\Gamma_i$ be the side $\theta=\omega_i$ of $\cK$.

We consider an elliptic system  $L$ in $\cK$  and on each side $\Gamma_i$ a set of boundary operators $\{T_i,D_i\}$ satisfying the covering condition. We assume that the operators $L$, $T_i$ and $D_i$ are homogeneous of orders $2$, $1$ and $0$, respectively, with constant coefficients.   For any subdomain $\cW\ee'$ of $\cK$, we consider the system of local interior and boundary equations
\begin{equation}
\label{2Ebvp}
   \left\{ \begin{array}{rclll}
   L\,\bu &=& \bff \quad & \mbox{in}\ \ \cK\cap\cW\ee', \\[0.3ex]
   T_i\,\bu &=& 0  & \mbox{on}\ \ \Gamma_i\cap\ov\cW{}\ee', &i=1,2, \\[0.3ex]
   D_i\,\bu &=& 0  & \mbox{on}\ \ \Gamma_i\cap\ov\cW{}\ee', &i=1,2,
   \end{array}\right.
\end{equation}
which is the localization to $\cW\ee'$ of the elliptic boundary value problem $L\bu=\bff$ in $\cK$, with zero boundary conditions on $\Gamma_1$ and $\Gamma_2$.

\subsection{Weighted spaces with homogeneous norms}
These spaces coincide with those introduced by Kondrat'ev in his pioneering study of corner problems \cite{Kondratev67}. The weight depends on the order of the derivatives. We adopt a different convention than \cite{Kondratev67} in our notation in order to make the definition of corresponding analytic classes more natural (see \eqref{3E4} below).

\begin{definition}
\label{2DK}
Let $\beta$ be a real number called the \emph{weight exponent}, and let $m\ge0$ be an integer called the \emph{Sobolev exponent}. Let $\cW$ be a subdomain of $\cK$.
\begin{itemize}
\item[]
 The \emph{weighted space with homogeneous norm} $\rK^m_\beta(\cW)$ is defined,
 with the distance  $r=|\bx|$ to the vertex $\bfz$, by
\begin{equation}
\label{2EK1}
   \rK^m_\beta (\cW) = \big\{ u \in \rL_\loc^2(\cW) \ : \
   r^{\beta +  |\alpha|}  \partial^{\alpha}_\bx u  \in  \rL^2(\cW), \quad
   \forall\alpha, \; |\alpha| \leq m \big\}
\end{equation}
and endowed with semi-norm and norm respectively defined as
\begin{equation}
\label{2EK2}
   \SNorm{u}{\rK;\,m,\beta\,;\,\cW}^2 = \!\sum_{|\alpha|=m}\!
   \Norm{r^{\beta +  |\alpha|}  \partial^\alpha_\bx u}{0;\, \cW}^2
   \ \ \mbox{and } \ \
   \Norm{u}{\rK^m_\beta (\cW)}^2 = \sum_{k=0}^m
   \SNorm{u}{\rK;\,k,\beta\,;\,\cW}^2 .
\end{equation}
\end{itemize}
\end{definition}

In these spaces we have the following estimates.

\begin{theorem}
\label{2T1}
Let $\cW$ and $\cW\ee'$ be the intersections of $\cK$ with the balls centered at $\bfz$ of radii $1$ and $1+\delta$, respectively. Let $\beta\in\R$ and $n\in\N$.
Let $\bu\in\bH^2_{\loc}(\ov \cW{}\ee'\setminus\{\bfz\})$ be a solution of problem \eqref{2Ebvp}. Then the following implication holds
\begin{equation}
\label{2EK3}
   \bu\in\bK^1_{\beta}(\cW\ee') \ \ \mbox{and}\ \
   \bff\in\bK^{n}_{\beta+2}(\cW\ee')
   \quad\Longrightarrow\quad
   \bu\in\bK^{n+2}_{\beta}(\cW)
\end{equation}
and there
exists a constant $C\ge1$ independent of $\bu$ and $n$ such that for any integer $k$,
$0\le k\le n+2$, we have
\begin{multline}
\label{2EK4}
   \frac{1}{k!}\;
   \Big(\sum_{|\alpha|=k}
   \Norm{r^{\beta+|\alpha|}\partial^\alpha_\bx \bu}{0;\,\cW}^2\Big)^{\frac12} \le
    C^{k+1} \Big\{
   \sum_{\ell=0}^{k-2} \frac{1}{\ell!}\;
   \Big(\sum_{|\alpha|=\ell}\Norm{r^{\beta+2+|\alpha|}
   \partial^\alpha_\bx \bff}{0;\,\cW\ee'}^2\Big)^{\frac12}
   \\
   +
   \sum_{|\alpha|\le1} \Norm{r^{\beta+|\alpha|}
   \partial^\alpha_{\bx}\bu}{0;\,\cW\ee'}\Big\}.
\end{multline}
\end{theorem}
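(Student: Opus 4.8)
The plan is to reduce the weighted estimate \eqref{2EK4} on the whole truncated sector to the scale-invariant analytic estimate of Proposition~\ref{1T1} on one fixed reference shell, using a dyadic partition around the vertex $\bfz$ and the homogeneity of $L$, $T_i$, $D_i$ and of the $\rK$-norms. The qualitative implication \eqref{2EK3} will then fall out of \eqref{2EK4} for $0\le k\le n+2$.

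First I would cover $\cW\setminus\{\bfz\}$ by the dyadic shells $\cC_j=\{\bx\in\cK:2^{-j-1}\le|\bx|\le2^{-j}\}$, $j\ge0$, together with the overlapping enlargements $\cC'_j=\{\bx\in\cK:\rho\,2^{-j-1}\le|\bx|\le\rho^{-1}2^{-j}\}$, where $\rho=(1+\delta)^{-1}$ is chosen so that $\cC'_j\subset\cW'$ for all $j$; the family $\{\cC'_j\}$ then has overlap bounded by a fixed integer $N=N(\delta)$. Writing $t_j=2^{-j}$ and $\hat\bu_j(\by)=\bu(t_j\by)$, the shell $\cC_j$ is the image under $\bx=t_j\by$ of the reference shell $\widehat\cW=\{1/2\le|\by|\le1\}\cap\cK$, and $\cC'_j$ of $\widehat\cW'=\{\rho/2\le|\by|\le\rho^{-1}\}\cap\cK$. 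Because $L$, $T_i$, $D_i$ are homogeneous with constant coefficients, $\hat\bu_j$ solves the transported problem \eqref{2Ebvp} on $\widehat\cW'$ with data $\hat\bff_j(\by)=t_j^2\,\bff(t_j\by)$ and with the same zero conditions $T_i\hat\bu_j=D_i\hat\bu_j=0$ on the straight sides $\Gamma_1,\Gamma_2$.

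The heart of the matter is to apply Proposition~\ref{1T1} to $\hat\bu_j$ on the pair $\widehat\cW\subset\widehat\cW'$. I choose the open neighbourhoods defining $\widehat\cW,\widehat\cW'$ to be annuli, so that the only part of $\partial\widehat\cW'$ lying on $\partial\cK$ consists of pieces of the analytic sides $\Gamma_1,\Gamma_2$, the two circular arcs being artificial and interior to $\widehat\cW'$; since these shells avoid the vertex, $\cK$ is locally a half-plane along $\widehat\Gamma'$, so the hypothesis $\widehat\Gamma'\subset\Gamma$ holds and $\hat\bu_j$ is as smooth as the data permits there. The trace terms in \eqref{1E1} vanish, and---this is the decisive point---the constant $A$ is \emph{the same for every} $j$, since all the rescaled problems are literally identical. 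Transporting \eqref{1E1} back to $\cC_j$ by $\bx=t_j\by$, using $d\bx=t_j^2\,d\by$ and $\partial^\alpha_\bx\bu=t_j^{-|\alpha|}\partial^\alpha_\by\hat\bu_j$, each homogeneous weighted seminorm acquires a common power of $t_j$: one gets $\SNorm{\bu}{\rK;k,\beta;\cC_j}\simeq t_j^{\beta+1}\SNorm{\hat\bu_j}{k;\widehat\cW}$, $\SNorm{L\hat\bu_j}{\ell;\widehat\cW'}\simeq t_j^{-\beta-1}\SNorm{\bff}{\rK;\ell,\beta+2;\cC'_j}$, and $\Norm{\hat\bu_j}{1;\widehat\cW'}\simeq t_j^{-\beta-1}\sum_{|\alpha|\le1}\Norm{r^{\beta+|\alpha|}\partial^\alpha_\bx\bu}{0;\cC'_j}$. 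The factors $t_j^{\pm(\beta+1)}$ cancel---this is exactly why the homogeneous weights are used---while the scale-comparison constants arising from replacing $r$ by $t_j$ on each shell are at most $2^{c|\beta+k|}\le(\mathrm{const})^{k}$ and are absorbed into $C^{k+1}$ without spoiling the factorial structure. The outcome is the per-shell estimate, uniform in $j$ and $k\ge2$,
\[
   \frac1{k!}\,\SNorm{\bu}{\rK;k,\beta;\cC_j}
   \le C^{k+1}\Big\{\sum_{\ell=0}^{k-2}\frac1{\ell!}\,\SNorm{\bff}{\rK;\ell,\beta+2;\cC'_j}
   + \sum_{|\alpha|\le1}\Norm{r^{\beta+|\alpha|}\partial^\alpha_\bx\bu}{0;\cC'_j}\Big\},
\]
the cases $k=0,1$ being trivial since $\cW\subset\cW'$.

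Finally I would square this inequality, sum over $j\ge0$, and take square roots. Since the $\cC_j$ tile $\cW$, the left-hand sides reassemble into $\frac1{k!}\SNorm{\bu}{\rK;k,\beta;\cW}$; on the right, Minkowski's inequality lets the $\ell^2_j$-summation pass through the factorial-weighted $\ell$-sum, and the bounded overlap gives $\big(\sum_j\SNorm{\bff}{\rK;\ell,\beta+2;\cC'_j}^2\big)^{1/2}\le\sqrt N\,\SNorm{\bff}{\rK;\ell,\beta+2;\cW'}$, and likewise for the order-$\le1$ term; folding $\sqrt N$ into $C$ yields exactly \eqref{2EK4}, with $C$ independent of $\bu$ and $n$. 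Then \eqref{2EK3} is immediate: the hypotheses make the right-hand side of \eqref{2EK4} finite for every $k\le n+2$, so $\bu\in\bK^{n+2}_\beta(\cW)$, and there is no circularity because on each fixed shell, away from $\bfz$, local elliptic regularity already provides the finite-order smoothness needed to invoke Proposition~\ref{1T1}. I expect the main obstacle to be precisely the uniform bookkeeping of constants across the infinitely many shells---verifying that $A$ is genuinely shell-independent, that the homogeneous weights make the $t_j^{\pm(\beta+1)}$ factors cancel exactly, and that the residual $r$-versus-$t_j$ comparison factors grow at most geometrically in $k$---after which the summation is routine.
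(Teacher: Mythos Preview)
Your approach is essentially the paper's: dyadic annuli around the vertex, rescaling to a fixed reference shell, application of Proposition~\ref{1T1} there, and summation using the homogeneity of the $\rK$-seminorms. The scaling bookkeeping and the finite-overlap summation are handled correctly.

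There is one technical point you gloss over that the paper treats explicitly. Your reference shell $\widehat\cW'$ meets \emph{both} sides $\Gamma_1$ and $\Gamma_2$ of the sector, and these may carry different boundary operators $(T_1,D_1)\neq(T_2,D_2)$. Proposition~\ref{1T1}, as stated, applies to a single analytic boundary piece $\Gamma$ with a single covering set $\{T,D\}$; it does not directly accommodate two disconnected boundary arcs with distinct conditions. The paper's Step~1 resolves this by bisecting the reference annulus along the ray $\theta=\frac12(\omega_1+\omega_2)$ into two half-annuli $\widehat\cV\cap\cK_i$, $i=1,2$, each touching only one side $\Gamma_i$; Proposition~\ref{1T1} is applied separately on each half and the two estimates are added. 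This is a routine fix, but without it your invocation of Proposition~\ref{1T1} on the full shell is not justified by the statement as written. Once you insert this split, your argument coincides with the paper's.
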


\begin{proof}
Let us assume that $\bu\in\bK^1_{\beta}(\cW\ee')$ and $L\bu=\bff\in\bK^{n}_{\beta+2}(\cW\ee')$.
Let us prove estimate \eqref{2EK4}.
By definition of the weighted spaces, the right-hand side of \eqref{2EK4} is bounded.
The proof of the estimate is based on a locally finite dyadic covering of $\cW$ and $\cW\ee'$. Let us introduce the reference annuli, see Fig.\ \ref{F1}
\begin{equation}
\label{2E4V}
   \widehat \cV = \{\bx\in \cK\ : \ \tfrac14< r(\bx) < 1\}
   \quad\mbox{and}\quad
   \widehat \cV\ee' = \{\bx\in \cK\ : \  \tfrac14-\delta< r(\bx) < 1+\delta\}.
\end{equation}
and for $\mu\in\N$ the scaled annuli:
\[
   \cV_\mu = 2^{-\mu}\widehat \cV \quad\mbox{and}\quad \cV\prm_\mu = 2^{-\mu}\widehat \cV\ee'.
\]
\begin{figure}
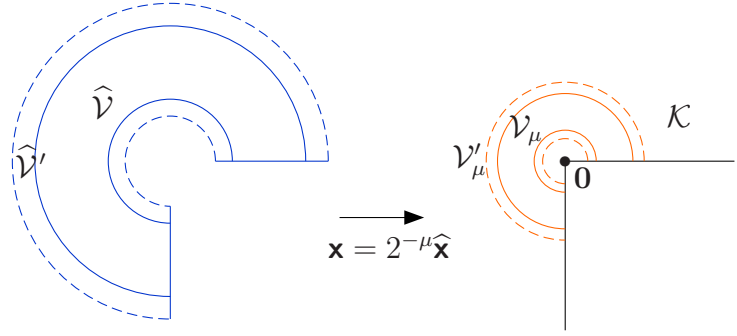

\begin{center}
    \figinit{0.75mm}
    \figpt 1:(  0, 0)
    \figpt 2:(28, 0)
    \figpt 3:( 0, -28)
    \figpt 32:(8, 0)
    \figpt 33:(0, -8)
    \figpt 4:( -20, 20)
    \figpt 8:( -17, 17)
    \figpt 6:(45,-10)
    \figpt 5:(30,-10)
    \figpt 12:(30,0)
    \figpt 13:(0,-30)
    \figvectC 100 ( 70, 0)
    \figptstra 21 = 1, 12, 13, 8 /1, 100/

    \def\MyPSfile{}
    \psbeginfig{\MyPSfile}
    \psset arrowhead (fillmode=yes)
    \psarrow[5,6]
    \psline[21,22]
    \psline[21,23]
    \psset (color=0. 0.2 0.8)
    \psline[32,2]
    \psline[33,3]
    \psset (color=1.0 0.4 0)
    \psarccirc 21; 5.5 (0,270)
    \psarccirc 21; 12 (0,270)
    \psset (color=0. 0.2 0.8)
    \psarccirc 1; 11 (0,270)
    \psarccirc 1; 24 (0,270)
    \psset (dash=3)
    \psset (color=1.0 0.4 0)
    \psarccirc 21; 4 (0,270)
    \psarccirc 21; 14 (0,270)
    \psset (color=0. 0.2 0.8)
    \psarccirc 1; 8 (0,270)
    \psarccirc 1; 28 (0,270)
    \psendfig

    \figvisu{\demo}{}{%
    \figinsert{\MyPSfile}
    \figsetmark{\footnotesize\Bullet}
    \figwritegce 21:$\bf0$(1.5,-3)
    \figsetmark{}
    \figwritegce 21:$\cK$(18,8)
    \figwritegce 5:{${\bx=2^{-\mu}}{\widehat\bx}$}(-2,-5)
    \figwritegce 1:{{$\widehat\cV$}}(-14,10.5)
    \figwritegce 1:{{$\widehat\cV\ee'$}}(-27.5,0)
    \figwritegce 21:{{$\cV_\mu$}}(-10,5.5)
    \figwritegce 21:{{$\cV'_\mu$}}(-20,0)
   }
    \centerline{\box\demo}
    \caption{Reference and scaled annuli for a sector $\cK$ of opening $3\pi/2$}
\label{F1}
\end{center}
\end{figure}
We check immediately that
\[
   \cW = \bigcup_{\mu\in\N} \cV_\mu
   \quad\mbox{and}\quad
   \cW\ee' = \bigcup_{\mu\in\N} \cV\prm_\mu\,.
\]
{\sc Step 1.\ } We are going to apply Proposition \REF{1T1} in two regions which separate the two sides $\Gamma_1$ and $\Gamma_2$ of $\cK$ where the boundary conditions can be distinct.
We recall that the sector $\cK$ is defined by the angular inequalities $\omega_1<\theta<\omega_2$. Let $\omega_3 := \frac12(\omega_1+\omega_2)$. We define the sectors $\cK_1$ and $\cK_2$ by
\[
   \cK_1 = \{\bx\in\R^2\ : \  \omega_1<\theta<\omega_3\} \quad\mbox{and}\quad
   \cK_2 = \{\bx\in\R^2\ : \  \omega_3<\theta<\omega_2\} .
\]
Let  $0<\delta_0<\frac12(\omega_2-\omega_1)$. We define the larger sectors $\cK'_1$ and $\cK'_2$ by
\[
   \cK'_1 = \{\bx\in\R^2\ : \  \omega_1<\theta<\omega_3+\delta_0\} \quad\mbox{and}\quad
   \cK'_2 = \{\bx\in\R^2\ : \  \omega_3-\delta_0<\theta<\omega_2\} .
\]
Let $i\in\{1,2\}$. Since the system $L$ is elliptic and covered by its boundary
conditions $\{T_i,D_i\}$ on $\Gamma_i$, the reference domains $\widehat \cV\cap\cK_i$ and $\widehat
\cV\ee'\cap\cK'_i$ satisfy the assumptions of Proposition  \REF{1T1}, and
there exists a positive constant $A_i$ such that for all $k\in \N,
k\geq 2$, we have:
\begin{equation}
\label{2E4i}
   \frac{1}{k!}\; \SNorm{\widehat\bu}{k;\, \widehat \cV\cap\cK_i} \le A^{k+1}_i \Big\{
   \sum_{\ell=0}^{k-2} \frac{1}{\ell!}\;
   \SNorm{\widehat\bff}{\ell; \, \widehat \cV\ee'\cap\cK'_i} +
   \sum_{\ell=0}^{1} \SNorm{\widehat\bu}{\ell;\, \widehat \cV\ee'\cap\cK'_i} \Big\},
\end{equation}
for any function $\widehat\bu$ satisfying the boundary conditions of \eqref{2Ebvp} and $\widehat\bff:=L \widehat\bu$. From these estimates for $i=1,2$ we deduce immediately, with $A=\max\{A_1,A_2\}$
\begin{equation}
\label{2E4}
   \frac{1}{k!}\; \SNorm{\widehat\bu}{k;\, \widehat \cV} \le  2 A^{k+1} \Big\{
   \sum_{\ell=0}^{k-2} \frac{1}{\ell!}\;
   \SNorm{\widehat\bff}{\ell; \, \widehat \cV\ee'} +
   \sum_{\ell=0}^{1} \SNorm{\widehat\bu}{\ell;\, \widehat \cV\ee'} \Big\},
\end{equation}
{\sc Step 2.\ }  Since $r(\hat\bx)\simeq 1$ on $\widehat \cV\ee'$, we can insert weights in the reference estimate \eqref{2E4}: There exists
a positive constant $B$ such that for all $k\in \N, k\geq 2$
\begin{align}
\label{2E4b}
   \frac{1}{k!}\; \Big(\sum_{|\alpha|=k}
   \Norm{r(\hat\bx)^{\beta+|\alpha|}\partial^\alpha_{\hat\bx} \widehat\bu}{0;\, \widehat \cV}^2
   \Big)^{\frac12} \le B^{k+1} \Big\{
   \sum_{\ell=0}^{k-2} \frac{1}{\ell!}\;
   \Big(\sum_{|\alpha|=\ell}\Norm{r(\hat\bx)^{\beta+2+|\alpha|} \partial^\alpha_{\hat\bx}
   \widehat\bff}{0; \, \widehat \cV\ee'}^2 \Big)^{\frac12}
   \\ \nonumber
   + \sum_{|\alpha| \le 1}
   \Norm{r(\hat\bx)^{\beta+|\alpha|}
   \partial^\alpha_{\hat\bx} \widehat\bu}{0;\, \widehat \cV\ee'} \Big\}.
\end{align}
By the change of variables $\hat\bx\rightarrow\bx=2^{-\mu}\hat\bx$ that maps $\widehat \cV$ onto $\cV_{\mu}$ (resp. $\widehat \cV\ee'$ onto
$\cV\prm_{\mu}$) coupled with the change of functions
\[
   \widehat\bu(\widehat\bx) := \bu(\bx) \quad\mbox{and}\quad
   \widehat\bff(\widehat\bx) := L \widehat\bu
   \quad\mbox{which implies}\quad \widehat\bff(\widehat\bx) = 2^{-2\mu}\bff(\bx),
\]
we deduce from estimate \eqref{2E4b} that
\begin{align*}
   &\frac{1}{k!}\, 2^{\mu\beta-\mu}
   \Big(\sum_{|\alpha|=k}
   \Norm{r(\bx)^{\beta+|\alpha|}\partial^\alpha_\bx \bu}{0;\,\cV_{\mu}}^2\Big)^{\frac12}
   \le B^{k+1} \Big\{    \\
   &\ \ \sum_{\ell=0}^{k-2} \frac{1}{\ell!}\;
   2^{\mu (\beta+2)-\mu}\Big(\sum_{|\alpha|=\ell} 2^{-2\mu}
   \Norm{r(\bx)^{\beta+2+|\alpha|} \partial^\alpha_\bx \bff}{0;
   \,\cV\prm_{\mu}}^2\Big)^{\frac12} \!\!
   + 2^{\mu \beta-\mu}  \sum_{|\alpha\le 1}
   \Norm{r(\bx)^{\beta+|\alpha|} \partial^\alpha_\bx \bu}{0;\,\cV\prm_{\mu}}  \Big\}.
\end{align*}
Multiplying this identity by $2^{-\mu\beta+\mu}$, the above
estimate is equivalent to
\begin{align*}
   \frac{1}{k!}\;
   \Big(\sum_{|\alpha|=k}
   \Norm{r(\bx)^{\beta+|\alpha|}\partial^\alpha_\bx \bu}{0;\,\cV_{\mu}}^2\Big)^{\frac12}
   \le B^{k+1} \Big(
   \sum_{\ell=0}^{k-2} \frac{1}{\ell!}\;
   \Big(\sum_{|\alpha|=\ell}\Norm{r(\bx)^{\beta+2+|\alpha|} \partial^\alpha_\bx \bff}{0;
   \,\cV\prm_{\mu}}^2\Big)^{\frac12}
   \\
   + \sum_{|\alpha|\le 1}
   \Norm{r(\bx)^{\beta+|\alpha|} \partial^\alpha_\bx \bu}{0;\,\cV\prm_{\mu}} \Big).
\end{align*}
Summing up the square of this estimate over all $\mu$ and
considering that only a finite number of the $\cV\prm_{\mu}$ overlap,
we get the desired estimate \eqref{2EK4}.
\end{proof}

\subsection{Weighted spaces with non-homogeneous norms}
\label{ss3.2}
In these spaces the weight exponent does not depend on the order of derivatives. Standard unweighted Sobolev spaces are a special case. The weighted Sobolev spaces with nonhomogeneous norms allow an accurate description of the regularity of functions with non-trivial Taylor expansion at the corners. In particular, they are useful for studying variational problems of Neumann type, because the variational space $\rH^1$ does not fit properly into the scale $\rK^1_\beta$.

\begin{definition}
\label{2DJ}
 Let $\beta$ be a real number and $m\ge0$  an integer.
\begin{itemize}
\item[]  Let $\cW$ be an open subset of $\cK$. The \emph{weighted space with non-homogeneous norm}
$\rJ^m_\beta(\cW)$ is defined by
\begin{equation}
\label{2EJ1}
   \rJ^m_\beta (\cW) = \big\{ u \in \rL_\loc^2(\cW) \ : \
   r^{\beta+m}  \partial^{\alpha}_\bx u  \in  \rL^2(\cW), \quad
   \forall\alpha, \; |\alpha| \leq m \big\}
\end{equation}
with its norm
$$
   \Norm{u}{\rJ^m_\beta (\cW)}^2 = \sum_{|\alpha|\le m}
   \Norm{r^{\beta+m}  \partial^\alpha_\bx u}{0;\, \cW}^2 .
$$
\end{itemize}
\end{definition}

Note that the semi-norm of $\rJ^m_\beta (\cW)$ coincides with the semi-norm of
$\rK^m_\beta(\cW)$. They are both denoted by $\SNorm{\cdot}{\rK;\,m,\beta\,;\,\cW}$.
With this notation, we have
\begin{equation}
\label{2EJ2}
   \Norm{u}{\rJ^m_\beta(\cW)}^2 =
   \sum_{k=0}^m \SNorm{u}{\rK;\,k,\beta+m-k\,;\, \cW}^2 .
\end{equation}

\begin{remark}
\label{2R1}
If $\cW$ is a finite sector with vertex at the origin, there holds \cite{MazyaPlamenevskii84b, KozlovMazyaRossmann97b} (more details are given in \cite{CostabelDaugeNicaise09}
 and \cite[Ch.\,11]{GLC}):

If $\beta>-1$, then $\rJ^m_{\beta}(\cW)=\rK^m_{\beta}(\cW)$ for all $m\in\N$.

If $\beta\le-1$ and $m\le-\beta-1$, then, again, $\rJ^m_{\beta}(\cW)=\rK^m_{\beta}(\cW)$.

If $\beta\le-1$ and $m>-\beta-1$, then one has to distinguish two cases:
\begin{itemize}
\item the generic case $-\beta\not\in\N$, in which one has
$$
  \rJ^m_{\beta}(\cW)=\rK^m_{\beta}(\cW)\oplus
    \P^{[-\beta-1]}
$$
where $\P^{[-\beta-1]}$ is the space of polynomials of degree not exceeding $-\beta-1$;
\item the \emph{critical} case $-\beta\in\N$, in which  $\rJ^m_{\beta}(\cW)$ contains $\rK^m_{\beta}(\cW) \oplus \P^{-\beta-1}$ as a strict subspace.
\end{itemize}
\end{remark}

We deduce from \cite[Prop.\ 3.18]{CostabelDaugeNicaise09} the following ``step-weighted'' characterization of $\rJ^m_\beta$ in the case of two space dimensions:

\begin{proposition}
\label{2P1}
Let $\beta\in\R$ and $m\ge-\beta$ be a natural number. If $\cW$ is bounded, then the norm in the space
$\rJ^m_\beta(\cW)$ is equivalent to
\begin{equation}
\label{2E6}
   \Bigl( \sum_{|\alpha|\le m}
   \Norm{r^{\max\{ \beta+|\alpha|,\,0 \}} \partial^\alpha_\bx u}
   {0; \,\cW}^2 \Bigr)^{\frac12}.
\end{equation}
\end{proposition}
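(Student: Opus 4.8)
The plan is to prove the two-sided estimate by comparing the two weight families order by order, exploiting that the hypothesis $m\ge-\beta$ forces \emph{identical} top-order contributions. Indeed, for $|\alpha|=m$ one has $\max\{\beta+m,0\}=\beta+m$, so the weight $r^{\max\{\beta+|\alpha|,0\}}$ in \eqref{2E6} coincides with the weight $r^{\beta+m}$ that appears in $\Norm{u}{\rJ^m_\beta(\cW)}$ (cf.\ \eqref{2EJ2}). All the work therefore concerns the orders $|\alpha|<m$, where the two weights differ, the exponent $\max\{\beta+|\alpha|,0\}$ being smaller and hence carrying more strength near the vertex $\bfz$.

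One inequality is immediate. Since $\cW$ is bounded, $r\le R$ on $\cW$, and for every $k\le m$ we have $\max\{\beta+k,0\}\le\beta+m$ (here $m\ge-\beta$ guarantees $0\le\beta+m$). Hence $r^{\beta+m}\le C\,r^{\max\{\beta+k,0\}}$ pointwise on $\cW$, so each term of $\Norm{u}{\rJ^m_\beta(\cW)}^2$ is dominated by the corresponding term of \eqref{2E6}, giving $\Norm{u}{\rJ^m_\beta(\cW)}\le C\,(\text{\eqref{2E6}})$.

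The substance is the reverse inequality, which states that the extra strength of the smaller exponents at orders $|\alpha|<m$ is already controlled by the uniform top-order weight $r^{\beta+m}$. I would establish it by a weighted Hardy inequality along rays issued from $\bfz$. Fixing the angle and setting $w=\partial^\alpha_\bx u$, the one-dimensional Hardy inequality in the radius gives, with $a=\max\{\beta+|\alpha|,0\}$, the bound $\Norm{r^{a}w}{0;\,\cW}\le C\big(\Norm{r^{a+1}\partial_r w}{0;\,\cW}+\text{(arc term at $r=R$)}\big)$. The crucial point is that $a\ge0$ lies strictly above the critical Hardy exponent (which, in the measure $r\,dr\,d\theta$, corresponds to $a=-1$), so the inequality holds unconditionally; this is precisely why \eqref{2E6} is valid for \emph{all} $\beta$ with $m\ge-\beta$, including the critical case $-\beta\in\N$ of Remark \ref{2R1}, where $\rJ^m_\beta(\cW)$ strictly contains $\rK^m_\beta(\cW)\oplus\P^{[-\beta-1]}$. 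Now $\partial_r w$ is a linear combination of order-$(|\alpha|+1)$ derivatives of $u$, and the produced exponent $a+1=\max\{\beta+|\alpha|,0\}+1$ is always at least the order-$(|\alpha|+1)$ step-exponent $\max\{\beta+|\alpha|+1,0\}$, so on the bounded domain $r^{a+1}\le C\,r^{\max\{\beta+|\alpha|+1,0\}}$; hence the Hardy-produced term is controlled by the order-$(|\alpha|+1)$ contribution to \eqref{2E6}. Iterating this step raises the derivative order by one at each stage until order $m$ is reached, where the step-weight is exactly $r^{\beta+m}$, a genuine term of $\Norm{u}{\rJ^m_\beta(\cW)}$.

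It then remains to absorb the boundary contributions. The accumulated arc terms are $\rL^2$-norms on $\{r=R\}$ of derivatives $\partial^\gamma_\bx u$ with $|\gamma|\le m-1$; on an outer collar $\{r\ge R/2\}\cap\cW$ all the weights are comparable to $1$, so by the trace theorem these arc norms are bounded by $\Norm{u}{\rH^m(\{r\ge R/2\}\cap\cW)}$, which is in turn dominated by $\Norm{u}{\rJ^m_\beta(\cW)}$. Summing over $|\alpha|\le m$ yields $(\text{\eqref{2E6}})\le C\,\Norm{u}{\rJ^m_\beta(\cW)}$, first for smooth $u$ and then for all $u\in\rJ^m_\beta(\cW)$ by density. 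The main obstacle is this hard inequality, and within it the bookkeeping of exponents through the iteration together with the control of the outer-arc terms; a secondary, technical point is the reduction of a general bounded $\cW$ to the radial setting, which one handles by localizing near $\bfz$ (where $\cW$ is conical and the Hardy argument applies) and noting that away from $\bfz$ both norms are equivalent to $\Norm{u}{\rH^m}$. The fully rigorous version of this argument is \cite[Prop.\ 3.18]{CostabelDaugeNicaise09}, from which the present two-dimensional statement follows by specialization.
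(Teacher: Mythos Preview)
The paper does not give its own proof of this proposition; it simply states that the result is deduced from \cite[Prop.\ 3.18]{CostabelDaugeNicaise09}. Your proposal ends by invoking the very same reference, so on that level the two agree exactly.

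What you add is a sketch of the argument behind that reference, and the sketch is sound. The easy direction is indeed pointwise, and for the hard direction the iterated radial Hardy inequality is the standard mechanism: your check that $a=\max\{\beta+|\alpha|,0\}\ge0$ keeps every step away from the critical exponent $-1$ (in the $r\,\rd r\,\rd\theta$ measure) is the key observation, and your bookkeeping $\max\{\beta+|\alpha|,0\}+1\ge\max\{\beta+|\alpha|+1,0\}$ is correct, so the iteration closes at order $m$ where the step weight matches $r^{\beta+m}$. The treatment of the outer-arc terms via a collar where all weights are equivalent is also the right way to dispose of the Hardy boundary contributions. In short, your proposal is a faithful outline of the proof that the paper chooses to cite rather than reproduce.
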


\begin{corollary}
\label{2C2}
Let $\beta\in\R$. Let $m\ge-\beta$ be a natural number\footnote{
For the sake of simplicity we did not quote \cite[Prop.\ 3.18]{CostabelDaugeNicaise09} in its full optimality. In fact the embedding $\rJ^{m+1}_\beta(\cW)\subset\rJ^m_\beta(\cW)$ holds as soon as $m>-\beta-1$.}. Then
$\rJ^{m+1}_\beta(\cW)\subset\rJ^m_\beta(\cW)$.
\end{corollary}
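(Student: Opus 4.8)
The plan is to reduce the statement to a term-by-term comparison of the step-weighted norms furnished by Proposition~\ref{2P1}. The naive approach of comparing the defining norms of $\rJ^{m+1}_\beta(\cW)$ and $\rJ^m_\beta(\cW)$ directly is bound to fail: the weight exponent in $\Norm{\cdot}{\rJ^m_\beta(\cW)}$ is $\beta+m$, whereas in $\Norm{\cdot}{\rJ^{m+1}_\beta(\cW)}$ it is $\beta+m+1$, and near the vertex $\bfz$ (where $r$ is small) the powers $r^{\beta+m}$ and $r^{\beta+m+1}$ cannot be bounded by one another in the direction we need. The device that removes this obstruction is precisely the step-weighted reformulation, whose weight $r^{\max\{\beta+|\alpha|,0\}}$ depends only on the order $|\alpha|$ of the derivative and on $\beta$, but \emph{not} on the Sobolev exponent.

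First I would apply Proposition~\ref{2P1} to both spaces. The hypothesis $m\ge-\beta$ guarantees applicability for $\rJ^m_\beta(\cW)$, and since $m+1>m\ge-\beta$ it applies equally to $\rJ^{m+1}_\beta(\cW)$; the boundedness of $\cW$ is inherited from the proposition. This yields two constants such that $\Norm{u}{\rJ^m_\beta(\cW)}$ is equivalent to $\big(\sum_{|\alpha|\le m}\Norm{r^{\max\{\beta+|\alpha|,0\}}\partial^\alpha_\bx u}{0;\,\cW}^2\big)^{1/2}$ and $\Norm{u}{\rJ^{m+1}_\beta(\cW)}$ is equivalent to the same expression with the sum extended to $|\alpha|\le m+1$.

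The key step is then immediate. Because the step weight $r^{\max\{\beta+|\alpha|,0\}}$ is literally the same in both expressions, the step-weighted quantity associated with $\rJ^m_\beta(\cW)$ is exactly the subsum over $|\alpha|\le m$ of the step-weighted quantity associated with $\rJ^{m+1}_\beta(\cW)$, the missing terms (those with $|\alpha|=m+1$) being nonnegative. Hence the former is dominated by the latter. Chaining the two norm equivalences produces a constant $C$ with $\Norm{u}{\rJ^m_\beta(\cW)}\le C\,\Norm{u}{\rJ^{m+1}_\beta(\cW)}$ for every $u$, so any $u\in\rJ^{m+1}_\beta(\cW)$ lies in $\rJ^m_\beta(\cW)$ and the (continuous) embedding follows.

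I do not expect a genuine obstacle here: the entire content lies in invoking Proposition~\ref{2P1}, and once its order-independent weights are in hand the comparison is a one-line subsum estimate. The only point requiring a moment of care is checking that the hypotheses of Proposition~\ref{2P1} hold simultaneously for the Sobolev exponents $m$ and $m+1$, which is automatic from $m\ge-\beta$.
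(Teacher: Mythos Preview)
Your proposal is correct and follows exactly the approach implied by the paper: the corollary is stated immediately after Proposition~\ref{2P1} with no separate proof, because once the step-weighted characterization \eqref{2E6} is available (for both exponents $m$ and $m+1$, which your check $m\ge-\beta$ ensures), the embedding is the trivial observation that the sum over $|\alpha|\le m$ is dominated by the sum over $|\alpha|\le m+1$.
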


\begin{theorem}
\label{2T2}
Let $\cW$ and $\cW\ee'$ be the intersections of $\cK$ with the balls centered at $\bfz$ of radii $1$ and $1+\delta$, respectively. 
Let $\beta\in\R$. Let $m\ge1$ be an integer such that $m+1\ge-\beta$.
Let $n\ge m-1$ be another integer. 
Let $\bu\in\bH^2_{\loc}(\ov \cW{}\ee'\setminus\{\bfz\})$ be a solution of problem \eqref{2Ebvp}.
Then the following implication holds
\begin{equation}
\label{2EJ3}
   \bu\in\bJ^m_{\beta}(\cW\ee') \ \ \mbox{and}\ \
   \bff\in\bJ^{n}_{\beta+2}(\cW\ee')
   \quad\Longrightarrow\quad
   \bu\in\bJ^{n+2}_{\beta}(\cW)
\end{equation}
and there exists a constant $C\ge1$ independent of $\bu$ and $n$ such that for all integer $k$,
$m\le k\le n+2$, we have
\begin{multline}
\label{2EJ4}
   \frac{1}{k!}\;
   \Big(\sum_{|\alpha|=k}
   \Norm{r^{\beta+|\alpha|}\partial^\alpha_\bx \bu}{0;\,\cW}^2\Big)^{\frac12} \le
    C^{k+1} \Big\{
   \sum_{\ell=m-1}^{k-2} \frac{1}{\ell!}\;
   \Big(\sum_{|\alpha|=\ell}\Norm{r^{\beta+2+|\alpha|}
   \partial^\alpha_\bx \bff}{0;\,\cW\ee'}^2\Big)^{\frac12}
   \\
   +
   \sum_{|\alpha|=m} \Norm{r^{\beta+|\alpha|}
   \partial^\alpha_{\bx}\bu}{0;\,\cW\ee'}\Big\}.
\end{multline}
\end{theorem}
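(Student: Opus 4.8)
The plan is to follow the proof of Theorem \ref{2T1} almost verbatim, with the single substitution of the semi-norm refinement of Corollary \ref{1C1} for Proposition \ref{1T1}, and then to translate the resulting $\rK$-type seminorm bound into the $\rJ$-scale. First I would reuse the same locally finite dyadic covering $\cW=\bigcup_\mu\cV_\mu$, $\cW\ee'=\bigcup_\mu\cV\prm_\mu$ by the scaled annuli $\cV_\mu=2^{-\mu}\widehat\cV$, $\cV\prm_\mu=2^{-\mu}\widehat\cV\ee'$, together with the same splitting of $\cK$ into the two halves $\cK_1,\cK_2$ (enlarged to $\cK'_1,\cK'_2$) so that each half meets only one side $\Gamma_i$ where the covering condition for $\{T_i,D_i\}$ holds. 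On each reference piece $\widehat\cV\cap\cK_i$, since $L,T_i,D_i$ are homogeneous with constant coefficients and $\widehat\bu$ satisfies the zero boundary conditions of \eqref{2Ebvp}, I apply Corollary \ref{1C1} with the given $m$: for every $k\ge m$ it bounds $\tfrac1{k!}\SNorm{\widehat\bu}{k;\,\widehat\cV\cap\cK_i}$ by $\sum_{\ell=m-1}^{k-2}\tfrac1{\ell!}\SNorm{\widehat\bff}{\ell;\,\widehat\cV\ee'\cap\cK'_i}$ plus the single seminorm $\SNorm{\widehat\bu}{m;\,\widehat\cV\ee'\cap\cK'_i}$, with $\widehat\bff=L\widehat\bu$. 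Taking the larger of the two constants merges the halves into one reference estimate on $\widehat\cV$.

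The remaining steps are mechanical and identical to Theorem \ref{2T1}. Since $r\simeq1$ on the reference annulus, I insert the weights $r^{\beta+|\alpha|}$ and $r^{\beta+2+|\alpha|}$ into the reference estimate, then apply the dilation $\widehat\bx\mapsto\bx=2^{-\mu}\widehat\bx$, under which $\widehat\bff=2^{-2\mu}\bff$. The decisive point, exactly as for Theorem \ref{2T1}, is that every $\rK$-type weighted seminorm carries one and the same power of $2^\mu$: the exponent of the weight $r^{\beta+|\alpha|}$ does not depend on $|\alpha|$, and the shift $+2$ in the weight of $\bff$ precisely absorbs the factor $2^{-2\mu}$ coming from the second-order operator. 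This common factor cancels, so the estimate on each shell becomes scale free; in particular the new last term, the order-$m$ seminorm, scales like all the others and passes through the dilation into $\SNorm{\bu}{\rK;\,m,\beta\,;\,\cV\prm_\mu}$. Squaring and summing over $\mu$, using $\cW=\bigcup_\mu\cV_\mu$, the finite overlap of the $\cV\prm_\mu$, and Minkowski's inequality for the inner $\ell$-sum, yields exactly \eqref{2EJ4}.

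To deduce the implication \eqref{2EJ3}, I first verify that the right-hand side of \eqref{2EJ4} is finite. The term $\SNorm{\bu}{\rK;\,m,\beta\,;\,\cW\ee'}$ is the top-order part of the $\rJ^m_\beta(\cW\ee')$-norm, hence finite by hypothesis. For the $\bff$-terms I invoke the step-weighted characterization of Proposition \ref{2P1}, which applies to $\rJ^n_{\beta+2}$ because $n\ge m-1\ge-\beta-2$; it equates $\Norm{\bff}{\rJ^n_{\beta+2}(\cW\ee')}$ with the norm built from the weights $r^{\max\{\beta+2+|\alpha|,0\}}$. The hypothesis $m+1\ge-\beta$ ensures $\beta+2+\ell\ge0$ for every $\ell\ge m-1$, so on this range the cutoff is inactive, the step weight equals $r^{\beta+2+\ell}$, and therefore each $\SNorm{\bff}{\rK;\,\ell,\beta+2\,;\,\cW\ee'}$ with $m-1\le\ell\le n$ is dominated by $\Norm{\bff}{\rJ^n_{\beta+2}(\cW\ee')}$. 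Finally I read off $\bu\in\bJ^{n+2}_\beta(\cW)$ from definition \eqref{2EJ1}: on the bounded set $\cW$ its single weight $r^{\beta+n+2}$ is the least singular weight in play, so for orders $m\le k\le n+2$ I dominate $\Norm{r^{\beta+n+2}\partial^\alpha\bu}{0;\,\cW}$ by $\SNorm{\bu}{\rK;\,k,\beta\,;\,\cW}$ (finite by \eqref{2EJ4}), and for orders $k<m$ by $\Norm{r^{\beta+m}\partial^\alpha\bu}{0;\,\cW\ee'}$ (finite since $\bu\in\bJ^m_\beta(\cW\ee')$); summing the finitely many terms gives $\Norm{\bu}{\rJ^{n+2}_\beta(\cW)}<\infty$.

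I expect the obstacle to be bookkeeping rather than analysis: the dilation machinery is word-for-word that of Theorem \ref{2T1}, and the only new analytic input is Corollary \ref{1C1}. The delicate point is the matching of the two weight scales in the last step---recognizing that $m+1\ge-\beta$ is exactly the condition making the step weights of Proposition \ref{2P1} trivial on the range $\ell\ge m-1$, so that the $\rJ$-data of $\bff$ feed the $\rK$-seminorm estimate, while the order-$m$ seminorm of $\bu$ produced by Corollary \ref{1C1} is precisely the top-order piece of $\rJ^m_\beta$ supplied by the basic regularity hypothesis.
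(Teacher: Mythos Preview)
Your proposal is correct and matches the paper's own proof essentially step for step: the paper likewise invokes Corollary~\ref{1C1} on the reference annuli in place of Proposition~\ref{1T1}, runs the identical dyadic-covering argument of Theorem~\ref{2T1} to obtain \eqref{2EJ4}, and then uses Proposition~\ref{2P1} together with the boundedness of $\cW$ to deduce \eqref{2EJ3}. The only cosmetic difference is that the paper notes the case $k=m$ is trivially true before turning to $k\ge m+1$, whereas you let Corollary~\ref{1C1} cover $k\ge m$ uniformly; both are fine.
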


\begin{proof}
Let $n\ge m-1$ and assume that $\bu\in\bJ^m_{\beta}(\cW\ee')$ is such that
$\bff\in\bJ^{n}_{\beta+2}(\cW\ee')$. 
If $k=m$, estimate \eqref{2EJ4} is trivial.
So, let $k$  be such that $m+1\le k\le n+2$.
Let us prove estimate \eqref{2EJ4}. Since $m+1\ge-\beta$, we have
$2+|\alpha|\ge-\beta$ for all $\alpha$ with length $\ge m-1$.   
Therefore, as a consequence of Proposition \REF{2P1}, the right-hand
side of \eqref{2EJ4} is bounded.

Then, in a similar way as in the proof of Theorem \REF{2T1}, we
start from estimate \eqref{1E2} written for the reference domains
$\widehat \cV$ and $\widehat \cV\ee'$ and we apply the same dyadic
covering technique. We arrive directly at the estimate \eqref{2EJ4}.

It remains to prove that $\bu\in\bJ^{n+2}_{\beta}(\cW)$. Since $\cW$ is bounded, estimate \eqref{2EJ4} implies that $r^{\beta+n+2}\partial^\alpha_\bx\bu$ belongs to $\bL^2(\cW)$ for all $\alpha$, $m\le|\alpha|\le n+2$. Since $\bu\in\bJ^m_{\beta}(\cW\ee')$, we deduce that $r^{\beta+n+2}\partial^\alpha_\bx\bu$ also belongs to $\bL^2(\cW)$ when $|\alpha|<m$, which ends the proof.
\end{proof}

\section{Natural weighted regularity shift in polygons}
\label{sec3}
Let $\Omega$ be a polygonal domain. This means that the boundary of $\Omega$ is the union of a finite number of line segments (the sides $\Gamma_\bs$, for indices $\bs\in\sS$). We do not assume that $\Omega$ is a Lipschitz domain, that is we include the presence of cracks in our analysis. The vertices $\bc$ are the ends of the edges. Let us denote by $\sC$ the set of vertices and
\begin{equation}
\label{3E0}
   r_\bc(\bx) = \dist(\bx,\bc).
\end{equation}
There exists $\varepsilon>0$ such that, setting
\begin{subequations}
\begin{equation}
\label{3E1a}
   \Omega_\bc = \{\bx\in\Omega \ :\ r_\bc<\varepsilon\},
\end{equation}
we have
\begin{equation}
\label{3E1b}
   \ov\Omega_\bc\cap\ov\Omega_{\bc'} = \varnothing,\ \ \forall\bc\neq\bc'.
\end{equation}
Setting $\Omega^{(2)}_\bc = \{\bx\in\Omega : r_\bc<\frac\varepsilon2\}$, we define
\begin{equation}
\label{3E1c}
   \Omega_0 = \Omega\setminus \bigcup_{\bc\in\sC} \ov {\Omega^{(2)}\dd\bc}\,.
\end{equation}
We also define larger neighborhoods choosing $\varepsilon'>\varepsilon$ such that
\begin{equation}
\label{3E1d}
   \Omega'_\bc = \{\bx\in\Omega \ :\ r_\bc<\varepsilon'\},\quad
   \ov\Omega{}'_\bc\cap\ov\Omega{}'_{\bc'} = \varnothing,\ \ \forall\bc\neq\bc' \,,
\end{equation}
and we finally set
\begin{equation}
\label{3E1}
   \Omega'_0 = \Omega\setminus \bigcup_{\bc\in\sC}  \ov {\Omega^{(3)}\dd\bc}\,,
\end{equation}
where $\Omega^{(3)}_\bc = \{\bx\in\Omega : r_\bc<\frac\varepsilon3\}$.
For each corner there is a plane sector $\cK_\bc$ with vertex $\bfz$ such that the translation $\bx\mapsto\bx-\bc$ sends $\Omega_\bc$ onto $\cK_\bc\cap \cB(\bfz,\varepsilon)$.
\end{subequations}

Let $\betab=(\beta_\bc)\dd{\bc\in\sC}\in\R^{\#\sC}$ be a weight multi-exponent and $m\in\N$ a Sobolev exponent. By localization we define the weighted semi-norm on any domain $\cV\subset\Omega$:
\begin{equation}
\label{3E2}
   \SNorm{u}{m,\betab\,;\,\cV}^2 = \sum_{|\alpha|=m} \Big(
   \Norm{ \partial^\alpha_\bx u}{0;\, \cV\cap\Omega_0}^2
   + \sum_{\bc\in\sC}
   \Norm{r_\bc^{\beta_\bc + |\alpha|}  \partial^\alpha_\bx u}{0;\, \cV\cap\Omega_\bc}^2
   \Big),
\end{equation}
and the norms, {\em cf.}\  \eqref{2EK2} and \eqref{2EJ2}
\begin{equation}
\label{3E3}
   \Norm{u}{\rK^m_\betab (\cV)}^2 =
   \sum_{k=0}^m \SNorm{u}{\rK;\,k,\betab\,;\,\cV}^2
   \quad\mbox{and}\quad
   \Norm{u}{\rJ^m_\betab(\Omega)}^2 =
   \sum_{k=0}^m \SNorm{u}{\rK;\,k,\betab+m-k\,;\, \cV}^2 \,,
\end{equation}
defining the spaces $\rK^m_\betab(\cV)$ and $\rJ^m_\betab(\cV)$, respectively. If all weight exponents $\beta_\bc$ coincide with the same number $\beta$, these spaces are simply denoted by $\rK^m_\beta(\cV)$ and $\rJ^m_\beta(\cV)$, respectively. Boldface notations $\bK^m_\betab(\cV)$ and $\bJ^m_\betab(\cV)$ indicate vector-valued functions.

\begin{remark}
\label{3R00}
The semi-norm $\SNorm{u}{m,\betab\,;\,\Omega}$ is equivalent to the globally defined semi-norm
\begin{equation}
\label{3E00}
   \Big\{ \sum_{|\alpha|=m}
   \Big\| \big(\prod_{\bc\in\sC} r_\bc^{\beta_\bc + |\alpha|}\big) \,\partial^\alpha_\bx u
   \Big\|_{0;\, \Omega}^2
   \Big\}^{\frac12}.
\end{equation}
\end{remark}

We define on any domain $\cV\subset\Omega$ the corresponding weighted analytic classes.
\iti1 With homogeneous norm:
\begin{equation}
\label{3E4}
   \rA_\betab (\cV) = \Big\{\  u \in \bigcap_{m\ge0}\rK^m_\betab(\cV) \ : \
    \exists C>0, \forall m\in\N,\quad
   \SNorm{u}{m,\betab\,;\,\cV} \le C^{m+1} m!   \Big\}.
\end{equation}
\iti2 With non-homogeneous norm: For a multi-exponent $\betab$ let
\begin{equation}
\label{3E5a}
   \kappa_\betab:=\max_{\bc\in\sC}-\beta_\bc.
\end{equation}
As a consequence of Proposition \REF{2P1}, for all $m\ge\kappa_\betab$ we have the continuous embedding of $\rJ^{m+1}_\betab(\cV)$ into $\rJ^m_\betab(\cV)$.
We introduce
\begin{equation}
\label{3E5b}
   \rB_\betab (\cV) = \Big\{\  u \in \bigcap_{m\ge\kappa_\betab}\rJ^m_\betab(\cV) \ : \
    \exists C>0, \forall m\ge\kappa_\betab\quad
   \SNorm{u}{m,\betab\,;\,\cV} \le C^{m+1} m!   \Big\}.
\end{equation}

\begin{remark}
\label{3R0}
\iti1 The classes $\rA_\betab (\Omega)$ and $\rB_\betab(\Omega)$ can be equivalently defined replacing semi-norms $\SNorm{u}{m,\betab\,;\,\Omega}$ by the global semi-norms \eqref{3E00}.
\iti2 The classes $\rA_\betab (\Omega)$ can also be equivalently defined locally i.e.
\[
   \rA_\betab (\Omega) = \{u\in\rL^2_\loc(\Omega)\ : \
   u\on{\Omega_0}\in\rA(\Omega_0) \quad\mbox{and}\quad
   u\on{\Omega_\bc}\in\rA_{\beta_\bc}(\Omega_\bc)\ \forall\bc\in\sC\}.
\]
Here $\rA(\Omega_0)$ is the unweighted class of analytic functions on $\Omega_0$.
The spaces $\rB_\betab(\Omega)$ allow analogous local descriptions.
\end{remark}

\begin{remark}
\label{3R1}
\itj1
Our spaces $\rB_{\beta}(\Omega)$ coincide with the family of \emph{countably normed spaces} $B^\ell_\beta(\Omega)$, introduced by Babu\v{s}ka and Guo \cite{BabuskaGuo88}: The spaces  $B^\ell_\beta(\Omega)$ are defined for $\ell\in\N$ and $0<\beta<1$, and there holds
\begin{equation}
\label{3eBG=Bb}
  B^\ell_\beta(\Omega) = \rB_{\beta-\ell}(\Omega) \,.
\end{equation}
\iti2
The relation between the classes $\rA_\betab (\Omega)$ and $\rB_\betab (\Omega)$ follows from the relation between the weighted spaces with homogeneous and nonhomogeneous norms  $\rK^m_\betab (\Omega)$ and $\rJ^m_\betab(\Omega)$. As a consequence of Remark~\ref{2R1} it follows that for $\beta>-1$ there holds
$\rB_{\beta}(\Omega_\bc)=\rA_{\beta}(\Omega_\bc)$, whereas for
$\beta\le-1$ one has in the non-critical case $-\beta\not\in\N$:
\begin{equation}
\label{3EBAP}
  \rB_{\beta}(\Omega_\bc)=\rA_{\beta}(\Omega_\bc)\oplus
    \P^{[-\beta-1]}
\end{equation}
and in the \emph{critical} case $-\beta\in\N$: $\rB_{\beta}(\Omega_\bc)$ contains $\rA_{\beta}(\Omega_\bc) \oplus \P^{-\beta-1}$ as a strict subspace.
\end{remark}

We consider a ``mixed'' boundary value problem on the polygonal domain $\Omega$: We suppose that we are given an homogeneous second order elliptic system $L$ with constant coefficients and for each side $\bs$ a covering set of boundary operators $\{T_\bs,D_\bs\}$ of order $1$ and $0$, homogeneous with constant coefficients. The boundary value problem under consideration is:
\begin{equation}
\label{3Ebvp}
   \left\{ \begin{array}{rclll}
   L\,\bu &=& \bff \quad & \mbox{in}\ \ \Omega, \\[0.3ex]
   T_\bs\,\bu &=& 0  & \mbox{on}\ \ \Gamma_\bs, & \bs\in\sS\,, \\[0.3ex]
   D_\bs\,\bu &=& 0  & \mbox{on}\ \ \Gamma_\bs, & \bs\in\sS\,.
   \end{array}\right.
\end{equation}
Note that one of $T_\bs$ or $D_\bs$ may be the zero operator, in which case the corresponding boundary condition is empty.

We can now prove the following statements of natural regularity shift
in weighted analytic spaces with homogeneous or non-homogeneous
semi-norms, respectively:

\begin{theorem}
\label{3T1}
Let $\betab=(\beta_\bc)\dd{\bc\in\sC}$ be a weight multi-exponent.
Let $\bu\in\bH^2_{\loc}(\ov \Omega\setminus\sC)$ be a solution of problem \eqref{3Ebvp}.
For all integer $k\ge1$, there holds the global uniform estimate 
\begin{equation}
\label{3EK3est}
   \frac1{k!} \SNorm{\bu}{\rK;\,k,\betab\,;\,\Omega} \le C^{k+1} \Big(
   \sum_{\ell=0}^{k-2} \frac1{\ell!} \SNorm{\bff}{\rK;\,\ell,\betab+2\,;\,\Omega} +
   \Norm{\bu}{\bK^1_{\betab}(\Omega)} \Big).
\end{equation}
The following implications hold
\begin{subequations}
\begin{equation}
\label{3EK3}
   \bu\in\bK^1_{\betab}(\Omega) \ \ \mbox{and}\ \
   \bff\in\bK^{n}_{\betab+2}(\Omega)
   \quad\Longrightarrow\quad
   \bu\in\bK^{n+2}_{\betab}(\Omega) \ \ (n\in\N).
\end{equation}
and
\begin{equation}
\label{3EA3}
   \bu\in\bK^1_{\betab}(\Omega) \ \ \mbox{and}\ \
   \bff\in\bA_{\betab+2}(\Omega)
   \quad\Longrightarrow\quad
   \bu\in\bA_{\betab}(\Omega).
\end{equation}
\end{subequations}
\end{theorem}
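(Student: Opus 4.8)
The plan is to establish the single global uniform estimate \eqref{3EK3est}, from which both implications \eqref{3EK3} and \eqref{3EA3} then follow by elementary manipulation of its right-hand side. The proof of \eqref{3EK3est} is modular and mirrors the decomposition of the weighted seminorm in \eqref{3E2}: I would control the regular part $\Omega_0$ by the smooth-domain Cauchy estimate of Proposition \ref{1T1} and each corner neighborhood $\Omega_\bc$ by the weighted sector estimate of Theorem \ref{2T1}, and then glue the pieces. The local regularity $\bu\in\bH^2_{\loc}(\ov\Omega\setminus\sC)$ assumed in the theorem is exactly what is needed to invoke both results. For $k=1$ the estimate is immediate, so I fix $k\ge2$ throughout.

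On the regular part I would cover $\Omega_0$ by finitely many subdomains each meeting at most one (straight, hence analytic) side $\Gamma_\bs$, so that a single covering pair $\{T_\bs,D_\bs\}$ acts on each piece, and apply Proposition \ref{1T1} with companion domains contained in $\Omega'_0$. Since $r_\bc$ stays comparable to $1$ on $\Omega'_0$, the resulting unweighted $\rH$-seminorm estimate coincides, up to constants, with the weighted $\rK$-seminorm contribution of \eqref{3EK3est} (there the weight shift $\betab\mapsto\betab+2$ is immaterial). Near a corner the translation $\bx\mapsto\bx-\bc$ identifies $\Omega_\bc$ with the model sector, so Theorem \ref{2T1} applies directly and produces the genuinely weighted estimate \eqref{2EK4} for the contribution $\big(\sum_{|\alpha|=k}\Norm{r_\bc^{\beta_\bc+|\alpha|}\partial^\alpha_\bx\bu}{0;\,\Omega_\bc}^2\big)^{1/2}$, with the shift $\betab\mapsto\betab+2$ on $\bff$ coming from the order-$2$ nature of $L$ and the low-order term bounded by $\Norm{\bu}{\bK^1_{\betab}(\Omega'_\bc)}$. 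To glue, I would replace the finitely many local constants by their maximum, sum the squares of the individual estimates, and invoke Minkowski's inequality applied separately for each $\ell$ together with the bounded overlap of the enlarged cover $\{\Omega'_0\}\cup\{\Omega'_\bc\}_{\bc\in\sC}$; this recombines the right-hand sides into the global quantities $\SNorm{\bff}{\rK;\,\ell,\betab+2\,;\,\Omega}$ and $\Norm{\bu}{\bK^1_{\betab}(\Omega)}$ and yields \eqref{3EK3est}.

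Both implications then follow at once. For the finite shift \eqref{3EK3}, if $\bu\in\bK^1_{\betab}(\Omega)$ and $\bff\in\bK^n_{\betab+2}(\Omega)$ then for every $k\le n+2$ the right-hand side of \eqref{3EK3est} is finite, its sum stopping at $\ell=k-2\le n$; hence all seminorms up to order $n+2$ are finite and $\bu\in\bK^{n+2}_{\betab}(\Omega)$. For the analytic shift \eqref{3EA3}, I would insert the Cauchy bound $\SNorm{\bff}{\rK;\,\ell,\betab+2\,;\,\Omega}\le C_f^{\ell+1}\ell!$ into \eqref{3EK3est}: the factorials cancel, the remaining sum $\sum_{\ell=0}^{k-2}C_f^{\ell+1}$ is a geometric series dominated by $(C'')^{k}$, and the fixed term $\Norm{\bu}{\bK^1_{\betab}(\Omega)}$ is harmless, so $\frac1{k!}\SNorm{\bu}{\rK;\,k,\betab\,;\,\Omega}\le\tilde C^{\,k+1}$ for a suitable $\tilde C$---which is exactly the membership $\bu\in\bA_{\betab}(\Omega)$ from \eqref{3E4}.

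I expect the gluing step, rather than either local estimate, to be the main obstacle: one must consolidate the finitely many local constants into one, control the enlargement from the $\Omega$-pieces to the $\Omega'$-pieces so that the companion domains overlap only boundedly, and check---via Minkowski applied termwise in $\ell$---that summing the squared local right-hand sides reproduces the global seminorms while preserving each $1/\ell!$ factor. The clean compatibility of the unweighted interior seminorm on $\Omega_0$ with the weighted $\rK$-seminorm, recorded in Remark \ref{3R00}, is what allows the two families of local estimates to combine without loss.
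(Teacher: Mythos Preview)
Your proposal is correct and follows essentially the same approach as the paper: the paper's proof likewise invokes the smooth-domain estimate \eqref{1E1} between $\Omega_0$ and $\Omega'_0$, the sector estimate \eqref{2EK4} between each $\Omega_\bc$ and $\Omega'_\bc$, combines them into \eqref{3EK3est}, and then deduces \eqref{3EK3} and \eqref{3EA3} by the same geometric-series argument you describe. Your added detail on covering $\Omega_0$ by pieces meeting at most one side and on the Minkowski-based gluing is entirely appropriate and fills in what the paper leaves implicit.
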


\begin{proof}
The uniform estimate \eqref{2EK4} is valid between $\Omega_\bc$ and $\Omega'_\bc$ for all $\bc\in\sC$. The uniform estimate \eqref{1E1} of the smooth case is valid between $\Omega_0$ and $\Omega'_0$. Combining these estimates we obtain the global uniform estimate \eqref{3EK3est}. The finite regularity shift \eqref{3EK3} is then straightforward.
If $\bff\in\bA_{\betab+2}(\Omega)$, it satisfies $\SNorm{\bff}{\ell,\betab\,;\,\Omega} \le F^{\ell+1} \ell!$ for some constant $F>1$. Thus estimate \eqref{3EK3est} yields
\[
   \SNorm{\bu}{\rK;\,k,\betab\,;\,\Omega} \le k! \ C^{k+1} \Big(
   \sum_{\ell=0}^{k-2} F^{\ell+1} + \Norm{\bu}{\bK^1_{\betab}(\Omega)} \Big) =
   k! \ C^{k+1} \Big(\frac{F^{k}-F}{F-1}
   + \Norm{\bu}{\bK^1_{\betab}(\Omega)} \Big).
\]
Hence $\bu\in\bA_{\betab}(\Omega)$, which proves \eqref{3EA3}.
\end{proof}

\begin{theorem}
\label{3T1b}
Let $\betab=(\beta_\bc)\dd{\bc\in\sC}$ be a weight multi-exponent.
Let $\bu\in\bH^2_{\loc}(\ov \Omega\setminus\sC)$ be a solution of problem \eqref{3Ebvp}.
Let $m\ge1$ be an integer such that $m\ge-\beta_\bc$ for all $\bc\in\sC$.
For all integer $k\ge m$, there holds the global uniform estimate 
\begin{equation}
\label{3EJ3est}
   \frac1{k!} \SNorm{\bu}{\rK;\,k,\betab\,;\,\Omega} \le C^{k+1} \Big(
   \sum_{\ell=m-1}^{k-2} \frac1{\ell!} \SNorm{\bff}{\rK;\,\ell,\betab+2\,;\,\Omega} +
   \Norm{\bu}{\bJ^m_{\betab}(\Omega)} \Big).
\end{equation}
The following implications hold
\begin{subequations}
\begin{equation}
\label{3EJ3}
   \bu\in\bJ^m_{\betab}(\Omega) \ \ \mbox{and}\ \
   \bff\in\bJ^{n}_{\betab+2}(\Omega)
   \quad\Longrightarrow\quad
   \bu\in\bJ^{n+2}_{\betab}(\Omega) \ \ (n+2\ge m).
\end{equation}
and
\begin{equation}
\label{3EB3}
   \bu\in\bJ^m_{\betab}(\Omega) \ \ \mbox{and}\ \
   \bff\in\bB_{\betab+2}(\Omega)
   \quad\Longrightarrow\quad
   \bu\in\bB_{\betab}(\Omega).
\end{equation}
\end{subequations}
\end{theorem}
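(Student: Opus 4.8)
The plan is to mirror the architecture of the homogeneous‑norm Theorem \ref{3T1}, replacing each local ingredient by its non‑homogeneous counterpart. First I would localize problem \eqref{3Ebvp} near each corner: the translation carrying $\Omega_\bc$ onto $\cK_\bc\cap\cB(\bfz,\varepsilon)$ turns it into a problem of type \eqref{2Ebvp}, and since the hypothesis $m\ge-\beta_\bc$ gives a fortiori $m+1\ge-\beta_\bc$, Theorem \ref{2T2} applies with $\beta=\beta_\bc$ and supplies the local uniform estimate \eqref{2EJ4} between $\Omega_\bc$ and $\Omega'_\bc$ for every $k\ge m$. Away from the corners the weights $r_\bc$ are bounded above and below by positive constants, so on the pair $(\Omega_0,\Omega'_0)$ I would use estimate \eqref{1E2} of Corollary \ref{1C1} in place of \eqref{2EJ4}. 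The crucial point is that \eqref{1E2} carries the $\rH^m$ \emph{semi}-norm, not the full norm, on its right-hand side, and that its $\bff$-sum starts at $\ell=m-1$, exactly matching \eqref{2EJ4}; this is precisely why the Bramble--Hilbert refinement of Corollary \ref{1C1}, rather than Proposition \ref{1T1}, is needed in the Neumann-type setting.

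Next I would sum the squares of these local estimates over $\bc\in\sC$ together with the interior contribution and recombine them, via Remark \ref{3R00}, into global semi-norms (using that the $\Omega'_\bc$ and $\Omega'_0$ cover $\Omega$ with bounded overlap and that away from $\bc$ the other corner weights are $\simeq1$), obtaining for all $k\ge m$
\[
   \tfrac1{k!}\SNorm{\bu}{\rK;\,k,\betab\,;\,\Omega}\le C^{k+1}\Big(
   \sum_{\ell=m-1}^{k-2}\tfrac1{\ell!}\SNorm{\bff}{\rK;\,\ell,\betab+2\,;\,\Omega}
   +\SNorm{\bu}{\rK;\,m,\betab\,;\,\Omega}\Big).
\]
Since the trailing semi-norm is exactly the top-order ($k=m$) term of the norm $\Norm{\bu}{\rJ^m_\betab(\Omega)}$ defined in \eqref{3E3}, it is dominated by that norm, which gives \eqref{3EJ3est}.

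For the finite shift \eqref{3EJ3} I would argue as at the end of the proof of Theorem \ref{2T2}. Assuming $\bu\in\bJ^m_\betab(\Omega)$ and $\bff\in\bJ^n_{\betab+2}(\Omega)$ with $n+2\ge m$, I note that for $m-1\le\ell\le n$ the weight exponent obeys $\beta_\bc+2+\ell\ge\beta_\bc+m+1\ge1>0$, so Proposition \ref{2P1} identifies each homogeneous semi-norm $\SNorm{\bff}{\rK;\,\ell,\betab+2\,;\,\Omega}$ with a step-weighted semi-norm bounded by $\Norm{\bff}{\rJ^n_{\betab+2}(\Omega)}$. Inserting this into \eqref{3EJ3est} makes $\SNorm{\bu}{\rK;\,k,\betab\,;\,\Omega}$ finite for $m\le k\le n+2$; since $\Omega$ is bounded and $\bu\in\bJ^m_\betab(\Omega)$ already controls the orders $|\alpha|<m$, the same two-range argument as in Theorem \ref{2T2} upgrades this to $r_\bc^{\beta_\bc+n+2}\partial^\alpha_\bx\bu\in\bL^2(\Omega_\bc)$ for all $|\alpha|\le n+2$, that is $\bu\in\bJ^{n+2}_\betab(\Omega)$.

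Finally, for the analytic shift \eqref{3EB3}, the membership $\bff\in\bB_{\betab+2}(\Omega)$ gives directly, by definition \eqref{3E5b}, a constant $F>1$ with $\SNorm{\bff}{\rK;\,\ell,\betab+2\,;\,\Omega}\le F^{\ell+1}\ell!$ for all integers $\ell\ge\kappa_{\betab+2}$; as $\ell\ge m-1\ge\kappa_\betab-1=\kappa_{\betab+2}+1$ this bound holds throughout the sum, and the resulting geometric series in \eqref{3EJ3est} yields the Cauchy-type estimate $\SNorm{\bu}{\rK;\,k,\betab\,;\,\Omega}\le\tilde C^{k+1}k!$ required by \eqref{3E5b}. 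Membership of $\bu$ in every $\rJ^{m'}_\betab(\Omega)$, $m'\ge\kappa_\betab$, then follows by applying \eqref{3EJ3} with $n$ arbitrarily large for $m'\ge m$ and the embeddings of Corollary \ref{2C2} for $\kappa_\betab\le m'\le m$, whence $\bu\in\bB_\betab(\Omega)$. The one genuinely delicate step, and the place where the hypothesis $m\ge-\beta_\bc$ is indispensable, is the constant shuttling between the homogeneous semi-norms in which the uniform estimate is naturally phrased and the non-homogeneous $\rJ$-norms of the hypotheses and conclusion: keeping all the relevant weight exponents non-negative, so that Proposition \ref{2P1} applies on both the $\bff$ and the $\bu$ side, is what makes this bookkeeping go through.
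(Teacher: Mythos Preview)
Your proposal is correct and follows essentially the same approach as the paper: localize near each corner via Theorem \ref{2T2} (estimate \eqref{2EJ4}), use the smooth-region estimate between $\Omega_0$ and $\Omega'_0$, combine into the global estimate \eqref{3EJ3est}, and then deduce \eqref{3EJ3} and \eqref{3EB3} exactly as in the homogeneous case. Your explicit identification of Corollary \ref{1C1} (rather than Proposition \ref{1T1}) as the correct smooth-region ingredient---so that the $\bff$-sum starts at $\ell=m-1$ and the $\bu$-term is a semi-norm---is the right observation; the paper's two-sentence proof leaves this implicit.
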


\begin{proof}
The finite regularity shift \eqref{3EJ3} is an obvious consequence of Theorem \REF{2T2}. The proof of \eqref{3EB3} is similar to that of \eqref{3EA3}, based on estimate \eqref{2EJ4}.
\end{proof}

\section{Local anisotropic Cauchy-type estimates in dihedral domains}
\label{sec4}
Infinite dihedral domains (or wedges) are the model domains for polyhedra which have the lowest level of complexity. In this section, we consider dihedral domains $\cD$ in a model configuration, that is there exists a plane sector $\cK$ with vertex $\bfz$ so that
\begin{equation}
\label{4E1}
   \cD = \cK\times\R
   \quad\mbox{and}\quad
   \bx = (x_1,x_2,x_3) = (\bx_\perp,x_3)\in \cD \ \ \Leftrightarrow\ \
   \bx_\perp\in \cK,\  x_3\in\R.
\end{equation}
The edge $\be$ of the dihedral domain $\cD$ is the line $x_1=x_2=0$.

Let $\cV$ be any subdomain of $\cD$. We consider the system of local interior and boundary equations
\begin{equation}
\label{4Ebvp}
   \left\{ \begin{array}{rclll}
   L\,\bu &=& \bff \quad & \mbox{in}\ \ \cD\cap\cV, \\[0.3ex]
   T_i\,\bu &=& 0  & \mbox{on}\ \ (\Gamma_i\times\R)\cap\ov\cV, &i=1,2, \\[0.3ex]
   D_i\,\bu &=& 0  & \mbox{on}\ \ (\Gamma_i\times\R)\cap\ov\cV, &i=1,2,
   \end{array}\right.
\end{equation}
where the operators $L$, $T_i$ and $D_i$ are homogeneous with constant coefficients and form an elliptic system.
The system \eqref{4Ebvp} is the localization to $\cV$ of the elliptic boundary value problem $L\bu=\bff$ in $\cD$, with zero boundary conditions on $\Gamma_1\times\R$ and $\Gamma_2\times\R$.

\subsection{Isotropic estimates: natural regularity shift}
The weighted spaces for the dihedron are defined by the same formulas as in the case of a plane sector:

\begin{definition}
\label{4DKJ}
Let $\beta$ be a real number and let $m\ge0$ be an integer. Let $\cW\subset\cD$.
\begin{itemize}
\item[]
 The \emph{isotropic weighted spaces} $\rK^m_\beta(\cW)$  and $\rJ^m_\beta(\cW)$ are defined,
 with the distance  $r:=|\bx_\perp|=\sqrt{x_1^2+x_2^2}$ to the edge $\be$, by
\begin{gather*}
   \rK^m_\beta (\cW) = \big\{ u \in \rL_\loc^2(\cW) \ : \
   r^{\beta +  |\alpha|}  \partial^{\alpha}_\bx u  \in  \rL^2(\cW), \quad
   \forall\alpha, \; |\alpha| \leq m \big\}\\
   \rJ^m_\beta (\cW) = \big\{ u \in \rL_\loc^2(\cW) \ : \
   r^{\beta +  m}  \partial^{\alpha}_\bx u  \in  \rL^2(\cW), \quad
   \forall\alpha, \; |\alpha| \leq m \big\}
\end{gather*}
endowed with their natural semi-norms and norms.
Recall that $\partial^\alpha_\bx$ denotes the derivative with respect to the three variables $x_1$, $x_2$, $x_3$.
\end{itemize}
\end{definition}

We call these spaces {\em isotropic}, in opposition with the {\em anisotropic} spaces $\rM^m_\beta(\cW)$ and $\rN^n_\beta(\cW)$ which will be introduced in the next subsection.

We gather in one statement the results concerning the $\rK$ and the $\rJ$ spaces. Here we set
\begin{equation}
\label{4E2}
\begin{aligned}
   \cW & = \big( \cK\cap \cB(\bfz,1) \big) \times (-1,1)
  \\
   \cW_\varepsilon & =
   \big( \cK\cap \cB(\bfz,1+\varepsilon) \big) \times (-1-\varepsilon,1+\varepsilon),
   \quad\varepsilon>0.
\end{aligned}
\end{equation}

\begin{theorem}
\label{4T1}
Let $\beta\in\R$ and $n\in\N$.
Let $\bu\in\bH^2_{\loc}(\ov \cW{}_\varepsilon\setminus\be)$ be a solution of problem \eqref{4Ebvp} with $\cV=\cW_\varepsilon$.
\iti1 If $\bu\in\bK^1_{\beta}(\cW_\varepsilon)$ and $\bff\in\bK^{n}_{\beta+2}(\cW_\varepsilon)$ then $\bu\in\bK^{n+2}_{\beta}(\cW)$ and there exists a constant $C\ge1$ independent of $\bu$ and $n$ such that for any integer $k$, $0\le k\le n+2$, we have
\begin{multline}
\label{4EK4}
   \frac{1}{k!}\;
   \Big(\sum_{|\alpha|=k}
   \Norm{r^{\beta+|\alpha|}\partial^\alpha_\bx \bu}{0;\,\cW}^2\Big)^{\frac12} \le
    C^{k+1} \Big\{
   \sum_{\ell=0}^{k-2} \frac{1}{\ell!}\;
   \Big(\sum_{|\alpha|=\ell}\Norm{r^{\beta+2+|\alpha|}
   \partial^\alpha_\bx \bff}{0;\,\cW_\varepsilon}^2\Big)^{\frac12}
   \\
   +
   \sum_{|\alpha|\le1} \Norm{r^{\beta+|\alpha|}
   \partial^\alpha_{\bx}\bu}{0;\,\cW_\varepsilon}\Big\}.
\end{multline}
\iti2   Let $m\ge1$ be an integer such that $m+1\ge-\beta$. Let $n\ge m-1$ be another integer.   If $\bu\in\bJ^m_{\beta}(\cW_\varepsilon)$ and $\bff\in\bJ^{n}_{\beta+2}(\cW_\varepsilon)$, then $\bu\in\bJ^{n+2}_{\beta}(\cW)$
and there exists a constant $C\ge1$ independent of $\bu$ and $n$ such that for any integer $k$,
$m\le k\le n+2$, we have
\begin{multline}
\label{4EJ4}
   \frac{1}{k!}\;
   \Big(\sum_{|\alpha|=k}
   \Norm{r^{\beta+|\alpha|}\partial^\alpha_\bx \bu}{0;\,\cW}^2\Big)^{\frac12} \le
    C^{k+1} \Big\{
   \sum_{\ell=m-1}^{k-2} \frac{1}{\ell!}\;
   \Big(\sum_{|\alpha|=\ell}\Norm{r^{\beta+2+|\alpha|}
   \partial^\alpha_\bx \bff}{0;\,\cW_\varepsilon}^2\Big)^{\frac12}
   \\
   +
   \sum_{|\alpha|=m} \Norm{r^{\beta+|\alpha|}
   \partial^\alpha_{\bx}\bu}{0;\,\cW_\varepsilon}\Big\}.
\end{multline}
\end{theorem}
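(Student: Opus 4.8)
The plan is to rerun, in the dihedral configuration, the dyadic argument of Theorems \REF{2T1} and \REF{2T2}. The only genuinely new feature is that the edge $\be$ is a segment, not a single point: since the isotropic weight $r=|\bx_\perp|$ and the operators $L,T_i,D_i$ are scale-covariant only under the \emph{isotropic} dilation $\bx\mapsto\lambda\bx$ (which shrinks the $x_3$-extent together with the transverse distance), a single dyadic sequence centred at $\bfz$ would only fill a conical region near the origin. One must therefore replace the one-parameter dyadic family by a two-parameter covering of a tubular neighbourhood of $\{\bfz\}\times(-1,1)$: dyadic in $r$, and uniform along $x_3$ at each dyadic level.

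First I would produce a weighted reference estimate. As in Step~1 of the proof of Theorem~\REF{2T1}, I split the dihedron near $\be$ into two overlapping pieces, each meeting only one face $\Gamma_i\times\R$, and apply Proposition~\REF{1T1} with the single covering set $\{T_i,D_i\}$ on a fixed box $\widehat\cV=\{\tfrac14<r<1,\ |x_3|<1\}$ together with a slightly larger box $\widehat\cV\ee'$ whose closure still avoids $\be$; for part~\itj2, Corollary~\REF{1C1} is used in place of Proposition~\REF{1T1} to obtain the semi-norm version. Since $r\simeq1$ on $\widehat\cV\ee'$, the weights $r^{\beta+|\alpha|}$ are inserted at no cost exactly as in Step~2 of Theorem~\REF{2T1}, yielding a weighted estimate on $(\widehat\cV,\widehat\cV\ee')$ whose constant depends only on $L,\{T_i,D_i\}$ and the reference boxes.

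Next I would cover the edge neighbourhood by boxes $\cV_{\mu,j}$ obtained from $\widehat\cV$ by the dilation $\widehat\bx\mapsto2^{-\mu}\widehat\bx$ followed by a translation along $x_3$, the integer $j$ running over the $O(2^{\mu})$ shifts needed to fill the slab $\{2^{-\mu}/4<r<2^{-\mu}\}\cap\cW$; the enlarged boxes $\cV\prm_{\mu,j}$ come from $\widehat\cV\ee'$ in the same way. The decisive point is that $L,T_i,D_i$ (homogeneous, constant coefficients) and the weight $r=|\bx_\perp|$ (independent of $x_3$) are invariant under translation along $\be$, and transform under the dilation exactly as in the plane case: the homogeneous shift $+|\alpha|$ makes the weighted semi-norm scale by a factor independent of the order $k$, while the order-two operator $L$ is compensated by the shift $\beta\mapsto\beta+2$ on the right-hand side. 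Hence the weighted reference estimate transfers, \emph{with the same constant}, to every $\cV_{\mu,j}$. Squaring and summing over the whole family $(\mu,j)$, which is locally finite with bounded multiplicity, reconstitutes on the left the global semi-norm $\SNorm{\bu}{\rK;\,k,\beta\,;\,\cW}$ and on the right the semi-norms of $\bff$ and of the low-order part of $\bu$ over $\cW_\varepsilon$, with no accumulation of constants and with the Cauchy factors $\tfrac1{k!},\tfrac1{\ell!}$ untouched (the spatial summation acts on domains, not on the indices $k,\ell$). This gives \eqref{4EK4}; since the right-hand side is then finite and $\cW$ is bounded, the membership $\bu\in\bK^{n+2}_\beta(\cW)$ follows. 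Part~\itj2 runs identically from Corollary~\REF{1C1}: the assumption $m+1\ge-\beta$ ensures, through the dihedral analogue of the step-weighted characterization of Proposition~\REF{2P1}, that the right-hand side of \eqref{4EJ4} is finite and that the low-order terms are absorbed as in Theorem~\REF{2T2}, and $\bu\in\bJ^{n+2}_\beta(\cW)$ is recovered from boundedness of $\cW$ as at the end of that proof.

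I expect the main obstacle to be organizational rather than analytic: one must control the two-parameter covering, i.e.\ verify that the longitudinal tiling at level $\mu$ has multiplicity bounded uniformly in $\mu$ (and that consecutive radial levels overlap boundedly), so that summing first in $j$ and then in $\mu$ faithfully reproduces the global weighted norms. Everything else — the reference estimate, the insertion of weights, and the dilation covariance — is word for word the two-dimensional argument.
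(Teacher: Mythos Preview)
Your proposal is correct and follows essentially the same route as the paper: a two-parameter dyadic covering $\cV_{\mu,\nu}=2^{-\mu}(\widehat\cV+(0,0,\tfrac\nu2))$ with $|\nu|<2^{\mu+1}$, reference estimates from Proposition~\REF{1T1} (resp.\ Corollary~\REF{1C1}) on boxes away from the edge, insertion of weights, and scaling back using the homogeneity and translation-invariance of $L,T_i,D_i$. The only point you flag as an obstacle---uniform local finiteness of the covering in both parameters---is exactly what the paper checks and is indeed routine.
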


\begin{proof}
Like in the case of Theorems \REF{2T1} and \REF{2T2}, the proof relies on a
locally finite dyadic covering of $\cW$ and $\cW_\varepsilon$. The
reference domains are now
\[
\begin{aligned}
   \widehat \cV &= \{\bx_\perp\in \cK\ : \ \tfrac14< |\bx_\perp| < 1\}
   \times (-\tfrac12,\tfrac12)
   \\
   \widehat \cV\ee' &=
   \{\bx_\perp\in \cK\ : \ \tfrac14-\varepsilon< |\bx_\perp| < 1+\varepsilon\}
   \times (-\tfrac12-\varepsilon,\tfrac12+\varepsilon)
\end{aligned}
\]
and for $\mu\in\N$ and $\nu\in\Z$:
\[
   \cV_{\mu,\nu} = 2^{-\mu}\big(\widehat \cV + (0,0,\tfrac\nu2) \big)
   \quad\mbox{and}\quad \cV\prm_{\mu,\nu} = 2^{-\mu}\big(\widehat \cV\ee' + (0,0,\tfrac\nu2) \big).
\]
We check immediately that
\[
   \cW = \bigcup_{\mu\in\N} \bigcup_{|\nu|< 2^{\mu+1}} \cV_{\mu,\nu}
   \quad\mbox{and}\quad
   \cW_\varepsilon \supset \bigcup_{\mu\in\N} \bigcup_{|\nu|< 2^{\mu+1}} \cV\prm_{\mu,\nu}\,,
\]
and that these coverings are locally finite. An a priori estimate between $\cV_{\mu,\nu}$ and $\cV\prm_{\mu,\nu}$ is deduced from a reference a priori estimate between $\widehat \cV$ and $\widehat \cV\ee'$ by the change of variables $\hat\bx\rightarrow\bx=2^{-\mu}(\hat\bx+(0,0,\frac\nu2))$ that maps $\widehat\cV$ onto $\cV_{\mu,\nu}$ and $\cV\ee'$ onto $\widehat\cV\prm_{\mu,\nu}$.
Here we use the fact that the operators $L$, $T$ and $D$ are homogeneous with constant coefficients. Then the rest of the proof goes exactly as in the case of the plane sectors.
\end{proof}

\subsection{Tangential regularity along the edge (homogeneous norms)}
The result in the previous sections only rely on the ellipticity of the boundary value problem under consideration. Now we will require a stronger condition, which is a local Peetre-type a priori estimate in an edge neighborhood. From this condition we will derive analytic type estimates for all derivatives $\partial^{j}_{x_3}$ in the direction of the edge.

\begin{assumption}
\label{4GA}
Let $\beta\in\R$. Let $\cW$ and $\cW\ee'=\cW_\varepsilon$ be the domains defined in \eqref{4E2} for some $\varepsilon>0$. We assume that the following a priori estimate holds for problem \eqref{4Ebvp} on $\cV=\cW\ee'$:
There is a constant $C$ such that
any
\[
   \bu\in \bK^2_{\beta}(\cW)\,,
\]
solution of problem \eqref{4Ebvp} with $\bff\in\bK^0_{\beta+2}(\cW\ee')$,
satisfies:
\begin{equation}
\label{4GA3}
   \Norm{\bu}{\bK^{2}_{\beta}(\cW)} \leq
   C\Big(\Norm{ \bff}{\bK^0_{\beta+2}(\cW\ee')}
   + \Norm{ \bu}{\bK^1_{\beta+1}(\cW\ee')}\Big).
\end{equation}
\end{assumption}

\begin{remark}
\label{4R1}
\iti1 Assumption \REF{4GA} is independent of $\varepsilon$ (although the constant $C$ depends on it), and more generally independent of the choice of the domains $\cW$ and $\cW\ee'$, if they satisfy the following conditions: There exists a ball with center on the edge $\be$ contained in $\cW$, and $\cW\ee'$ contains $\ov\cW\cap\cD$.
\iti2 The inequality \eqref{4GA3} is a Peetre-type estimate, since
$\bK^2_{\beta}(\cW)$ is compactly embedded in $\bK^1_{\beta+1}(\cW)$.
\iti3 As a consequence of Theorem \REF{4T1}, it is equivalent to postulate the estimate
\[
   \Norm{\bu}{\bK^{1}_{\beta}(\cW)} \leq
   C\Big(\Norm{ \bff}{\bK^0_{\beta+2}(\cW\ee')}
   + \Norm{ \bu}{\bK^1_{\beta+1}(\cW\ee')}\Big)
\]
for all $\bu\in\bK^1_{\beta}(\cW\ee') \cap \bH^2_{\loc}(\ov \cW{}\ee'\setminus\be)$
\end{remark}

\begin{remark}
\label{4RGA}
Assumption \REF{4GA} can be characterized by a condition on the partial Fourier symbol of $L$ along the edge, as follows. If we write the system $L$ in the form $L(\partial_{\bx_\perp},\partial_{x_3})$, its partial Fourier symbol $\hat L(\xi)$ is defined on the sector $\cK$ for all $\xi\in\R$ by 
\begin{equation}
\label{4Esymb}
   \hat L(\xi)(\partial_{\bx_\perp}) = L(\partial_{\bx_\perp},i\xi),\quad \bx_\perp\in\cK.
\end{equation} 
We define $\hat T_i$ and $\hat D_i$ on the same way. We also need the weighted spaces on $\cK$
\begin{equation}
\label{4EE}
   \rE^m_\beta (\cK) = \big\{ u \in \rL_\loc^2(\cK) \ : \
   \max\{ r^{\beta +  |\alpha|},  r^{\beta + m}\}\, \partial^{\alpha}_\bx u  \in  \rL^2(\cK), \quad
   \forall\alpha, \; |\alpha| \leq m \big\}.
\end{equation}
Then Assumption \REF{4GA} holds if (and only if) the problem
\begin{equation}
\label{4Ebvpxi}
   \left\{ \begin{array}{rclll}
   \hat L(\xi)\,\bu &=& \bff \quad & \mbox{in}\ \ \cK \\[0.3ex]
   \hat T_i(\xi)\,\bu &=& 0  & \mbox{on}\ \ \Gamma_i, &i=1,2, \\[0.3ex]
   \hat D_i(\xi)\,\bu &=& 0  & \mbox{on}\ \ \Gamma_i, &i=1,2,
   \end{array}\right.
\end{equation}
defines a injective operator with closed range from $\bE^2_\beta(\cK)$ into $\bE^0_{\beta+2}(\cK)$ for $\xi=\pm1$.
In \cite{MazyaPlamenevskii80b}, Maz'ya and Plamenevskii introduced the spaces $\bE^2_\beta(\cK)$ and proved that isomorphism properties of the transversal problem \eqref{4Ebvpxi} are necessary and sufficient for Fredholm properties of the boundary value problem \eqref{4Ebvp} on the wedge. The same technique proves that left invertibility of the transversal problem \eqref{4Ebvpxi} implies the semi-Fredholm estimate \eqref{4GA3} of Assumption~\ref{4GA}.
\end{remark}

The first step for higher order estimates is the $\rho$-estimate for which we control the dependence of the constant $C$ in \eqref{4GA3} on the ``distance'' between $\cW$ and $\cW'$.

\begin{lemma}
\label{4L2} Under Assumption \REF{4GA}, let $R\in[0,\varepsilon)$
and $\rho\in(0,\varepsilon-R]$. Assume that $\bu\in
\bK^2_{\beta}(\cW_R)$ is a solution of problem \eqref{4Ebvp} with
$\bff\in\bK^0_{\beta+2}(\cV)$ for $\cV=\cW_{R+\rho}\,$. There
exists a constant $C$ independent of $\bu$, $R$ and $\rho$ such that
\begin{equation}
   \label{4E6a}
   \Norm{\bu}{\bK^{2}_{\beta}(\cW_R)} \leq
   C\Big(\Norm{ \bff}{\bK^0_{\beta+2}(\cW_{R+\rho})}
   + \rho^{-1} \Norm{ \bu}{\bK^1_{\beta+1}(\cW_{R+\rho})}
   + \rho^{-2} \Norm{ \bu}{\bK^0_{\beta+2}(\cW_{R+\rho})}\Big).
\end{equation}
\end{lemma}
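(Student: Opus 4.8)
The plan is to prove \eqref{4E6a} by a cut-off argument combined with the a priori estimate of Assumption \ref{4GA}: the commutator of $L$ with the cut-off will produce exactly the negative powers of $\rho$ that appear on the right-hand side. The interior computation is routine; the only genuine difficulty is that \eqref{4GA3} is stated for functions satisfying the homogeneous boundary conditions, which the cut-off of $\bu$ does not automatically do.

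First I would reduce to the case $R=0$ by a dilation. The dihedron $\cD$, its edge $\be$, the distance function $r$, and the operators $L,T_i,D_i$ (homogeneous with constant coefficients) are all covariant under the scaling $\bx\mapsto(1+R)\bx$, which maps $\cW_R$ onto $\cW$ and $\cW_{R+\rho}$ onto $\cW_{\tilde\rho}$ with $\tilde\rho=\rho/(1+R)$. Since the homogeneous norm $\Norm{\cdot}{\bK^m_\gamma(\cdot)}$ scales by a fixed power of $1+R$ under this dilation, and since $1+R\in[1,1+\varepsilon)$ is bounded above and below while $\tilde\rho^{-1}\le(1+\varepsilon)\rho^{-1}$, an estimate of the form \eqref{4E6a} proved at $R=0$ for an arbitrary gap transfers to general $R$ with the constant changing only by a factor depending on $\varepsilon$.

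For $R=0$ I would choose a cut-off of product form $\chi=\psi(r)\,\phi(x_3)$, equal to $1$ on $\cW$ and supported in $\cW_\rho$, with $|\partial^\gamma_\bx\chi|\le C\rho^{-|\gamma|}$, and apply \eqref{4GA3} (valid on the fixed pair $(\cW,\cW_\varepsilon)$) to $\bv=\chi\bu$, which is supported in $\cW_\rho\subset\cW_\varepsilon$. Because $\chi=1$ on $\cW$, the left-hand side reproduces $\Norm{\bu}{\bK^2_\beta(\cW)}$. On the right-hand side $L\bv=\chi\bff+[L,\chi]\bu$, and the commutator $[L,\chi]\bu=\sum_{|\gamma|=2}a_\gamma\sum_{0<\delta\le\gamma}\binom{\gamma}{\delta}(\partial^\delta_\bx\chi)(\partial^{\gamma-\delta}_\bx\bu)$ splits into a first-order piece carrying a factor $\rho^{-1}$ times $\partial_\bx\bu$ and a zeroth-order piece carrying $\rho^{-2}$ times $\bu$, both supported in the collar $\cW_\rho\setminus\cW$. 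Measuring these in $\bK^0_{\beta+2}$ and using $r^{\beta+2}\partial^\alpha_\bx\bu=r^{(\beta+1)+|\alpha|}\partial^\alpha_\bx\bu$ for $|\alpha|=1$ yields precisely the terms $\rho^{-1}\Norm{\bu}{\bK^1_{\beta+1}(\cW_\rho)}$ and $\rho^{-2}\Norm{\bu}{\bK^0_{\beta+2}(\cW_\rho)}$; the remaining term $\Norm{\bv}{\bK^1_{\beta+1}(\cW_\varepsilon)}$ is handled identically and, using $\rho\le\varepsilon$, absorbed into these two.

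The delicate point, and the one I expect to be the main obstacle, is that $\bv=\chi\bu$ need not satisfy the homogeneous boundary conditions. Here the product form $\chi=\psi(r)\phi(x_3)$ is essential: on each face $\Gamma_i\times\R$ the gradient $\nabla\chi$ is tangential (it has only a radial and an axial component, both tangent to the face), so $D_i\bv=\chi D_i\bu=0$ automatically, while $T_i\bv=(\nabla\chi\cdot\bb)\bu$ vanishes for conormal boundary operators and is otherwise a lower-order term supported in the collar and bounded by $\rho^{-1}|\bu|$, which is controlled by the semi-Fredholm estimate underlying Assumption \ref{4GA} (see Remark \ref{4RGA}). Once this boundary bookkeeping is settled, the three terms of \eqref{4E6a} follow at $R=0$, and the dilation of the first paragraph finishes the proof.
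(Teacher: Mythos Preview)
Your approach is essentially the paper's: apply the a~priori estimate \eqref{4GA3} to a cut-off $\chi_\rho\bu$ and control the commutator $[L,\chi_\rho]\bu$ via \eqref{4Ecut}, which produces exactly the $\rho^{-1}$ and $\rho^{-2}$ terms. Two minor differences are worth noting. First, your dilation to $R=0$ is unnecessary: the paper constructs the cut-off directly for general $R$ by setting $\hat\chi_\rho(t)=\hat\chi\big((|t|-1-R)/\rho\big)$ and $\chi_\rho(\bx)=\hat\chi_\rho(|\bx_\perp|)\,\hat\chi_\rho(x_3)$, so $\chi_\rho\equiv1$ on $\cW_R$ and $\chi_\rho\equiv0$ outside $\cW_{R+\rho}$, with the derivative bound $|\partial^\alpha\chi_\rho|\le D\rho^{-|\alpha|}$ uniform in $R$. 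Second, the boundary-condition issue you flag as ``the main obstacle'' is simply not discussed in the paper: it applies \eqref{4GA3} to $\chi_\rho\bu$ without comment. Your observation that the product cut-off has $\partial_\theta\chi_\rho=0$ on the faces (so $D_i(\chi_\rho\bu)=0$ and $T_i(\chi_\rho\bu)=0$ for conormal $T_i$) is correct and suffices for the Dirichlet/Neumann/mixed problems that are the paper's actual applications; for fully general first-order $T_i$ one would indeed fall back on the estimate with boundary terms underlying Assumption~\ref{4GA}, but the paper does not spell this out.
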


\begin{proof}
We introduce a special family of cut-off functions $\chi_{\rho}$.
Let $\hat\chi\in\Cinf(\R)$ be such that $\hat\chi\equiv1$ on
$(-\infty,0]$ and $\hat\chi\equiv0$ on $[1,+\infty)$. Define $\hat\chi_{\rho}$ on $\R$ by:
\begin{equation}
\label{4Ecut1}
   \hat\chi_{\rho}(t) = \hat\chi\left(\frac{|t|-1-R}{\rho}\right).
\end{equation}
Thus $\hat\chi_{\rho}$ equals $1$ in $[-1-R,1+R]$ and $0$ outside $(-1-R-\rho,1+R+\rho)$. Then we set
\begin{equation}
\label{4Ecut3}
   \chi_{\rho}(\bx) = \hat\chi_{\rho}(|\bx_\perp|)\,\hat\chi_{\rho}(x_3).
\end{equation}
Thus by construction, {\em cf.}\ \eqref{4E2}
\[
   \chi_{\rho} \equiv 1 \ \ \mbox{on}\ \  \cW_R
   \quad\mbox{and}\quad
   \chi_{\rho} \equiv 0 \ \ \mbox{outside}\ \  \cW_{R+\rho}.
\]
We note the following important bound on the derivatives of $\chi_{\rho}$
\begin{equation}
\label{4Ecut}
   \exists D>0, \quad
   \forall\rho>0, \; \forall\alpha,\,|\alpha|\le2,\quad
   |\partial^\alpha_\bx\chi_{\rho}| \le D \rho^{-|\alpha|}.
\end{equation}
Then in order to prove \eqref{4E6a}, it suffices to apply estimate \eqref{4GA3} to $\chi_{\rho}\bu$ and to check that the commutator $[L,\chi_{\rho}]$ applied to $\bu$ satisfies
\begin{equation}
\label{4E6c}
   \Norm{[L,\chi_{\rho}]\bu}{\bK^0_{\beta+2}(\cW_{R+\rho})} \leq
   C\Big(\rho^{-1} \Norm{ \bu}{\bK^1_{\beta+1}(\cW_{R+\rho})}
   + \rho^{-2} \Norm{ \bu}{\bK^0_{\beta+2}(\cW_{R+\rho})}\Big).
\end{equation}
The latter estimate is an obvious consequence of \eqref{4Ecut} and the fact that
\[
   \Norm{\partial^\alpha_\bx\bu}{\bK^0_{\beta+2}(\cW_{R+\rho})} \le
   \Norm{ \bu}{\bK^{2-|\alpha|}_{\beta+|\alpha|}(\cW_{R+\rho})}
\]
for all $\alpha$, $|\alpha|\le1$.
\end{proof}

\begin{corollary}
\label{4L2coro}
Under the assumptions of Lemma \REF{4L2}, if
$\partial_{x_3} \bff\in \bK^0_{\beta+2}(\cW_{R+\rho})$, then
$\partial_{x_3}\bu\in\bK^{2}_{\beta}(\cW_R)$ and there
exists a constant $C\ge1$ independent of $R$, $\rho$ and $\bu$ such that
\begin{equation} \label{4E6}
   \Norm{\partial_{x_3} \bu}{\bK^{2}_{\beta}(\cW_R)} \leq
   C\Big(\Norm{ \partial_{x_3} \bff}{\bK^0_{\beta+2}(\cW_{R+\rho})}
   + \rho^{-1} \Norm{ \bu}{\bK^2_{\beta}(\cW_{R+\rho})}
   + \rho^{-2} \Norm{ \bu}{\bK^1_{\beta+1}(\cW_{R+\rho})}\Big). \!\!\!\!
\end{equation}
\end{corollary}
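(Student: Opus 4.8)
The plan is to obtain \eqref{4E6} by applying the a priori estimate of Lemma \REF{4L2} to the tangential derivative $\partial_{x_3}\bu$. This requires two ingredients: first, that $\partial_{x_3}\bu$ solves the same boundary value problem \eqref{4Ebvp} as $\bu$ but with right-hand side $\partial_{x_3}\bff$; and second, the genuine regularity $\partial_{x_3}\bu\in\bK^2_\beta(\cW_R)$, which I would establish by the classical difference-quotient method of Nirenberg, using Lemma \REF{4L2} as the underlying a priori bound.

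For the first ingredient, I would exploit that $L$, $T_i$ and $D_i$ are homogeneous with constant coefficients and that the dihedron $\cD=\cK\times\R$, together with its lateral boundary $\Gamma_i\times\R$, is invariant under the translations $\tau_h\colon\bx\mapsto\bx+he_3$ parallel to the edge $\be$. Writing $\delta_h\bu(\bx)=\frac1h\bigl(\bu(\bx+he_3)-\bu(\bx)\bigr)$ for the difference quotient in the edge direction, the constant-coefficient operators commute with $\delta_h$, so $\delta_h\bu$ solves \eqref{4Ebvp} with datum $\delta_h\bff$ on a domain slightly smaller than $\cW_{R+\rho}$: indeed $L\delta_h\bu=\delta_h\bff$ and $T_i\delta_h\bu=D_i\delta_h\bu=0$ on the lateral boundary, which is itself $\tau_h$-invariant.

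The observation that makes the weighted estimate work is that the weight $r=|\bx_\perp|$ does not depend on $x_3$, hence is preserved by $\tau_h$. Consequently, for $0<h<\rho/2$ the difference quotient $\delta_h\bu$ lies in $\bK^2_\beta(\cW_{R+\rho/2})$ (being built from $\tau_h\bu$ and $\bu$, both in $\bK^2_\beta$ on the relevant translated domains), and the standard difference-quotient bound $\Norm{\delta_h v}{\bK^0_{\beta+2}(\cW_{R+\rho/2})}\le\Norm{\partial_{x_3}v}{\bK^0_{\beta+2}(\cW_{R+\rho})}$ carries over verbatim because the weight is an $x_3$-independent factor. I would then apply Lemma \REF{4L2} to $\delta_h\bu$ for the pair $(\cW_R,\cW_{R+\rho/2})$ and bound the three terms on the right uniformly in $h$: the datum term by $\Norm{\partial_{x_3}\bff}{\bK^0_{\beta+2}(\cW_{R+\rho})}$, and the two solution terms by $\Norm{\partial_{x_3}\bu}{\bK^1_{\beta+1}}\le\Norm{\bu}{\bK^2_\beta}$ and $\Norm{\partial_{x_3}\bu}{\bK^0_{\beta+2}}\le\Norm{\bu}{\bK^1_{\beta+1}}$, invoking the inequality $\Norm{\partial^\alpha\bu}{\bK^0_{\beta+2}}\le\Norm{\bu}{\bK^{2-|\alpha|}_{\beta+|\alpha|}}$ already used in the proof of Lemma \REF{4L2}. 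Since the constant in Lemma \REF{4L2} is independent of its argument, this produces a bound on $\Norm{\delta_h\bu}{\bK^2_\beta(\cW_R)}$ uniform in $h$ and of exactly the shape of the right-hand side of \eqref{4E6}.

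I would conclude by weak compactness: the family $\{\delta_h\bu\}$ is bounded, uniformly in $h$, in the Hilbert space $\bK^2_\beta(\cW_R)$, so a subsequence converges weakly to a limit that is necessarily the distributional derivative $\partial_{x_3}\bu$; thus $\partial_{x_3}\bu\in\bK^2_\beta(\cW_R)$, and \eqref{4E6} follows from the weak lower semicontinuity of the norm. The main obstacle is purely the bookkeeping of running the difference-quotient argument cleanly in the weighted spaces while shrinking the domains; this is rendered painless precisely by the $x_3$-independence of the weight, and once $\partial_{x_3}\bu\in\bK^2_\beta$ is secured, the estimate is nothing but Lemma \REF{4L2} applied to $\partial_{x_3}\bu$ with datum $\partial_{x_3}\bff$.
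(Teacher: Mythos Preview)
Your proposal is correct and follows essentially the same route as the paper: apply Lemma~\REF{4L2} to the difference quotient $\delta_h\bu$ on the pair $(\cW_R,\cW_{R+\rho/2})$, use the $x_3$-independence of the weight and the constant-coefficient translation invariance to bound the right-hand side uniformly in $h$ by the three terms in \eqref{4E6}, and pass to the limit. The paper writes the difference quotient as an integral $\bv_h=h^{-1}\int_0^h\partial_{x_3}\bu(\cdot+te_3)\,\rd t$ to obtain the uniform bounds, which is equivalent to your standard difference-quotient inequality, and it simply says ``passing to the limit'' where you invoke weak compactness explicitly.
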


\begin{proof}
For any $h<\rho/2$, we apply \eqref{4E6a} in $\cW_{R+\rho/2}$ to
$\bv_h$ defined by
\[
\bv_h:\bx\to h^{-1}(\bu(\bx+h e_3)-\bu(\bx)),
\]
where $e_3=(0,0,1)$. This yields
\begin{equation}
\label{4E(1)}
\begin{aligned}
\Norm{\bv_h}{\bK^{2}_{\beta}(\cW_R)} \leq
   4C&\Big(\Norm{ L\bv_h}{\bK^0_{\beta+2}(\cW_{R+\rho/2})} \\
   &+ \rho^{-1} \Norm{ \bv_h}{\bK^1_{\beta+1}(\cW_{R+\rho/2})}
   + \rho^{-2} \Norm{\bv_h}{\bK^0_{\beta+2}(\cW_{R+\rho/2})}\Big),\quad
\end{aligned}
\end{equation}
where $C$ is the positive constant from Lemma \REF{4L2}. By
noticing that
\[
\bv_h= h^{-1}\int_0^h \partial_{x_3}\bu(\bx+t e_3)\, \rd t,
\]
we check that for all $h<\rho/2$
\begin{eqnarray*} \Norm{
L\bv_h}{\bK^0_{\beta+2}(\cW_{R+\rho/2})}&\leq& \Norm{
\partial_{x_3} L\bu}{\bK^0_{\beta+2}(\cW_{R+\rho})},\\
 \Norm{ \bv_h}{\bK^1_{\beta+1}(\cW_{R+\rho/2})} &\leq& \Norm{
\partial_{x_3} \bu}{\bK^1_{\beta+1}(\cW_{R+\rho})}
\ \le\
   \Norm{ \bu}{\bK^2_{\beta}(\cW_{R+\rho})},
   \\
 \Norm{ \bv_h}{\bK^0_{\beta+2}(\cW_{R+\rho/2})} &\leq& \Norm{
\partial_{x_3} \bu}{\bK^0_{\beta+2}(\cW_{R+\rho})}
\ \le\
   \Norm{ \bu}{\bK^1_{\beta+1}(\cW_{R+\rho})}.
\end{eqnarray*}
This shows that the right-hand side of \eqref{4E(1)} is bounded uniformly in
$h$. Therefore passing to the limit in \eqref{4E(1)}, we find that
$\partial_{x_3}\bu$ belongs to $\bK^{2}_{\beta}(\cW_R)$ and that
\eqref{4E6} holds.
\end{proof}

\begin{corollary}  \label{4C2}
Under Assumption \REF{4GA}, let $\bu\in \bK^2_{\beta}(\cW_\varepsilon)$ be a solution of \eqref{4Ebvp}.
Let $R\in[0,\varepsilon/2]$ and $R'\ge \varepsilon/2$ with $R+R'\le\varepsilon$.
Then there exists a constant $C$ independent of $R$, $R'$ and $\bu$ such
that for all $\ell\in \N$, we have
\begin{equation}
   \label{4E7new}
   \frac{1}{ \ell!} \Norm{
   \partial_{x_3}^\ell \bu}{\bK^2_{\beta}(\cW_R)}
   \leq C^{\ell+1} \Big\{ \sum_{j=0}^{\ell}\frac{1}{j!}
   \Norm{\partial_{x_3}^j L\bu}{\bK^0_{\beta+2}(\cW_{R+R'})}
   + \Norm{\bu}{\bK^1_{\beta+1}(\cW_{R+R'})} \Big\}.
\end{equation}
\end{corollary}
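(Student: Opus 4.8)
The plan is to obtain \eqref{4E7new} by iterating the single-tangential-derivative estimate \eqref{4E6} of Corollary \ref{4L2coro} along the edge, in the spirit of the Morrey--Nirenberg nested open set technique. The starting observation is that, since $L$, $T_i$, $D_i$ are homogeneous with constant coefficients and the faces $\Gamma_i\times\R$ are invariant under translation in $x_3$, each tangential derivative $\partial_{x_3}^j\bu$ is again a solution of \eqref{4Ebvp} with right-hand side $\partial_{x_3}^j\bff$, where $\bff=L\bu$. We may assume $\partial_{x_3}^j\bff\in\bK^0_{\beta+2}(\cW_{R+R'})$ for all $j\le\ell$, since otherwise the right-hand side of \eqref{4E7new} is infinite; Corollary \ref{4L2coro}, applied successively to $\partial_{x_3}^{j-1}\bu$, then shows inductively that $\partial_{x_3}^j\bu\in\bK^2_{\beta}$ on the relevant domains, so all quantities below are finite. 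Abbreviate, for $0\le\sigma<R'$,
\[
   \Sigma_\ell(\sigma) := \Norm{\partial_{x_3}^\ell\bu}{\bK^2_{\beta}(\cW_{R+\sigma})},\qquad
   F_j := \Norm{\partial_{x_3}^jL\bu}{\bK^0_{\beta+2}(\cW_{R+R'})},\qquad
   U := \Norm{\bu}{\bK^1_{\beta+1}(\cW_{R+R'})}.
\]

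The key step is to rebalance \eqref{4E6}. Applying it to $\partial_{x_3}^{\ell-1}\bu$ (inner radius $R+\sigma$, gap $\rho$ with $\sigma+\rho\le R'$), and using monotonicity of the norm in the domain in the data term, gives for $\ell\ge1$
\[
   \Sigma_\ell(\sigma) \le C_*\Big(F_\ell + \rho^{-1}\Sigma_{\ell-1}(\sigma+\rho)
   + \rho^{-2}\Norm{\partial_{x_3}^{\ell-1}\bu}{\bK^1_{\beta+1}(\cW_{R+\sigma+\rho})}\Big).
\]
The last term appears one power of $\rho^{-1}$ too expensive, but it is not: writing $\partial_{x_3}^{\ell-1}=\partial_{x_3}\partial_{x_3}^{\ell-2}$, every summand of $\Norm{\partial_{x_3}^{\ell-1}\bu}{\bK^1_{\beta+1}(\cW)}$, namely $r^{\beta+1}\partial_{x_3}^{\ell-1}\bu$ and $r^{\beta+2}\partial^\alpha\partial_{x_3}^{\ell-1}\bu$ with $|\alpha|=1$, is \emph{literally} one of the summands of $\Norm{\partial_{x_3}^{\ell-2}\bu}{\bK^2_{\beta}(\cW)}$, with matching weights. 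Hence $\Norm{\partial_{x_3}^{\ell-1}\bu}{\bK^1_{\beta+1}(\cW_{R+\sigma})}\le\Sigma_{\ell-2}(\sigma)$ for $\ell\ge2$. Setting $\Sigma_{-1}(\sigma):=\Norm{\bu}{\bK^1_{\beta+1}(\cW_{R+\sigma})}$ and $\Sigma_{-2}(\sigma):=\Norm{\bu}{\bK^0_{\beta+2}(\cW_{R+\sigma})}$ (both $\le CU$), this yields a \emph{single balanced recursion} valid for every $\ell\ge0$:
\[
   \Sigma_\ell(\sigma) \le C_*\big(F_\ell + \rho^{-1}\Sigma_{\ell-1}(\sigma+\rho) + \rho^{-2}\Sigma_{\ell-2}(\sigma+\rho)\big),\qquad \sigma+\rho\le R'.
\]
For $\ell=0$ this is exactly \eqref{4E6a} of Lemma \ref{4L2}, and for $\ell=1$ it is \eqref{4E6}. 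Now each power of $\rho^{-1}$ is matched by the drop of one tangential order, which is what makes the iteration produce factorial (rather than factorial-squared) growth.

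I would then run the classical Morrey--Nirenberg induction on $\ell$, proving that there is $K\ge1$ depending only on $C_*$ such that, for $0\le\sigma<R'$,
\[
   \Sigma_\ell(\sigma) \le K^{\ell+1}\Big(\frac{\ell!}{(R'-\sigma)^\ell}\sum_{j=0}^\ell\frac{(R'-\sigma)^j}{j!}\,F_j
   + \frac{(\ell+2)!}{(R'-\sigma)^{\ell+2}}\,U\Big).
\]
Here the seed $U$ carries two extra negative powers of $R'-\sigma$, reflecting that it controls a total-order-$1$ quantity whereas $\Sigma_\ell$ is of total order $\ell+2$. For the induction step one chooses $\rho=(R'-\sigma)/(2\ell)$, so that $R'-\sigma-\rho=(R'-\sigma)(1-\tfrac1{2\ell})$ and the bounded factors $(1-\tfrac1{2\ell})^{-(\ell+1)}\le e$ appear; using $\ell!/(\ell-1)!=\ell$, $\ell^2(\ell-2)!\le2\,\ell!$ together with $\rho^{-1}=2\ell/(R'-\sigma)$ and $\rho^{-2}=4\ell^2/(R'-\sigma)^2$, both the data sum and the seed reproduce their own weights up to constants, and $K$ is fixed large enough that inequalities of the form $C_*(4K+17)\le K^2$ absorb all constants. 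Setting $\sigma=0$ and dividing by $\ell!$ gives $\tfrac1{\ell!}\Sigma_\ell(0)\le K^{\ell+1}\big(\sum_{j}(R')^{j-\ell}\tfrac{F_j}{j!}+(\ell+1)(\ell+2)(R')^{-(\ell+2)}U\big)$; since $R'\in[\varepsilon/2,\varepsilon]$, the powers $(R')^{j-\ell}\le(2/\varepsilon)^{\ell}$ and $(R')^{-(\ell+2)}\le(2/\varepsilon)^{\ell+2}$, together with the harmless polynomial factor $(\ell+1)(\ell+2)$, are absorbed into a new constant $C=K\max(1,2/\varepsilon)$, which is precisely \eqref{4E7new}.

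The main obstacle is the second step: recognizing that the seemingly unbalanced $\rho^{-2}$ term in \eqref{4E6} is in fact dominated by the two-orders-lower quantity $\Sigma_{\ell-2}$, through the exact term-by-term identification of the weighted summands. Without this observation one is forced to use the embedding $\bK^2_{\beta}\hookrightarrow\bK^1_{\beta+1}$ at the \emph{same} tangential order, which produces $\rho^{-2}\Sigma_{\ell-1}$ and hence only a Gevrey-$2$ (non-analytic) bound. The remaining difficulty is the bookkeeping of the nested domains --- carrying the distance weights $(R'-\sigma)^{-\ell}$ and $(R'-\sigma)^{-(\ell+2)}$, and the inhomogeneous data $F_j$ with their $1/j!$ factors, consistently through the induction --- which is exactly where the delicacy of the nested open set method resides.
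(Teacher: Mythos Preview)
Your proof is correct and rests on exactly the same key observation as the paper's: the apparently unbalanced $\rho^{-2}\Norm{\partial_{x_3}^{\ell-1}\bu}{\bK^1_{\beta+1}}$ term from \eqref{4E6} is controlled by $\Norm{\partial_{x_3}^{\ell-2}\bu}{\bK^2_{\beta}}$, so that each negative power of $\rho$ is paid for by a drop of one tangential order. The paper exploits this identically (writing it as $\Norm{\partial_{x_3}\bu}{\bK^1_{\beta+1}}\le\Norm{\bu}{\bK^2_{\beta}}$ inside the induction).

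The only difference is in how the nested-domain iteration is packaged. The paper keeps a \emph{fixed} gap $\rho$ throughout the induction, obtaining an estimate of the form
\[
   \Norm{\partial_{x_3}^\ell\bu}{\bK^2_\beta(\cW)}\le (2C)^\ell\Big(\sum_{j=1}^\ell\rho^{-(\ell-j)}F_j+\rho^{-\ell}\Norm{\bu}{\bK^2_\beta}+\rho^{-\ell-1}\Norm{\bu}{\bK^1_{\beta+1}}\Big),
\]
then sets $\rho=\gamma/\ell$ and converts $\ell^{\ell}$ to $\ell!$ via Stirling. You instead carry the distance $(R'-\sigma)$ through the induction with an explicit factorial ansatz and choose $\rho=(R'-\sigma)/(2\ell)$ at each step, obtaining the factorials directly without Stirling. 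Both are classical variants of the Morrey--Nirenberg scheme; your version is slightly more self-contained, while the paper's fixed-$\rho$ version keeps the intermediate estimate \eqref{4E7} in a simple closed form.
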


\begin{proof}
If $\ell=0$, this is a consequence of estimate \eqref{4GA3}. For $\ell\ge1$ the proof is divided into two steps. To keep notations simpler we take $R=0$.

\smallskip
\iti1 We first prove by induction on $\ell$ that if $\rho\leq \varepsilon/(2\ell-1)$, then
\begin{multline}
   \label{4E7}
    \Norm{\partial_{x_3}^\ell \bu}{\bK^2_{\beta}(\cW)}
   \leq (2C)^{\ell} \Big\{ \sum_{j=1}^{\ell}\rho^{-(\ell-j)} \Norm{\partial_{x_3}^j
   L\bu}{\bK^0_{\beta+2}(\cW_{(2\ell-j)\rho})} \\
   +\rho^{-\ell} \Norm{\bu}{\bK^2_{\beta}(\cW_{(2\ell-1)\rho})}
   +\rho^{-\ell-1} \Norm{\bu}{\bK^1_{\beta+1}(\cW_{(2\ell-1)\rho})}
   \Big\},
\end{multline}
where $C\ge1$ is the constant from Corollary  \REF{4L2coro}.
\\
$\bullet$ \ If $\ell=1$, the estimate \eqref{4E7} is nothing else than
\eqref{4E6}. Hence it suffices to show that if \eqref{4E7} holds
for $\ell$, it holds for $\ell+1$.
\\
$\bullet$ \  For that purpose, we first apply
\eqref{4E7} to $\bv_h$ defined as before by
\[
\bv_h:\bx\to h^{-1}(\bu(\bx+h e_3)-\bu(\bx)),
\]
and passing to the limit in $h$, we get
\begin{multline*}
    \Norm{\partial_{x_3}^{\ell+1} \bu}{\bK^2_{\beta}(\cW)}
   \leq (2C)^{\ell} \Big\{ \sum_{j=1}^{\ell}\rho^{-(\ell-j)}
   \Norm{\partial_{x_3}^{j+1}
   L\bu}{\bK^0_{\beta+2}(\cW_{(2\ell+1-j)\rho})} \\
   +\rho^{-\ell} \Norm{\partial_{x_3}\bu}{\bK^2_{\beta}(\cW_{2\ell\rho})}
   +\rho^{-\ell-1} \Norm{\partial_{x_3}\bu}{\bK^1_{\beta+1}(\cW_{2\ell\rho})}
   \Big\}.
\end{multline*}
For the second term of this right-hand side, we apply \eqref{4E6}
to $\bu$ but between $\cW_{2\ell\rho}$ and $\cW_{(2\ell+1)\rho}$, while
for the third term we  use the fact that $
\Norm{\partial_{x_3}\bu}{\bK^1_{\beta+1}(\cW_{2\ell\rho})}\leq
 \Norm{\bu}{\bK^2_{\beta}(\cW_{2\ell\rho})}$. This leads to
 \begin{multline*}
    \Norm{\partial_{x_3}^{\ell+1} \bu}{\bK^2_{\beta}(\cW)}
   \leq (2C)^{\ell} \sum_{j=1}^{\ell}\rho^{-(\ell-j)}
   \Norm{\partial_{x_3}^{j+1}
   L\bu}{\bK^0_{\beta+2}(\cW_{(2\ell+1-j)\rho})} \\
   +(2C)^{\ell} C \rho^{-\ell}
   \Big(\Norm{ \partial_{x_3} L\bu}{\bK^0_{\beta+2}(\cW_{(2\ell+1)\rho})}
   + \rho^{-1} \Norm{ \bu}{\bK^2_{\beta}(\cW_{(2\ell+1)\rho})}
   + \rho^{-2} \Norm{ \bu}{\bK^1_{\beta+1}(\cW_{(2\ell+1)\rho})}\Big)
 \\
   +(2C)^{\ell} \rho^{-\ell-1} \Norm{\bu}{\bK^2_{\beta}(\cW_{2\ell\rho})}.
\end{multline*}
By the change of index $j'=j+1$ in the sum on $j$, we finally get
(since $(2C)^{\ell}\leq  2^{\ell} C^{\ell+1}$)
\begin{multline*}
    \Norm{\partial_{x_3}^{\ell+1} \bu}{\bK^2_{\beta}(\cW)}
   \leq   2^{\ell} C^{\ell+1}   \sum_{j=1}^{\ell+1}\rho^{-(\ell+1-j)}
   \Norm{\partial_{x_3}^{j}
   L\bu}{\bK^0_{\beta+2}(\cW_{(2(\ell+1)-j)\rho})} \\
   +(2C)^{\ell} (C+1) \rho^{-\ell-1}
    \Norm{ \bu}{\bK^2_{\beta}(\cW_{(2\ell+1)\rho})}
   +(2C)^{\ell} C  \rho^{-\ell-2} \Norm{ \bu}{\bK^1_{\beta+1}(\cW_{(2\ell+1)\rho})}.
\end{multline*}
Since $C\ge1$, $C+1\le2C$, and this proves that \eqref{4E7} holds for $\ell+1$.

\smallskip
\iti2 Now we choose $\rho$ such that
\[
\cW_{(2\ell-1)\rho}\subset \cW_{\varepsilon'} \quad\mbox{with}\quad \varepsilon'=\varepsilon/2.
\]
This holds if we take
\[
   \rho = \frac\gamma\ell \quad\mbox{with}\quad
   \gamma = \min\{\frac{\varepsilon}{4},1\} \,.
\]
Hence applying \eqref{4E7} with this choice of $\rho$, we obtain for all $\ell\ge1$
\begin{equation}
\label{4E(4)}
\begin{aligned}
    \Norm{\partial_{x_3}^\ell \bu}{\bK^2_{\beta}(\cW)}
   \leq & (2C)^{\ell} \Big\{ \sum_{j=1}^{\ell}(\gamma^{-1})^{(\ell-j)} \ell^{\ell-j}
   \Norm{\partial_{x_3}^j
   L\bu}{\bK^0_{\beta+2}(\cW_{\varepsilon'})} \\
   &+\ (\gamma^{-1})^{\ell} \ell^{\ell} \Norm{\bu}{\bK^2_{\beta}(\cW_{{\varepsilon'}})}
   +\ (\gamma^{-1})^{\ell+1} \ell^{\ell+1} \Norm{\bu}{\bK^1_{\beta+1}(\cW_{{\varepsilon'}})}
   \Big\}.
\end{aligned}
\end{equation}
Since $\gamma\leq 1$,
$(\gamma^{-1})^{(\ell-j)}\leq (\gamma^{-1})^{\ell}$. Moreover by
Stirling's formula, one has
\[
\ell^{\ell}\leq S^{\ell} \ell\,!
\]
for some $S> 1$. We find
\[
   \frac{\ell^{\ell-j} j!}{\ell!} = \frac{\ell^\ell j! }{\ell! \ell^j}
   \leq  \frac{S^\ell  j!}{ \ell^j} \leq S^\ell\,,
\]
since  $j!\leq j^j\leq \ell^j$.
Inserting this into \eqref{4E(4)} gives, with $C_1=2C\gamma^{-1}S$,
\[
 \Norm{
   \partial_{x_3}^\ell \bu}{\bK^2_{\beta}(\cW)}
   \leq C_1^\ell \Big\{ \sum_{j=1}^{\ell} \frac{\ell!}{j!}\,
   \Norm{\partial_{x_3}^j L\bu}{\bK^0_{\beta+2}(\cW_{\varepsilon'})}
   + \ell!\, \Norm{\bu}{\bK^2_{\beta}(\cW_{\varepsilon'})}
   +\gamma^{-1} \ell\,\ell!\,\Norm{\bu}{\bK^1_{\beta+1}(\cW_{{\varepsilon'}})} \Big\}.
   \]
Using the trivial inequality $\ell\le 2^\ell$, we arrive at
\[
 \Norm{
   \partial_{x_3}^\ell \bu}{\bK^2_{\beta}(\cW)}
   \leq C_2^\ell \Big\{ \sum_{j=1}^{\ell} \frac{\ell!}{j!}\,
   \Norm{\partial_{x_3}^j L\bu}{\bK^0_{\beta+2}(\cW_{\varepsilon'})}
   + \ell!\, \Norm{\bu}{\bK^2_{\beta}(\cW_{\varepsilon'})}
   +\ell!\,\Norm{\bu}{\bK^1_{\beta+1}(\cW_{{\varepsilon'}})} \Big\},
   \]
which, combined with \eqref{4E6a} between $\cW_{\varepsilon'}$ and $\cW_\varepsilon\,$, yields the requested estimate.
\end{proof}

\subsection{Anisotropic estimates in dihedral domains (homogeneous norms)}
We are now ready to prove the main results of this section, namely
the weighted anisotropic regularity of solutions of our local boundary value problem \eqref{4Ebvp}. For this we introduce the following new class of weighted spaces:

\begin{definition}
\label{4DM}
Let $\beta$ be a real number and let $m\ge0$ be an integer.
\begin{itemize}
\item[] Let $\cW$ be a subdomain of the dihedral domain $\cD$.
We recall that $r=|\bx_\perp|$ denotes the distance to the edge $\be\equiv \{\bx_\perp=0\}$.
The \emph{anisotropic weighted space with homogeneous norm} $\rM^m_\beta(\cW)$ is defined by
\begin{equation}
\label{4EM1}
   \rM^m_\beta (\cW) = \big\{ u \in \rL_\loc^2(\cW) \ : \
   r^{\beta +  |\alpha_\perp|}  \partial^{\alpha}_\bx u  \in  \rL^2(\cW), \quad
   \forall\alpha, \; |\alpha| \leq m \big\}
\end{equation}
where  for $\alpha=(\alpha_1,\alpha_2,\alpha_3)$,
$\alpha_\perp=(\alpha_1,\alpha_2)$ is the component of $\alpha$ in
the direction perpendicular to the edge $\be$.
The norm of this space is defined as
\begin{equation}
\label{4EM2}
   \Norm{u}{\rM^m_\beta (\cW)}^2 = \sum_{k=0}^m  \!\sum_{|\alpha|=k}\!
   \Norm{r^{\beta +  |\alpha_\perp|}  \partial^\alpha_\bx u}{0;\, \cW}^2 .
\end{equation}
\end{itemize}
\end{definition}

\begin{theorem}
\label{4T3}
 Let $\beta\in\R$.   Under Assumption \REF{4GA}, let $\bu\in \bK^1_{\beta}(\cW_\varepsilon)$ be a solution of problem \eqref{4Ebvp}.
If $\bff\in \bM^{n}_{\beta+2}(\cW_\varepsilon)$, then
$\bu\in\bM^{n}_{\beta}(\cW)$, and there exists a positive constant $C$ independent of $\bu$ and $n$ such that for all integer $k$, $0\le k\le n$ we have
\begin{multline}
\label{4E7c}
   \frac{1}{k!}\;
   \Big(\sum_{|\alpha|=k}\Norm{r^{\beta+|\alpha_\perp|}
   \partial^\alpha_\bx \bu}{0;\,\cW}^2\Big)^{\frac12} \le
    C^{k+1} \Big\{
   \sum_{\ell=0}^{k} \frac{1}{\ell!}\;
   \Big(\sum_{|\alpha|=\ell}\Norm{r^{\beta+2+|\alpha_\perp|} \partial^\alpha_\bx \bff}{0;
   \,\cW_\varepsilon}^2\Big)^{\frac12}
   \\
   + \Norm{\bu}{\bK^1_{\beta+1}(\cW_\varepsilon)}\Big\}.
\end{multline}
\end{theorem}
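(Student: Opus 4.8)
The plan is to combine the analytic \emph{tangential} regularity already contained in Corollary \REF{4C2} with the isotropic \emph{transverse} regularity shift of Theorem \REF{4T1}\,\itj1, both applied to the edge-derivatives $\bw_\ell:=\partial_{x_3}^\ell\bu$. The key observation is that for the anisotropic weight $r^{\beta+|\alpha_\perp|}$ a derivative $\partial^\alpha_\bx\bu=\partial^{\alpha_\perp}_{\bx_\perp}\partial_{x_3}^\ell\bu$ (with $\ell=\alpha_3$, $p=|\alpha_\perp|$) carries exactly the weight of the purely transverse derivative of order $p$ of $\bw_\ell$ in the \emph{isotropic} space $\bK_\beta$. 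I would therefore split the anisotropic semi-norm according to the number $\ell$ of edge-derivatives and the number $p$ of transverse ones, writing $\SNorm{\bu}{\rM;\,k,\beta\,;\,\cW}^2=\sum_{p+\ell=k}S_{p,\ell}^2$ with $S_{p,\ell}^2=\sum_{|\alpha_\perp|=p}\Norm{r^{\beta+p}\partial^{\alpha_\perp}_{\bx_\perp}\partial_{x_3}^\ell\bu}{0;\cW}^2\le\SNorm{\bw_\ell}{\rK;\,p,\beta\,;\,\cW}^2$, and estimating each block by the two results above.

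First, since $\bff\in\bM^n_{\beta+2}(\cW_\varepsilon)\subset\bK^0_{\beta+2}(\cW_\varepsilon)$, Assumption \REF{4GA} in the equivalent form of Remark \REF{4R1}\,\itj3 upgrades the hypothesis $\bu\in\bK^1_\beta(\cW_\varepsilon)$ to $\bu\in\bK^2_\beta(\cW_{\varepsilon/2})$, so that Corollary \REF{4C2} applies; I fix the nested domains $\cW\subset\cW_{\varepsilon/2}\subset\cW_\varepsilon$. \emph{Step 1 (edge-derivatives).} Applying Corollary \REF{4C2} between $\cW_{\varepsilon/2}$ and $\cW_\varepsilon$, and noting that on the bounded domain $\partial_{x_3}^i\bff$ enters $\bK^0_{\beta+2}$ only through the anisotropic term with $|\alpha_\perp|=0$ (so $\Norm{\partial_{x_3}^i\bff}{\bK^0_{\beta+2}}\le\SNorm{\bff}{\rM;\,i,\beta+2}$), I obtain
\[
   \frac1{\ell!}\Norm{\partial_{x_3}^\ell\bu}{\bK^2_\beta(\cW_{\varepsilon/2})}
   \le B^{\ell+1}\Big(\sum_{i=0}^\ell\frac1{i!}\,\SNorm{\bff}{\rM;\,i,\beta+2\,;\,\cW_\varepsilon}
   + \Norm{\bu}{\bK^1_{\beta+1}(\cW_\varepsilon)}\Big).
\]
This already controls the blocks with $p\le2$, and furnishes $\bw_\ell\in\bK^1_\beta(\cW_{\varepsilon/2})$.

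\emph{Step 2 (transverse shift).} Since $L$, $T_i$, $D_i$ have constant coefficients, $\bw_\ell$ solves problem \eqref{4Ebvp} with right-hand side $\partial_{x_3}^\ell\bff$. Applying the Cauchy estimate \eqref{4EK4} of Theorem \REF{4T1}\,\itj1 (with parameter $\varepsilon/2$, i.e. large domain $\cW_{\varepsilon/2}$) to $\bw_\ell$ gives, for $0\le p\le k-\ell$,
\[
   \frac1{p!}\,\SNorm{\bw_\ell}{\rK;\,p,\beta\,;\,\cW}
   \le A^{p+1}\Big(\sum_{j=0}^{p-2}\frac1{j!}\,
   \SNorm{\partial_{x_3}^\ell\bff}{\rK;\,j,\beta+2\,;\,\cW_{\varepsilon/2}}
   + \Norm{\bw_\ell}{\bK^1_\beta(\cW_{\varepsilon/2})}\Big).
\]
The crucial elementary comparison is that, since $r$ is bounded on $\cW_\varepsilon$ and $\gamma_3\ge0$, the weights satisfy $r^{\beta+2+|\gamma|}\le C_0^{|\gamma|}\,r^{\beta+2+|\gamma_\perp|}$; mapping $\gamma\mapsto(\gamma_\perp,\gamma_3+\ell)$ injectively then yields
\[
   \SNorm{\partial_{x_3}^\ell\bff}{\rK;\,j,\beta+2\,;\,\cW_{\varepsilon/2}}
   \le C_0^{\,j}\,\SNorm{\bff}{\rM;\,j+\ell,\beta+2\,;\,\cW_\varepsilon},
\]
so isotropic seminorms of tangential derivatives are dominated by anisotropic ones.

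Finally I would assemble the blocks. From $\frac1{k!}\le\frac1{p!\,\ell!}$ I bound $\frac1{k!}S_{p,\ell}\le\frac1{\ell!}\big(\frac1{p!}\SNorm{\bw_\ell}{\rK;\,p,\beta\,;\,\cW}\big)$ and insert Steps 1–2; the factorial identity $\frac1{j!\,\ell!}=\binom{j+\ell}{j}\frac1{(j+\ell)!}\le\frac{2^{\,j+\ell}}{(j+\ell)!}$ converts the product of the two Cauchy structures into a single Cauchy structure of order $j+\ell$ (resp. $i$) in $\bff$. Summing the $k+1$ splittings and absorbing $(k+1)$, $A^{p+1}$, $B^{\ell+1}$ and the powers of $2C_0$ into one constant $C^{k+1}$ produces \eqref{4E7c}; then $\bu\in\bM^n_\beta(\cW)$ follows since the right-hand side is finite for every $k\le n$. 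The main obstacle is precisely this last bookkeeping: one must verify that gluing the $\frac1{\ell!}$ of the tangential estimate to the $\frac1{p!}$ of the transverse estimate reproduces an honest order-$k$ Cauchy estimate (hence the binomial/$2^{\,j+\ell}$ trick), and that no right-hand-side term of order exceeding $k$ is generated — which holds because $j\le p-2$ and $i\le\ell$ force all $\bff$-orders to be $\le k$. A secondary technical point is the domain nesting, ensuring that the hypothesis of Theorem \REF{4T1}\,\itj1 for $\bw_\ell$ (regularity on the \emph{larger} domain $\cW_{\varepsilon/2}$) is exactly what Step 1 provides.
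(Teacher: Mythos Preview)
Your proposal is correct and follows essentially the same approach as the paper: both proofs decompose the anisotropic seminorm according to the split $\alpha=(\alpha_\perp,\alpha_3)$, apply the isotropic Cauchy estimate \eqref{4EK4} of Theorem~\REF{4T1}\,\itj1 to $\partial_{x_3}^\ell\bu$, and control the resulting lower-order remainder via the tangential estimate \eqref{4E7new} of Corollary~\REF{4C2}, with the same factorial bookkeeping (your $\tfrac{1}{j!\ell!}\le\tfrac{2^{j+\ell}}{(j+\ell)!}$ plays the role of the paper's $\tfrac{q!}{\ell!k!}\le\tfrac{1}{(\ell+\mu)!}$). The only cosmetic difference is the order: the paper first combines \eqref{4EK4} with \eqref{4GA3} to get \eqref{4EK5} (right side in $\bK^1_{\beta+1}$) and then invokes Corollary~\REF{4C2}, whereas you invoke Corollary~\REF{4C2} first and then \eqref{4EK4}; the ingredients and the logic are the same.
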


\begin{proof}
\itj1 We first  apply the isotropic estimate \eqref{4EK4} between $\cW$ and $\cW_{\varepsilon/4}$, and combine with \eqref{4GA3} between $\cW_{\varepsilon/4}$ and $\cW_{\varepsilon/2}$ ({\em cf.}\ Remark \REF{4R1} \itj1). This yields the estimate for all $k$, $0\le k\le n$, and with $\varepsilon'=\varepsilon/2$
\begin{multline*}
   \frac{1}{k!}\;
   \Big(\sum_{|\alpha|=k}
   \Norm{r^{\beta+|\alpha|}\partial^\alpha_\bx \bu}{0;\,\cW}^2\Big)^{\frac12} \le
    C^{k+1} \Big\{
   \sum_{\ell=0}^{k-2} \frac{1}{\ell!}\;
   \Big(\sum_{|\alpha|=\ell}\Norm{r^{\beta+2+|\alpha|}
   \partial^\alpha_\bx \bff}{0;\,\cW_{\varepsilon'}}^2\Big)^{\frac12}
   \\
   + \Norm{r^{\beta+2}\bff}{0;\,\cW_{\varepsilon'}} +
   \Norm{\bu}{\bK^1_{\beta+1}(\cW_{\varepsilon'})}\Big\}.
\end{multline*}
In order to absorb the term $\Norm{r^{\beta+2}\bff}{0;\,\cW_{\varepsilon'}}$ in the sum on the right-hand side (including when $k=0$ or $1$), we write the previous inequality in the slightly weaker form
\begin{multline*}
   \frac{1}{k!}\;
   \Big(\sum_{|\alpha|=k}
   \Norm{r^{\beta+|\alpha|}\partial^\alpha_\bx \bu}{0;\,\cW}^2\Big)^{\frac12} \le
    C_1^{k+1} \Big\{
   \sum_{\ell=0}^{k} \frac{1}{\ell!}\;
   \Big(\sum_{|\alpha|=\ell}\Norm{r^{\beta+2+|\alpha|}
   \partial^\alpha_\bx \bff}{0;\,\cW_{\varepsilon'}}^2\Big)^{\frac12}
   \\
   +
   \Norm{\bu}{\bK^1_{\beta+1}(\cW_{\varepsilon'})}\Big\}.
\end{multline*}
We reduce the left-hand side to any $\alpha=(\alpha_\perp,0)$ of length
$q\geq 0$, and bound $r^{\beta+2+|\alpha|}$ by $r^{\beta+2+|\alpha_\perp|}$ in the right-hand side (recall that $r$ is bounded in $\cW_\varepsilon$) to obtain for all $q$, $0\le q\le n$
\begin{equation}
\label{4EK5}
\begin{aligned}
   \frac{1}{q!}\,
   \Big(\sum_{|\alpha_\perp|=q}\Norm{r^{\beta+|\alpha_\perp|}
   & \partial^{\alpha_\perp}_{\bx_\perp} \bu}{0;\,\cW}^2\Big)^{\frac12} \le
    C_2^{q+1}  \Big\{ \\
   \sum_{\ell=0}^{q} & \ \frac{1}{\ell!}\,
   \Big(\sum_{|\alpha|=\ell}\Norm{r^{\beta+2+|\alpha_\perp|} \partial^\alpha_\bx \bff}{0;
   \,\cW_{\varepsilon'}}^2\Big)^{\frac12}
   + \Norm{\bu}{\bK^1_{\beta+1}(\cW_{\varepsilon'})} \Big\}.
\end{aligned}
\end{equation}

\iti2 We now prove that for all $\mu=0,\ldots,n$ and for all $q=0,\ldots,n-\mu$
one has the following estimates with $k:=q+\mu$ and a constant $C_3$
independent of $\bu$, $q$ and $\mu$
\begin{equation}
\label{4EK6}
\begin{aligned}
   \frac{1}{k!}\;
   \Big(\sum_{|\alpha_\perp|=q}
   \Norm{r^{\beta+|\alpha_\perp|}
   & \partial^{\alpha_\perp}_{\bx_\perp}\partial_{x_3}^\mu \bu}{0;\,\cW}^2\Big)^{\frac12}
   \le C_3^{k+1} \Big\{ \\
   \sum_{\ell=0}^{k} &\ \frac{1}{\ell!}\;
   \Big(\sum_{|\alpha|=\ell}\Norm{r^{\beta+2+|\alpha_\perp|} \partial^\alpha_\bx \bff}{0;
   \,\cW_\varepsilon}^2\Big)^{\frac12}
   + \Norm{\bu}{\bK^1_{\beta+1}(\cW_\varepsilon)}\Big\}.
\end{aligned}
\end{equation}

\noindent
1. If $\mu=0$, this estimate is a consequence of \eqref{4EK5} since
 $\cW_{\varepsilon'}\subset \cW_\varepsilon$.

\noindent
2. If $\mu>0$ (or equivalently $q<k$), we apply \eqref{4EK5} to
$\partial_{x_3}^\mu \bu$ to obtain
\begin{equation}
\label{4E(3)}
 \begin{aligned}
   \frac{1}{q!}\,
   \Big(\sum_{|\alpha_\perp|=q}\Norm{r^{\beta+|\alpha_\perp|}
   & \partial^{\alpha_\perp}_{\bx_\perp} \partial_{x_3}^\mu \bu}{0;\,\cW}^2\Big)^{\frac12}
   \le C_2^{q+1}  \Big\{ \\
   \sum_{\ell=0}^{q} \frac{1}{\ell!}\,
   \Big(&\sum_{|\alpha|=\ell}\Norm{r^{\beta+2+|\alpha_\perp|}
   \partial^\alpha_\bx\partial_{x_3}^\mu  \bff}{0;
   \,\cW_{\varepsilon'}}^2\Big)^{\frac12}
   + \Norm{\partial_{x_3}^\mu \bu}{\bK^1_{\beta+1}(\cW_{\varepsilon'})} \Big\}.
\end{aligned}
\end{equation}
The last term of this right-hand side is now estimated with the help
of Corollary \REF{4C2}. Using that
\[
   \Norm{ \partial^\mu_{x_3} \bu}{\bK^1_{\beta+1}(\cW_{\varepsilon'})} \le
   \Norm{ \partial^{\mu-1}_{x_3}\bu}{\bK^2_{\beta}(\cW_{\varepsilon'})} ,
\]
and applying \eqref{4E7new} between
$\cW_{\varepsilon'}$ and $\cW_{\varepsilon}$ with $\ell=\mu-1$, we
obtain
\[
   \Norm{ \partial^\mu_{x_3} \bu}{\bK^1_{\beta+1}(\cW_{\varepsilon'})}
   \leq C_4^\mu (\mu-1)! \,\Big( \sum_{j=0}^{\mu-1} \frac{1}{j!}
   \Norm{\partial_{x_3}^j \bff}{\bK^0_{\beta+2}(\cW_\varepsilon)}
   + \Norm{\bu}{\bK^1_{\beta+1}(\cW_\varepsilon)} \Big).
\]
Using this estimate in \eqref{4E(3)} we obtain that
\begin{align*}
   \frac{1}{q!}\,
   \Big(\sum_{|\alpha_\perp|=q}\Norm{r^{\beta+|\alpha_\perp|}
   \partial^{\alpha_\perp}_{\bx_\perp}\partial_{x_3}^\mu \bu}{0;\,\cW}^2\Big)^{\frac12} \le
    C_2^{q+1}
   \sum_{\ell=0}^{q} \frac{1}{\ell!}\,
   \Big(\sum_{|\alpha|=\ell}\Norm{r^{\beta+2+|\alpha_\perp|}
   \partial_{x_3}^\mu \partial^\alpha_\bx  \bff}{0;\,\cW_\varepsilon}^2\Big)^{\frac12}
   \\
   +\ C_2^{q+1} C_4^\mu (\mu-1)! \, \Big( \sum_{j=0}^{\mu-1}\frac{1}{j!}
   \Norm{\partial_{x_3}^j \bff}{\bK^0_{\beta+2}(\cW_\varepsilon)}
   + \Norm{\bu}{\bK^1_{\beta+1}(\cW_\varepsilon)}\Big).
\end{align*}
Multiplying this estimate by ${q!}({k!})^{-1}$, we find (since $q!(\mu-1)!({k!})^{-1}\le1$)
\begin{align*}
   \frac{1}{k!}\,
   \Big(\sum_{|\alpha_\perp|=q}\Norm{r^{\beta+|\alpha_\perp|}
   \partial^{\alpha_\perp}_{\bx_\perp}\partial_{x_3}^\mu \bu}{0;\,\cW}^2\Big)^{\frac12} \le
    C_2^{q+1}
   \sum_{\ell=0}^{q} \frac{q!}{\ell!k!}\,
   \Big(\sum_{|\alpha|=\ell}\Norm{r^{\beta+2+|\alpha_\perp|}
   \partial_{x_3}^\mu \partial^\alpha_\bx  \bff}{0;\,\cW_\varepsilon}^2\Big)^{\frac12}
   \\
   +\ C_2^{q+1}C_4^\mu \Big(  \sum_{j=0}^{\mu-1}\frac{1}{j!}
   \Norm{\partial_{x_3}^j \bff}{\bK^0_{\beta+2}(\cW_\varepsilon)}
   +
   \Norm{\bu}{\bK^1_{\beta+1}(\cW_\varepsilon)}\Big).
\end{align*}
For the first term of this right-hand side we finally notice that $
\partial_{x_3}^\mu \partial^\alpha=
\partial^{\alpha+(0,0,\mu)}$
and that $|\alpha+(0,0,\mu)|=\ell+\mu$. Hence we have to check that
\[
\frac{q!}{\ell!k!}\leq \frac{1}{(\ell+\mu)!},\] which is equivalent to
\[
\frac{(\ell+\mu)!q!}{\ell!k!}\leq 1,\] and holds since $\ell+\mu\leq k$
and $q\leq k$.

Altogether we have proved that \eqref{4EK6} holds for all $\mu\in \N$ such
that $q+\mu=k$.

\smallskip
\iti3 Summing the square of this estimate \eqref{4EK6} on $q=0,\ldots, k$ and
$\mu=0,\ldots, k-q$, we arrive at
\[
\begin{aligned}
   \frac{1}{k!}\;
   \Big(\sum_{|\alpha|=k}
   \Norm{r^{\beta+|\alpha_\perp|}\partial^\alpha\bu}{0;\,\cW}^2\Big)^{\frac12} \le
    k^2C_3^{k+1} \Big(
   \sum_{\ell=0}^{k} \frac{1}{\ell!}\;
   \Big(\sum_{|\alpha|=\ell}\Norm{r^{\beta+2+|\alpha_\perp|}
   \partial^\alpha    \bff}{0;
   \,\cW_\varepsilon}^2\Big)^{\frac12}
   \\
   + \Norm{\bu}{\bK^1_{\beta+1}(\cW_\varepsilon)}\Big).
  \end{aligned}
\]
This proves the theorem.
\end{proof}

\subsection{Anisotropic estimates in dihedral domains (non-homogeneous norms)}
In this last part of section \ref{sec4} devoted to local estimates in dihedral domains, we investigate the situation where the a priori estimate holds in the $\rJ$-weighted scale instead the $\rK$ scale. We set:

\begin{assumption}
\label{4GB}
Let $\beta\in\R$. Let $m\ge1$ be an integer such that $m+1\ge-\beta$.
We assume that the following a priori estimate holds for problem \eqref{4Ebvp}:
There is a constant $C$ such that
any
\[
   \bu\in \bJ^{m+1}_{\beta}(\cW)\,,
\]
solution of problem \eqref{4Ebvp} in $\cV=\cW\ee'$ with $\bff\in\bJ^{m-1}_{\beta+2}(\cW\ee')$,
satisfies:
\begin{equation}
\label{4GB3}
   \Norm{\bu}{\bJ^{m+1}_{\beta}(\cW)} \leq
   C\Big(\Norm{ \bff}{\bJ^{m-1}_{\beta+2}(\cW\ee')}
   + \Norm{ \bu}{\bJ^m_{\beta+1}(\cW\ee')}\Big).
\end{equation}
\end{assumption}

\begin{remark}
\label{4R2}
Using the analogue of Proposition \REF{2P1} for dihedral domains, we obtain that in the situation of Assumption \REF{4GB} the norm in the space
$\rJ^{m+1}_\beta(\cW)$ is equivalent to
\begin{equation}
\label{4EJmax}
   \Bigl( \sum_{|\alpha|\le m+1}
   \Norm{r^{\max\{ \beta+|\alpha|,\,0 \}} \partial^\alpha_\bx u}
   {0; \,\cW}^2 \Bigr)^{\frac12}.
\end{equation}
\end{remark}

\begin{remark}
\label{4RGB}
Along the same lines as Remark \REF{4RGA}, we have a characterization of Assumption \REF{4GB} by the partial Fourier symbol of $L$: Assumption \REF{4GB} holds if (and only if) problem \eqref{4Ebvpxi} defines a injective operator with closed range from $\bJ^{m+1}_\beta(\cK)$ into $\bJ^{m-1}_{\beta+2}(\cK)$ for $\xi=\pm1$.
\end{remark}

The non-homogeneous anisotropic weighted spaces are defined as follows on the model of the homogeneous ones (Definition \REF{4DM}):

\begin{definition}
\label{4DN}
Let $\beta\in\R$. Let $n\ge1$ be a natural number such that $n\ge-\beta$.
\begin{itemize}
 \item[] Let $\cW$ be a subdomain of the dihedral domain $\cD$. 
The \emph{anisotropic weighted space with non-homogeneous norm} $\rN^n_\beta(\cW)$ is defined by
\begin{equation}
\label{4EN1}
   \rN^n_\beta (\cW) = \big\{ u \in \rL_\loc^2(\cW) \ : \
   r^{\max\{\beta+|\alpha_\perp|,\,0\}}  \partial^{\alpha}_\bx u  \in  \rL^2(\cW), \quad
   \forall\alpha, \; |\alpha| \leq n \big\}
\end{equation}
endowed with its natural norm.
\end{itemize}
\end{definition}

Our aim is to prove the ``non-homogeneous'' analogue of Theorem \REF{4T3}:

\begin{theorem}
\label{4T4}
Let $\beta\in\R$. Let $m\ge1$ be an integer such that $m+1\ge-\beta$.  
Under Assumption \REF{4GB}, let $\bu\in \bJ^m_{\beta}(\cW_\varepsilon)$ be a solution of problem \eqref{4Ebvp}.
If $\bff\in \bN^{n}_{\beta+2}(\cW_\varepsilon)$ for an integer $n>m$, then
$\bu\in\bN^{n}_{\beta}(\cW)$, and there exists a positive constant $C$ independent of $\bu$ and $n$ such that for all integer $k$, $0\le k\le n$ we have
\begin{multline}
\label{4E8}
   \frac{1}{k!}\;
   \Big(\sum_{|\alpha|=k}\Norm{r^{\max\{\beta+|\alpha_\perp|,\,0\}}
   \partial^\alpha_\bx \bu}{0;\,\cW}^2\Big)^{\frac12} \le
    C^{k+1} \Big\{ \\
   \sum_{\ell=0}^{k} \frac{1}{\ell!}\;
   \Big(\sum_{|\alpha|=\ell}
   \Norm{r^{\max\{\beta+2+|\alpha_\perp|,\,0\}} \partial^\alpha_\bx \bff}{0;
   \,\cW_\varepsilon}^2\Big)^{\frac12}
   + \Norm{\bu}{\bJ^m_{\beta+1}(\cW_\varepsilon)}\Big\}.
\end{multline}
\end{theorem}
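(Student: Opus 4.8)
The plan is to follow the architecture of the proof of Theorem~\ref{4T3} almost verbatim, replacing the homogeneous scale $\bK$/$\bM$ by the non-homogeneous scale $\bJ$/$\bN$ and assembling three ingredients: the isotropic $\bJ$-estimate \eqref{4EJ4} of Theorem~\ref{4T1}\,(ii), the a priori estimate \eqref{4GB3} of Assumption~\ref{4GB}, and a tangential analytic regularity estimate in the $\bJ$-scale (the analogue of Corollary~\ref{4C2}). The passage between the homogeneous weights $r^{\beta+|\alpha|}$ produced by \eqref{4EJ4} and the step weights $r^{\max\{\beta+|\alpha_\perp|,0\}}$ defining $\bN$ (Definition~\ref{4DN}) will be handled by the step-weight equivalence of Remark~\ref{4R2}, i.e.\ the dihedral analogue of Proposition~\ref{2P1}. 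Throughout, the elementary comparisons rest on two facts: $r\le1$ on the reference domains and $|\alpha_\perp|\le|\alpha|$; together with the hypothesis $m+1\ge-\beta$ they ensure that $\beta+2+|\alpha|\ge0$ whenever $|\alpha|\ge m-1$, so that all exponents that occur are nonnegative and the weight inequalities point in the needed direction.

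First I would record the tangential analytic estimate. Under Assumption~\ref{4GB} the whole chain Lemma~\ref{4L2} $\to$ Corollary~\ref{4L2coro} $\to$ Corollary~\ref{4C2} carries over word for word to the $\bJ$-scale, yielding a constant $C$ such that for all $\ell\in\N$
\[
   \frac{1}{\ell!}\Norm{\partial_{x_3}^\ell\bu}{\bJ^{m+1}_\beta(\cW_R)}
   \le C^{\ell+1}\Big\{\sum_{j=0}^\ell\frac{1}{j!}
   \Norm{\partial_{x_3}^j\bff}{\bJ^{m-1}_{\beta+2}(\cW_{R+R'})}
   + \Norm{\bu}{\bJ^m_{\beta+1}(\cW_{R+R'})}\Big\}.
\]
The only structural input needed is that $\partial_{x_3}$ commutes with the weight $r=|\bx_\perp|$ (which depends on $\bx_\perp$ only) and with the constant-coefficient operators $L,T_i,D_i$; the cut-off/commutator $\rho$-estimate, the difference-quotient argument in $x_3$, and the induction are then identical, now starting from \eqref{4GB3} instead of \eqref{4GA3}.

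Next comes the transverse estimate, the $\bN$-analogue of \eqref{4EK5}. For transverse order $q\ge m$ I would apply \eqref{4EJ4} between $\cW$ and $\cW_{\varepsilon/4}$ and eliminate its basic term $\SNorm{\bu}{\rK;m,\beta;\cW_{\varepsilon/4}}\le\Norm{\bu}{\bJ^m_\beta(\cW_{\varepsilon/4})}$ by combining the embedding $\bJ^{m+1}_\beta\subset\bJ^m_\beta$ (the dihedral analogue of Corollary~\ref{2C2}) with \eqref{4GB3} between $\cW_{\varepsilon/4}$ and $\cW_{\varepsilon/2}$, which bounds it by $\Norm{\bff}{\bJ^{m-1}_{\beta+2}}+\Norm{\bu}{\bJ^m_{\beta+1}}$. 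Restricting the left-hand side to purely transverse multi-indices $\alpha=(\alpha_\perp,0)$, where the homogeneous weight $r^{\beta+q}$ dominates the $\bN$-weight $r^{\max\{\beta+q,0\}}$, and using $r^{\beta+2+|\alpha|}\le r^{\max\{\beta+2+|\alpha_\perp|,0\}}$ on the right-hand side, the homogeneous estimate converts into one whose $\bff$-terms are genuine $\bN$-terms; the low orders $0\le\ell\le m-1$ absorbed from $\Norm{\bff}{\bJ^{m-1}_{\beta+2}}$ extend the sum down to $\ell=0$ exactly as in the proof of Theorem~\ref{4T3}. For $q<m$ the same transverse estimate is immediate, since the corresponding $\bN$-semi-norm is a piece of $\Norm{\bu}{\bJ^m_\beta}$ by Remark~\ref{4R2}, again bounded through \eqref{4GB3}.

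Finally I would run the two remaining steps of Theorem~\ref{4T3}. Applying the transverse estimate to $\partial_{x_3}^\mu\bu$, a solution of \eqref{4Ebvp} with right-hand side $\partial_{x_3}^\mu\bff$ because $L,T_i,D_i$ are translation invariant in $x_3$, and noting that $r^{\max\{\beta+q,0\}}=r^{\max\{\beta+|\alpha_\perp|,0\}}$ for $\alpha=(\alpha_\perp,\mu)$ with $|\alpha_\perp|=q$, the left-hand side becomes exactly the $\bN$-weighted norm of the mixed derivative $\partial^\alpha\bu$. The basic term $\Norm{\partial_{x_3}^\mu\bu}{\bJ^m_{\beta+1}(\cW_{\varepsilon/2})}\le\Norm{\partial_{x_3}^{\mu-1}\bu}{\bJ^{m+1}_\beta(\cW_{\varepsilon/2})}$ is then controlled by the tangential estimate with $\ell=\mu-1$, and the factorial bookkeeping $\frac{q!}{\ell!\,k!}\le\frac{1}{(\ell+\mu)!}$ together with $\frac{q!\,(\mu-1)!}{k!}\le1$ (for $k=q+\mu$) yields the per-$(q,\mu)$ estimate, i.e.\ the $\bN$-analogue of \eqref{4EK6}; here the $\bff$-contributions $\partial_{x_3}^\mu\partial^\alpha\bff=\partial^{\alpha+(0,0,\mu)}\bff$ and $\partial_{x_3}^j\bff$ all reassemble into $\bN$-terms of $\bff$ of order $\le k$ by the same $|\alpha_\perp|\le|\alpha|$ comparison. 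Squaring and summing over the $k+1$ pairs $(q,\mu)$ with $q+\mu=k$ reconstitutes the full order-$k$ $\bN$-semi-norm of $\bu$, the factor $k+1$ being absorbed into $C^{k+1}$, which is \eqref{4E8}; boundedness of $\cW$ together with $\bu\in\bJ^m_\beta$ then gives $\bu\in\bN^n_\beta(\cW)$ as in the last paragraph of the proof of Theorem~\ref{2T2}. The main obstacle is not any single inequality but the disciplined handling of the three different weights (homogeneous, step, and transverse step) so that every comparison points the right way; the conceptual crux that makes the anisotropy work is, as above, that $\partial_{x_3}$ commutes with the weight $r$, so tangential derivatives may be peeled off first and the transverse estimate applied afterwards.
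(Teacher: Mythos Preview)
Your proposal is correct and follows essentially the same route as the paper: tangential analytic estimate in the $\bJ$-scale (the paper's \eqref{4E8new}), transverse estimate split into low and high orders, then the mixed $(q,\mu)$ combination with the same factorial bookkeeping. One small imprecision worth flagging: the chain Lemma~\ref{4L2} $\to$ Corollary~\ref{4C2} does \emph{not} carry over literally word for word, because the commutator $[L,\chi_\rho]$ estimated in the $\bJ^{m-1}_{\beta+2}$ norm produces a full tail $\sum_{\lambda=0}^m \rho^{-1-\lambda}\Norm{\bu}{\bJ^{m-\lambda}_{\beta+1+\lambda}}$ rather than just two terms; the paper writes this out explicitly before iterating, but it does collapse to exactly the tangential estimate you display, so your conclusion is right.
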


\begin{proof}
We first notice that Theorem \REF{4T1} 
yields $\bu\in \bJ^{n+2}_{\beta}(\cW_{\varepsilon'})$ for any $\varepsilon'\in(0,\varepsilon)$.
As $n\geq m-1$, we have obtained the basic regularity $\bu\in \bJ^{m+1}_{\beta}(\cW_{\varepsilon'})$.
We review now the sequence of steps leading to Theorem \REF{4T3} and adapt them to non-homogeneous norms.  

\iti1 Applying \eqref{4GB3} to $\chi_\rho\bu$ with the function $\chi_\rho$ introduced in \eqref{4Ecut3}, we obtain, -- compare with \eqref{4E6a},
\[
   \Norm{\bu}{\bJ^{m+1}_{\beta}(\cW_R)} \leq
   C\Big(\Norm{ \bff}{\bJ^{m-1}_{\beta+2}(\cW_{R+\rho})}
   + \sum_{\lambda=0}^m \rho^{-1-\lambda}
   \Norm{ \bu}{\bJ^{m-\lambda}_{\beta+1+\lambda}(\cW_{R+\rho})}
   \Big).
\]

\iti2 By the differential quotients technique we deduce, -- compare with \eqref{4E6},
\[
   \Norm{\partial_{x_3}\bu}{\bJ^{m+1}_{\beta}(\cW_R)} \leq
   C\Big(\Norm{ \partial_{x_3}\bff}{\bJ^{m-1}_{\beta+2}(\cW_{R+\rho})}
   + \sum_{\lambda=0}^m \rho^{-1-\lambda}
   \Norm{ \bu}{\bJ^{m+1-\lambda}_{\beta+\lambda}(\cW_{R+\rho})}
   \Big),
\]
since $\Norm{\partial_{x_3}\bu}{\bJ^{m-\lambda}_{\beta+1+\lambda}(\cW_{R+\rho})}$ is bounded by $\Norm{\bu}{\bJ^{m+1-\lambda}_{\beta+\lambda}(\cW_{R+\rho})}$.

\iti3 Iterating this on the model of \eqref{4E7} we find for $\ell\ge1$
\begin{align*}
    \Norm{\partial_{x_3}^\ell \bu}{\bJ^{m+1}_{\beta}(\cW)}
   \leq (2C)^{\ell} \Big\{ &\sum_{j=1}^{\ell}\rho^{-(\ell-j)} \Norm{\partial_{x_3}^j
   L\bu}{\bJ^{m-1}_{\beta+2}(\cW_{(2\ell-j)\rho})} \\
   &+ \sum_{\lambda=0}^m  \rho^{-\ell-\lambda}
   \Norm{ \bu}{\bJ^{m+1-\lambda}_{\beta+\lambda}(\cW_{(2\ell-1)\rho})}
   \Big\},
\end{align*}
leading to the analytic type estimate,  -- compare with
\eqref{4E7new},
\begin{equation}
   \label{4E8new}
   \frac{1}{ \ell!} \Norm{
   \partial_{x_3}^\ell \bu}{\bJ^{m+1}_{\beta}(\cW_R)}
   \leq C^{\ell+1} \Big\{ \sum_{j=0}^{\ell}\frac{1}{j!}
   \Norm{\partial_{x_3}^j L\bu}{\bJ^{m-1}_{\beta+2}(\cW_{R+R'})}
   + \Norm{\bu}{\bJ^m_{\beta+1}(\cW_{R+R'})} \Big\}.
\end{equation}

\iti4 To prove \eqref{4E8}, we start with the proof of, -- compare with \eqref{4EK5},
\begin{equation}
\label{4EJ5}
\begin{aligned}
   \frac{1}{q!}\,
   \Big(\sum_{|\alpha_\perp|=q}
   &\Norm{r^{\max\{\beta+|\alpha_\perp|,\,0\}}
   \partial^{\alpha_\perp}_{\bx_\perp} \bu}{0;\,\cW}^2\Big)^{\frac12} \le
    C_2^{q+1}  \Big\{ \\
   &\sum_{\ell=0}^{q}  \ \frac{1}{\ell!}\,
   \Big(\sum_{|\alpha|=\ell}\Norm{r^{\max\{\beta+2+|\alpha_\perp|,\,0\}}
   \partial^\alpha_\bx \bff}{0;
   \,\cW_{\varepsilon'}}^2\Big)^{\frac12}
   + \Norm{\bu}{\bJ^m_{\beta+1}(\cW_{\varepsilon'})} \Big\}.
\end{aligned}
\end{equation}
$\bullet$ \ For $q=0,\ldots,m$, we rely on the estimate \eqref{4GB3} combined with the use of the norm \eqref{4EJmax} for $\bJ^{m+1}_{\beta}(\cW)$: If we restrict the left-hand side to the derivatives of the form $\partial^{\alpha_\perp}_{\bx_\perp}$ and replace the weight $r^{\max\{\beta+2+|\alpha|,\,0\}}$ by $r^{\max\{\beta+2+|\alpha_\perp|,\,0\}}$ in the right-hand side, we obtain \eqref{4EJ5}.

\noindent
$\bullet$ \ For $q\ge m+1$, we combine the estimate \eqref{4GB3} with the isotropic non-homogeneous estimate \eqref{4EJ4} and making the same restriction to $\partial^{\alpha_\perp}_{\bx_\perp}$ in the left-hand side and the same change of weights in the right-hand side.

\smallskip
\iti5 We continue with the proof that for all $\mu=0,\ldots,n$ and for all $q=0,\ldots,n-\mu$
one has the following estimates with $k:=q+\mu$ and a constant $C_3$
independent of $\bu$, $q$ and $\mu$
\begin{equation}
\label{4EJ6}
\begin{aligned}
   \frac{1}{k!}\;
   \Big(\sum_{|\alpha_\perp|=q}
   &\Norm{r^{\max\{\beta+|\alpha_\perp|,\,0\}}
   \partial^{\alpha_\perp}_{\bx_\perp}\partial_{x_3}^\mu \bu}{0;\,\cW}^2\Big)^{\frac12}
   \le C_3^{k+1} \Big\{ \\
   &\sum_{\ell=0}^{k}\ \frac{1}{\ell!}\;
   \Big(\sum_{|\alpha|=\ell}
   \Norm{r^{\max\{\beta+2+|\alpha_\perp|,\,0\}} \partial^\alpha_\bx \bff}{0;
   \,\cW_\varepsilon}^2\Big)^{\frac12}
   + \Norm{\bu}{\bJ^m_{\beta+1}(\cW_\varepsilon)}\Big\}.
\end{aligned}
\end{equation}

\noindent
1. If $\mu=0$, this estimate is a consequence of \eqref{4EJ5} since
 $\cW_{\varepsilon'}\subset \cW_\varepsilon$.

\noindent
2. If $\mu>0$ (or equivalently $q<k$), we apply \eqref{4EJ5} to
$\partial_{x_3}^\mu \bu$ to obtain
\begin{equation}
\label{4EJ(3)}
 \begin{aligned}
   \frac{1}{q!}\,
   \Big(\sum_{|\alpha_\perp|=q} & \Norm{r^{\max\{\beta+|\alpha_\perp|,\,0\}}
   \partial^{\alpha_\perp}_{\bx_\perp} \partial_{x_3}^\mu \bu}{0;\,\cW}^2\Big)^{\frac12}
   \le C_2^{q+1}  \Big\{ \\
   &\sum_{\ell=0}^{q} \frac{1}{\ell!}\,
   \Big(\sum_{|\alpha|=\ell}\Norm{r^{\max\{\beta+2+|\alpha_\perp|,\,0\}}
   \partial^\alpha_\bx\partial_{x_3}^\mu  \bff}{0;
   \,\cW_{\varepsilon'}}^2\Big)^{\frac12}
   + \Norm{\partial_{x_3}^\mu \bu}{\bJ^m_{\beta+1}(\cW_{\varepsilon'})} \Big\}.
\end{aligned}
\end{equation}
The last term of this right-hand side is now estimated with the help
of \eqref{4E8new} with $\ell=\mu-1$
\[
   \Norm{ \partial^\mu_{x_3} \bu}{\bJ^m_{\beta+1}(\cW_{\varepsilon'})}
   \!\leq \Norm{ \partial^{\mu-1}_{x_3} \bu}{\bJ^{m+1}_{\beta}(\cW_{\varepsilon'})}
   \!\leq C_4^\mu (\mu-1)! \Big( \sum_{j=0}^{\mu-1} \frac{1}{j!}
   \Norm{\partial_{x_3}^j \bff}{\bJ^{m-1}_{\beta+2}(\cW_\varepsilon)}
   + \Norm{\bu}{\bJ^{m}_{\beta+1}(\cW_\varepsilon)} \Big).
\]
Using this estimate in \eqref{4EJ(3)} we obtain that
\begin{align*}
   \frac{1}{q!}\,
   \Big(\sum_{|\alpha_\perp|=q}\Norm{r^{\max\{\beta+|\alpha_\perp|,\,0\}}
   &\partial^{\alpha_\perp}_{\bx_\perp}\partial_{x_3}^\mu \bu}{0;\,\cW}^2\Big)^{\frac12} \le
   \\
   & C_2^{q+1}
   \sum_{\ell=0}^{q} \frac{1}{\ell!}\,
   \Big(\sum_{|\alpha|=\ell}\Norm{r^{\max\{\beta+2+|\alpha_\perp|,\,0\}}
   \partial^\alpha_\bx \partial_{x_3}^\mu \bff}{0;\,\cW_\varepsilon}^2\Big)^{\frac12}
   \\
   +\ & C_2^{q+1} C_4^\mu (\mu-1)! \, \Big( \sum_{j=0}^{\mu-1}\frac{1}{j!}
   \Norm{\partial_{x_3}^j \bff}{\bJ^{m-1}_{\beta+2}(\cW_\varepsilon)}
   + \Norm{\bu}{\bJ^m_{\beta+1}(\cW_\varepsilon)}\Big).
\end{align*}
We note that the norm in the space $\bJ^{m-1}_{\beta+2}(\cW_\varepsilon)$ is equivalent to ({\em cf.}\ \eqref{4EJmax})
\[
   \Bigl( \sum_{|\alpha|\le m-1}
   \Norm{r^{\max\{ \beta+2+|\alpha|,\,0 \}} \partial^\alpha_\bx u}
   {0; \,\cW}^2 \Bigr)^{\frac12}.
\]
Thus dividing the latter estimate by $\mu!$ and recalling that $k=q+\mu$ we deduce
\begin{align*}
   \frac{1}{k!}\,
   \Big(\sum_{|\alpha_\perp|=q}\Norm{ & r^{\max\{\beta+|\alpha_\perp|,\,0\}}
   \partial^{\alpha_\perp}_{\bx_\perp}\partial_{x_3}^\mu \bu}{0;\,\cW}^2\Big)^{\frac12} \le
   \\
   & C_5^{k+1}
   \sum_{\ell=0}^{q} \frac{1}{\ell!\mu!}\,
   \Big(\sum_{|\alpha|=\ell}\Norm{r^{\max\{\beta+2+|\alpha_\perp|,\,0\}}
   \partial^\alpha_\bx \partial_{x_3}^\mu \bff}{0;\,\cW_\varepsilon}^2\Big)^{\frac12}
   \\
   +\ & C_5^{k+1} \Big( \sum_{j=0}^{\mu-1}\frac{1}{j!}
   \sum_{|\alpha|\le m-1}
   \Norm{r^{\max\{ \beta+2+|\alpha|,\,0 \}} \partial^\alpha_\bx \partial_{x_3}^j \bff}
   {0; \,\cW_\varepsilon}
    + \Norm{\bu}{\bJ^m_{\beta+1}(\cW_\varepsilon)}\Big).
\end{align*}
From this we deduce \eqref{4EJ6}. The final way to \eqref{4E8} is very similar to the conclusion of the proof of Theorem \REF{4T3}. This ends the proof of Theorem \REF{4T4}.
\end{proof}

\begin{remark}
\label{4R3}
We note some similarities between our estimates and those obtained in \cite{GuoBabuska97b} for the Laplace operator. Our argument based on the dyadic partition technique clearly improves the structure of the whole proof.
\end{remark}

\section{Natural anisotropic weighted regularity shift in polyhedra}
\label{sec5}
\subsection{Edge and corner neighborhoods}
Let $\Omega$ be a polyhedron in $\R^3$, that is a domain whose
boundary is a finite union of plane domains (the faces $\Gamma_\bs$,
$\bs\in\sS$). The faces are polygonal, the segments forming their
boundaries are the edges $\be$ of $\Omega$, and the ends of the
edges are the corners $\bc$ of $\Omega$. We denote the set of edges
by $\sE$ and the set of corners by $\sC$. Edge openings may be equal to $2\pi$,
allowing domains with crack surfaces.

In order to prove global regularity results in suitable weighted
Sobolev spaces, we introduce corner,
edge and  edge-vertex neighborhoods of $\Omega$.
For a fixed corner $\bc\in \sC$, we denote by $\sE_\bc$
the set of edges that have $\bc$ as extremities. Similarly for a
fixed edge $\be\in \sE$, we denote by $\sC_\be$ the set of corners
that are  extremities of  $\be$. Now we introduce the following
distances:
\begin{equation}
\label{5E0}
r_\bc(\bx)=\dist(\bx, \bc), \quad r_\be(\bx)=\dist(\bx, \be),
\quad
\rho_{\bc\be}(\bx)=\frac{r_\be(\bx)}{r_\bc(\bx)} \,.
\end{equation}
There exists $\varepsilon>0$ small enough such that if we set
\begin{subequations}
\begin{eqnarray}
\label{5Ea}
   \Omega_\be&=&\{\bx\in \Omega\, :\, r_\be(\bx)<\varepsilon\ \hbox{ and }\
   r_{\bc}(\bx)>\varepsilon/2 \quad\forall \bc\in \sC_\be\},
   \nonumber\\
   \Omega_\bc&=&\{\bx\in \Omega\, :\, r_\bc(\bx)<\varepsilon \ \hbox{ and } \
   \rho_{\bc\be}(\bx)>\varepsilon/2 \quad\forall \be\in \sE_\bc\},\\
  \Omega_{\bc\be}&=&\{\bx\in \Omega\, :\, r_\bc(\bx)<\varepsilon \ \hbox{
   and } \ \rho_{\bc\be}(\bx)<\varepsilon\},
   \nonumber
\end{eqnarray}
we have the following properties:
\begin{equation}
\label{5E1}
   \left\{ \begin{array}{rcll}
   \ov\Omega_\be\cap\ov\Omega_{\be'}&=&\varnothing, &\forall\be'\neq\be,
   \\[0.4ex]
   \ov\cB(\bc,\varepsilon)\cap\ov\cB(\bc',\varepsilon) &=&\varnothing,&\forall\bc'\neq\bc,
   \\[0.4ex]
   \ov\Omega_{\bc\be}\cap\ov\Omega_{\bc\be'}&=&\varnothing, &\forall\be'\neq\be.
   \end{array}\right.
\end{equation}
We also define the larger neighborhoods with $\varepsilon''<\varepsilon<\varepsilon'$
\begin{eqnarray}
\label{5Ec}
   \Omega'_\be&=&\{\bx\in \Omega\, :\, r_\be(\bx)<\varepsilon'\ \hbox{ and }\
   r_{\bc}(\bx)>\varepsilon''/2 \quad\forall \bc\in \sC_\be\},
   \nonumber\\
   \Omega'_\bc&=&\{\bx\in \Omega\, :\, r_\bc(\bx)<\varepsilon' \ \hbox{ and } \
   \rho_{\bc\be}(\bx)>\varepsilon''/2 \quad\forall \be\in \sE_\bc\},\\
  \Omega'_{\bc\be}&=&\{\bx\in \Omega\, :\, r_\bc(\bx)<\varepsilon' \ \hbox{
   and } \ \rho_{\bc\be}(\bx)<\varepsilon'\},
   \nonumber
\end{eqnarray}
assuming the $\varepsilon'$ and $\varepsilon''$ are sufficiently close to $\varepsilon$ for the above properties \eqref{5E1} to hold for $\Omega'_\be$, $\Omega'_\bc$, and $\Omega'_{\bc\be}$. We finally introduce the smaller neighborhoods  $\Omega''_\be$, $\Omega''_\bc$, and $\Omega''_{\bc\be}$ by inverting the roles of $\varepsilon'$ and $\varepsilon''$ and set,
\begin{equation}
\label{5Ed}
\Omega_{\sC}=\bigcup_{\bc\in \sC} \Omega''_\bc, \quad
\Omega_{\sE}=\bigcup_{\be\in \sE} \Omega''_\be, \quad
\Omega_{\sC\sE}=\bigcup_{\bc\in \sC}\bigcup_{\be\in \sE_\bc}
\Omega''_{\bc\be}.
\end{equation}
We define $\Omega_0$ as the remainder:
\begin{equation}
\label{5Esmo}
\Omega_0=\Omega\setminus\ov{\Omega_{\sC}\cup \Omega_{\sE} \cup
\Omega_{\sC\sE}}.
\end{equation}
Note that $\Omega_0$ is far from the singular points of $\Omega$. 
We finally choose a larger ``smooth'' neighborhood $\Omega'_0\subset\Omega$ such that 
$\Omega\cap\ov\Omega_0\subset\Omega'_0$ and $\ov\Omega{}'_0\cap(\sE\cup\sC)=\varnothing$.
\end{subequations}

Let $\cV$ be any subdomain of $\Omega$. We consider the system of local interior and boundary equations
\begin{equation}
\label{5Ebvp}
   \left\{ \begin{array}{rclll}
   L\,\bu &=& \bff \quad & \mbox{in}\ \ \Omega\cap\cV, \\[0.3ex]
   T_\bs\,\bu &=& 0  & \mbox{on}\ \ \Gamma_\bs\cap\ov\cV, &\bs\in\sS, \\[0.3ex]
   D_\bs\,\bu &=& 0  & \mbox{on}\ \ \Gamma_\bs\cap\ov\cV, &\bs\in\sS,
   \end{array}\right.
\end{equation}
where the operators $L$, $T_\bs$ and $D_\bs$ are homogeneous with constant coefficients and form an elliptic system. The choice $\cV=\Omega$ gives back the global boundary value problem on the polyhedron $\Omega$.

\begin{definition}
\label{5DK}
On $\cV\subset \Omega$, for $m\in \N$ and
$\betab=\{\beta_\bc\}_{\bc\in \sC}\cup\{\beta_\be\}_{\be\in \sE}$,
the weighted space with homogeneous norm $\rK^m_\betab(\cV)$ is
defined as follows, {\em cf.}\
\cite{MazyaRossmann91b,MazyaRossmann03,BuffaCoDa03, BuffaCoDa04}
\begin{align}
\label{5E1wsK}
    \rK^m_{\betab}(\cV) = \Big\{ & u\in \rL^2_{\rm loc}(\cV)\ : \
    \forall\alpha, \; |\alpha| \leq m,
    \quad    \partial^{\alpha}_\bx u\in \rL^2(\cV\cap \Omega_{0})\quad\mbox{and}\quad \\ \nonumber
    & r_\bc(\bx)^{\beta_\bc+|\alpha|} \,
    \partial^{\alpha}_\bx u\in \rL^2(\cV\cap \Omega_\bc) \quad \forall
    \bc\in \sC,\\ \nonumber
    & r_\be(\bx)^{\beta_\be+|\alpha|} \,
    \partial^{\alpha}_\bx u\in \rL^2(\cV\cap \Omega_\be) \quad \forall
    \be\in \sE,\\ \nonumber
    &r_\bc(\bx)^{\beta_\bc+|\alpha|} \,
    \rho_{\bc\be}(\bx)^{\beta_\be+|\alpha|} \,
    \partial^{\alpha}_\bx u\in \rL^2(\cV\cap \Omega_{\bc\be}) \quad \forall
    \bc\in \sC, \ \forall\be\in \sE_\bc \nonumber
    \Big\},
\end{align}
and endowed with its natural semi-norms and norm.
\end{definition}

Note that the condition in the edge-vertex neighborhood $\Omega_{\bc\be}$ can be equivalently written as
\[
    r_\bc(\bx)^{\beta_\bc-\beta_\be} \,
    r_{\be}(\bx)^{\beta_\be+|\alpha|} \,
    \partial^{\alpha}_\bx u\in \rL^2(\cV\cap \Omega_{\bc\be}).
\]

\begin{remark}
\label{5R0}
The semi-norms issued from \eqref{5E1wsK} are equivalent to the globally defined semi-norms
\begin{equation}
\label{5E00}
   \Big\{ \sum_{|\alpha|=k}
   \Big\| \big\{ \prod_{\bc\in\sC} r_\bc^{\beta_\bc + |\alpha|}\big\}
   \big\{ \prod_{\be\in\sE} \big( \frac{r_\be}{r_\sC} \big)^{\beta_\be + |\alpha|} \big\}
   \,\partial^\alpha_\bx u\,
   \Big\|_{0;\, \cV}^2
   \Big\}^{\frac12},\quad k=0,\ldots,m.
\end{equation}
Here $r_\sC$ denotes the distance function to the set $\sC$ of
corners. With this expression, the relations between our spaces
$\rK^m_\betab(\Omega)$ and the spaces
$V^{m,p}_{\vec\beta,\vec\delta}(\Omega)$ defined in \cite[\S1.2]{MazyaRossmann91b} or
\cite[\S7.3]{MazyaRossmann03} become obvious:
\begin{equation}
\label{5E01}
   \rK^m_\betab(\Omega) = V^{m,p}_{\vec\beta,\vec\delta}(\Omega)\quad\mbox{if}\quad
   p=2,\ \
   \vec\beta = \big\{\beta_\bc+m\big\}_{\bc\in\sC},\ \
   \vec\delta = \big\{\beta_\be+m\big\}_{\be\in\sE}\,.
\end{equation}
\end{remark}

\subsection{Anisotropic weighted spaces with homogeneous norms}
Unlike in the conical case, the weighted spaces $\rK^m_\betab$ are in a certain sense too large to describe accurately the regularity of solutions of the elliptic problem \eqref{5Ebvp} along the directions of edges. Mimicking the definition of the spaces $\rM^m_\beta$ in the pure edge case, {\em cf.}\ \eqref{4EM1}, we particularize for each edge $\be\in\sE$, the derivatives in the directions transverse or parallel to that edge by the notations
\begin{equation}
\label{5Eder}
   \partial^{\alpha_\perp}_\bx \ \mbox{(transverse)}\quad\mbox{and}\quad
   \partial^{\alpha_\parallel}\dd\bx \ \mbox{(parallel)},\quad (\be\in\sE),
\end{equation}
so that
\[
   \partial^\alpha_\bx = \partial^{\alpha_\perp}_\bx\, \partial^{\alpha_\parallel}\dd\bx\,.
\]
Of course these directions are edge dependent. They are well-defined in each of the domains $\Omega_\be$ and $\Omega_{\bc\be}$ determined by the edge $\be$.

The following spaces were introduced in \cite{BuffaCoDa03, BuffaCoDa04} for similar purposes:

\begin{definition}
\label{5DM}
On $\cV\subset \Omega$, for $m\in \N$ and
$\betab=\{\beta_\bc\}_{\bc\in \sC}\cup\{\beta_\be\}_{\be\in \sE}$,
we define
\begin{align}
\label{5E1wsM}
    \rM^m_{\betab}(\cV) = \Big\{ & u\in \rL^2_{\rm loc}(\cV)\  : \
    \forall\alpha, \; |\alpha| \leq m,
    \quad    \partial^{\alpha}_\bx u\in \rL^2(\cV\cap \Omega_{0})\quad\mbox{and}\quad \\ \nonumber
    & r_\bc(\bx)^{\beta_\bc+|\alpha|} \,
    \partial^{\alpha}_\bx u\in \rL^2(\cV\cap \Omega_\bc) \quad \forall
    \bc\in \sC,\\ \nonumber
    & r_\be(\bx)^{\beta_\be+|\alpha_\perp|} \,
    \partial^{\alpha}_\bx u\in \rL^2(\cV\cap \Omega_\be) \quad \forall
    \be\in \sE,\\ \nonumber
    &r_\bc(\bx)^{\beta_\bc+|\alpha|} \,
    \rho_{\bc\be}(\bx)^{\beta_\be+|\alpha_\perp|} \,
    \partial^{\alpha}_\bx u\in \rL^2(\cV\cap \Omega_{\bc\be}) \quad \forall
    \bc\in \sC, \ \forall\be\in \sE_\bc
    \Big\},
\end{align}
\begin{itemize}
\item[]
We denote by $\Norm{\cdot}{\rM;\,m, \betab;\,\cV}$ and $\SNorm{\cdot}{\rM;\,m, \betab;\,\cV}$
its norm and semi-norm, namely
\[
   \Norm{\cdot}{\rM;\,m, \betab;\,\cV}^2 = \sum_{\ell=0}^m
   \SNorm{\cdot}{\rM;\,\ell, \betab;\,\cV}^2
\]
with
\begin{align}
\label{5EnormM}
    \SNorm{u}{\rM;\,\ell, \betab;\,\cV}^2=
    &\sum_{|\alpha|= \ell}
    \Big(\Norm{\partial^{\alpha}_\bx u}{0;\, \cV\cap \Omega_0}^2
    + \sum_{\bc\in \sC}\Norm{r_\bc^{\beta_\bc+|\alpha|}
    \partial^{\alpha}_\bx u}{0;\, \cV\cap \Omega_\bc}^2\\
    \nonumber
    &+\sum_{\be\in \sE}\Norm{r_\be^{\beta_\be+|\alpha_\perp|}
    \partial^{\alpha}_\bx u}{0;\, \cV\cap \Omega_\be}^2
    +\sum_{\bc\in \sC}\sum_{\be\in\sE_\bc}
    \Norm{r_\bc^{\beta_\bc+|\alpha|}
    \rho_{\bc\be}^{\beta_\be+|\alpha_\perp|}
    \partial^{\alpha}_\bx u}{0;\, \cV\cap \Omega_{\bc\be}}^2\Big).
\end{align}
\end{itemize}
\end{definition}

\noindent
Note that the condition in the edge-vertex neighborhood $\Omega_{\bc\be}$ can be written equivalently as
\[
    r_\bc(\bx)^{\beta_\bc-\beta_\be+\alpha_\parallel} \,
    r_{\be}(\bx)^{\beta_\be+|\alpha|} \,
    \partial^{\alpha}_\bx u\in \rL^2(\cV\cap \Omega_{\bc\be}).
\]
We can then define the corresponding analytic class as
follows:

\begin{definition}
\label{5DA}
We say that $u\in \rA_{\betab}(\Omega)$ if $u\in
\rM^k_{\betab}(\Omega)$ for all $k\geq 0$ and there exists a positive
constant $C$ such that
\[
   \SNorm{u}{\rM;\,k, \betab;\,\Omega}\leq C^{k+1} k! \quad \forall k\geq 0.
\]
\end{definition}

We rephrase Assumption \REF{4GA} for the dihedral neighborhood $\Omega_\be$:
\begin{assumption}
\label{5GA}
Let $\be\in\sE$ and $\beta_\be\in\R$. We assume the following a priori estimate:
There is a constant $C$ such that
any
\[
   \bu\in \bK^2_{\beta_\be}(\Omega_\be)\,,
\]
solution of problem \eqref{5Ebvp} in $\cV=\Omega'_\be$ with $\bff\in\bK^0_{\beta_\be+2}(\Omega'_\be)$,
satisfies:
\begin{equation}
\label{5GA3}
   \Norm{\bu}{\bK^{2}_{\beta_\be}(\Omega_\be)} \leq
   C\Big(\Norm{ \bff}{\bK^0_{\beta_\be+2}(\Omega'_\be)}
   + \Norm{ \bu}{\bK^1_{\beta_\be+1}(\Omega'_\be)}\Big).
\end{equation}
\end{assumption}

\medskip
We can apply Theorem \REF{4T3} to the edge neighborhood $\Omega_\be$. We obtain that under Assumption \REF{5GA}, any solution $\bu\in \bK^1_{\beta_\be}(\Omega'_\be)$ of problem \eqref{5Ebvp} with  $\bff\in \bM^{n}_{\beta_\be+2}(\Omega'_\be)$ satisfies the uniform estimates for $0\le k\le n$
\begin{multline}
\label{5E7e}
   \frac{1}{k!}\;
   \Big(\sum_{|\alpha|=k}\Norm{r_\be^{\beta_\be+|\alpha_\perp|}
   \partial^\alpha_\bx \bu}{0;\,\Omega_\be}^2\Big)^{\frac12} \le
    C^{k+1} \Big\{
   \sum_{\ell=0}^{k} \frac{1}{\ell!}\;
   \Big(\sum_{|\alpha|=\ell}
   \Norm{r_\be^{\beta_\be+2+|\alpha_\perp|} \partial^\alpha_\bx \bff}{0;
   \,\Omega'_\be}^2\Big)^{\frac12}
   \\
   + \Norm{\bu}{\bK^1_{\beta_\be+1}(\Omega'_\be)}\Big\}.
\end{multline}

Now we consider the edge-vertex domain $\Omega_{\bc\be}$.

\begin{proposition}
\label{5P1}
Let $\bc\in\sC$ and $\be\in\sE_\bc$. Let $\betab=\{\beta_\bc,\beta_\be\}$.
Under Assumption \REF{5GA}, any solution $\bu\in \bK^1_{\betab}(\Omega'_{\bc\be})$ of problem \eqref{5Ebvp} with  $\bff\in \bM^{n}_{\betab+2}(\Omega'_{\bc\be})$ belongs to $\bM^{n}_{\betab}(\Omega_{\bc\be})$ and satisfies the uniform estimates for $0\le k\le n$
\begin{multline}
\label{5E7ce}
   \frac{1}{k!}\;
   \Big(\sum_{|\alpha|=k}\Norm{r_\bc^{\beta_\bc+|\alpha|}
   \rho_{\bc\be}^{\beta_\be+|\alpha_\perp|}
   \partial^\alpha_\bx \bu}{0;\,\Omega_{\bc\be}}^2\Big)^{\frac12} \le
    C^{k+1} \Big\{ \\
   \sum_{\ell=0}^{k} \frac{1}{\ell!}\;
   \Big(\sum_{|\alpha|=\ell}
   \Norm{r_\bc^{\beta_\bc+2+|\alpha|}
   \rho_{\bc\be}^{\beta_\be+2+|\alpha_\perp|} \partial^\alpha_\bx \bff}{0;
   \,\Omega'_{\bc\be}}^2\Big)^{\frac12}
   + \Norm{\bu}{\bK^1_{\betab+1}(\Omega'_{\bc\be})}\Big\}.
\end{multline}
\end{proposition}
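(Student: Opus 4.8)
The plan is to reduce the edge-vertex estimate to the pure dihedral anisotropic estimate of Theorem \REF{4T3} by means of a dyadic partition of $\Omega_{\bc\be}$ centered at the corner $\bc$. The decisive structural observation is that, under a dilation $\bx\mapsto\hat\bx=2^\mu\bx$ centered at $\bc$, the corner distance $r_\bc$ scales like $2^\mu$ while the ratio $\rho_{\bc\be}=r_\be/r_\bc$ is \emph{scale-invariant}. Hence such a dilation trivializes the homogeneous corner weight while leaving the anisotropic edge structure carried by $\rho_{\bc\be}$ intact, so that on each dyadic cell the problem is precisely a truncated wedge problem of the type already treated in Section \REF{sec4}.

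First I would introduce reference annular cells near $\bc$. Let $\widehat\cV$ and $\widehat\cV\ee'$ be the intersections of the edge-vertex cone $\{\rho_{\bc\be}<\varepsilon\}$ with the dyadic shells $\{\frac14<r_\bc<1\}$ and $\{\frac14-\delta<r_\bc<1+\delta\}$, and set $\cV_\mu=2^{-\mu}\widehat\cV$, $\cV\ee'_\mu=2^{-\mu}\widehat\cV\ee'$; these cover $\Omega_{\bc\be}$ and $\Omega'_{\bc\be}$ respectively with finite overlap. On the reference cell $\widehat\cV$ one has $r_\bc\simeq1$, so the condition $\rho_{\bc\be}<\varepsilon$ reduces to $r_\be\lesssim\varepsilon$, i.e.\ $\widehat\cV$ is a truncated neighborhood of the edge $\be$ on which $\rho_{\bc\be}\simeq r_\be$. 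Consequently the anisotropic dihedral estimate \eqref{5E7e} (the conclusion of Theorem \REF{4T3}) applies on $\widehat\cV$, with the edge weight $r_\be^{\beta_\be+|\alpha_\perp|}$ playing the role of $\rho_{\bc\be}^{\beta_\be+|\alpha_\perp|}$. Assumption \REF{5GA} furnishes exactly the a priori estimate that Theorem \REF{4T3} requires, and since $L$, $T_\bs$ and $D_\bs$ are homogeneous with constant coefficients, the constant is uniform over all cells.

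Next, because $r_\bc\simeq1$ on $\widehat\cV$, I may freely insert the corner weight $r_\bc^{\beta_\bc+|\alpha|}$ into the reference estimate, exactly as weights were inserted in Step 2 of the proof of Theorem \REF{2T1}. I then transport the resulting estimate to the cell $\cV_\mu$ by the change of variables $\hat\bx\mapsto\bx=2^{-\mu}\hat\bx$ together with the change of functions $\widehat\bu(\hat\bx)=\bu(\bx)$ and $\widehat\bff(\hat\bx)=2^{-2\mu}\bff(\bx)$. The point to check is that the scaling factors cancel: the homogeneous corner weight $r_\bc^{\beta_\bc+|\alpha|}$ produces precisely the factor $2^{-\mu(\beta_\bc+|\alpha|)}$ needed to make each term scale-covariant (this is the purpose of the $+|\alpha|$ convention), while the scale-invariant factor $\rho_{\bc\be}^{\beta_\be+|\alpha_\perp|}$ contributes no power of $2^\mu$ and simply rides along. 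As in Theorem \REF{2T1}, the derivative factors $2^{\mu|\alpha|}$, the weight factors, and the Jacobian $2^{-3\mu/2}$ combine so that the scaled inequality on $\cV_\mu$ is identical in form to the reference inequality, now written with the mixed weight $r_\bc^{\beta_\bc+|\alpha|}\rho_{\bc\be}^{\beta_\be+|\alpha_\perp|}$ on both sides.

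Finally I would sum the squares of the cellwise estimates over $\mu\in\N$. The finite-overlap property of the coverings $\{\cV_\mu\}$ and $\{\cV\ee'_\mu\}$ lets the right-hand sides recombine into the global $\bM^n_{\betab+2}(\Omega'_{\bc\be})$ norm of $\bff$ and the $\bK^1_{\betab+1}(\Omega'_{\bc\be})$ norm of $\bu$, while the left-hand sides recombine into the $\bM^n_{\betab}(\Omega_{\bc\be})$ semi-norms, yielding \eqref{5E7ce} and the membership $\bu\in\bM^n_{\betab}(\Omega_{\bc\be})$ since $\Omega_{\bc\be}$ is bounded. I expect the main obstacle to be the simultaneous bookkeeping of the two weight types under a \emph{single} dilation: one must verify both the equivalence $\rho_{\bc\be}\simeq r_\be$ on each reference cell \emph{and} that the anisotropic weight carries no scaling factor, so that the transverse order $|\alpha_\perp|$ in the exponent survives the passage between scales while the full order $|\alpha|$ in the corner exponent supplies the correct homogeneity.
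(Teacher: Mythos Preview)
Your proposal is correct and follows essentially the same approach as the paper: a dyadic partition centered at $\bc$, application of the anisotropic edge estimate \eqref{5E7e} on the reference cell (where $r_\bc\simeq1$ so $\rho_{\bc\be}\simeq r_\be$), insertion of the corner weight, and transport via the dilation $\hat\bx\mapsto 2^{-\mu}\hat\bx$ using the scale-invariance of $\rho_{\bc\be}$. The paper's only cosmetic difference is that its reference shells are scaled to $\{\varepsilon/4<r_\bc<\varepsilon\}$ rather than $\{1/4<r_\bc<1\}$.
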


\begin{proof}
We mimic the proof of Theorem \REF{2T1}.
The proof of estimate \eqref{5E7ce} is based on a locally finite dyadic covering of $\Omega_{\bc\be}$ and $\Omega'_{\bc\be}$. Define, compare with \eqref{5Ea}-\eqref{5Ec},
\begin{align*}
   \widehat \cV &= \{\bx\in \Omega\ : \
   \tfrac\varepsilon4< r_\bc(\bx) < \varepsilon \ \ \mbox{and}\ \
   \rho_{\bc\be}<\varepsilon\} \\
   \widehat \cV\ee' &= \{\bx\in \Omega\ : \
   \tfrac{\varepsilon^2}{4\varepsilon'}< r_\bc(\bx) < \varepsilon' \ \ \mbox{and}\ \
   \rho_{\bc\be}<\varepsilon'\},
\end{align*}
and for $\mu\in\N$:
\[
   \cV_\mu = 2^{-\mu}\widehat \cV \quad\mbox{and}\quad \cV\prm_\mu = 2^{-\mu}\widehat \cV\ee'.
\]
We check:
\[
   \Omega_{\bc\be} = \bigcup_{\mu\in\N} \cV_\mu
   \quad\mbox{and}\quad
   \Omega'_{\bc\be} = \bigcup_{\mu\in\N} \cV\prm_\mu\,.
\]
The estimate \eqref{5E7e} between $\Omega_\be$ and $\Omega'_\be$
also holds in the configuration of $\widehat \cV$ and $\widehat
\cV\ee'$ which is similar:  $\widehat \cV$ and $\widehat
\cV\ee'$ are {\em nested edge neighborhoods} which do not touch any corner, see Fig.~\ref{F2}.
\begin{figure}
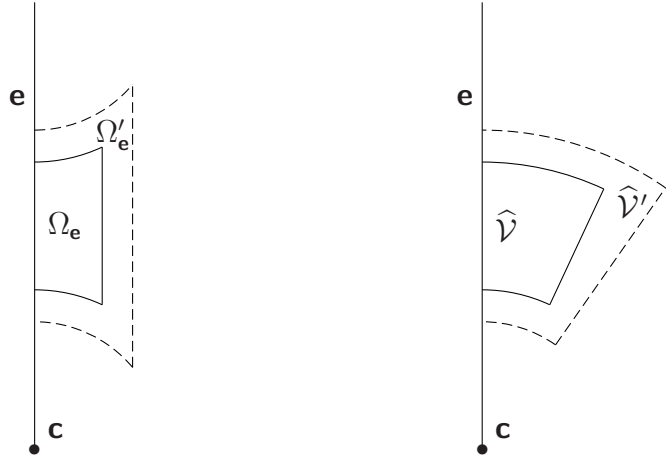

\begin{center}
    \figinit{0.85mm}
    \figpt 1:( 0, -5)
    \figpt 2:( 0,65)
    \figpt 3:( 0,15)
    \figpt 4:( 0,45)
    \figpt 5:( 0,20)
    \figpt 6:( 0,40)
    \figvectC 100 ( 1, 0)
    \figptrot 13: = 3 /1, -50/
    \figptrot 14: = 4 /2, 50/
    \figptrot 15: = 5 /1, -25/
    \figptrot 16: = 6 /2, 25/
    \figptstra 21 = 1, 2, 3, 4, 5, 6 /70, 100/
    \figptrot 33: = 23 /21, -35/
    \figptrot 34: = 24 /21, -35/
    \figptrot 35: = 25 /21, -25/
    \figptrot 36: = 26 /21, -25/

    \def\MyPSfile{}
    \psbeginfig{\MyPSfile}
    \psline[1,2]
    \psline[21,22]
    \psline[15,16]
    \psline[35,36]
    \psarccircP 1 ; 25 [15,5]
    \psarccircP 2 ; 25 [6,16]
    \psarccircP 21 ; 25 [35,25]
    \psarccircP 21 ; 45 [36,26]
    \psset (dash=3)
    \psarccircP 1 ; 20 [13,3]
    \psarccircP 2 ; 20 [4,14]
    \psarccircP 21 ; 20 [33,23]
    \psarccircP 21 ; 50 [34,24]
    \psline[13,14]
    \psline[33,34]
    \psendfig

    \figvisu{\demo}{}{%
    \figinsert{\MyPSfile}
    \figsetmark{\footnotesize\Bullet}
    \figwritegce 1:$\bc$(2,3)
    \figwritegce 21:$\bc$(2,3)
    \figsetmark{}
    \figwritegce 4:$\be$(-4,5)
    \figwritegce 24:$\be$(-4,5)
    \figwritegce 5:$\Omega_\be$(2,10)
    \figwritegce 16:$\Omega'_\be$(-1,2)
    \figwritegce 25:$\widehat\cV$(2,10)
    \figwritegce 36:$\widehat\cV\ee'$(2,-2)
   }
    \centerline{\box\demo}
    \caption{Nested edge neighborhoods (section determined by azimuthal angle $\theta_\be=$ constant)}
\label{F2}
\end{center}
\end{figure}

Since $r_\bc$ is bounded from above and from below by strictly positive constants, the distance $r_\be$ is equivalent to $\rho_{\bc\be}$ on the reference domains: We have
\begin{multline*}
   \frac{1}{k!}
   \Big(\sum_{|\alpha|=k}\Norm{\rho_{\bc\be}(\widehat\bx)^{\beta_\be+|\alpha_\perp|}
   \partial^\alpha_\bx \widehat\bu}{0;\,\widehat \cV}^2\Big)^{\frac12} \le
    C^{k+1} \Big\{
   \sum_{\ell=0}^{k} \frac{1}{\ell!}
   \Big(\sum_{|\alpha|=\ell}
   \Norm{\rho_{\bc\be}(\widehat\bx)^{\beta_\be+2+|\alpha_\perp|} \partial^\alpha_\bx \widehat\bff}{0;
   \,\widehat \cV\ee'}^2\Big)^{\frac12}
   \\
   + \sum_{|\alpha|\le1} \Norm{\rho_{\bc\be}(\widehat\bx)^{\beta_\be+|\alpha|}
   \partial^\alpha_\bx \widehat\bu}{0;\,\widehat \cV\ee'}\Big\}.
\end{multline*}
for any reference function $\widehat\bu$ satisfying the boundary conditions of \eqref{5Ebvp} and $\widehat\bff:=L \widehat\bu$.

For the same reason, we can insert powers of $r_\bc$ in the above estimate, to obtain our new reference estimate
\begin{align}
\label{5E7f}
   \frac{1}{k!}\;
   \Big(\sum_{|\alpha|=k}\Norm{r_\bc(\widehat\bx)^{\beta_\bc+|\alpha|} &
   \rho_{\bc\be}(\widehat\bx)^{\beta_\be+|\alpha_\perp|}
   \partial^\alpha_\bx \widehat\bu}{0;\,\widehat \cV}^2\Big)^{\frac12} \le
    C^{k+1} \Big\{ \\ \nonumber
   &\sum_{\ell=0}^{k} \frac{1}{\ell!}\;
   \Big(\sum_{|\alpha|=\ell} \Norm{r_\bc(\widehat\bx)^{\beta_\bc+2+|\alpha|}
  \rho_{\bc\be}(\widehat\bx)^{\beta_\be+2+|\alpha_\perp|} \partial^\alpha_\bx \widehat\bff}{0;
   \,\widehat \cV\ee'}^2\Big)^{\frac12}
   \\ \nonumber
   &+ \sum_{|\alpha|\le1} \Norm{
   r_\bc(\widehat\bx)^{\beta_\bc+|\alpha|}\rho_{\bc\be}(\widehat\bx)^{\beta_\be+|\alpha|}
   \partial^\alpha_\bx \widehat\bu}{0;\,\widehat \cV\ee'} \Big\}.
\end{align}
The change of variables $\hat\bx\rightarrow\bx=2^{-\mu}\hat\bx$ maps $\widehat \cV$ to $\cV_{\mu}$ (resp. $\widehat \cV\ee'$ to $\cV\prm_{\mu}$). We note that
\[
   \rho_{\bc\be}(\widehat\bx) = \rho_{\bc\be}(\bx) \quad\mbox{and}\quad
   r_\bc(\widehat\bx) = 2^\mu r_\bc(\bx).
\]
With the change of functions
\[
   \widehat\bu(\widehat\bx) := \bu(\bx) \quad\mbox{and}\quad
   \widehat\bff(\widehat\bx) := L \widehat\bu\,,
   \quad\mbox{which implies}\quad \widehat\bff(\widehat\bx) = 2^{-2\mu}\bff(\bx),
\]
we deduce from estimate \eqref{5E7f} that
\begin{align*}
   \frac{1}{k!}\; 2^{\mu\beta_\bc}
   \Big(\sum_{|\alpha|=k}\Norm{r_\bc(\bx & )^{\beta+|\alpha|}
   \rho_{\bc\be}(\bx)^{\beta_\be+|\alpha_\perp|}
   \partial^\alpha_\bx \bu}{0;\, \cV}^2\Big)^{\frac12} \le
    C^{k+1} \Big\{ \\ \nonumber
   &\sum_{\ell=0}^{k} \frac{1}{\ell!}\; 2^{\mu(\beta_\bc+2)}
   \Big(\sum_{|\alpha|=\ell} 2^{-2\mu} \Norm{r_\bc(\bx)^{\beta+2+|\alpha|}
  \rho_{\bc\be}(\bx)^{\beta_\be+2+|\alpha_\perp|} \partial^\alpha_\bx \bff}{0;
   \, \cV\ee'}^2\Big)^{\frac12}
   \\ \nonumber
   &+ \Big( 2^{\mu\beta_\bc} \sum_{|\alpha|\le1} \Norm{
   r_\bc(\bx)^{\beta_\bc+|\alpha|}\rho_{\bc\be}(\bx)^{\beta_\be+|\alpha|}
   \partial^\alpha_\bx \bu}{0;\, \cV\ee'}^2\Big)^{\frac12}\Big\}.
\end{align*}
Multiplying this identity by $2^{-\mu \beta}$, taking squares, and summing up over all $\mu$, we get the requested estimate \eqref{5E7ce}.
\end{proof}

The estimates in pure vertex domains $\Omega_{\bc}$ (i.e., close to corners but ``relatively far'' from the edges) are similar to those in obtained in Theorem \REF{2T1} for plane sectors:

\begin{proposition}
\label{5P2}
Let $\bc\in\sC$ and $\betab=\{\beta_\bc\}$.
Any solution $\bu\in \bK^1_{\betab}(\Omega'_{\bc})$ of problem \eqref{5Ebvp} with  $\bff\in \bM^{n-2}_{\betab+2}(\Omega'_{\bc})$ belongs to $\bM^{n}_{\betab}(\Omega_{\bc})$ and satisfies the uniform estimates for $0\le k\le n$
\begin{multline}
\label{5E7c}
   \frac{1}{k!}\;
   \Big(\sum_{|\alpha|=k}\Norm{r_\bc^{\beta_\bc+|\alpha|}
   \partial^\alpha_\bx \bu}{0;\,\Omega_{\bc}}^2\Big)^{\frac12} \le
    C^{k+1} \Big\{ \\
   \sum_{\ell=0}^{k-2} \frac{1}{\ell!}\;
   \Big(\sum_{|\alpha|=\ell}
   \Norm{r_\bc^{\beta_\bc+2+|\alpha|}
   \partial^\alpha_\bx \bff}{0;
   \,\Omega'_{\bc}}^2\Big)^{\frac12}
   + \Norm{\bu}{\bK^1_{\betab+1}(\Omega'_{\bc})}\Big\}.
\end{multline}
\end{proposition}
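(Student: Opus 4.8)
The plan is to reproduce the argument of Theorem \REF{2T1}, now performing the dyadic decomposition with center at the corner $\bc$ rather than at the vertex of a plane sector. The starting observation is that on the pure vertex neighborhood $\Omega_\bc$ the quotient $\rho_{\bc\be}$ is bounded below by $\varepsilon/2$ for every edge $\be\in\sE_\bc$, so no edge weight is active: the $\rM$-norms appearing in \eqref{5E7c} reduce, up to equivalence, to $\rK$-norms involving only the corner weight $r_\bc$. In particular $\bff\in\bM^{n-2}_{\beta_\bc+2}(\Omega'_\bc)$ means simply $\bff\in\bK^{n-2}_{\beta_\bc+2}(\Omega'_\bc)$, and \eqref{5E7c} is a purely isotropic corner estimate of exactly the type \eqref{2EK4}, but in three dimensions and with the distance $r_\bc$ to the point $\bc$ replacing the distance $r$ to the vertex. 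Note that, unlike in Proposition \REF{5P1}, no edge a priori estimate (Assumption \REF{5GA}) is needed here, precisely because away from the edges the boundary is smooth.

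First I would fix reference shells that avoid the edges, for instance $\widehat\cV=\{\bx\in\Omega:\tfrac\varepsilon4<r_\bc<\varepsilon,\ \rho_{\bc\be}>\tfrac\varepsilon2\ \forall\be\in\sE_\bc\}$ together with a slightly larger $\widehat\cV'$ built from $\varepsilon'$, and then set $\cV_\mu=2^{-\mu}\widehat\cV$ and $\cV'_\mu=2^{-\mu}\widehat\cV'$, these dilations being centered at $\bc$. One checks at once that $\Omega_\bc=\bigcup_\mu\cV_\mu$, that $\Omega'_\bc=\bigcup_\mu\cV'_\mu$, and that the coverings are locally finite. Each reference shell meets the faces $\Gamma_\bs$ abutting $\bc$ but stays at positive distance from all edges, so its boundary is made of flat analytic pieces. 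As in Step 1 of the proof of Theorem \REF{2T1}, I would split $\widehat\cV$ into finitely many subregions, each meeting at most one face, on which Proposition \REF{1T1} applies directly, the operator $L$ being elliptic and each boundary pair $\{T_\bs,D_\bs\}$ satisfying the covering condition on the corresponding flat face. Taking the maximum of the finitely many resulting constants yields a single finite analytic estimate on $\widehat\cV$, controlled by the data on $\widehat\cV'$.

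Since $r_\bc\simeq1$ on the reference shells, I would then insert the weights $r_\bc^{\beta_\bc+|\alpha|}$ into this reference estimate, just as in the passage from \eqref{2E4} to \eqref{2E4b}. The scaling $\hat\bx\mapsto\bx=2^{-\mu}\hat\bx$ maps $\widehat\cV$ onto $\cV_\mu$, and because $L$ is homogeneous of order two with constant coefficients it transforms $\widehat\bff=L\widehat\bu$ into $2^{-2\mu}\bff$; the homogeneity of the weights then produces matching powers of $2^{\mu\beta_\bc}$ on both sides, which cancel after multiplication by $2^{-\mu\beta_\bc}$. Squaring, summing over $\mu\in\N$, and invoking the finite overlap of the dilated shells $\cV'_\mu$ gives the global estimate \eqref{5E7c} on $\Omega_\bc$, while the membership $\bu\in\bM^n_{\beta_\bc}(\Omega_\bc)$ follows from the finiteness of its right-hand side exactly as in Theorem \REF{2T1}.

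The only delicate point is the geometric separation of the several faces abutting the corner on the reference shell, so that Proposition \REF{1T1} may be invoked with a single flat analytic boundary piece at a time; this is the three-dimensional counterpart of the decomposition $\cK=\cK_1\cup\cK_2$ used in the plane-sector case, and it introduces no new analytic difficulty. Everything else --- the dyadic scaling, the bookkeeping of the factorials and of the powers of $2^\mu$, and the final summation --- is identical to the two-dimensional argument of Theorem \REF{2T1}.
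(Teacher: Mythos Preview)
Your proposal is correct and follows essentially the same approach as the paper: dyadic partition centered at $\bc$ into reference shells that avoid the edges, application of the smooth-case analytic estimate of Proposition~\REF{1T1} on each shell (after splitting so that at most one face is met), insertion of the corner weight since $r_\bc\simeq1$ on the reference domain, and then scaling and summation exactly as in Theorem~\REF{2T1}. The paper's proof is terser but identical in substance, the only cosmetic difference being that the paper defines $\widehat\cV$ directly as a radial shell inside $\Omega_\bc$ (which already carries the condition $\rho_{\bc\be}>\varepsilon/2$) rather than rewriting that condition explicitly.
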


\begin{proof}
The proof is again based on the argument of dyadic partitions with reference domains defined as
\[
   \widehat \cV = \{\bx\in \Omega_{\bc},\quad
   \tfrac\varepsilon4< r_\bc(\bx) < \varepsilon\}
   \quad\mbox{and}\quad
   \widehat \cV\ee' = \{\bx\in \Omega'_\bc,\quad
   \tfrac{\varepsilon^2}{4\varepsilon'}< r_\bc(\bx) < \varepsilon'\}.
\]
and for $\mu\in\N$:
\[
   \cV_\mu = 2^{-\mu}\widehat \cV \quad\mbox{and}\quad \cV\prm_\mu = 2^{-\mu}\widehat \cV\ee'.
\]
We check:
\[
   \Omega_{\bc} = \bigcup_{\mu\in\N} \cV_\mu
   \quad\mbox{and}\quad
   \Omega'_{\bc} = \bigcup_{\mu\in\N} \cV\prm_\mu\,.
\]
We can apply the a priori estimates of the smooth case between $\widehat \cV$ and $\widehat \cV\ee'$, cf.\ \eqref{2E4} and deduce \eqref{5E7c} in the same way.
\end{proof}

We obtain now the anisotropic regularity shift in homogeneous weighted spaces on polyhedra :

\begin{theorem}
\label{5T1}
Let $\Omega$ be a polyhedron and $\betab=\{\beta_\bc,\beta_\be\}$ be a weight multi-exponent.
Let Assumption \REF{5GA} be satisfied for all edges $\be\in\sE$.
Let $\bu\in\bH^2_{\loc}(\ov \Omega\setminus\sE)$ be a solution of problem \eqref{5Ebvp} in $\cV=\Omega$.
Then the following implications hold
\begin{subequations}
\begin{gather}
\label{5EM}
   \bu\in\bK^1_{\betab}(\Omega) \ \ \mbox{and}\ \
   \bff\in\bM^m_{\betab+2}(\Omega)
   \quad\Longrightarrow\quad
   \bu\in\bM^m_{\betab}(\Omega)\quad (m\in\N),
\\
\label{5EA}
   \bu\in\bK^1_{\betab}(\Omega) \ \ \mbox{and}\ \
   \bff\in\bA_{\betab+2}(\Omega)
   \quad\Longrightarrow\quad
   \bu\in\bA_{\betab}(\Omega).
\end{gather}
\end{subequations}
\end{theorem}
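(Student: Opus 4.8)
The plan is to glue together the four families of local estimates already in hand, following verbatim the pattern by which Theorem~\ref{3T1} was deduced in the polygonal case. The starting observation is that the anisotropic semi-norm \eqref{5EnormM} is, by construction, a finite sum of squared local contributions over the smooth region $\Omega_0$, the pure vertex neighborhoods $\Omega_\bc$, the pure edge neighborhoods $\Omega_\be$, and the edge-vertex neighborhoods $\Omega_{\bc\be}$. On each of these pieces a uniform Cauchy-type bound is available: the smooth a~priori estimate \eqref{1E1} of Proposition~\ref{1T1} on $\Omega_0$ (where all weights are bounded above and below, so the $\rM$-contribution reduces to the ordinary semi-norm), the vertex estimate \eqref{5E7c} of Proposition~\ref{5P2} on $\Omega_\bc$, the edge estimate \eqref{5E7e} (Theorem~\ref{4T3} applied to the dihedral neighborhood $\Omega_\be$) on $\Omega_\be$, and the edge-vertex estimate \eqref{5E7ce} of Proposition~\ref{5P1} on $\Omega_{\bc\be}$.

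First I would verify the hypotheses of these local results. The assumption $\bu\in\bK^1_\betab(\Omega)$ restricts on each primed neighborhood to the low-order regularity those results require (away from corners the isotropic weight reduces to the edge weight, and so on), the hypothesis $\bu\in\bH^2_\loc(\ov\Omega\setminus\sE)$ guarantees that $\bu$ is a genuine solution to which the a~priori estimates apply, and $\bff\in\bM^m_{\betab+2}(\Omega)$ restricts to the needed data spaces on each piece. Assumption~\ref{5GA}, imposed on every edge, is precisely what drives the edge and edge-vertex estimates through Theorem~\ref{4T3} and Proposition~\ref{5P1}.

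Next I would sum the squares of the four local estimates. Each local right-hand side is a data sum over $\ell$ of local $\bff$-contributions plus a single low-order term $\Norm{\bu}{\bK^1_{\betab+1}(\cdot)}$ on the corresponding primed neighborhood. Because the primed neighborhoods overlap only finitely, the data sums recombine into $\sum_\ell\frac1{\ell!}\SNorm{\bff}{\rM;\,\ell,\betab+2\,;\,\Omega}$; the vertex and smooth sums run to $\ell=k-2$ and the edge and edge-vertex sums to $\ell=k$, so the combined sum runs to $\ell=k$. For the low-order terms I would use that on a bounded domain $r\le1$ forces $r^{\beta+1}\le r^\beta$, hence $\Norm{\bu}{\bK^1_{\betab+1}(\cdot)}\le C\Norm{\bu}{\bK^1_{\betab}(\Omega)}$, together with finite overlap once more. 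This produces the global uniform estimate
\[
   \frac1{k!}\SNorm{\bu}{\rM;\,k,\betab\,;\,\Omega}
   \le C^{k+1}\Big(\sum_{\ell=0}^{k}\frac1{\ell!}
   \SNorm{\bff}{\rM;\,\ell,\betab+2\,;\,\Omega}
   + \Norm{\bu}{\bK^1_{\betab}(\Omega)}\Big),\qquad k\in\N.
\]

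The two implications then follow exactly as \eqref{3EK3} and \eqref{3EA3} followed from \eqref{3EK3est}. For \eqref{5EM}, the finiteness of $\SNorm{\bff}{\rM;\,\ell,\betab+2\,;\,\Omega}$ for $\ell\le m$ combined with the displayed estimate yields $\SNorm{\bu}{\rM;\,k,\betab\,;\,\Omega}<\infty$ for all $k\le m$, i.e.\ $\bu\in\bM^m_\betab(\Omega)$. For \eqref{5EA}, inserting the analytic bound $\SNorm{\bff}{\rM;\,\ell,\betab+2\,;\,\Omega}\le F^{\ell+1}\ell!$ and summing the geometric series $\sum_{\ell=0}^{k}F^{\ell+1}=(F^{k+1}-F)/(F-1)$ gives $\SNorm{\bu}{\rM;\,k,\betab\,;\,\Omega}\le k!\,C'^{k+1}$, which is exactly membership in $\bA_\betab(\Omega)$. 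I expect the only genuine work here to be the bookkeeping of the finite overlaps and the reconciliation of the $\betab+1$ weight of the low-order terms with the $\betab$ weight of the hypothesis space; all the analytically difficult estimation was already carried out in Theorem~\ref{4T3} and Propositions~\ref{5P1}--\ref{5P2}, so this final step is essentially a transcription of the polygonal argument into the anisotropic setting.
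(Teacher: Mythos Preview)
Your proposal is correct and follows essentially the same approach as the paper: the paper's proof is a four-line sketch invoking exactly the same four local estimates (smooth case via Proposition~\ref{1T1}, pure corner via \eqref{5E7c}, edge via \eqref{5E7e}, edge-vertex via \eqref{5E7ce}) and leaves the gluing implicit. Your write-up simply spells out the bookkeeping (finite overlap, the $\betab+1$ versus $\betab$ weight comparison, the geometric-series summation) that the paper takes for granted, so the two proofs are the same in substance.
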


\begin{proof}
The proof is a consequence of
\begin{itemize}
\item[\iti1]  elliptic estimates in the smooth case applied between $\Omega_0$ and $\Omega'_0$,
\item[\iti2]  pure corner estimates \eqref{5E7c},
\item[\iti3]  edge estimates \eqref{5E7e} between the pure edge domains $\Omega_\be$ and $\Omega'_\be$,
\item[\iti4]  edge-vertex estimates \eqref{5E7ce}.
\end{itemize}

\end{proof}

\subsection{Anisotropic weighted spaces with non-homogeneous norms}
For the same reason as in the two-dimensional case, it is valuable to have alternative statements to \eqref{5EM} and \eqref{5EA} in which the a priori condition $\bu\in\bK^1_{\betab}(\Omega)$ can be replaced by the weaker condition $\bu\in\bJ^1_{\betab}(\Omega)$.

\begin{definition}
\label{5DJN}
For $\betab=\{\beta_\bc,\beta_\be\}$ and $n\in\N$, let us introduce the isotropic weighted space
\begin{align}
\label{5E1wsJ}
    \rJ^n_{\betab}(\cV) = \Big\{ & u\in \rL^2_{\rm loc}(\cV)\ : \
    \forall\alpha, \; |\alpha| \leq  n,
    \quad    \partial^{\alpha}_\bx u\in \rL^2(\cV\cap \Omega_{0})
    \quad\mbox{and}\quad \\ \nonumber
    & r_\bc(\bx)^{\beta_\bc+n} \,
    \partial^{\alpha}_\bx u\in \rL^2(\cV\cap \Omega_\bc) \quad \forall
    \bc\in \sC,\\ \nonumber
    & r_\be(\bx)^{\beta_\be+n} \,
    \partial^{\alpha}_\bx u\in \rL^2(\cV\cap \Omega_\be) \quad \forall
    \be\in \sE,\\ \nonumber
    &r_\bc(\bx)^{\beta_\bc+n} \,
    \rho_{\bc\be}(\bx)^ {\beta_\be+n} \,
    \partial^{\alpha}_\bx u\in \rL^2(\cV\cap \Omega_{\bc\be}) \quad \forall
    \bc\in \sC, \ \forall\be\in \sE_\bc
    \Big\},
\end{align}
and its anisotropic companion, for
$n\ge -\min\{\min_{\bc\in\sC}\beta_\bc, \min_{\be\in\sE}\beta_\be\}$,
{\em cf.}\ \eqref{4EN1}
\begin{align}
\label{5E1wsN}
    \rN^n_{\betab}&(\cV) = \Big\{ u\in \rL^2_{\rm loc}(\cV)\ : \
    \forall\alpha, \; |\alpha| \leq  n,
    \quad    \partial^{\alpha}_\bx u\in \rL^2(\cV\cap \Omega_{0})
    \quad\mbox{and}\quad \\ \nonumber
    & r_\bc(\bx)^{\max\{\beta_\bc+|\alpha|,0\}} \,
    \partial^{\alpha}_\bx u\in \rL^2(\cV\cap \Omega_\bc) \quad \forall
    \bc\in \sC,\\ \nonumber
    & r_\be(\bx)^{\max\{\beta_\be+|\alpha_\perp|,0\}} \,
    \partial^{\alpha}_\bx u\in \rL^2(\cV\cap \Omega_\be) \quad \forall
    \be\in \sE,\\ \nonumber
    &r_\bc(\bx)^{\max\{\beta_\bc+|\alpha|,0\}} \,
    \rho_{\bc\be}(\bx)^ {\max\{\beta_\be+|\alpha_\perp|,0\}} \,
    \partial^{\alpha}_\bx u\in \rL^2(\cV\cap \Omega_{\bc\be}) \quad \forall
    \bc\in \sC, \ \forall\be\in \sE_\bc
    \Big\}.\hskip-2em
\end{align}
\end{definition}

We note that, like in the case of $\rK$-weighted spaces, the semi-norms issued from \eqref{5E1wsJ} are equivalent to the globally defined semi-norms, compare with \eqref{5E00}
\begin{equation}
\label{5E10}
   \Big\{ \sum_{|\alpha|=k}
   \Big\| \big\{\prod_{\bc\in\sC} r_\bc^{\beta_\bc + n}\big\}
   \big\{ \prod_{\be\in\sE} \big( \frac{r_\be}{r_\sC}\big)^ {\beta_\be+n}\big\}
   \,\partial^\alpha_\bx u\,
   \Big\|_{0;\, \cV}^2
   \Big\}^{\frac12},\quad k=0,\ldots,n.
\end{equation}
It is useful to introduce, in the same spirit as in \cite{MazyaRossmann03}, a full range of intermediate spaces between $\rK^n_\betab(\Omega)$ and $\rJ^n_\betab(\Omega)$.

\begin{definition}
\label{5Dflag}
Let us flag a subset $\sC_0$ of corners and a subset $\sE_0$ of edges, and define $\rJ^n_{\betab}(\cV;\sC_0,\sE_0)$ as the space of functions such that all semi-norms
\begin{equation}
\label{5E10b}
   \Big\| \big\{\prod_{\bc\in\sC_0} r_\bc^{\beta_\bc + |\alpha|}\big\}
   \big\{\!\!\prod_{\bc\in\sC\setminus\sC_0} \! r_\bc^{\beta_\bc + n}\big\}
   \big\{\prod_{\be\in\sE_0} \big( \frac{r_\be}{r_\sC}\big)^ {\beta_\be+|\alpha|}\big\}
   \big\{\!\!\prod_{\be\in\sE\setminus\sE_0} \!
   \big( \frac{r_\be}{r_\sC}\big)^ {\beta_\be+n}\big\}
   \,\partial^\alpha_\bx u\,
   \Big\|_{0;\, \cV}
\end{equation}
are finite for $|\alpha|\le n$. Anisotropic spaces $\rN^n_{\betab}(\cV;\sC_0,\sE_0)$ are defined similarly, replacing in \eqref{5E1wsN} the weight $r_\bc^{\max\{\beta_\bc+|\alpha|,0\}}$ by $r_\bc^{\beta_\bc+|\alpha|}$ when $\bc\in\sC_0$, and $\{r_\be,\rho_{\bc\be}\}^{\max\{\beta_\be+|\alpha|,0\}}$ by $\{r_\be,\rho_{\bc\be}\}^{\beta_\be+|\alpha|}$ when $\be\in\sE_0$. The sum of the squares of these contributions for $|\alpha|=n$ defines the squared semi-norm
\[
   \SNorm{u}{\rN^n_{\betab}(\cV;\sC_0,\sE_0)}^2.
\]
\end{definition}

Note that with $\sC_0=\sE_0=\varnothing$, we obtain the maximal spaces  already introduced in \eqref{5E1wsJ} and \eqref{5E1wsN}:
\begin{equation}
\label{5EJN}
 \rJ^n_\betab(\cV) = \rJ^n_\betab(\cV;\varnothing,\varnothing)\,;\qquad
  \rN^n_\betab(\cV) = \rN^n_\betab(\cV;\varnothing,\varnothing)\;.
\end{equation}

The corresponding analytic class is defined as usual:

\begin{definition}
\label{5DB}
We say that $u\in \rB_{\betab}(\Omega;\sC_0,\sE_0)$ if $u\in\rN^k_{\betab}(\Omega;\sC_0,\sE_0)$ for all $k>k_\betab:=-\min\{\min_{\bc\in\sC}\beta_\bc, \min_{\be\in\sE}\beta_\be\}$ and there exists a positive
constant $C$ such that
\[
   \SNorm{u}{\rN^k_{\betab}(\Omega;\sC_0,\sE_0)}\leq C^{k+1} k! \quad \forall k>k_\betab.
\]
In accordance with \eqref{5EJN}, we write $\rB_\betab(\Omega)$ for $\rB_\betab(\Omega;\varnothing,\varnothing)$.
\end{definition}

\begin{remark}
\label{5R1}
\iti1 Choosing $\sC_0=\sC$ and $\sE_0=\sE$, we find that the spaces $\rJ^n_{\betab}(\Omega;\sC,\sE)$, $\rN^n_{\betab}(\Omega;\sC,\sE)$ and $\rB_{\betab}(\Omega;\sC,\sE)$ coincide with the homogeneous spaces $\rK^n_{\betab}(\Omega)$, $\rM^n_{\betab}(\Omega)$ and $\rA_{\betab}(\Omega)$, respectively.

\smallskip
\iti2 The following relations hold between our spaces $\rJ^m_\betab(\Omega;\sC_0,\sE_0)$ and the spaces $W^{m,p}_{\vec\beta,\vec\delta}(\Omega)$ of Maz'ya and Rossmann \cite{MazyaRossmann03}:
\begin{equation}
\label{5E11}
   \rJ^m_\betab(\Omega;\sC,\varnothing) = W^{m,p}_{\vec\beta,\vec\delta}(\Omega)
   \quad\mbox{if}\ \
   p=2,\ \
   \vec\beta = \big\{\beta_\bc+m\big\}_{\bc\in\sC},\ \
   \vec\delta = \big\{\beta_\be+m\big\}_{\be\in\sE}\,.
\end{equation}
In these spaces, the non-homogeneity is only related to {\em edges}. Under the same condition as in \eqref{5E11}, the intermediate spaces $W^{m,p}_{\vec\beta,\vec\delta}(\Omega;\tilde J)$ of \cite[\S\,7.3]{MazyaRossmann03} coincide with our spaces $\rJ^m_\betab(\Omega;\sC,\sE_0)$ if $\sE_0$ is chosen as the same set of edges as $\tilde J$.

\smallskip
\iti3 Our analytic class $\rB_\betab(\Omega)$ coincides with the so-called countably normed spaces $B^\ell_\beta(\Omega)$ introduced by Guo in \cite{Guo95}: If Guo's edge and corner exponents $\beta_{ij}\in(0,1)$ and $\beta_m\in(0,\frac12)$ satisfy $\beta_{ij}=\beta_\be+\ell$ and $\beta_m=\beta_\bc+\ell$, respectively, then $B^\ell_\beta(\Omega)=\rB_\betab(\Omega)$.
\end{remark}

We state the assumption for $\rJ$-weighted spaces corresponding to Assumption \REF{4GB} for the dihedral neighborhood $\Omega_\be$:

\begin{assumption}
\label{5GB}
Let $\be\in\sE$.
Let $\beta_\be\in\R$. Let $m\ge1$ be an integer such that $m+1\ge-\beta_\be$.
We assume the following a priori estimate:
There is a constant $C$ such that
any
\[
   \bu\in \bJ^{m+1}_{\beta_\be}(\Omega_\be)\,,
\]
solution of problem \eqref{5Ebvp} in $\cV=\Omega'_\be$ with $\bff\in\bJ^{m-1}_{\beta_\be+2}(\Omega'_\be)$,
satisfies:
\begin{equation}
\label{5GB3}
   \Norm{\bu}{\bJ^{m+1}_{\beta_\be} (\Omega_\be)} \leq
   C\Big(\Norm{ \bff}{\bJ^{m-1}_{\beta_\be+2} (\Omega'_\be)}
   + \Norm{ \bu}{\bJ^m_{\beta_\be+1} (\Omega'_\be)}\Big).
\end{equation}
\end{assumption}

We then have the following anisotropic regularity shift result in the {\em non-homogeneous} weighted spaces $\bN^n_{\betab}(\Omega;\sC,\varnothing)$ and $\bB_{\betab}(\Omega;\sC,\varnothing)$:

\begin{theorem}
\label{5T2}
Let $\Omega$ be a polyhedron and $\betab=\{\beta_\bc,\beta_\be\}$ be a weight multi-exponent.
Let $m\ge1$ be an integer such that $m+1\ge-\beta_\be$ for all edges.
Let Assumption \REF{5GB} be satisfied for all $\be\in\sE$.
Let $\bu\in\bH^2_{\loc}(\ov \Omega\setminus\sE)$ be a solution of problem \eqref{5Ebvp} in $\cV=\Omega$.
Then the following implications hold
\begin{equation}
\begin{gathered}
\label{5ENB}
   \bu\in\bJ^m_{\betab}(\Omega;\sC,\varnothing) \ \ \mbox{and}\ \
   \bff\in\bN^n_{\betab+2}(\Omega;\sC,\varnothing)
   \quad\Longrightarrow\quad
   \bu\in\bN^n_{\betab}(\Omega;\sC,\varnothing) \quad(n>m),\hskip-1em
\\
   \bu\in\bJ^m_{\betab}(\Omega;\sC,\varnothing) \ \ \mbox{and}\ \
   \bff\in\bB_{\betab+2}(\Omega;\sC,\varnothing)
   \quad\Longrightarrow\quad
   \bu\in\bB_{\betab}(\Omega;\sC,\varnothing).
\end{gathered}
\end{equation}
\end{theorem}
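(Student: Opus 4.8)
The plan is to follow the modular strategy of the proof of Theorem \ref{5T1}, replacing the homogeneous edge ingredients by their non-homogeneous counterparts while keeping the homogeneous (type $\rK$/$\rM$) behavior at the corners, which is exactly what the flag $(\sC,\varnothing)$ prescribes. The global $\bN^n_\betab(\Omega;\sC,\varnothing)$ estimate will be assembled, along the covering \eqref{5Esmo}, from four families of local estimates: \itj1 the analytic elliptic estimate of Corollary \ref{1C1} between the interior domains $\Omega_0$ and $\Omega'_0$, where all weights are equivalent to $1$; \itj2 the pure corner estimate of Proposition \ref{5P2}, used \emph{verbatim} because the corners are flagged and hence carry homogeneous weights (on $\Omega_\bc$ one has $\rM=\rK$); \itj3 a pure edge estimate; and \itj4 an edge--vertex estimate. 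For \itj3, in place of the homogeneous estimate \eqref{5E7e} derived from Theorem \ref{4T3}, I would invoke Theorem \ref{4T4} on each dihedral neighborhood $\Omega_\be$ under Assumption \ref{5GB}; this yields the anisotropic non-homogeneous control in $\bN^n_{\beta_\be}$ near the edge and requires precisely the hypothesis $m+1\ge-\beta_\be$.

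The genuinely new ingredient is \itj4, a non-homogeneous analogue of Proposition \ref{5P1}. I would run the dyadic decomposition in the corner distance $r_\bc$ of $\Omega_{\bc\be}$ and $\Omega'_{\bc\be}$ into nested edge neighborhoods $\widehat\cV,\widehat\cV'$ (which touch no corner), and apply Theorem \ref{4T4} as the reference estimate on these, with the anisotropic edge weight $\rho_{\bc\be}^{\max\{\beta_\be+|\alpha_\perp|,0\}}$; the basic regularity on each annulus is furnished by localizing and scaling the hypothesis $\bu\in\bJ^m_\betab(\Omega'_{\bc\be};\sC,\varnothing)$, and Theorem \ref{4T4} itself supplies the internal bootstrap via Theorem \ref{4T1}. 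The point is that the scaling bookkeeping of Proposition \ref{5P1} survives unchanged: on the reference annuli $r_\bc\simeq1$, so $r_\be\simeq\rho_{\bc\be}$, and under $\bx=2^{-\mu}\hat\bx$ the ratio $\rho_{\bc\be}$ is scale invariant while $r_\bc$ scales covariantly. Thus the homogeneous corner weight $r_\bc^{\beta_\bc+|\alpha|}$ absorbs all the powers of $2^{-\mu}$ exactly as in the homogeneous case, while the $\max$-based edge weight is simply carried along by the dilation. Summing the squares of the dyadic estimates over $\mu$ gives the edge--vertex estimate in $\bN^n_\betab(\Omega_{\bc\be};\sC,\varnothing)$.

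Patching the four families together, using that the weight systems agree up to equivalence on the overlaps, produces a global Cauchy-type bound
\[
   \frac1{k!}\,\SNorm{\bu}{\rN^k_{\betab}(\Omega;\sC,\varnothing)} \le
   C^{k+1}\Big(\sum_{\ell=0}^{k}\frac1{\ell!}\,\SNorm{\bff}{\rN^\ell_{\betab+2}(\Omega;\sC,\varnothing)}
   + \Norm{\bu}{\bJ^m_{\betab}(\Omega;\sC,\varnothing)}\Big)
\]
for all $k$ with $m\le k\le n$. This immediately yields the finite shift $\bff\in\bN^n_{\betab+2}\Rightarrow\bu\in\bN^n_{\betab}$, and the passage to the analytic class $\bB_\betab$ follows exactly as in the proof of \eqref{3EB3} in Theorem \ref{3T1b}: if $\SNorm{\bff}{\rN^\ell_{\betab+2}}\le F^{\ell+1}\ell!$, the geometric sum over $\ell$ is controlled by a term of the form $C^{k+1}F^{k}k!$, whence $\bu\in\bB_\betab(\Omega;\sC,\varnothing)$.

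I expect the main obstacle to be step \itj4. Two features demand care. First, one must verify that the non-homogeneous ($\max$-based) edge weights are compatible with the dyadic dilation in $r_\bc$; this is precisely the scale invariance of $\rho_{\bc\be}$ noted above, and is the reason the argument of Proposition \ref{5P1} goes through with $\rho_{\bc\be}^{\beta_\be+|\alpha_\perp|}$ replaced by $\rho_{\bc\be}^{\max\{\beta_\be+|\alpha_\perp|,0\}}$. Second, one must propagate the \emph{mixed} basic regularity $\bu\in\bJ^m_\betab(\Omega;\sC,\varnothing)$ — homogeneous at corners, non-homogeneous at edges — consistently through all four regions; this is where the condition $m+1\ge-\beta_\be$ is used to license both Theorem \ref{4T4} and its edge--vertex counterpart, the corner contributions remaining of pure type $\rK$ throughout.
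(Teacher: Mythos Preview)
Your proposal is correct and follows essentially the same approach as the paper: decompose into the four region types, use the smooth estimate on $\Omega_0$, Proposition~\ref{5P2} verbatim on the pure corner regions (since there $\rM=\rN$), Theorem~\ref{4T4} (estimate \eqref{4E8}) on the pure edge regions, and for the edge--vertex regions rerun the dyadic argument of Proposition~\ref{5P1} with the non-homogeneous reference estimate, exploiting the scale invariance of $\rho_{\bc\be}$ and the homogeneity in $r_\bc$ of the corner weight $r_\bc^{\beta_\bc+|\alpha|}$. Your identification of the key technical point in step \itj4 --- that the $\max$-based edge weight depends only on $\rho_{\bc\be}$ and is therefore carried unchanged through the dilation while the powers of $2^{-\mu}$ are absorbed by the homogeneous corner weight --- is exactly the observation the paper makes.
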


\begin{proof}
The proof is a consequence of suitable a priori estimates with analytic control in the four types of regions in the polyhedron:

\iti1 Elliptic estimates in the smooth case can be applied between $\Omega_0$ and $\Omega'_0$.

\smallskip
\iti2 Pure corner estimates \eqref{5E7c} are valid here: We note that in the pure corner region $\Omega_\bc$ the norms in $\rK$ and $\rJ$ spaces, or in $\rM$ and $\rN$ spaces, are the same.

\smallskip
\iti3 The edge estimates \eqref{4E8} are valid between the pure edge domains $\Omega_\be$ and $\Omega'_\be$.

\smallskip
\iti4 Finally, edge-vertex estimates are proved by the dyadic partition argument starting from the same reference domains $\widehat\cV$ and $\widehat\cV\ee'$ as in the proof of Proposition \REF{5P1}. The reference estimate can be written as
\begin{multline}
\label{5E20}
   \frac{1}{k!}\;
   \Big(\sum_{|\alpha|=k}\Norm{r_\be^{\max\{\beta_\be+|\alpha_\perp|,\,0\}}
   \partial^\alpha_\bx \widehat\bu}{0;\,\widehat\cV}^2\Big)^{\frac12} \le
    C^{k+1} \Big\{ \\[-1.5ex]
   \sum_{\ell=0}^{k} \frac{1}{\ell!}\;
   \Big(\sum_{|\alpha|=\ell}
   \Norm{r_\be^{\max\{\beta_\be+2+|\alpha_\perp|,\,0\}} \partial^\alpha_\bx \widehat\bff}{0;
   \,\widehat\cV\ee'}^2\Big)^{\frac12}\\[-1.5ex]
   +  \Big(\sum_{|\alpha|\le m}\Norm{r_\be^{\max\{\beta_\be+|\alpha|,\,0\}}
   \partial^\alpha_\bx \widehat\bu}{0;\,\widehat\cV\ee'}^2\Big)^{\frac12}
   \Big\}.
\end{multline}
Since $r_\bc$ and $(r_\bc)^{-1}$ are bounded on the reference domains, we can
\begin{itemize}
\item replace $r_\be$ by $\rho_{\bc\be}$
\item insert powers of $r_\bc$
\end{itemize}
in the previous estimate, thus obtaining
\begin{multline*}
   \frac{1}{k!}\;
   \Big(\sum_{|\alpha|=k}\Norm{r_\bc^{\beta_\bc+|\alpha|}\,
   \rho_{\bc\be}^{\max\{\beta_\be+|\alpha_\perp|,\,0\}}
   \partial^\alpha_\bx \widehat\bu}{0;\,\widehat\cV}^2\Big)^{\frac12} \le
    C^{k+1} \Big\{ \\[-1.5ex]
   \sum_{\ell=0}^{k} \frac{1}{\ell!}\;
   \Big(\sum_{|\alpha|=\ell}
   \Norm{r_\bc^{\beta_\bc+2+|\alpha|}\,
   \rho_{\bc\be}^{\max\{\beta_\be+2+|\alpha_\perp|,\,0\}} \partial^\alpha_\bx \widehat\bff}{0;
   \,\widehat\cV\ee'}^2\Big)^{\frac12} \\[-1.5ex]
   +  \Big(\sum_{|\alpha|\le m}\Norm{r_\bc^{\beta_\bc+|\alpha|}\,
   \rho_{\bc\be}^{\max\{\beta_\be+|\alpha|,\,0\}}
   \partial^\alpha_\bx \widehat\bu}{0;\,\widehat\cV\ee'}^2\Big)^{\frac12}
   \Big\}.
\end{multline*}
Owing to the homogeneity of the weights with respect to $r_\bc$, the dyadic partition argument yields the desired edge-vertex estimate, which allows to conclude the proof of the theorem.
\end{proof}

\begin{remark}
\label{5R2}
\iti1 If we replace Assumption \REF{5GB} by Assumption \REF{5GA} for edges $\be$ in the flagged subset $\sE_0$, we can prove, instead of \eqref{5ENB}, the implications
\begin{equation}
\begin{gathered}
\label{5ENBb}
   \bu\in\bJ^m_{\betab}(\Omega;\sC,\sE_0) \ \ \mbox{and}\ \
   \bff\in\bN^n_{\betab+2}(\Omega;\sC,\sE_0)
   \quad\Longrightarrow\quad
   \bu\in\bN^n_{\betab}(\Omega;\sC,\sE_0),
\\
   \bu\in\bJ^m_{\betab}(\Omega;\sC,\sE_0) \ \ \mbox{and}\ \
   \bff\in\bB_{\betab+2}(\Omega;\sC,\sE_0)
   \quad\Longrightarrow\quad
   \bu\in\bB_{\betab}(\Omega;\sC,\sE_0).
\end{gathered}
\end{equation}
\iti2 Under Assumption \REF{5GB}, the implications in the maximal non-homogeneous spaces, i.e., with $\sC_0=\sE_0=\varnothing$, are also true:
\begin{equation}
\begin{gathered}
\label{5ENBc}
   \bu\in\bJ^m_{\betab}(\Omega) \ \ \mbox{and}\ \
   \bff\in\bN^n_{\betab+2}(\Omega)
   \quad\Longrightarrow\quad
   \bu\in\bN^n_{\betab}(\Omega),
\\
   \bu\in\bJ^m_{\betab}(\Omega) \ \ \mbox{and}\ \
   \bff\in\bB_{\betab+2}(\Omega)
   \quad\Longrightarrow\quad
   \bu\in\bB_{\betab}(\Omega).
\end{gathered}
\end{equation}
If $\beta_\bc>-\frac32$ for any corner $\bc$, the statements \eqref{5ENB} and \eqref{5ENBc} coincide, since in this case the spaces $\bJ^m_{\betab}(\Omega;\sC,\varnothing)$ and $\bJ^m_{\betab}(\Omega)$ are the same (consequence of Hardy's inequality). In the general case \eqref{5ENBc} can be proved by two different methods:
\begin{itemize}
\item Deduced from \eqref{5ENB} by an argument of corner asymptotics (at each corner, the asymptotics modulo $\bJ^m_{\betab}(\Omega;\sC,\varnothing)$ contains only polynomials): For instance when $m=1$, if $\beta_\bc\in(-\frac52,-\frac32)$ for all corners $\bc$, any element of $\bu\in\bJ^m_{\betab}(\Omega)$ splits as
\[
   \bu = \bu_\bc + \bw_\bc \quad\mbox{in}\quad\Omega_\bc,\quad\mbox{with}\quad
   \bu_\bc\in\bJ^m_{\betab}(\Omega;\sC;\varnothing),\quad
   \bw_\bc\in\C^N,
\]
and we can apply \eqref{5ENB} locally near each corner, to each function $\bu_\bc$.
\item Directly proved by the same method as for Theorem \REF{5T2}, starting with the reference estimate for $k\ge m$
\begin{multline*}
   \quad\frac{1}{k!}\;
   \Big(\sum_{|\alpha|=k}\Norm{r_\be^{\max\{\beta_\be+|\alpha_\perp|,\,0\}}
   \partial^\alpha_\bx \widehat\bu}{0;\,\widehat\cV}^2\Big)^{\frac12} \le
    C^{k+1} \Big\{ \\[-1.5ex]
   \quad\sum_{\ell=m-1}^{k} \frac{1}{\ell!}\;
   \Big(\sum_{|\alpha|=\ell}
   \Norm{r_\be^{\max\{\beta_\be+2+|\alpha_\perp|,\,0\}} \partial^\alpha_\bx \widehat\bff}{0;
   \,\widehat\cV\ee'}^2\Big)^{\frac12}\\[-1.5ex]
   +  \Big(\sum_{|\alpha|= m}\Norm{r_\be^{\max\{\beta_\be+|\alpha|,\,0\}}
   \partial^\alpha_\bx \widehat\bu}{0;\,\widehat\cV\ee'}^2\Big)^{\frac12}
   \Big\},
\end{multline*}
instead of \eqref{5E20}: The $\rJ^m_{\beta_\be}$ norm present in \eqref{5E20} is replaced here by the corresponding semi-norm, {\em cf.}\ Corollary \REF{1C1}.
\end{itemize}
\end{remark}

\section{Analytic weighted regularity for solutions of coercive problems}
\label{sec6}
In this section, we show how Theorems \REF{3T1} and \REF{3T1b} in the polygonal case, or Theorems \REF{5T1} and \REF{5T2} in the polyhedral case, apply to solutions of variational problems. 
For second order boundary boundary value problems that allow a coercive variational formulation, one knows basic regularity in weighted Sobolev spaces in a form that fits the hypotheses of our natural regularity shift results. For polygons, this is obtained by means of Kondrat'ev's classical theory, and for polyhedra, such results were proved by Maz'ya and Rossmann \cite{MazyaRossmann03}. As a consequence, we obtain analytic regularity for solutions of variational problems.

Let $\Omega$ be a polygon or a polyhedron. In coherence with the previous sections, we consider a sesquilinear form $\ra$, homogeneous of order $1$ and with constant coefficients acting on vector-valued functions with $N$ components
\begin{equation}
\label{6E1}
   \ra(\bu,\bv) = \sum_{i=1}^N \sum_{j=1}^N \sum_{|\alpha|=1} \sum_{|\gamma|=1}
   \int_\Omega a^{\alpha\gamma}_{ij} \,
   \partial^\alpha_\bx u_j(\bx)\, \partial^\gamma_\bx \ov v_i(\bx)\,\rd\bx,
\end{equation}
and a subspace $\bV$ of $\rH^1(\Omega)^N=:\bH^1(\Omega)$ defined by essential boundary conditions on the sides $\Gamma_\bs$ of $\Omega$
\begin{equation}
\label{6E4}
   \bV = \{ \bu\in\bH^1(\Omega):\  D_\bs\bu = 0
   \;\mbox{ on }\; \Gamma_\bs,\ \bs\in\sS\}.
\end{equation}
We assume that the form $\ra$ is \emph{coercive} on $\bV$:
\[
   \exists c,\,C>0,\quad\forall\bu\in\bV,\quad
   \Re\ra(\bu,\bu) \ge c\Norm{\bu}{1;\ee\Omega}^2 - C \Norm{\bu}{0;\ee\Omega}^2.
\]
Standard examples of such sesquilinear forms are
the gradient form for scalar functions
\[
   \ra_\nabla(u,v) =
   \int_\Omega \nabla u(\bx) \cdot \nabla \ov v(\bx)\,\rd\bx
\]
and the stress-strain sesquilinear forms in linear elasticity:
\[
   \ra_{\sf ela} = \int_\Omega \sigma (\bu)(\bx) : \ov{\varepsilon (\bv)}(\bx)\,\rd\bx,
\]
where $\varepsilon$ is the symmetrized gradient tensor and $\sigma=A\varepsilon$, where $A$ is a material tensor with the usual symmetry and positivity properties.
Variational spaces $\rV$ on which $\ra_\nabla$ is coercive can be defined by any subset $\sS_D$ of the set of sides $\sS$:
\[
   \rV = \{u\in\rH^1(\Omega)\ :\ u\on{\Gamma_\bs}=0\quad\forall\bs\in\sS_D\}.
\]
As for $\ra_{\sf ela}$ we can take for $\bV$ any space of the type
\begin{multline}
\label{6Etan}
   \bV = \{\bu\in\bH^1(\Omega)\ :\ \bu\on{\Gamma_\bs}=0 \quad \forall\bs\in\sS_D,\quad
   \bu\cdot\bn\on{\Gamma_\bs}=0 \quad \forall\bs\in\sS_T\\
   \mbox{and}\quad
   \bu\times\bn\on{\Gamma_\bs}=0\quad \forall\bs\in\sS_N\},
\end{multline}
where $\bn$ is the outward unit normal vector to $\Gamma_\bs$, and $\sS_D$, $\sS_T$, and $\sS_N$ are disjoint subsets of $\sS$. As a consequence of Korn's inequality, $\ra_{\sf ela}$ is coercive on such spaces $\bV$.

We consider the variational problem
\begin{equation}
\label{6Evar}
   \mbox{\sl Find }\quad \bu\in\bV\quad\mbox{\sl such that} \quad
   \forall\bv\in\bV,\quad
   \ra(\bu,\bv) = \int_{\Omega} \bff\,\ov\bv \ \rd\bx\,.
\end{equation}
With $L$, $T_\bs$ and $D_\bs$ defined in an obvious way, solutions of \eqref{6Evar} satisfy
\begin{equation}
\label{6Ebvp}
   \left\{ \begin{array}{rclll}
   L\,\bu &=& \bff \quad & \mbox{in}\ \ \Omega \\[0.3ex]
   T_\bs\,\bu &=& 0  & \mbox{on}\ \ \Gamma_\bs, &\bs\in\sS, \\[0.3ex]
   D_\bs\,\bu &=& 0  & \mbox{on}\ \ \Gamma_\bs &\bs\in\sS.
   \end{array}\right.
\end{equation}

Having the natural analytic regularity shift results of Theorems \REF{3T1}, \REF{3T1b}, \REF{5T1} and \REF{5T2} at hand, the issue is to find suitable exponents $\betab$ so that
\begin{enumerate}
\item $\bA_\betab(\Omega)$ or $\bB_\betab(\Omega)$ are compactly embedded in $\bH^1(\Omega)$, --- in order to be useful in error analysis for example.
\item Variational solutions  $\bu$ with sufficiently smooth right hand sides belong to $\bK^1_\betab(\Omega)$ or $\bJ^1_\betab(\Omega)$.
\end{enumerate}

Condition (1) of compact embedding is satisfied on two- and three-dimensional domains for all $\betab<-1$ (this means that all components $\beta_\bc$ and $\beta_\be$ are $<-1$). 

Condition (2) of initial regularity is the main question discussed in the rest of this section.

\subsection{Regularity of variational solutions in polygons}
Let $\Omega$ be a polygon with vertices $\bc\in\sC$. We recall that $\Omega_\bc$ denotes a  neighborhood of $\bc$ satisfying \eqref{3E1a}-\eqref{3E1b}, $\cK_\bc$ is the infinite sector which coincides with $\Omega_\bc$ near $\bc$, and $(r_\bc,\theta_\bc)$ are polar coordinates centered at $\bc$. Finally let $G_\bc$ denote the set of corresponding angles $\theta_\bc$. Denoting by $\sS_\bc$ the set of face indices such that $\bc$ belongs to the closure of $\Gamma_\bs$, the localized version of problem \eqref{6Ebvp} near the corner $\bc$ is
\begin{equation}
\label{6EbvpKc}
   \left\{ \begin{array}{rclll}
   L\,\bu &=& \bff \quad & \mbox{in}\ \ \cK_\bc \\[0.3ex]
   T_\bs\,\bu &=& 0  & \mbox{on}\ \ \Gamma_\bs, &\bs\in\sS_\bc, \\[0.3ex]
   D_\bs\,\bu &=& 0  & \mbox{on}\ \ \Gamma_\bs &\bs\in\sS_\bc.
   \end{array}\right.
\end{equation}

The standard Sobolev space $\rH^1(\Omega)$ coincides with $\rJ^1_{-1}(\Omega)$, see \eqref{3E3}. From Remark \REF{2R1}, we know that for the comparison of $\rJ^1_{-1}(\Omega)$ with $\rK^1_{-1}(\Omega)$ we are in a critical case, namely a function $u\in\rH^1(\Omega)$ neither has point values at corners nor satisfies  $r^{-1}_\bc u\in\rL^2(\Omega)$   in general (see \cite{KozlovMazyaRossmann97b}). There holds
$$\rK^1_{-1}(\Omega)\subset \rJ^1_{-1}(\Omega) \subset \rK^1_{-1+\varepsilon}(\Omega),\quad\forall\varepsilon>0.$$

Taking the essential boundary conditions into account that define the variational space $\bV\subset \bH^{1}(\Omega)$, one will sometimes find that $\bV$ is embedded in $\bK^1_{-1}(\Omega)$. This happens in particular if each corner lies on at least one side on which Dirichlet conditions are imposed. In the general case, one will just have
$\bV\subset\bK^1_{-1+\varepsilon}(\Omega)$ for all $\varepsilon>0$. Necessary and sufficient conditions for the embedding $\bV\subset\bK^1_{-1}(\Omega)$ are discussed in \cite[Ch.\,14]{GLC}.

\subsubsection{Case $\bV\subset\bK^1_{-1}(\Omega)$  (homogeneous norms)} In this case the analytic regularity shift \eqref{3EA3} in classes $\bA_\betab(\Omega)$ can be applied to variational solutions with well chosen weight exponents $\beta_\bc<-1$ as we explain now. For each corner $\bc$, the optimal condition on $\beta_\bc$ is related to the spectrum $\sigma(\gA_\bc)$ of the ``Mellin symbol'' $\gA_\bc$ of the system $(L,T_\bs,D_\bs)$\footnote{$\gA_\bc$ is also called ``operator pencil'' generated by the system $(L,T_\bs,D_\bs)$.} at $\bc$ (see \cite{Kondratev67, KozlovMazyaRossmann97b}): 
\begin{multline}
\label{6Esymb}
   \sigma(\gA_\bc) = \big\{\lambda\in\C,\quad\exists\varphif\in\bH^1(G_\bc),\ \varphif\neq0,
   \ \mbox{such that} \\
   \bu:=r_\bc^\lambda\varphif(\theta_\bc) \ 
   \mbox{solves problem \eqref{6EbvpKc} with $\bff=0$ on $\cK_\bc$}\big\}.
\end{multline}
Then we define $b_\bc(\Omega,\ra,\bV)$ as the supremum of the numbers $b>0$ such that
\begin{equation}
\label{6Ebc2}
    \{\lambda\in\C: 0<\Re\lambda< b\} \cap \sigma(\gA_\bc) = \varnothing.
\end{equation}
As a consequence of the coercivity of the form $\ra$ on $\bV$, the number $b_\bc(\Omega,\ra,\bV)$ is positive. 

\begin{theorem}
\label{6T1}
Let $\Omega$ be a polygon. We assume that the form $\ra$ is coercive on $\bV$, and that $\bV\subset\bK^1_{-1}(\Omega)$. If the following condition holds for the exponents $\beta_\bc$
\begin{equation}
\label{6Eco1}
   0\le -\beta_\bc-1 < b_\bc(\Omega,\ra,\bV)\quad\forall\bc\in\sC
\end{equation}
then any solution $\bu\in\bV$ of the variational problem \eqref{6Evar} satisfies the regularity result:
\begin{equation}
\label{6EA}
   \bff\in\bA_{\betab+2}(\Omega) 
   \quad\Longrightarrow\quad \bu\in\bA_{\betab}(\Omega).
\end{equation}
\end{theorem}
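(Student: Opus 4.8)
The plan is to deduce Theorem \ref{6T1} from the natural analytic regularity shift of Theorem \ref{3T1}. Its second implication \eqref{3EA3} asserts that
\[
   \bu\in\bK^1_{\betab}(\Omega) \ \ \mbox{and}\ \ \bff\in\bA_{\betab+2}(\Omega)
   \quad\Longrightarrow\quad \bu\in\bA_{\betab}(\Omega).
\]
Since the hypothesis $\bff\in\bA_{\betab+2}(\Omega)$ is granted, the entire matter reduces to establishing the \emph{basic regularity} $\bu\in\bK^1_{\betab}(\Omega)$ of the variational solution; once this is available, \eqref{3EA3} yields the conclusion at once. Thus condition \eqref{6Eco1}, and with it the coercivity of $\ra$ and the assumption $\bV\subset\bK^1_{-1}(\Omega)$, will all be spent in the basic regularity step, while the analytic upgrade is entirely carried by Theorem \ref{3T1}.

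First I would record the initial regularity at our disposal: by hypothesis $\bu\in\bV\subset\bK^1_{-1}(\Omega)$, and $\bff\in\bA_{\betab+2}(\Omega)\subset\bK^0_{\betab+2}(\Omega)$. The point is to upgrade the corner weight from $-1$ to $\beta_\bc\le-1$ (note $\beta_\bc\le-1$ since $-\beta_\bc-1\ge0$); because $\bK^1_{\beta_\bc}(\Omega_\bc)\subset\bK^1_{-1}(\Omega_\bc)$, this is a genuine gain and not a mere embedding. Near a fixed corner $\bc$ the function $\bu$ solves the model problem \eqref{6EbvpKc} on the sector $\cK_\bc$, and I would invoke Kondrat'ev's weighted elliptic regularity theory for the operator pencil $\gA_\bc$ of \eqref{6Esymb}. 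The dictionary between weights and pencil eigenvalues is that $r_\bc^\lambda\varphif(\theta_\bc)$ lies in $\bK^1_{\beta}$ near $\bc$ if and only if $\Re\lambda>-\beta-1$; hence the weight $-1$ corresponds to the Mellin line $\Re\lambda=0$ and the target weight $\beta_\bc$ to the line $\Re\lambda=-\beta_\bc-1$. Shifting the weight from $-1$ to $\beta_\bc$ is obstructed only by eigenvalues of $\gA_\bc$ in the strip $0<\Re\lambda\le-\beta_\bc-1$. By \eqref{6Ebc2} the strip $0<\Re\lambda<b_\bc(\Omega,\ra,\bV)$ is free of spectrum, and \eqref{6Eco1} gives $-\beta_\bc-1<b_\bc(\Omega,\ra,\bV)$, so this obstructing strip is empty. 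Kondrat'ev's shift theorem then provides, with no singular terms to subtract, $\bu\in\bK^2_{\beta_\bc}(\Omega_\bc)$, in particular $\bu\in\bK^1_{\beta_\bc}(\Omega_\bc)$. On the smooth remainder $\Omega_0$ classical interior and boundary elliptic regularity give $\bu\in\bH^2(\Omega_0)$, and patching these local statements produces $\bu\in\bK^1_{\betab}(\Omega)$.

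With $\bu\in\bK^1_{\betab}(\Omega)$ in hand and $\bff\in\bA_{\betab+2}(\Omega)$ given, implication \eqref{3EA3} of Theorem \ref{3T1} yields $\bu\in\bA_{\betab}(\Omega)$, which is the assertion. The one delicate point is the basic regularity step: one must translate the weight exponents correctly into the location of the weight lines in the Mellin plane and check that the spectral condition \eqref{6Eco1}, phrased through $b_\bc(\Omega,\ra,\bV)$, exactly guarantees that no eigenvalue of $\gA_\bc$ is crossed when moving from $\Re\lambda=0$ to $\Re\lambda=-\beta_\bc-1$. Here the positivity of $b_\bc(\Omega,\ra,\bV)$, which follows from coercivity, and the hypothesis $\bV\subset\bK^1_{-1}(\Omega)$, which pins $\bu$ to the line $\Re\lambda=0$ rather than merely to the larger space $\bH^1=\bJ^1_{-1}(\Omega)$, are precisely what make this crossing-free shift legitimate.
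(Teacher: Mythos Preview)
Your proposal is correct and follows essentially the same approach as the paper: both combine Kondrat'ev's basic regularity shift (yielding $\bu\in\bK^2_{\betab}(\Omega)$, hence $\bu\in\bK^1_{\betab}(\Omega)$, from the spectral condition \eqref{6Eco1}) with the natural analytic regularity shift \eqref{3EA3} of Theorem \ref{3T1}. The paper's proof is more terse, simply citing \cite{Kondratev67} for the implication \eqref{6EK}, whereas you spell out the Mellin-theoretic mechanism behind it; but the structure and ingredients are identical.
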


\begin{proof}
Invoking the general theory of corner problems in the variational setting, we know that if \eqref{6Eco1} holds, then
\begin{equation}
\label{6EK}
   \bff\in \bK^0_{\betab+2}(\Omega)
   \ \ \Longrightarrow\ \
   \bu\in\bK^2_{\betab}(\Omega).
\end{equation}
The proof of this essentially goes back to Kondrat'ev \cite{Kondratev67}, see also \cite[Ch.\,10]{GLC} for more details on the application of Kondrat'ev's technique to variational problems. Then \eqref{6EA} is a consequence of \eqref{3EA3} and \eqref{6EK}.
\end{proof}

\begin{example}
\label{6X1} Let us consider the gradient form $\ra=\ra_\nabla$ on
scalar functions. The associated operator is the Laplacian $\Delta$.
Let $\omega_\bc$ be the opening of $\Omega$ near the vertex $\bc$
and denote by $\Gamma^i_\bc$, $i=1,2$, the two sides of $\Omega$
containing $\bc$.
\iti1 For the Dirichlet problem, we have $\bV\subset\bK^1_{-1}(\Omega)$ and
\[
   b_\bc(\Omega,\ra_\nabla,\rH^1_0) = \frac\pi{\omega_\bc}\,,\quad \bc\in\sC.
\]
\iti2 In the mixed Neumann-Dirichlet case, if at all corners Dirichlet conditions are imposed on at least one side containing $\bc$, we still have $\bV\subset\bK^1_{-1}(\Omega)$ and
\[
   b_\bc(\Omega,\ra_\nabla,\rV) =  
   \frac\pi{\omega_\bc}\ \ \ \mbox{if}\ \  \ \bc\in\sC_D \quad\mbox{and}\quad
   b_\bc(\Omega,\ra_\nabla,\rV) = \frac\pi{2\omega_\bc}\ \ \ \mbox{if}\ \  \ \bc\in\sC_M,
\]
where $\sC_D$ is the set of Dirichlet corners $\bc$ (Dirichlet conditions on both sides $\Gamma^i_\bc$) and $\sC_M$ the set of ``Mixed'' corners $\bc$ (Dirichlet conditions on only one side $\Gamma^i_\bc$).
\end{example}

\subsubsection{General case  (non-homogeneous norms)} If $\bV\not\subset\bK^1_{-1}(\Omega)$ or for more general data with a nonzero Taylor expansion at corners, it is advantageous to use the analytic regularity shift  \eqref{3EB3} in classes $\bB_\betab(\Omega)$. Let us recall from formula \eqref{3EBAP} that for $-\beta_\bc-1\in(k,k+1)$   (with a natural number $k$): 
\[
   \bB_{\betab}(\Omega_\bc) = \bA_{\betab}(\Omega_\bc) \oplus (\P^{k})^N.
\]

\begin{theorem}
\label{6T2}
Let $\Omega$ be a polygon. We assume that the form $\ra$ is coercive on $\bV$. If condition \eqref{6Eco1} holds for the exponents $\beta_\bc$,
then any solution $\bu\in\bV$ of the variational problem \eqref{6Evar} satisfies the regularity result:
\begin{equation}
\label{6EB}
   \bff\in\bB_{\betab+2}(\Omega)
   \ \ \Longrightarrow\ \
   \bu\in\bB_{\betab}(\Omega).
\end{equation}
\end{theorem}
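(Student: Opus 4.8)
The plan is to reproduce the two-step scheme of the proof of Theorem \REF{6T1}, but carried out in the non-homogeneous scale $\rJ$/$\rB$ instead of the homogeneous scale $\rK$/$\rA$. The engine is the natural analytic regularity shift \eqref{3EB3} of Theorem \REF{3T1b}: as soon as one knows that the variational solution $\bu$ lies in a single space $\bJ^m_{\betab}(\Omega)$ with $m\ge1$ and $m\ge-\beta_\bc$ for every corner $\bc$ (that is, $m\ge\max\{1,\kappa_\betab\}$), the hypothesis $\bff\in\bB_{\betab+2}(\Omega)$ forces $\bu\in\bB_{\betab}(\Omega)$. Hence everything reduces to producing this \emph{basic} regularity in one weighted Sobolev space with non-homogeneous norm.

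First I would establish the basic regularity. Because $\bff\in\bB_{\betab+2}(\Omega)$, we have $\bff\in\bJ^{n}_{\betab+2}(\Omega)$ for every integer $n\ge\kappa_{\betab+2}$, while the variational solution satisfies $\bu\in\bV\subset\bH^1(\Omega)=\bJ^1_{-1}(\Omega)$ by \eqref{3E3}. In contrast with Theorem \REF{6T1}, I would \emph{not} assume any embedding $\bV\subset\bK^1_{-1}(\Omega)$; this is exactly the gain of working in the $\rJ$-scale, since $\bH^1(\Omega)$ always coincides with $\bJ^1_{-1}(\Omega)$. I would then invoke the Kondrat'ev-type Fredholm theory for the coercive problem \eqref{6Evar} in spaces with non-homogeneous norms (in the form of \cite{KozlovMazyaRossmann97b}, cf.\ the model Theorem \REF{0T1N}): under condition \eqref{6Eco1}, which places the weight exponent strictly below the first nonzero point of the spectrum $\sigma(\gA_\bc)$ entering the definition \eqref{6Ebc2} of $b_\bc$, one has the finite regularity shift
\begin{equation*}
   \bu\in\bH^1(\Omega)\ \ \mbox{and}\ \ \bff\in\bJ^{n}_{\betab+2}(\Omega)
   \quad\Longrightarrow\quad \bu\in\bJ^{n+2}_{\betab}(\Omega),
   \qquad n+2\ge\kappa_\betab.
\end{equation*}
Next I would simply choose $n$ large enough that $n\ge\kappa_{\betab+2}$ and $n+2\ge\max\{1,\kappa_\betab\}$, and put $m:=n+2$. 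The displayed implication then gives $\bu\in\bJ^m_{\betab}(\Omega)$ with $m\ge\max\{1,\kappa_\betab\}$, which is precisely the admissibility hypothesis of Theorem \REF{3T1b}. Feeding $\bu\in\bJ^m_{\betab}(\Omega)$ together with $\bff\in\bB_{\betab+2}(\Omega)$ into \eqref{3EB3} yields $\bu\in\bB_{\betab}(\Omega)$, which is the assertion \eqref{6EB}.

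The hard part is the basic-regularity step, and above all its correct formulation in the non-homogeneous scale. Whereas in the homogeneous case the implication \eqref{6EK} is the classical Kondrat'ev statement, here one must account for the nontrivial Taylor expansions at the corners: by Remark \REF{3R1}, for $\beta_\bc\le-1$ the local space $\bB_{\betab}(\Omega_\bc)$ differs from $\bA_{\betab}(\Omega_\bc)$ by the polynomials of degree $\le[-\beta_\bc-1]$, and in the critical situation $-\beta_\bc\in\N$ one has to check that the weight line still avoids the offending point of $\sigma(\gA_\bc)$. The coercivity of $\ra$ on $\bV$ guarantees $b_\bc(\Omega,\ra,\bV)>0$; the lower bound $0\le-\beta_\bc-1$ in \eqref{6Eco1} keeps the exponent in the admissible range where $\bV$ already provides the starting regularity, and the upper bound $-\beta_\bc-1<b_\bc$ keeps it below the first nonzero exponent of the operator pencil. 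Together these are exactly what make the $\rJ$-regularity shift available, and the argument then closes as above.
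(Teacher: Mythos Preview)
Your proposal is correct and follows essentially the same route as the paper: establish the basic regularity $\bu\in\bJ^{m}_{\betab}(\Omega)$ for some $m\ge\kappa_\betab$ via the non-homogeneous Kondrat'ev theory (the implication \eqref{6EJ}), then feed it into the natural analytic shift \eqref{3EB3}. The paper's proof singles out more explicitly the key technical point behind the $\rJ$-regularity step, namely that under \eqref{6Eco1} the localized problem \eqref{6EbvpKc} with a polynomial right-hand side of degree $\le[-\beta_\bc-1]-2$ admits a polynomial solution on $\cK_\bc$; your final paragraph gestures at this via the Taylor-expansion remark but does not isolate the polynomial-solvability condition as sharply.
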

\begin{proof}
The proof relies on regularity results in spaces with non-homogeneous norms: By a modification of Kondrat'ev's method, see \cite{MazyaPlamenevskii84b, KozlovMazyaRossmann97b} and \cite{Dauge88}, one can prove that if \eqref{6Eco1} holds, for any $m\ge\max\{-\beta_\bc\}$ we have the implication
\begin{equation}
\label{6EJ}
   \bff\in\bJ^{m-2}_{\betab+2}(\Omega)
   \ \ \Longrightarrow\ \
   \bu\in\bJ^m_{\betab}(\Omega)
\end{equation}
for variational solutions: In addition to the standard theory, polynomial right-hand sides of degree $[-\beta_\bc-1]-2$ at each corner $\bc$ have to be taken into account. In dimension two of space, the condition that   the problem \eqref{6EbvpKc}   with a polynomial $\bff$ of degree $[-\beta_\bc-1]-2$ has a polynomial solution $\bu$ on the infinite cone $\cK_\bc$ is a consequence of the condition $0\le -\beta_\bc-1 < b_\bc(\Omega,\ra,\bV)$.
A complete proof in this framework is presented in \cite[Ch.\,13 \& 14]{GLC}. 
Then \eqref{6EB} is a consequence of \eqref{3EB3}, and \eqref{6EJ}.
\end{proof}

\begin{example}
\label{6X2}
Let us come back to the gradient form $\ra=\ra_\nabla$ on scalar functions.
For {\em any} mixed Neumann-Dirichlet problem, including the pure Neumann problem, Theorem \REF{6T2} is valid and we find
\[
   b_\bc(\Omega,\ra_\nabla,\rV) =  
   \frac\pi{\omega_\bc}\ \ \ \mbox{if}\ \  \ \bc\in\sC_D\cup\sC_N \quad\mbox{and}\quad
   b_\bc(\Omega,\ra_\nabla,\rV) = \frac\pi{2\omega_\bc}\ \ \ \mbox{if}\ \  \ \bc\in\sC_M,
\]
where $\sC_D$ is the set of Dirichlet corners,  $\sC_N$ is the set of Neumann corners, and $\sC_M$ the set of ``Mixed'' corners $\bc$.
Thus $b_\bc(\Omega,\ra_\nabla,\rV)$ will always be greater than $\frac14$. For the pure Dirichlet or pure Neumann problem on a convex polygon, it will be greater than $1$, and for some triangles even greater than $2$, but never greater than $3$.
\end{example}

\begin{remark}
\label{6R2}
Theorem \REF{6T2} has to be compared with earlier results by Babu\v{s}ka and Guo\footnote{When $-\beta\in(1,2)$, our space $\rB_{\beta}(\Omega)$ coincides with their space $B^2_{\beta+2}(\Omega)$.}: The Laplace operator with non-homogeneous mixed boundary conditions is considered in \cite{BabuskaGuo88,BabuskaGuo89}; more general scalar second order operators with analytic coefficients are addressed in \cite{BabuskaGuo88b}, and finally the Lam\'e system of linear elasticity with non-homogeneous mixed Dirichlet-Neumann boundary conditions is investigated in
\cite{GuoBabuska93}. These results are at the same time more general than Theorem \REF{6T2} since they address non-homogeneous boundary conditions and variable coefficients, but more restrictive since they do not include a full class of coercive second order systems with a unified approach. 
\end{remark}

\subsection{Regularity of variational solutions in polyhedra}
\label{sec62}
Let $\Omega$ be a polyhedron with edges $\be\in\sE$ and corners $\bc\in\sC$. We recall from \eqref{5Ea}-\eqref{5E1} the edge neighborhoods $\Omega_\be$ and the corner neighborhoods $\Omega\cap\cB(\bc,\varepsilon)$. Then $\cK_\bc$ is the infinite cone which coincides with $\Omega$ in $\cB(\bc,\varepsilon)$, and $G_\bc$ denotes the set of corresponding solid angles $\theta_\bc=(\bx-\bc)r^{-1}_\bc$. For any edge $\be$, let $\cW_\be$ be the wedge coinciding with $\Omega$ in $\Omega_\be$ and $\cK_\be$ be the plane sector such that such that $\cW_\be \cong \cK_\be\times\R$.

The comparison between the variational space $\bV$ and weighted spaces $\rK^1_\betab(\Omega)$ and $\rJ^1_\betab(\Omega)$, {\em cf.}\ \eqref{5E1wsK} and \eqref{5E1wsJ}, still involves the multi-exponent $\beta_\bc=\beta_\be=-1$ and essential boundary conditions: We have
\[
   \rJ^1_{-1}(\Omega) = \rH^1(\Omega)
\]
and, in the Dirichlet case
\[
   \rH^1_0(\Omega) \subset \rK^1_{-1}(\Omega).
\]
Moreover, the intermediate space
\[
   \rJ^1_{-1}(\Omega;\sC,\varnothing) = \big\{u\in\rH^1(\Omega)\ :\
   r_\bc^{-1}u\in \rL^2(\Omega) \quad
   \forall\bc\in \sC\big\},
\]
also coincides with $\rH^1(\Omega)$ by virtue of Hardy's inequality in three-dimensional cones.

If we want to establish that weighted analytic regularity results hold in polyhedra,
we have two tasks: 
\begin{enumerate}
\item  Verify Assumptions \REF{5GA} or \REF{5GB}, which are closed range properties along the edges,
\item  Give conditions for variational solutions to belong to spaces $\bK^1_\betab(\Omega)$ or $\bJ^1_\betab(\Omega)$.
\end{enumerate}

As a matter of fact, the condition which ensures the regularity of
variational solutions {\em implies} Assumptions \REF{5GA} or
\REF{5GB}. Hence we focus on conditions for the regularity. There
are not so many results on regularity for elliptic boundary value
problems in polyhedra. Let us quote
\cite{MazyaPlamenevskii73,MazyaPlamenevskii77} for early results in
general $n$-dimensional polyhedral domains in spaces of $\rK$ type,
\cite{Dauge88} in $n$-dimensional polyhedral domains in standard
Sobolev spaces, and more recently \cite{MazyaRossmann03} in
$3$-dimensional polyhedral domains in spaces
$\rJ^n_\betab(\sC,\sE_0)$, {\em cf.}\ Remark \REF{5R1} \itj2.

The latter results, especially \cite[Thms.\,7.1\,\&\,7.2]{MazyaRossmann03}, fit exactly our requirements, namely in the form \eqref{5ENBb}. For this reason we formulate Theorem \REF{6T3}   in the somewhat restricted framework of \cite{MazyaRossmann03}:   that is mixed Dirichlet-Neumann boundary conditions for second order systems.

The regularity conditions depend on the position of the spectra $\sigma(\gA_\bc)$ and $\sigma(\gA_\be)$ of the Mellin symbols $\gA_\bc$ and $\gA_\be$ of the system $(L,T_\bs,D_\bs)$ at the corners $\bc$ and the edges $\be$, respectively. The set $\sigma(\gA_\bc)$ is defined by \eqref{6Esymb} on the three-dimensional cone $\cK_\bc$. The set $\sigma(\gA_\be)$ is defined similarly by problem \eqref{4Ebvpxi} for $\xi=0$ posed on the plane sector $\cK_\be$, namely as the set of exponents $\lambda\in\C$ such that the totally homogeneous problem on $\cK_\be$ has a nontrivial solution homogeneous of degree $\lambda$.  

\begin{definition}
\label{6D1}
Let $\sigma(\gA_\bc)$ and $\sigma(\gA_\be)$ denote the spectrum of the Mellin symbol $\gA_\bc$ and $\gA_\be$ of the system $(L,T_\bs,D_\bs)$ at the corner $\bc$ and the edge $\be$, respectively.
Then for any edge $\be$ we define $b_\be(\Omega,\ra,\bV)$ as the supremum of the numbers $b>0$ such that
\[
   \{\lambda\in\C\ :\ 0<\Re\lambda< b\} \cap \sigma(\gA_\be) = \varnothing
\]
and, for any corner $\bc$, $b_\bc(\Omega,\ra,\bV)$ as the supremum of the numbers $b$ such that
\[
   \{\lambda\in\C\ :\
   -\tfrac12<\Re\lambda< b\} \cap \sigma(\gA_\bc) = \varnothing.
\]
\end{definition}

\begin{remark}
\label{6R4}
It is a consequence of the coercivity of the form $\ra$ that the numbers
$b_\be(\Omega,\ra,\bV)$ are positive and $b_\bc(\Omega,\ra,\bV)>-\frac12$.
\end{remark}

With these notations at hand, we can state

\begin{theorem}
\label{6T3} 
We consider a mixed Dirichlet-Neumann problem \eqref{6Evar}, which means that the variational space is of the form
\[
   \bV = \{\bu\in\bH^1(\Omega): \bu\on{\Gamma_\bs}=0,\ \bs\in \sS_D \}.
\]
Let $\sE_0$ be the set of edges $\be$ which are the sides of faces $\Gamma_\bs$ with $\bs\in\sS_D$. We assume that the form $\ra$ \eqref{6E1} is coercive on $\bV$.
If the following condition holds for the exponents $\beta_\be$ and $\beta_\bc$
\begin{equation}
\label{6Eco2}
\begin{cases}
   0\le -\beta_\be-1 < b_\be(\Omega,\ra,\bV) \quad&\forall\be\in\sE
   \\
   -\tfrac12\le -\beta_\bc-\tfrac32< b_\bc(\Omega,\ra,\bV) \quad&\forall\bc\in\sC
\end{cases}
\end{equation}
then any solution of the variational problem \eqref{6Evar} satisfies the regularity result:
\begin{equation}
\label{6EC} 
   \bff\in\bB_{\betab+2}(\Omega;\sC,\sE_0)
   \ \ \Longrightarrow\ \
   \bu\in\bB_{\betab}(\Omega;\sC,\sE_0).
\end{equation}
\end{theorem}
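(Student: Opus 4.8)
The plan is to mirror the two-step strategy of the polygonal Theorems \REF{6T1} and \REF{6T2}: first extract a finite-order \emph{basic} regularity for the variational solution in a weighted space of the correct flagged type, and then promote it to analytic regularity by the natural anisotropic shift \eqref{5ENBb}. For the basic regularity I would invoke the Maz'ya--Rossmann theory \cite[Thms 7.1 \& 7.2]{MazyaRossmann03}. By Remark \REF{5R1}\,\itj2, their spaces $W^{m,2}_{\vec\beta,\vec\delta}(\Omega;\tilde J)$ coincide with our $\rJ^m_\betab(\Omega;\sC,\sE_0)$ once $\tilde J$ is taken to be $\sE_0$ and the exponents are matched. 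Under exactly condition \eqref{6Eco2} their theory yields the finite shift $\bff\in\bJ^{m-2}_{\betab+2}(\Omega;\sC,\sE_0)\Rightarrow\bu\in\bJ^m_{\betab}(\Omega;\sC,\sE_0)$. Since $\bff\in\bB_{\betab+2}(\Omega;\sC,\sE_0)$ embeds into every $\bJ^{m-2}_{\betab+2}(\Omega;\sC,\sE_0)$, I may fix any $m$ with $m\ge1$ and $m+1\ge-\beta_\be$ for all edges, thereby securing the initial regularity $\bu\in\bJ^m_{\betab}(\Omega;\sC,\sE_0)$ demanded by the shift.

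Next I would verify the edge hypotheses needed for \eqref{5ENBb}: Assumption \REF{5GA} for the Dirichlet edges $\be\in\sE_0$ and Assumption \REF{5GB} for the remaining edges $\be\in\sE\setminus\sE_0$. By Remarks \REF{5RGA}/\REF{4RGA} and \REF{5RGB}/\REF{4RGB}, each is equivalent to injectivity with closed range of the transversal Fourier-symbol problem \eqref{4Ebvpxi} at $\xi=\pm1$, in the scale $\bE^2_{\beta_\be}/\bE^0_{\beta_\be+2}$ respectively $\bJ^{m+1}_{\beta_\be}/\bJ^{m-1}_{\beta_\be+2}$ on the plane sector $\cK_\be$. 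The edge half of \eqref{6Eco2}, namely $0\le-\beta_\be-1<b_\be(\Omega,\ra,\bV)$, places the weight line $\Re\lambda=-\beta_\be-1$ strictly below the first positive eigenvalue of the edge pencil $\gA_\be$; it is then spectrum-free in the homogeneous scale relevant to $\sE_0$, while in the non-homogeneous scale relevant to $\sE\setminus\sE_0$ the only possible pencil value on the line, $\lambda=0$, is absorbed by the polynomial part of the $\rJ$-spaces. By coercivity (Remark \REF{6R4}) the numbers $b_\be$ are positive, so the admissible interval is non-empty, and Assumptions \REF{5GA}, \REF{5GB} follow.

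With the basic regularity and the edge assumptions in hand, I would conclude by the natural anisotropic regularity shift in non-homogeneous weighted spaces, in the form of implication \eqref{5ENBb} of Remark \REF{5R2}\,\itj1 with the flag $(\sC,\sE_0)$. Since $\bff\in\bB_{\betab+2}(\Omega;\sC,\sE_0)$ and $\bu\in\bJ^m_{\betab}(\Omega;\sC,\sE_0)$, this gives immediately $\bu\in\bB_{\betab}(\Omega;\sC,\sE_0)$, which is \eqref{6EC}. Here I use, as in the proof of Theorem \REF{5T2}\,\itj2, that in the pure-corner regions the $\rK$- and $\rJ$-norms coincide, so that the homogeneous corner flag $\sC_0=\sC$ is consistent with the corner half $-\tfrac12\le-\beta_\bc-\tfrac32<b_\bc$ of \eqref{6Eco2}.

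The main obstacle lies in the verification step: matching the \emph{global} spectral condition \eqref{6Eco2}, which is phrased through the homogeneous edge pencil $\gA_\be$ (the $\xi=0$ transversal symbol), to the \emph{semi-Fredholm} a priori estimates of Assumptions \REF{5GA}/\REF{5GB}, which are conditions on the non-homogeneous symbol at $\xi=\pm1$. The bridge is the classical mechanism of Maz'ya--Plamenevskii \cite{MazyaPlamenevskii80b}: a partial Fourier transform along the edge reduces the wedge problem to the parameter-dependent transversal problem, and a dilation $\xi\mapsto1$ shows that left-invertibility for $\xi\neq0$ is governed by the same absence-of-pencil-eigenvalues condition on the weight line. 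Establishing this equivalence carefully, together with the correct treatment of the $\lambda=0$ eigenvalue in the Neumann-type ($\rJ$) scale, is the delicate part; once it is settled, the remaining three regions (smooth, pure corner, edge-vertex) are handled by the estimates already assembled in Section \ref{sec5}.
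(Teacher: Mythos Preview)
Your proposal is correct and follows essentially the same route as the paper: invoke \cite[Thm.~7.1]{MazyaRossmann03} for the basic regularity $\bu\in\bJ^m_{\betab}(\Omega;\sC,\sE_0)$ under \eqref{6Eco2}, verify Assumptions~\REF{5GA}/\REF{5GB} on the edges (Dirichlet edges in $\sE_0$, the rest in $\sE\setminus\sE_0$), and conclude by the flagged shift \eqref{5ENBb}. The only cosmetic difference is that the paper dispatches the edge assumptions by a direct appeal to the Fredholm Theorem~7.2 of \cite{MazyaRossmann03}, whereas you unpack the mechanism via Remarks~\REF{4RGA}/\REF{4RGB} and the spectral condition on the transversal symbol; both arrive at the same place.
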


\begin{proof}
First, the Fredholm Theorem 7.2 of \cite{MazyaRossmann03} guarantees
that the Assumptions \REF{5GA} (if $\be\in\sE_0$) and \REF{5GB} (if $\be\in\sE\setminus\sE_0$) are  satisfied 
for any $\beta_\be$ satisfying \eqref{6Eco2}. Second,
the regularity Theorem 7.1 of \cite{MazyaRossmann03} shows   for any $m\ge\max\{-\beta_\be\}$   the regularity $\bu\in\bJ^m_{\betab}(\Omega;\sC,\sE_0)$ with $\betab$ satisfying \eqref{6Eco2}.
Hence the conclusion follows from Theorem \REF{5T2} extended by Remark
\REF{5R2}  --- in particular, implication \eqref{5ENBb}.
\end{proof}

\subsubsection{Dirichlet case (homogeneous norms)} 
As a consequence of the fact that $\bB_{\betab}(\Omega;\sC,\sE)=\bA_\betab(\Omega)$ (Remark \REF{5R1} \itj1) we immediately obtain a regularity result in the scale $\bA_\betab$ for the Dirichlet problem.

\begin{corollary}
\label{6T4}
When $\bV=\bH^1_0(\Omega)$ (Dirichlet problem), assuming that the form $\ra$ \eqref{6E1} is coercive on $\bV$, if condition \eqref{6Eco2} holds, then any solution of the variational problem \eqref{6Evar} satisfies the regularity result:
\begin{equation}
\label{6EC4} 
   \bff\in\bA_{\betab+2}(\Omega)
   \ \ \Longrightarrow\ \
   \bu\in\bA_{\betab}(\Omega).
\end{equation}
\end{corollary}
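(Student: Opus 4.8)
The plan is to obtain Corollary~\REF{6T4} as a direct specialization of Theorem~\REF{6T3} to the case of pure Dirichlet conditions, exploiting the identification of the intermediate class $\bB_\betab(\Omega;\sC,\sE)$ with the homogeneous analytic class $\bA_\betab(\Omega)$ recorded in Remark~\REF{5R1}~\itj1. In other words, no new analysis is needed: everything reduces to reading off the correct value of the flagged edge set $\sE_0$ and then applying the space identification.

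First I would unwind the definition of the flagged edge set $\sE_0$ appearing in the statement of Theorem~\REF{6T3}. For the pure Dirichlet problem we have $\bV=\bH^1_0(\Omega)$, so that $\sS_D=\sS$: every face $\Gamma_\bs$ of $\Omega$ carries an essential (Dirichlet) boundary condition. Since every edge $\be\in\sE$ of a polyhedron is a side of at least one face, and all faces now belong to $\sS_D$, each edge is a side of a Dirichlet face; hence $\sE_0=\sE$. The corner flag in Theorem~\REF{6T3} is already the full set $\sC$, so we are exactly in the configuration $\sC_0=\sC$, $\sE_0=\sE$.

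With this choice of flags, Theorem~\REF{6T3} — whose hypotheses (coercivity of $\ra$ on $\bV$ and condition~\eqref{6Eco2}) are precisely those assumed in the corollary — yields for any variational solution the implication $\bff\in\bB_{\betab+2}(\Omega;\sC,\sE)\Rightarrow\bu\in\bB_{\betab}(\Omega;\sC,\sE)$. It then remains only to substitute the identities $\bB_{\betab+2}(\Omega;\sC,\sE)=\bA_{\betab+2}(\Omega)$ and $\bB_{\betab}(\Omega;\sC,\sE)=\bA_{\betab}(\Omega)$ furnished by Remark~\REF{5R1}~\itj1 (the vector-valued counterpart of the scalar statement there), which transforms the implication into precisely~\eqref{6EC4}.

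There is no genuine obstacle in this argument: all the substantive work — verification of Assumptions~\REF{5GA}/\REF{5GB} along the edges and the basic $\bJ^m_\betab(\Omega;\sC,\sE_0)$ regularity of variational solutions via \cite{MazyaRossmann03} — is already carried out in Theorem~\REF{6T3}. The only point requiring any care is the combinatorial identification $\sE_0=\sE$, which rests on the elementary geometric fact that in a polyhedron each edge bounds a face together with the observation that the pure Dirichlet condition flags all faces simultaneously.
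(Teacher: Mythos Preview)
Your argument is correct and is exactly the route the paper takes: the corollary is presented there as an immediate consequence of Theorem~\REF{6T3} together with the identification $\bB_\betab(\Omega;\sC,\sE)=\bA_\betab(\Omega)$ from Remark~\REF{5R1}~\itj1, once one notes that for the pure Dirichlet problem the flagged edge set is $\sE_0=\sE$.
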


\begin{example}
\label{6X3}
For the gradient form $\ra_\nabla$ (for which $L$ is the Laplace operator) on $\rH^1_0(\Omega)$, the spectrum of the edge Mellin symbol $\gA_\be$ is 
\begin{equation}
\label{6Espde}
   \sigma(\gA_\be) = \Big\{ \frac{\ell\pi}{\omega_\be}\,,\quad \ell\in\Z\setminus\{0\} 
   \Big\}
\end{equation}
and the spectrum of the corner Mellin symbol $\gA_\bc$ is 
\begin{equation}
\label{6Espdc}
   \sigma(\gA_\bc) = \Big\{ -\frac12 \pm \sqrt{\mu^\Dir_{\bc,n}+\frac14}\,,\quad 
   n\in\N 
   \Big\}
\end{equation}
where $\mu^\Dir_{\bc,n}$ (for $n\ge1$) is the $n$-th eigenvalue of the Laplace-Beltrami operator with Dirichlet conditions on the spherical cap $G_\bc$. Hence
\[
   b_\be(\Omega,\ra_\nabla,\rH^1_0(\Omega)) = \frac\pi{\omega_\be} 
   \quad\mbox{and}\quad
   b_\bc(\Omega,\ra_\nabla,\rH^1_0(\Omega)) = 
   -\frac12 + \sqrt{\mu^\Dir_{\bc,1}+\frac14}\,.
\]
\end{example}

\subsubsection{Neumann case (non-homogeneous norms)} 
\label{7Simp}
For the Neumann problem, it is adequate to use the full spaces $\bB_{\betab}(\Omega)$ instead of $\bB_{\betab}(\Omega;\sC,\varnothing)$ as in Theorem \REF{6T3}, --- in the Neumann case $\sE_0$ is empty. These two families of spaces differ by the non-zero Taylor expansions at corners for the elements of $\bB_{\betab}(\Omega)$. 

For each corner $\bc$ the optimal condition on $\beta_\bc$ relating to spaces $\bJ^m_\betab$ and $\bB_\betab$ relies on the condition of {\em injectivity modulo polynomials} \cite{Dauge88,GLC}: The spectrum $\sigma(\gA_\bc)$ has to be replaced by the set $\sigma_\star(\gA_\bc)$ of complex $\lambda$'s for which the condition of injectivity modulo polynomials does not hold. This means that there exists a non-polynomial function 
\[
   \bu = \sum_{q=0}^Q r_\bc^\lambda \log^qr_\bc \varphif_q(\theta_\bc),\quad \varphif_q\in\bH^1(G_\bc)
\] 
solution of the problem \eqref{6EbvpKc} with a polynomial right hand side $\bff$ on the infinite three-dimensional cone $\cK_\bc$.
Note that this condition may differ from the condition in  \eqref{6Esymb} only for integer $\lambda$:
\[
   \sigma(\gA_\bc)\setminus\N = \sigma_\star(\gA_\bc)\setminus\N\,.
\]
Then $b^\star_\bc(\Omega,\ra,\bV)$ is defined as the supremum of the numbers $b$ such that
\[
   \{\lambda\in\C: -\tfrac12<\Re\lambda< b\} \cap \sigma_\star(\gA_\bc) = \varnothing.
\]

\begin{theorem}
\label{6T5}
When $\bV=\bH^1(\Omega)$ (Neumann problem), assuming that the form $\ra$ \eqref{6E1} is coercive on $\bV$, if the following condition holds for the exponents $\betab$
\begin{equation}
\label{6Eco5}
\begin{cases}
   0\le -\beta_\be-1 < b_\be(\Omega,\ra,\bV) \quad&\forall\be\in\sE
   \\
   -\tfrac12\le -\beta_\bc-\tfrac32< b^\star_\bc(\Omega,\ra,\bV) \quad&\forall\bc\in\sC
\end{cases}
\end{equation}
 then any solution of the variational problem \eqref{6Evar} satisfies the regularity result:
\begin{equation}
\label{6EC5} 
   \bff\in\bB_{\betab+2}(\Omega)
   \ \ \Longrightarrow\ \
   \bu\in\bB_{\betab}(\Omega).
\end{equation}
\end{theorem}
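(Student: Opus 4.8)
The plan is to follow the three-step template used in the proof of Theorem~\REF{6T3}, specialized to the pure Neumann setting and carried out in the \emph{full} non-homogeneous scale rather than in $\bB_\betab(\Omega;\sC,\varnothing)$. The three ingredients are: the edge a priori estimate of Assumption~\REF{5GB} at every edge; an initial finite-order regularity statement $\bu\in\bJ^m_\betab(\Omega)$; and the analytic shift of Theorem~\REF{5T2} in the form extended by Remark~\REF{5R2}. Since the variational space is $\bV=\bH^1(\Omega)$ with no Dirichlet faces, the flagged edge set $\sE_0$ is empty, so the non-homogeneous edge hypothesis Assumption~\REF{5GB} (not the homogeneous Assumption~\REF{5GA}) is the one to verify at each $\be\in\sE$, and the target class is $\bB_\betab(\Omega)=\bB_\betab(\Omega;\varnothing,\varnothing)$.

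First I would verify Assumption~\REF{5GB} for all $\be\in\sE$. By the equivalence in Remark~\REF{4RGB}, this amounts to a closed-range property of the transversal problem \eqref{4Ebvpxi} along the edge; it is furnished by the Fredholm Theorem~7.2 of \cite{MazyaRossmann03} and holds exactly when $0\le-\beta_\be-1<b_\be(\Omega,\ra,\bV)$, i.e.\ the first line of \eqref{6Eco5}, because $b_\be$ is by definition the distance from $0$ to the first point of $\sigma(\gA_\be)$ with positive real part, so $-\beta_\be-1$ then avoids the edge spectrum.

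Next I would establish the initial regularity $\bu\in\bJ^m_\betab(\Omega)$ for $m\ge k_\betab$, under the hypothesis $\bff\in\bJ^{m-2}_{\betab+2}(\Omega)$. This is the step that distinguishes the Neumann case from Theorem~\REF{6T3}: there the regularity Theorem~7.1 of \cite{MazyaRossmann03} only yields membership in $\bJ^m_\betab(\Omega;\sC,\varnothing)$, with homogeneous behaviour at the corners, whereas the Neumann solution carries genuine Taylor expansions at the corners and must be placed in the full space $\bJ^m_\betab(\Omega)$. I would obtain this through the injectivity-modulo-polynomials theory of \cite{Dauge88} (see also \cite[Ch.\,13\,\&\,14]{GLC}): at each corner $\bc$ one subtracts the Taylor polynomial of $\bff$ of degree $[-\beta_\bc-1]-2$ and uses that the homogeneous cone problem \eqref{6EbvpKc} with such a polynomial right-hand side admits a polynomial solution. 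It is precisely here that the second line of \eqref{6Eco5} enters through $b^\star_\bc$ rather than $b_\bc$: since $\sigma(\gA_\bc)\setminus\N=\sigma_\star(\gA_\bc)\setminus\N$, the spectral condition at integer exponents must be replaced by injectivity modulo polynomials, and $-\tfrac12\le-\beta_\bc-\tfrac32<b^\star_\bc(\Omega,\ra,\bV)$ is exactly the hypothesis guaranteeing the needed polynomial solvability.

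Finally, with $\bu\in\bJ^m_\betab(\Omega)$ and $\bff\in\bB_{\betab+2}(\Omega)$, I would invoke Theorem~\REF{5T2} as extended in Remark~\REF{5R2}\,\itj2 --- that is, implication \eqref{5ENBc} in the maximal non-homogeneous spaces $\sC_0=\sE_0=\varnothing$ --- to conclude $\bu\in\bB_\betab(\Omega)$, which is \eqref{6EC5}. I expect the main obstacle to be precisely the initial regularity step in the full space $\bJ^m_\betab(\Omega)$ when some $\beta_\bc\le-\tfrac32$: for $\beta_\bc>-\tfrac32$ Hardy's inequality identifies $\bJ^m_\betab(\Omega)$ with $\bJ^m_\betab(\Omega;\sC,\varnothing)$ and the argument reduces to that of Theorem~\REF{6T3}, but in the remaining range the corner polynomials must be tracked explicitly. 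As Remark~\REF{5R2}\,\itj2 indicates, an alternative to the direct argument is to first deduce $\bu\in\bB_\betab(\Omega;\sC,\varnothing)$ from implication \eqref{5ENB} and then recover the corner Taylor polynomials by a corner-asymptotics argument, each corner asymptotics modulo $\bJ^m_\betab(\Omega;\sC,\varnothing)$ consisting only of polynomials.
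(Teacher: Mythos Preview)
Your proposal is correct and follows essentially the same approach as the paper's proof: verify Assumption~\REF{5GB} at every edge via the Fredholm theory of \cite{MazyaRossmann03}, establish initial regularity $\bu\in\bJ^m_{\betab}(\Omega)$ in the full non-homogeneous scale using the injectivity-modulo-polynomials condition encoded in $b^\star_\bc$, and conclude via implication \eqref{5ENBc} from Remark~\REF{5R2}. Your account of the initial regularity step is in fact more explicit than the paper's, which simply states that the needed modification of \cite[Thm~7.1]{MazyaRossmann03} is carried out ``with the corner Mellin transform''; your identification of this as the polynomial-solvability argument of \cite{Dauge88} is exactly right.
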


\begin{proof}
The proof follows the same steps as for Theorem \REF{6T4}, but with one difference: Instead of relying directly on \cite[Thm 7.1]{MazyaRossmann03}, by a modification of this statement we prove with the corner Mellin transform that condition \eqref{6Eco5} implies for any $m\ge\max\{-\beta_\be,-\beta_\bc\}$ the regularity $\bu\in\bJ^m_{\betab}(\Omega)$.
Once more, the conclusion then follows from Theorem \REF{5T2} extended by Remark
\REF{5R2} --- now, implication \eqref{5ENBc}.
\end{proof}

\begin{example}
\label{6X4}
For the gradient form $\ra_\nabla$ on $\rH^1(\Omega)$, the spectrum of the edge Mellin symbol $\gA_\be$ is 
\begin{equation}
\label{6Espne}
   \sigma(\gA_\be) = \Big\{ \frac{\ell\pi}{\omega_\be}\,,\quad \ell\in\Z 
   \Big\}
\end{equation}
and the spectrum of the corner Mellin symbol $\gA_\bc$ is 
\begin{equation}
\label{6Espnc}
   \sigma(\gA_\bc) = \Big\{ -\frac12 \pm \sqrt{\mu^\Neu_{\bc,n}+\frac14}\,,\quad 
   n\in\N 
   \Big\}
\end{equation}
where $\mu^\Neu_{\bc,n}$ (for $n\ge1$) is the $n$-th eigenvalue of the Laplace-Beltrami operator with Neumann conditions on the spherical cap $G_\bc$. Since $\mu^\Neu_{\bc,1}=0$, the set $\sigma(\gA_\bc)$ contains $0$. But one can show that the condition of injectivity modulo polynomials is satisfied in $\lambda=0$, and that it is also satisfied in $\lambda=1$ if $1\not\in\sigma(\gA_\bc)$.
Hence we deduce
\[
   b_\be(\Omega,\ra_\nabla,\rH^1(\Omega)) = \frac\pi{\omega_\be} 
   \quad\mbox{and}\quad
   b^\star_\bc(\Omega,\ra_\nabla,\rH^1(\Omega)) \ge 
   \min\Big\{2,-\frac12 + \sqrt{\mu^\Neu_{\bc,2}+\frac14}\Big\}\,.
\]
\end{example}

\subsubsection{A priori estimates along edges}
\label{6Sapriori}
We conclude this section by considerations about the nature of necessary and sufficient conditions ensuring the closed range properties along edges required by Assumptions \REF{5GA} or \REF{5GB}. As mentioned in Remarks \REF{4RGA} and \REF{4RGB}, a minimal condition for these assumptions to hold at a chosen edge $\be\in\sE$ is an injectivity and closed range condition for the Fourier symbol $(\hat L_\be(\xi),\hat T_{\be,\bs}(\xi), \hat D_{\be,\bs}(\xi))$ of the system $(L,T_\bs,D_\bs)$ on the plane sector $\cK_\be$:
\begin{equation}
\label{6Ebvpxi}
   \left\{ \begin{array}{rclll}
   \hat L_\be(\xi)\,\bu &=& \bff \quad & \mbox{in}\ \ \cK_\be \\[0.3ex]
   \hat T_{\be,\bs}(\xi)\,\bu &=& 0  & \mbox{on}\ \ \Gamma_\bs, &\bs\in\sS_\be, \\[0.3ex]
   \hat D_{\be,\bs}(\xi)\,\bu &=& 0  & \mbox{on}\ \ \Gamma_\bs, &\bs\in\sS_\be,
   \end{array}\right.
\end{equation}
Here $L_\be$ is the operator $L$ written in local Cartesian coordinates $(\bx^\perp_\be,x^\parallel_\be)\in\cK_\be\times\R$. The set $\sS_\be$ is the set of the two faces such that $\be\subset\ov\bs$, and for $\bs\in\sS_\be$, the boundary operators $T_{\be,\bs}$ and $D_{\be,\bs}$ are the local forms of $T_\bs$ and $D_\bs$, respectively.

\paragraph{Homogeneous norms} The necessary and sufficient conditions for Assumptions \REF{5GA} to hold is that $(\hat L_\be(\xi),\hat T_{\be,\bs}(\xi), \hat D_{\be,\bs}(\xi))$ defines an operator with trivial kernel and closed range from $\bE^2_{\beta_\be}(\cK_\be)$ into $\bE^0_{\beta_\be+2}(\cK_\be)$ for $\xi=\pm1$. The closed range condition is satisfied if and only if, cf \cite{MazyaPlamenevskii80b},
\begin{equation}
\label{6EclE}
   -\beta_\be-1\not\in \Re\big(\sigma(\gA_\be)\big) := 
   \{\eta\in\R:\quad\exists\lambda\in \sigma(\gA_\be)
   \ \ \mbox{with}\ \ \Re\lambda=\eta\}.
\end{equation}
In the variational case when the space $\bV$ is contained in $\bK^1_{-1}(\Omega)$, the trivial kernel condition is satisfied as soon as $-\beta_\be-1\ge0$, and even further, for all $\beta_\be$ such that $-\beta_\be-1> -b^-_\be(\Omega,\ra,\bV)$ where $b^-_\be(\Omega,\ra,\bV)$ is the supremum of the numbers $b>0$ such that
\[
   \{\lambda\in\C\ :\ -b<\Re\lambda< 0\} \cap \sigma(\gA_\be) = \varnothing.
\]
In the Laplace Dirichlet case, the conjunction of the two conditions is
\[
   -\beta_\be-1>-\frac{\pi}{\omega_\be} \quad\mbox{and}\quad
   -\beta_\be-1\not=\frac{\ell\pi}{\omega_\be} ,\ \ \ell\in\N (\ell\ge1).
\]

\paragraph{Non-homogeneous norms and Neumann case} Then the necessary and sufficient conditions for Assumptions \REF{5GB} to hold is that $(\hat L_\be(\xi),\hat T_{\be,\bs}(\xi), \hat D_{\be,\bs}(\xi))$ defines an operator with trivial kernel and closed range from $\bJ^2_{\beta_\be}(\cK_\be)$ into $\bJ^0_{\beta_\be+2}(\cK_\be)$ for $\xi=\pm1$. The closed range condition is \emph{implied} by \eqref{6EclE}. The optimal one has to be defined with the injectivity modulo polynomials. For the Neumann case, this makes a difference because $0$ belongs to the spectrum $\sigma(\gA_\be)$ and not to the star spectrum $\sigma^\star(\gA_\be)$ defined by the injectivity modulo polynomials. The optimal trivial kernel condition in the Neumann case is $-\beta_\be-1\ge0$.

More details in the forthcoming work \cite[Part III]{GLC}.

\section{Extensions and generalizations}
\label{sec8}
In this final section, we describe possible extensions and generalizations of our results. More or less straightforward extensions concern non-zero boundary conditions, non-con\-stant (analytic) coefficients in the two-dimensional polygonal case, and general boundary conditions in the polyhedral case. These situations can be handled with the techniques presented in the previous sections, and they were omitted here mainly for the sake of brevity. Generalizations that could be handled with similar methods, but would need further technical work, concern analytic coefficients in the polyhedral case, transmission problems, and higher order elliptic systems or systems elliptic in a more general sense. 

\subsection{Inhomogeneous boundary conditions and variable coefficients}
\label{ssInh&Var}
The fundamental estimate of Proposition~\REF{1T1} in the smooth case is already formulated in \eqref{1E1} for the situation of non-homogeneous boundary data. It is also  available for variable coefficients.
One has to introduce the trace spaces on the boundary that correspond to our function spaces on the domain. It is well known how to do this, and it is covered in the references given in section~\ref{sec1}. On the technical side, it is also known how to extend the method of dyadic partitions to the case of variable coefficients. Therefore our analytic regularity results in sections~\ref{sec3} and \ref{sec6} can be extended to cover the situation of elliptic systems with analytic coefficients on polygonal domains. Such results have been published by Babu\v{s}ka and Guo for a more restricted class of elliptic equations, see Remark~\ref{6R2}. 
Thus, on a polygonal domain $\Omega\subset\R^{2}$, we can consider a general boundary value problem
\begin{equation}
\label{7Ebvp}
   \left\{ \begin{array}{rclll}
   L\,\bu &=& \bff \quad & \mbox{in}\ \ \Omega, \\[0.3ex]
   T_\bs\,\bu &=& \bg_\bs  & \mbox{on}\ \ \Gamma_\bs, & \bs\in\sS\,, \\[0.3ex]
   D_\bs\,\bu &=& \bh_\bs  & \mbox{on}\ \ \Gamma_\bs, & \bs\in\sS\,,
   \end{array}\right.
\end{equation}
where $L=L(\bx;\partial_\bx)$ is a second order elliptic system, $T_\bs=T_\bs(\bx;\partial_\bx)$ and $D_\bs=D_\bs(\bx)$ are boundary operators of order $1$ and $0$, respectively, and the operators have analytic coefficients and may have lower order terms. 

The analogs of Theorems~\ref{3T1} and \ref{3T1b} are then true, if we augment the regularity assumptions on the right hand side $\bff$ by the appropriate regularity assumptions on $\bg_\bs$ and $\bh_\bs$.

Since Kondrat'ev's results \cite{Kondratev67} apply to general operators with variable coefficients, the basic regularity results for variational solutions are also available, and therefore Theorem~\REF{6T1} can be extended to cover coercive problems with analytic coefficients on polygons.

In the three-dimensional case, the extension to non-zero boundary data is as straightforward as in the two-dimensional case, but the handling of variable coefficients is of a different level of difficulty, due to the anisotropy along the edges. For this, the techniques of analytic estimates have to be resumed at a more basic level, involving commutator estimates and norms of Sobolev-Morrey type \cite[Lemmas 1.6.2 \& 2.6.2]{GLC}. This will be presented in detail elsewhere.

\subsection{General boundary conditions}
\label{ssGBC}
In our theorem on analytic regularity for coercive variational problems on polyhedra, Theorem~\ref{6T3}, we had to restrict the admissible boundary conditions to Dirichlet and Neumann conditions. A boundary condition concerning the tangential components, for example, such as described in \eqref{6Etan}, is not covered by Theorem~\ref{6T3}, although the corresponding result is undoubtedly true.
The restriction is not due to the tools developed in this paper --- the natural analytic regularity shift results in Theorems~\REF{5T1} and~\REF{5T2} are proved for solutions of problem \eqref{5Ebvp} without this restriction --- but due to the availability of the basic regularity results that we are quoting from \cite{MazyaRossmann03}, see the proof of Theorem~\ref{6T3}. If one wants to lift this restriction, one therefore has to prove basic regularity in the appropriate weighted Sobolev spaces for solutions of the boundary value problem \eqref{5Ebvp}. This is outside of the scope of the present paper, but it will be treated in \cite{GLC}.

\subsection{More general elliptic problems}
\label{ssGEP}
First we may easily extend the results of this paper to transmission
problems, namely problem like \eqref{5Ebvp} where $L$ has piecewise
constant coefficients (hence some transmission conditions have to be
imposed at the common boundary of the sub-domains). Indeed an
estimate like \eqref{1E1} holds for such problems and is proved in
\cite[Theorem 5.2.2]{GLC}.  Second, higher order differential
operators like $\Delta^2$ may be treated in a similar manner.
Finally, our method may be used for the Stokes system (see
\cite{GuoSchwab06} for two-dimensional results).

The interesting case of boundary value problems for the Maxwell equations is more delicate for several reasons. Whereas the Maxwell equations may be formulated more or less equivalently as a second order elliptic system, the boundary conditions will be of a more general type than the one treated here. More importantly, the energy space where variational solutions are to be found is, in the case of non-convex polygons or polyhedra, not contained in $\bH^1$, and therefore the basic regularity results will be of a different nature.


\begin{thebibliography}{10}

\bibitem{ApelMehrmannWatkins02}
{\sc T.~Apel, V.~Mehrmann, D.~Watkins}.
\newblock Structured eigenvalue methods for the computation of corner
  singularities in 3{D} anisotropic elastic structures.
\newblock {\em Comput. Methods Appl. Mech. Engrg.} {\bf 191} (2002) 4459--4473.

\bibitem{ApelNicaise98}
{\sc T.~Apel, S.~Nicaise}.
\newblock The finite element method with anisotropic mesh grading for elliptic
  problems in domains with corners and edges.
\newblock {\em Math. Methods Appl. Sci.} {\bf 21}(6) (1998) 519--549.

\bibitem{BabuskaGuo88b}
{\sc I.~Babu{\v{s}}ka, B.~Guo}.
\newblock The {$h$}-{$p$} version of the finite element method for domains with
  curved boundaries.
\newblock {\em SIAM J. Numer. Anal.} {\bf 25}(4) (1988) 837--861.

\bibitem{BabuskaGuo88}
{\sc I.~Babu{\v{s}}ka, B.~Guo}.
\newblock Regularity of the solution of elliptic problems with piecewise
  analytic data. {I}. {B}oundary value problems for linear elliptic equation of
  second order.
\newblock {\em SIAM J. Math. Anal.} {\bf 19}(1) (1988) 172--203.

\bibitem{BabuskaGuo89}
{\sc I.~Babu{\v{s}}ka, B.~Guo}.
\newblock Regularity of the solution of elliptic problems with piecewise
  analytic data. {II}. {T}he trace spaces and application to the boundary value
  problems with nonhomogeneous boundary conditions.
\newblock {\em SIAM J. Math. Anal.} {\bf 20}(4) (1989) 763--781.

\bibitem{BabuskaGuo96}
{\sc I.~Babu{\v{s}}ka, B.~Guo}.
\newblock Approximation properties of the {$h$}-{$p$} version of the finite
  element method.
\newblock {\em Comput. Methods Appl. Mech. Engrg.} {\bf 133}(3-4) (1996)
  319--346.

\bibitem{BacutaNistor05}
{\sc C.~Bacuta, V.~Nistor, L.~T. Zikatanov}.
\newblock Improving the rate of convergence of high-order finite elements on
  polyhedra. {I}. {A} priori estimates.
\newblock {\em Numer. Funct. Anal. Optim.} {\bf 26}(6) (2005) 613--639.

\bibitem{BacutaNistor07}
{\sc C.~Bacuta, V.~Nistor, L.~T. Zikatanov}.
\newblock Improving the rate of convergence of high-order finite elements on
  polyhedra. {II}. {M}esh refinements and interpolation.
\newblock {\em Numer. Funct. Anal. Optim.} {\bf 28}(7-8) (2007) 775--824.

\bibitem{Bernstein04}
{\sc S.~Bernstein}.
\newblock Sur la nature analytique des solutions des \'equations aux
  d\'eriv\'ees partielles du secoud ordre.
\newblock {\em Math. Ann.} {\bf 59} (1904) 20--76.

\bibitem{BolleyCamusDauge85}
{\sc P.~Bolley, J.~Camus, M.~Dauge}.
\newblock R\'egularit\'e {Gevrey} pour le probl\`eme de {Dirichlet} dans des
  domaines \`a singularit\'es coniques.
\newblock {\em Comm. Partial Differential Equations} {\bf {\bf 10} (2)} (1985)
  391--432.

\bibitem{BrambleHilbert70}
{\sc J.~H. Bramble, S.~R. Hilbert}.
\newblock Estimation of linear functionals on {S}obolev spaces with application
  to {F}ourier transforms and spline interpolation.
\newblock {\em SIAM J. Numer. Anal.} {\bf 7} (1970) 112--124.

\bibitem{BuffaCoDa03}
{\sc A.~Buffa, M.~Costabel, M.~Dauge}.
\newblock Anisotropic regularity results for {Laplace} and {Maxwell} operators
  in a polyhedron.
\newblock {\em C. R. Acad. Sc. Paris, S{\'e}rie I} {\bf 336} (2003) 565--570.

\bibitem{BuffaCoDa04}
{\sc A.~Buffa, M.~Costabel, M.~Dauge}.
\newblock Algebraic convergence for anisotropic edge elements in polyhedral
  domains.
\newblock {\em Numer. Math.} {\bf 101} (2005) 29--65.

\bibitem{CostabelDauge93a}
{\sc M.~Costabel, M.~Dauge}.
\newblock General edge asymptotics of solutions of second order elliptic
  boundary value problems~{I}.
\newblock {\em Proc. Royal Soc. Edinburgh} {\bf 123A} (1993) 109--155.

\bibitem{CostabelDaugeNicaise09}
{\sc M.~Costabel, M.~Dauge, S.~Nicaise}.
\newblock Mellin analysis of weighted {Sobolev} spaces with nonhomogeneous
  norms on cones.
\newblock In {\em Around the Research of Vladimir Maz'ya, I}, volume~11 of {\em
  International Mathematical Series}, pages 105--136. Springer and ``Tamara
  Rozhkovskaya Publisher'', Novosibirsk 2010.

\bibitem{GLC}
{\sc M.~Costabel, M.~Dauge, S.~Nicaise}.
\newblock {\em Corner Singularities and Analytic Regularity for Linear Elliptic
  Systems.} Book in preparation.

\bibitem{GLCI}
{\sc M.~Costabel, M.~Dauge, S.~Nicaise}.
\newblock {\em Corner Singularities and Analytic Regularity for Linear Elliptic
  Systems. Part I: Smooth domains.}
  \href{http://hal.archives-ouvertes.fr/hal-00453934/en/} {\sl
  http://hal.archives-ouvertes.fr/hal-00453934/en/} Online version of Chapters
  1 to 5. (2010).

\bibitem{CostabelDaugeSchwab05}
{\sc M.~Costabel, M.~Dauge, C.~Schwab}.
\newblock Exponential convergence of {$hp$-FEM} for {Maxwell's} equations with
  weighted regularization in polygonal domains.
\newblock {\em Math. Models Methods Appl. Sci.} {\bf 15}(4) (2005) 575--622.

\bibitem{Dauge88}
{\sc M.~Dauge}.
\newblock {\em Elliptic Boundary Value Problems in Corner Domains -- Smoothness
  and Asymptotics of Solutions}.
\newblock Lecture Notes in Mathematics, Vol. 1341. Springer-Verlag, Berlin
  1988.

\bibitem{DaugeStevenson09}
{\sc M.~Dauge, R.~Stevenson}.
\newblock Sparse tensor product wavelet approximation of singular functions.
\newblock {\em SIAM J. Math. Anal.} {\bf 42}(5) (2010) 2203--2228.

\bibitem{Guo95}
{\sc B.~Guo}.
\newblock The $h$-$p$ version of the finite element method for solving boundary
  value problems in polyhedral domains.
\newblock In {\sc M.~Costabel, M.~Dauge, S.~Nicaise}, editors, {\em Boundary
  value problems and integral equations in nonsmooth domains (Luminy, 1993)},
  pages 101--120. Dekker, New York 1995.

\bibitem{GuoBabuska93}
{\sc B.~Guo, I.~Babu{\v{s}}ka}.
\newblock On the regularity of elasticity problems with piecewise analytic
  data.
\newblock {\em Adv. in Appl. Math.} {\bf 14}(3) (1993) 307--347.

\bibitem{GuoBabuska97a}
{\sc B.~Guo, I.~Babu{\v{s}}ka}.
\newblock Regularity of the solutions for elliptic problems on nonsmooth
  domains in $\mathbf{R}\sp 3$. {I}. {C}ountably normed spaces on polyhedral
  domains.
\newblock {\em Proc. Roy. Soc. Edinburgh Sect. A} {\bf 127}(1) (1997) 77--126.

\bibitem{GuoBabuska97b}
{\sc B.~Guo, I.~Babu{\v{s}}ka}.
\newblock Regularity of the solutions for elliptic problems on nonsmooth
  domains in $\mathbf{R}\sp 3$. {I}{I}. {R}egularity in neighbourhoods of
  edges.
\newblock {\em Proc. Roy. Soc. Edinburgh Sect. A} {\bf 127}(3) (1997) 517--545.

\bibitem{GuoSchwab06}
{\sc B.~Guo, C.~Schwab}.
\newblock Analytic regularity of {S}tokes flow on polygonal domains in
  countably weighted {S}obolev spaces.
\newblock {\em J. Comput. Appl. Math.} {\bf 190}(1-2) (2006) 487--519.

\bibitem{GuoStephan98}
{\sc B.~Guo, E.~P. Stephan}.
\newblock The {$h$}-{$p$} version of the coupling of finite element and
  boundary element methods for transmission problems in polyhedral domains.
\newblock {\em Numer. Math.} {\bf 80}(1) (1998) 87--107.

\bibitem{HolmMaischakStephan08}
{\sc H.~Holm, M.~Maischak, E.~P. Stephan}.
\newblock Exponential convergence of the {$h$}-{$p$} version {BEM} for mixed
  boundary value problems on polyhedrons.
\newblock {\em Math. Methods Appl. Sci.} {\bf 31}(17) (2008) 2069--2093.

\bibitem{Kawohl80}
{\sc B.~Kawohl}.
\newblock On nonlinear mixed boundary value problems for second order elliptic
  differential equations on domains with corners.
\newblock {\em Proc. Roy. Soc. Edinburgh Sect. A} {\bf 87}(1-2) (1980/81)
  35--51.

\bibitem{Kondratev67}
{\sc V.~A. Kondrat'ev}.
\newblock Boundary-value problems for elliptic equations in domains with
  conical or angular points.
\newblock {\em Trans. Moscow Math. Soc.} {\bf 16} (1967) 227--313.

\bibitem{KozlovMazyaRossmann97b}
{\sc V.~A. Kozlov, V.~G. Maz'ya, J.~Rossmann}.
\newblock {\em Elliptic boundary value problems in domains with point
  singularities}.
\newblock Mathematical Surveys and Monographs, 52. American Mathematical
  Society, Providence, RI 1997.

\bibitem{LionsMagenes70}
{\sc J.-L. Lions, E.~Magenes}.
\newblock {\em Probl\`emes aux limites non homog\`enes et applications. {V}ol.
  3}.
\newblock Dunod, Paris 1970.
\newblock Travaux et Recherches Math\'ematiques, No. 20.

\bibitem{MazyaPlamenevskii73}
{\sc V.~Maz'ya, B.~Plamenevskij}.
\newblock {On elliptic boundary value problems with discontinuous coefficients
  on manifolds with singularities.}
\newblock {\em Sov. Math., Dokl.} {\bf 14} (1973) 765--770.

\bibitem{MazyaPlamenevskii77}
{\sc V.~G. Maz'ya, B.~A. Plamenevskii}.
\newblock Elliptic boundary value problems on manifolds with singularities.
\newblock {\em Probl. Mat. Anal.} {\bf 6} (1977) 85--142.

\bibitem{MazyaPlamenevskii80b}
{\sc V.~G. Maz'ya, B.~A. Plamenevskii}.
\newblock {$L^p$} estimates of solutions of elliptic boundary value problems in
  a domain with edges.
\newblock {\em Trans. Moscow Math. Soc.} {\bf 1} (1980) 49--97.
\newblock Russian original in \emph{Trudy Moskov. Mat. Obshch.} \textbf{37}
  (1978).

\bibitem{MazyaPlamenevskii84b}
{\sc V.~G. Maz'ya, B.~A. Plamenevskii}.
\newblock Weighted spaces with nonhomogeneous norms and boundary value problems
  in domains with conical points.
\newblock {\em Amer. Math. Soc. Transl. (2)} {\bf 123} (1984) 89--107.

\bibitem{MazyaRossmann88}
{\sc V.~G. Maz'ya, J.~Rossmann}.
\newblock {\"U}ber die {Asymptotik} der {L\"osungen} el\-lip\-ti\-scher
  {Rand\-wert\-aufgaben} in der {Umgebung} von {Kanten}.
\newblock {\em Math. Nachr.} {\bf 138} (1988) 27--53.

\bibitem{MazyaRossmann91b}
{\sc V.~G. Maz{\cprime}ya, J.~Rossmann}.
\newblock On the {A}gmon-{M}iranda maximum principle for solutions of elliptic
  equations in polyhedral and polygonal domains.
\newblock {\em Ann. Global Anal. Geom.} {\bf 9}(3) (1991) 253--303.

\bibitem{MazyaRossmann03}
{\sc V.~G. Maz'ya, J.~Ro{\ss}mann}.
\newblock Weighted {$L\sb p$} estimates of solutions to boundary value problems
  for second order elliptic systems in polyhedral domains.
\newblock {\em ZAMM Z. Angew. Math. Mech.} {\bf 83}(7) (2003) 435--467.

\bibitem{MazzucatoNistor10}
{\sc A.~L. Mazzucato, V.~Nistor}.
\newblock Well-posedness and regularity for the elasticity equation with mixed
  boundary conditions on polyhedral domains and domains with cracks.
\newblock {\em Arch. Ration. Mech. Anal.} {\bf 195}(1) (2010) 25--73.

\bibitem{Morrey66}
{\sc C.~B. Morrey, Jr.}
\newblock {\em Multiple integrals in the calculus of variations}.
\newblock Die Grundlehren der mathematischen Wissenschaften, Band 130.
  Springer-Verlag New York, Inc., New York 1966.

\bibitem{MorreyNirenberg57}
{\sc C.~B. Morrey, Jr., L.~Nirenberg}.
\newblock On the analyticity of the solutions of linear elliptic systems of
  partial differential equations.
\newblock {\em Comm. Pure Appl. Math.} {\bf 10} (1957) 271--290.

\bibitem{SchmitzVolkWendland93}
{\sc H.~Schmitz, K.~Volk, W.~Wendland}.
\newblock Three-dimensional singularities of elastic fields near vertices.
\newblock {\em Numer. Methods Partial Differential Equations} {\bf 9}(3) (1993)
  323--337.

\bibitem{Schotzau09}
{\sc D.~Sch\"otzau, C.~Schwab, T.~P. Wihler}.
\newblock $hp$-d{GFEM} for second order elliptic problems in polyhedra.
\newblock Technical report, ETH Z\"urich 2009.

\end{thebibliography}

\newcommand{\noopsort}[1]{}\def\cprime{$'$} \def\cprime{$'$}
  \def\polhk#1{\setbox0=\hbox{#1}{\ooalign{\hidewidth
  \lower1.5ex\hbox{`}\hidewidth\crcr\unhbox0}}}

\bigskip
\centerline{\sc Addresses}

\end{document}